\definecolor{Blue}{rgb}{0.3,0.3,0.9}
\DeclareSymbolFont{cyrillic}{T2A}{cmr}{m}{n}
\DeclareMathSymbol{\Sha}{\mathalpha}{cyrillic}{216}
\newcommand{\sk}{\vspace{0.1in}}
\newtheorem{thm}{Theorem}[section]
\newtheorem{def-thm}[thm]{Definition-Theorem}
\newtheorem{cor}[thm]{Corollary}
\newtheorem{lem}[thm]{Lemma}
\newtheorem{def-lem}[thm]{Definition-Lemma}
\newtheorem{prop}[thm]{Proposition}
\newtheorem{conj}[thm]{Conjecture}
\newtheorem*{ThmA}{Theorem A}
\newtheorem*{ThmB}{Theorem B}
\newtheorem*{ThmC}{Theorem C}
\theoremstyle{definition}
\newtheorem{defn}[thm]{Definition}
\theoremstyle{remark}
\newtheorem{rem}[thm]{Remark}
\numberwithin{thm}{section}
\numberwithin{equation}{section}
\newcommand{\wh}[1]{\hat#1}
\newcommand{\A}{\mathcal{A}}
\newcommand{\cO}{\mathcal{O}}
\newcommand{\frakm}{\mathfrak{m}}
\newcommand{\frakl}{\mathfrak{l}}
\newcommand{\pp}{\mathfrak{p}}
\newcommand{\frakp}{\mathfrak{p}}
\newcommand{\qq}{\mathfrak{q}}
\newcommand{\frakq}{\mathfrak{q}}
\newcommand{\frakf}{}
\newcommand{\fraka}{\mathfrak{a}}
\newcommand{\frakc}{\mathfrak{c}}
\newcommand{\cI}{\mathcal{I}}
\newcommand{\bQ}{\mathbf{Q}}
\newcommand{\bZ}{\mathbf{Z}}
\newcommand{\bC}{\mathbf{C}}
\newcommand{\Ac}{{\mathbf{A}^{\rm ac}}}
\newcommand{\Tc}{{\mathbf{T}^{\rm ac}}}
\newcommand{\T}{\mathbf{T}^\dagger}
\newcommand{\ro}{\mathbf{Z}_p}
\newcommand{\cyc}{\rm cyc}
\newcommand{\can}{\Psi}
\newcommand{\unr}{R_0}
\newcommand{\eps}{\varepsilon}
\newcommand{\tors}{\rm tors}
\def\ac{{\rm ac}}
\def\BF{{\mathcal{BF}}}
\newcommand{\newabstract}[1]{%
	\par\bigskip
	\csname otherlanguage*\endcsname{#1}%
	\csname captions#1\endcsname
	\item[\hskip\labelsep\scshape\abstractname.]
}
\begin{document}

\title[Perrin-Riou's main conjecture for elliptic curves at supersingular primes]{Perrin-Riou's 
	main conjecture for elliptic curves at supersingular primes}
\author[F.~Castella and X.~Wan]{Francesc Castella and Xin Wan}

\address{Department of Mathematics, Princeton University, Fine Hall, 
	Princeton, NJ 08544-1000, USA}
\email{fcabello@math.princeton.edu}
\address{Morningside Center of Mathematics, Academy of Mathematics and Systems Science, Chinese Academy of Science, No.~55 Zhongguancun East Road, Beijing, 100190, China}
\email{xwan@math.ac.cn}
\thanks{
	This project has received funding from the European Research Council (ERC) under the European Union's
	Horizon 2020 research and innovation programme (grant agreement No. 682152).}



\begin{abstract}
In 1987, B.~Perrin-Riou formulated a Heegner point main conjecture for elliptic curves at primes of ordinary reduction. In this paper, we formulate an analogue of Perrin-Riou's main conjecture for supersingular primes. We then prove this conjecture under mild hypotheses, and deduce from this result a $\Lambda$-adic extension of Kobayashi's $p$-adic Gross--Zagier formula, new cases of B.-D.~Kim's doubly-signed main conjectures,  and a strengthened version of Skinner's converse to the Gross--Zagier--Kolyvagin theorem for supersingular primes.
\end{abstract}

\selectlanguage{english}

\subjclass[2010]{11R23 (primary); 11G05, 11G40 (secondary)}

\maketitle

\setcounter{tocdepth}{2}
\tableofcontents

\section{Introduction}


\subsection{Perrin-Riou's main conjecture} 

Let $E/\bQ$ be an elliptic curve of conductor of $N$, 
let 
$f$ be the associated newform on $\Gamma_0(N)$, and fix an odd prime $p\nmid N$. Let $K/\bQ$ be an imaginary quadratic field of discriminant prime to $N$. Writing 
\[
N=N^+N^-
\] 
with $N^+$ (resp. $N^-$) only divisible by primes which split (resp. are inert) in $K$, assume that the following \emph{generalized Heegner hypothesis} is satisfied:
\begin{equation}\label{eq:gen-HH}
\textrm{$N^-$ is the square-free product of an even number of primes.}
\tag{Heeg}
\end{equation}

Assuming that $p$ is \emph{ordinary} for $E$, Perrin-Riou formulated in \cite{PR-HP} an Iwasawa-theoretic main conjecture for Heegner points over the anticyclotomic tower. To recall its statement, let $K_\infty^{\rm ac}=\bigcup_{n}K_n^{\ac}$ be the anticyclotomic $\bZ_p$-extension of $K$, with $K_n^{\ac}$ the unique subextension of $K_\infty^{\ac}$ of degree $p^n$ over $K$, and set
\begin{equation}\label{eq:bigSel}
{\rm Sel}_{p^\infty}(E/K_\infty^\ac):=\varinjlim_n\varinjlim_m{\rm Sel}_{p^m}(E/K_{n}^{\ac}),\quad
{\rm Sel}(K_\infty^{\ac},T_pE):=\varprojlim_n\varprojlim_m{\rm Sel}_{p^m}(E/K_n^{\ac}),
\end{equation}
where
\begin{equation}\label{eq:local-Sel}
{\rm Sel}_{p^m}(E/K_n^{\ac}):={\rm ker}\biggl\{H^1(K_n^{\ac},E[p^m])\longrightarrow\prod_{w}H^1(K_{n,w}^\ac,E)\biggr\}
\end{equation}
is the $p^m$-descent Selmer group of $E$ over $K_n^\ac$. 
Another assumption in \cite{PR-HP} is that  $N^-=1$. Taking Heegner points on $X_0(N)$ and mapping them to $E$ by a fixed modular parametrization
\[
\varphi:X_0(N)\longrightarrow E
\]
one can construct a $\Lambda_{\rm ac}$-module $H_\infty\subset {\rm  Sel}(K_\infty^\ac,T_pE)$, where
\[ 
\Lambda_{\ac}:=\bZ_p[[{\rm Gal}(K_\infty^\ac/K)]]
\] 
is the anticyclotomic Iwasawa algebra. After the work of Cornut \cite{cornut} and Vatsal \cite{vatsal}, the module $H_\infty$ is known to be free of rank $1$, say $H_\infty=\Lambda_{\ac}\cdot\mathbf{z}_\infty$.
\sk 

Let $c_E\in\bQ^\times$ be the Manin constant associated with $\varphi$ (i.e., if $\omega$ is a N\'eron differential of $E$ and $f$ is the newform associated with $E$, then $\varphi^*\omega=c_E\cdot 2\pi if(z)dz$), and let $u_K:=\vert\cO_K^\times\vert/2$.  

\begin{conj}[Perrin-Riou]\label{conj:PR-ord}
	Assume that $N^-=1$ and that $p$ is a prime of good ordinary reduction of $E$. Then the Pontrjagin dual $X_{p^\infty}(E/K_{\infty}^\ac)$ of ${\rm Sel}_{p^\infty}(E/K_{\infty}^\ac)$ has $\Lambda_\ac$-rank $1$, and
	\[
	{\rm Char}_{\Lambda_\ac}(X_{p^\infty}(E/K_\infty^\ac)_{\rm tors})=\frac{1}{c_E^2u_K^2}\cdot{\rm Char}_{\Lambda_\ac}\biggl(\frac{{\rm Sel}(K_\infty^\ac,T_pE)}{\Lambda_\ac\cdot\mathbf{z}_\infty}\biggr)^2,
	\]
	where the subscript {\rm tors} denotes the $\Lambda^\ac$-torsion submodule.
\end{conj}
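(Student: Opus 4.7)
The plan is to prove the conjecture by establishing two opposite divisibilities of characteristic ideals, obtained by essentially disjoint techniques, and then combining them.

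\emph{Upper bound via the Heegner point Kolyvagin system.} I would first apply Howard's Iwasawa-theoretic extension of Kolyvagin's Euler system argument to the norm-compatible family of Heegner points underlying $\mathbf{z}_\infty$. In the Mazur--Rubin framework of Kolyvagin systems, the derivative classes built from Heegner points over ring class extensions of $K$ organize into a Kolyvagin system for the self-dual representation $T_pE$. This yields simultaneously that $X_{p^\infty}(E/K_\infty^\ac)$ has $\Lambda_\ac$-rank exactly $1$ and the divisibility
\[
{\rm Char}_{\Lambda_\ac}\bigl(X_{p^\infty}(E/K_\infty^\ac)_{\rm tors}\bigr)\;\text{divides}\;{\rm Char}_{\Lambda_\ac}\!\biggl(\frac{{\rm Sel}(K_\infty^\ac,T_pE)}{\Lambda_\ac\cdot\mathbf{z}_\infty}\biggr)^{\!2},
\]
in which the square reflects the self-duality of $T_pE$ together with the derivative-class formalism of Mazur--Rubin. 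The nontriviality input required to run the Euler system argument is furnished by the equidistribution results of Cornut--Vatsal, already invoked in the excerpt to know that $H_\infty=\Lambda_\ac\cdot\mathbf{z}_\infty$ is free of rank $1$.

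\emph{Lower bound via Eisenstein congruences.} The reverse divisibility is substantially deeper. I would combine the explicit reciprocity law of Bertolini--Darmon--Prasanna, which identifies the image of $\mathbf{z}_\infty$ under a big-logarithm map with an anticyclotomic $p$-adic $L$-function $\cL_p^{\rm BDP}(f/K)$, with the two-variable Rankin--Selberg main conjecture for $f/K$ proved by Skinner--Urban (and subsequently refined by Wan) through Eisenstein congruences on the unitary group $U(3,1)$. Specialization of the latter to the anticyclotomic line, combined with the explicit reciprocity law, converts the known divisibility of $p$-adic $L$-function ideals into the reverse divisibility
\[
{\rm Char}_{\Lambda_\ac}\!\biggl(\frac{{\rm Sel}(K_\infty^\ac,T_pE)}{\Lambda_\ac\cdot\mathbf{z}_\infty}\biggr)^{\!2}\;\text{divides}\;c_E^2\, u_K^2\cdot{\rm Char}_{\Lambda_\ac}\bigl(X_{p^\infty}(E/K_\infty^\ac)_{\rm tors}\bigr).
\]
Together with the first step this would yield the stated equality, the rank assertion being already contained in the Euler system input.

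\emph{Main obstacle.} The principal difficulty is matching the characteristic ideals exactly, with the precise scalar $1/(c_E^2 u_K^2)$, rather than up to a fudge factor or a power of $p$. This requires the $U(3,1)$ main conjecture in a form whose interpolation factors match perfectly with the Bertolini--Darmon--Prasanna reciprocity law, careful control of Euler factors upon specialization to the anticyclotomic line, and keeping track of the Manin constant $c_E$ and the unit $u_K$ through the identifications. A secondary subtlety is that the hypothesis $N^-=1$ forces the Heegner points to come from the classical modular curve $X_0(N)$, so one must verify that the required reciprocity laws and Eisenstein congruence inputs apply directly in this split setting.
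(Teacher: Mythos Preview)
The statement you were asked to prove is Conjecture~\ref{conj:PR-ord}, which the paper states as a \emph{conjecture} and does not prove. It appears in the introduction as historical background: Perrin-Riou's original Heegner point main conjecture in the ordinary case. The paper remarks that one divisibility follows from Bertolini and Howard, and that ``the authors established the converse divisibility, leading to a proof of Conjecture~\ref{conj:PR-ord} under mild hypotheses (see \cite{wan} and \cite{cas-BF})''---but those proofs live in the cited references, not in this paper. So there is no ``paper's own proof'' of this statement to compare your proposal against.

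That said, your outline is broadly the correct strategy used in the literature and mirrors what this paper does for the \emph{supersingular} analogue (Conjecture~\ref{conj:PR-ss}, proved as Theorem~A). The Kolyvagin-system divisibility is indeed Howard's input, and the reverse divisibility comes from Wan's Eisenstein congruence work on $U(3,1)$ combined with an explicit reciprocity law relating Heegner classes to a BDP-type $p$-adic $L$-function. One correction: the Skinner--Urban main conjecture is not the right input here; Wan's work concerns a different (Greenberg-type, $\pp$-ordinary) Selmer group over the full $\bZ_p^2$-extension, and it is that result which descends to the anticyclotomic line. Your remark about the difficulty of pinning down the exact constant $1/(c_E^2 u_K^2)$ is well taken---and in fact the results in \cite{wan}, \cite{cas-BF}, and the present paper are typically stated as equalities of ideals up to such explicit constants or after inverting $p$, so a fully integral statement with the precise scalar is genuinely delicate.
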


The important works of Bertolini \cite{bertolini-PhD} and Howard \cite{howard-PhD-I} led to the proof (under mild hypotheses) of one of the divisibilities predicted by Conjecture~\ref{conj:PR-ord}, while later in \cite{howard-PhD-II}  Howard formulated an extension of Conjecture~\ref{conj:PR-ord} 
to abelian varieties of ${\rm GL}_2$-type over totally real fields, and extended the results of \cite{howard-PhD-I} to this context. 
\sk

Our first main goal in this paper is to formulate an extension of Conjecture~\ref{conj:PR-ord} to good supersingular primes, working under the generalized Heegner hypothesis (\ref{eq:gen-HH}). A fundamental obstacle to such extension is the fact that, in the non-ordinary case, i.e., when $p$ divides
\[
a_p:=p+1-\vert E(\mathbf{F}_p)\vert, 
\]
Heegner points on $E$ give rise to compatible systems of classes with \emph{unbounded} growth over the anticyclotomic tower. As a result, there is no obvious analogue of the submodule $H_\infty$ of ${\rm Sel}(K_\infty^\ac,T_pE)$ for supersingular primes $p$. Nonetheless, following ideas of Kobayashi \cite{kobayashi-152}, and their extension by B.-D.~Kim \cite{kim-CJM}, we will succeed in formulating the right analogue of Conjecture~\ref{conj:PR-ord} assuming in addition\footnote{Note that, by the Hasse bound, the condition $a_p=0$ holds for all supersingular primes $p>3$.} that $a_p=0$  and that
\begin{equation}\label{eq:split}
\textrm{$p=\pp\overline{\pp}$ splits in $K$.}\tag{spl}
\end{equation}
 
Indeed, following these methods, in Section~\ref{sec:Sel} we define four pairs of doubly-signed Selmer groups 
\[
\mathfrak{Sel}_{p^\infty}^{\pm,\pm}(E/K_\infty^\ac)\subset
{\rm Sel}_{p^\infty}(E/K_\infty^\ac),\quad
\mathfrak{Sel}^{\pm,\pm}(K^\ac_\infty,T_pE)\supset{\rm Sel}(K^\ac_\infty,T_pE)
\]
obtained by replacing the local conditions
in $(\ref{eq:local-Sel})$ at the primes above $\pp$ and $\overline{\pp}$. On the other hand, in Section~\ref{sec:HP} we construct two bounded cohomology classes
\[
\mathbf{z}^+\in\mathfrak{Sel}^{+,+}(K_\infty^\ac,T_pE),\quad
\mathbf{z}^-\in\mathfrak{Sel}^{-,-}(K_\infty^\ac,T_pE).
\]
obtained by dividing the natural systems of Heegner points over $K_\infty^\ac/K$ by certain analogues of Pollack's \cite{pollack} half-logarithms. 
By (\ref{eq:gen-HH}), the curve $E$ is isogenous to a quotient
\begin{equation}\label{eq:Sh-par}
\pi:J_{N^+,N^-}\longrightarrow E'
\end{equation}
of the Jacobian of a Shimura curve $X_{N^+,N^-}$ attached to an indefinite quaternion algebra over $\bQ$ of discriminant $N^-$, and the Heegner points used to construct $\mathbf{z}^\pm$ come from $X_{N^+,N^-}$. 
Let $\delta(N^+,N^-)=\pi\circ\pi^\vee$ be the modular degree of the parametrization (\ref{eq:Sh-par}),  and assuming that $E$ is the strong Weil curve in its isogeny class, set
\[
\delta_{N^+,N^-}:=\frac{\delta(N^+,N^-)}{\delta(N,1)},
\]
where $\delta(N,1)=\varphi\circ\varphi^\vee$ is the modular degree of $\varphi$. We are now ready to state our 
generalization of Conjecture~\ref{conj:PR-ord}.

\begin{conj}\label{conj:PR-ss}
Assume the generalized Heegner hypothesis {\rm (\ref{eq:gen-HH})} holds, and let $p>3$ be a prime of good supersingular reduction of $E$ which splits in $K$. Then, for each $\varepsilon\in\{\pm\}$, the Pontrjagin dual $\mathfrak{X}_{p^\infty}^{\varepsilon,\varepsilon}(E/K_{\infty}^\ac)$ of $\mathfrak{Sel}^{\varepsilon,\varepsilon}_{p^\infty}(E/K_{\infty}^\ac)$ has $\Lambda_\ac$-rank $1$, and
\[
{\rm Char}_{\Lambda_\ac}(\mathfrak{X}_{p^\infty}^{\varepsilon,\varepsilon}(E/K_\infty^\ac)_{\rm tors})=\frac{\delta_{N^+,N^-}}{c_E^2u_K^2}\cdot{\rm Char}_{\Lambda_\ac}\biggl(\frac{\mathfrak{Sel}^{\varepsilon,\varepsilon}(K_\infty^\ac,T_pE)}{\Lambda_\ac\cdot\mathbf{z}^\varepsilon_\infty}\biggr)^2,
\]
where the subscript {\rm tors} denotes the $\Lambda_\ac$-torsion submodule.
\end{conj}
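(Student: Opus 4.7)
The plan is to prove Conjecture~\ref{conj:PR-ss} by adapting, to the supersingular setting, the anticyclotomic Iwasawa-theoretic strategy of Bertolini and Howard in the ordinary case, with the plus/minus machinery of Kobayashi and B.-D.~Kim providing the required bridge. The argument will proceed via two independent intermediate statements: a \emph{signed explicit reciprocity law} relating the Heegner classes $\mathbf{z}^\varepsilon_\infty$ to a Bertolini--Darmon--Prasanna-type anticyclotomic $p$-adic $L$-function $\mathcal{L}^\varepsilon_\pp$, and a \emph{signed Iwasawa--Greenberg main conjecture} relating $\mathcal{L}^\varepsilon_\pp$ to $\mathrm{Char}_{\Lambda_\ac}(\mathfrak{X}^{\varepsilon,\varepsilon}_{p^\infty}(E/K_\infty^\ac)_{\tors})$. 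Combining the two inputs will yield the desired identity.

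\textbf{Step 1} (signed reciprocity law): Because $p=\pp\overline{\pp}$ splits in $K$, the Kobayashi--Kim plus/minus Coleman map is available at $\pp$ over the anticyclotomic tower. I would show that its value on the localization at $\pp$ of $\mathbf{z}^\varepsilon_\infty$ is a bounded element of $\Lambda_\ac$ whose specializations at anticyclotomic Hecke characters of finite order, matched against a Waldspurger-type formula on the Shimura curve $X_{N^+,N^-}$, recover $\mathcal{L}^\varepsilon_\pp$ up to an explicit factor involving $\delta_{N^+,N^-}^{1/2}/(c_E u_K)$ which tracks the transition from the modular parametrization $\varphi$ to the Shimura-curve parametrization $\pi$ of (\ref{eq:Sh-par}). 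This extends to the supersingular setting the reciprocity laws of Bertolini--Darmon--Prasanna and Brakocevi\'c.

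\textbf{Step 2} (signed main conjecture): The identity $\mathrm{Char}_{\Lambda_\ac}(\mathfrak{X}^{\varepsilon,\varepsilon}_{p^\infty}(E/K_\infty^\ac)_{\tors}) = (\mathcal{L}^\varepsilon_\pp)^2\cdot\Lambda_\ac$, together with the rank-one assertion on $\mathfrak{X}^{\varepsilon,\varepsilon}_{p^\infty}$, is the content of a signed Iwasawa--Greenberg main conjecture for the BDP $p$-adic $L$-function. One divisibility I would obtain from the Eisenstein-congruence machinery on auxiliary unitary groups developed by the second author, now adapted by matching the Panchishkin-type local subgroups at $p$ on the unitary side with the signed local conditions at both $\pp$ and $\overline{\pp}$ defining $\mathfrak{Sel}^{\varepsilon,\varepsilon}$. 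The reverse divisibility would follow from a Kolyvagin-type Euler system argument applied to $\mathbf{z}^\varepsilon_\infty$, passing through Step~1 to convert information about the Heegner classes into information about $\mathcal{L}^\varepsilon_\pp$.

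\textbf{Main obstacle:} The hardest step is expected to be the analytic-side divisibility in Step~2. The signed local conditions at the supersingular prime must be matched delicately with appropriate local subgroups in the Klingen--Eisenstein family, and doing this \emph{doubly-signed}, i.e., coherently at $\pp$ and $\overline{\pp}$ simultaneously, is the main technical novelty required over the ordinary case. Once both ingredients are in place, the rank-one claim for $\mathfrak{X}^{\varepsilon,\varepsilon}_{p^\infty}$ and the precise correction factor $\delta_{N^+,N^-}/(c_E^2 u_K^2)$ appearing in Conjecture~\ref{conj:PR-ss} drop out upon substituting Step~1 into Step~2 and squaring.
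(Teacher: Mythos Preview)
The statement is a \emph{conjecture}, and the paper only establishes it under the additional hypotheses listed in Theorem~A (see Theorem~\ref{thm:howard}); your proposal should be read as a strategy for that restricted result rather than for the conjecture in full.

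Your overall architecture---an explicit reciprocity law for $\mathbf{z}^\varepsilon$ combined with Euler-system and Eisenstein-congruence divisibilities---matches the paper's, but there is one substantive divergence worth highlighting. You propose to run the Eisenstein-congruence argument directly against the doubly-signed Selmer group $\mathfrak{X}^{\varepsilon,\varepsilon}$, and you correctly identify the matching of signed local conditions at both $\pp$ and $\overline{\pp}$ inside the Klingen--Eisenstein family as the main obstacle. The paper \emph{sidesteps} this obstacle entirely: the Eisenstein divisibility from \cite{wan-combined} is proved for the Greenberg-type Selmer group $\mathfrak{X}^{{\rm rel},{\rm str}}$ (relaxed at $\pp$, strict at $\overline{\pp}$), where no plus/minus conditions appear at all, against the $p$-adic $L$-function $L_\pp(f/K)$ whose anticyclotomic projection is $\mathscr{L}^{\tt BDP}_\pp(f/K)^2$. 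The passage from $\mathfrak{X}^{{\rm rel},{\rm str}}$ to $\mathfrak{X}^{\varepsilon,\varepsilon}$ is then handled by purely global-duality arguments (Lemmas~\ref{lem:cas} and~\ref{lem:tors}), which convert the $({\rm rel},{\rm str})$ divisibility and the Kolyvagin-system divisibility for $\mathbf{z}^\varepsilon$ (Theorem~\ref{thm:KS-argument}) into complementary bounds that force equality. Thus the ``doubly-signed Eisenstein matching'' you flag as the hardest step is not needed, and attempting it directly would likely be considerably more difficult than the route the paper takes.

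A smaller point: the explicit reciprocity law in the paper (Theorem~\ref{3.1}) goes through the signed \emph{logarithm} map ${\rm Log}_\ac^\varepsilon$ on $H^1_\varepsilon(K_\pp,\mathbf{T}^\ac)$, not the Coleman map, and relates ${\rm loc}_\pp(\mathbf{z}^\varepsilon)$ to $\mathscr{L}^{\tt BDP}_\pp(f/K)$ directly; the factor $\delta_{N^+,N^-}/(c_E^2u_K^2)$ is not tracked in the proof of Theorem~\ref{thm:howard}, which establishes the equality of characteristic ideals without the constant (a $p$-adic unit under the running hypotheses).
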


\subsection{Statement of the main results} As mentioned above, one of the divisibilities in Perrin-Riou's main conjecture follows from \cite{bertolini-PhD} and \cite{howard-PhD-I}. More recently, the authors established the converse divisibility, leading to a proof of Conjecture~\ref{conj:PR-ord} under mild hypotheses (see \cite{wan} and \cite{cas-BF}). As for the supersingular setting, 
our first main result in this paper is the following result on Conjecture~\ref{conj:PR-ss}:

\begin{ThmA}
Assume that: 
\begin{itemize}
	\item $N$ is square-free,
	\item $N^-\neq 1$,
	\item $E[p]$ is ramified at every prime $\ell\mid N^-$.
\end{itemize} 
Then Conjecture~\ref{conj:PR-ss} holds.	
\end{ThmA}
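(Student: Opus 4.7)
The plan is to mirror the strategy used to settle the ordinary case (Conjecture~\ref{conj:PR-ord}) in \cite{wan} and \cite{cas-BF}, replacing the classical Iwasawa modules and Coleman/Perrin-Riou map by their signed analogues of Kobayashi and B.-D.~Kim. Accordingly, I would prove the two divisibilities in Conjecture~\ref{conj:PR-ss} separately and then combine them.

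\textbf{The Kolyvagin-type divisibility.} By construction, the signed classes $\mathbf{z}^\pm$ arise from the natural tower of Heegner points on $X_{N^+,N^-}$ by dividing out supersingular analogues of Pollack's half-logarithms, while the underlying tower inherits the usual Euler-system norm compatibilities. From $\mathbf{z}^\pm$ I would build anticyclotomic Kolyvagin systems in Howard's sense, adapted to the signed local conditions $\mathfrak{Sel}^{\pm,\pm}$. The machinery of \cite{howard-PhD-I} then yields the divisibility
\[
\mathrm{Char}_{\Lambda_\ac}\bigl(\mathfrak{X}^{\varepsilon,\varepsilon}_{p^\infty}(E/K_\infty^\ac)_{\mathrm{tors}}\bigr)\;\text{ divides }\;\frac{\delta_{N^+,N^-}}{c_E^2u_K^2}\cdot\mathrm{Char}_{\Lambda_\ac}\biggl(\frac{\mathfrak{Sel}^{\varepsilon,\varepsilon}(K_\infty^\ac,T_pE)}{\Lambda_\ac\cdot\mathbf{z}^\varepsilon_\infty}\biggr)^2,
\]
the required big-image/non-degeneracy hypotheses being supplied by $N$ squarefree together with the ramification of $E[p]$ at each $\ell\mid N^-$. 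The same input also delivers the rank-$1$ assertion for $\mathfrak{X}^{\varepsilon,\varepsilon}_{p^\infty}(E/K_\infty^\ac)$.

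\textbf{The reverse divisibility.} This rests on three ingredients: (i) an explicit reciprocity law identifying the image of $\mathbf{z}^\pm$ under the signed Coleman-type maps at $\pp$ and $\overline{\pp}$ with signed anticyclotomic $p$-adic $L$-functions $\mathcal{L}_p^\pm(f/K)$, obtained by extending the $p$-adic Waldspurger formulae of Bertolini--Darmon--Prasanna to the signed setting via the half-logarithm factorization built into the definition of $\mathbf{z}^\pm$; (ii) an Iwasawa main conjecture for these signed $p$-adic $L$-functions, which in the split supersingular setting should be accessible from Wan's earlier work on Eisenstein congruences on $\mathrm{GU}(3,1)$, suitably specialized and ``signed''; and (iii) combining (i) and (ii) to obtain the divisibility opposite to the one above. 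Juxtaposing the two divisibilities then proves Conjecture~\ref{conj:PR-ss} under the hypotheses of the theorem.

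\textbf{Main obstacle.} The technical crux is the explicit reciprocity law (i). It has to be simultaneously compatible with the signed local decomposition at $p=\pp\overline{\pp}$---a decomposition that is delicate to track consistently across the Heegner and the Eisenstein-congruence constructions---and produce \emph{precisely} the signed $p$-adic $L$-functions that arise from (ii). Carrying this compatibility through while correctly accounting for the Manin-constant factor $c_E$ and the ratio of modular degrees $\delta_{N^+,N^-}$ on the right-hand side of Conjecture~\ref{conj:PR-ss} is where the bulk of the technical work will lie; the Kolyvagin step, while substantial, is essentially a careful translation of known machinery to the signed world.
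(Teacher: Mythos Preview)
Your overall architecture matches the paper's: a Kolyvagin-system divisibility from the plus/minus Heegner classes $\mathbf{z}^\eps$ (extending Howard), and a reverse divisibility coming from the second author's Eisenstein-congruence work, glued together by an explicit reciprocity law. The rank-$1$ statement and the Kolyvagin direction are handled essentially as you describe.

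Where your proposal goes off track is in the identification of the $p$-adic $L$-function and Selmer group that drive the reverse divisibility. You posit ``signed anticyclotomic $p$-adic $L$-functions $\mathcal{L}_p^\pm(f/K)$'' and a signed main conjecture for them, but under $(\mathrm{Heeg})$ the anticyclotomic restrictions of Loeffler's signed $L$-functions $L_p^{\eps,\eps}(f/K)$ vanish identically (this is forced by the functional equation; see Corollary~\ref{cor:signed-0}). So there is no nontrivial ``signed anticyclotomic $p$-adic $L$-function'' to which one could apply a main conjecture directly. What actually happens is that the signed logarithm map at the single prime $\pp$ sends ${\rm loc}_\pp(\mathbf{z}^\eps)$ to the \emph{unsigned} Bertolini--Darmon--Prasanna $p$-adic $L$-function $\mathscr{L}_\pp^{\tt BDP}(f/K)$, and the value is the same for both signs $\eps$ (Theorem~\ref{3.1}). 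The Eisenstein-congruence input from \cite{wan-combined} is correspondingly a divisibility for the two-variable $({\rm rel},{\rm str})$ Selmer group against the unsigned $L_\pp(f/K)$; its anticyclotomic specialization is $\mathscr{L}_\pp^{\tt BDP}(f/K)^2$ (Corollary~\ref{cor:wan-bdp}).

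The missing glue in your outline is therefore the passage between the Iwasawa--Greenberg main conjecture for $\mathscr{L}_\pp^{\tt BDP}(f/K)^2$ (with the $({\rm rel},{\rm str})$ Selmer group) and the signed Heegner point main conjecture (with the $(\eps,\eps)$ Selmer group). This is accomplished by Poitou--Tate exact sequences relating $\mathfrak{X}^{{\rm rel},{\rm str}}$, $\mathfrak{X}^{\eps,{\rm str}}$, and $\mathfrak{X}^{\eps,\eps}$, together with the reciprocity law, which allow one to show that the Kolyvagin divisibility and the Eisenstein divisibility are genuinely opposite (Lemmas~\ref{lem:cas} and~\ref{lem:tors}). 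This Poitou--Tate translation, rather than the reciprocity law itself, is the step you have not accounted for; once one knows that the target of the reciprocity law is the BDP $L$-function, the reciprocity law is a fairly direct computation with the formal-group logarithm.
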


Similarly as in \cite{wan}, Theorem~A yields in particular a converse to the Gross--Zagier--Kolyvagin theorem, 
in the same spirit as the result first obtained by Skinner 
\cite[Thm.~B]{skinner} for ordinary primes (\emph{cf.} W.~Zhang's result  \cite[Thm.~1.3]{zhang-Kolyvagin}, still for ordinary primes). Both  approaches make crucial use of Iwasawa theory, but by working with Heegner
points over the tower $K_\infty^\ac/K$, our result does \emph{not} require any
injectivity hypothesis on the localization maps at places above $p$:
\[
{\rm loc}_p:{\rm Sel}(K,V_pE)\longrightarrow\prod_{w\mid p}H^1(K_w,V_pE),
\]
thereby dispensing with the finiteness of the $p$-primary part of 
$\Sha(E/K)$ 
(see \cite[Lem.~2.2.2]{skinner}), and rather deducing it as a consequence.


\begin{ThmB}
	Under the hypotheses in Theorem~A, the following implication holds: 
	\[
	{\rm corank}_{\bZ_p}{\rm Sel}_{p^\infty}(E/K)=1\quad\Longrightarrow\quad{\rm ord}_{s=1}L(E/K,s)=1.
	\]
\end{ThmB}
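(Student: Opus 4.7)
Since the generalized Heegner hypothesis (\ref{eq:gen-HH}) forces the global root number of $E/K$ to equal $-1$, the order of vanishing $r := \ord_{s=1}L(E/K,s)$ is odd; in particular $r \geq 1$, and it therefore suffices to rule out $r \geq 3$, i.e.\ to prove $L'(E/K,1) \neq 0$. By the complex Gross--Zagier formula applied to the parametrization (\ref{eq:Sh-par}), this in turn reduces to showing that the classical Heegner point $y_K \in E(K)$, arising from a CM point of conductor $1$ on the Shimura curve $X_{N^+,N^-}$, is non-torsion.

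Fix a sign $\varepsilon \in \{\pm\}$. The plan is to read off the non-triviality of $y_K$ from the specialization of the big Heegner class $\mathbf{z}_\infty^\varepsilon$ at the trivial character of $\Gamma_\ac := \Gal(K_\infty^\ac/K)$. Using Kobayashi's and B.-D.~Kim's control-theorem arguments, adapted to the doubly-signed local conditions at $\pp$ and $\overline{\pp}$, the hypothesis $\mathrm{corank}_{\bZ_p}\mathrm{Sel}_{p^\infty}(E/K) = 1$ combined with the fact, included in Theorem~A, that $\mathfrak{X}_{p^\infty}^{\varepsilon,\varepsilon}(E/K_\infty^\ac)$ has $\Lambda_\ac$-rank exactly one yields that the characteristic ideal of $\mathfrak{X}_{p^\infty}^{\varepsilon,\varepsilon}(E/K_\infty^\ac)_{\rm tors}$ is coprime to the augmentation ideal of $\Lambda_\ac$. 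Invoking the characteristic-ideal identity of Conjecture~\ref{conj:PR-ss} (now a theorem by Theorem~A), the same coprimality is then forced on $\mathrm{Char}_{\Lambda_\ac}\bigl(\mathfrak{Sel}^{\varepsilon,\varepsilon}(K_\infty^\ac,T_pE)/\Lambda_\ac\cdot\mathbf{z}_\infty^\varepsilon\bigr)$, so that $\mathbf{z}_\infty^\varepsilon$ specializes at the trivial character to a non-torsion class in $\mathfrak{Sel}^{\varepsilon,\varepsilon}(K,T_pE)$.

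To finish, I would appeal to the construction of $\mathbf{z}^\varepsilon$ as a compatible system of Heegner classes divided by a Pollack-type half-logarithm: the specialization of $\mathbf{z}_\infty^\varepsilon$ at the trivial character agrees, up to the non-zero $p$-adic value of that half-logarithm at $T=0$, with the image of $y_K$ in $\mathfrak{Sel}^{\varepsilon,\varepsilon}(K,T_pE) \otimes \bQ_p$. Hence $y_K$ is non-torsion, and the first paragraph yields $\ord_{s=1}L(E/K,s) = 1$. The principal obstacle is this last step --- reconciling the big Heegner class divided by half-logarithms with the classical Heegner point after specialization at the trivial character, in particular confirming that the half-logarithm factor introduces no spurious zero there --- which is the supersingular, split-prime analogue of the core specialization argument in Skinner's ordinary-prime proof of the converse to Gross--Zagier--Kolyvagin.
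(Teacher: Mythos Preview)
Your approach is correct and essentially the same as the paper's: use the control theorem to pass from ${\rm corank}_{\bZ_p}{\rm Sel}_{p^\infty}(E/K)=1$ to information about $\mathfrak{X}^{\eps,\eps}(K,\Ac)$ modulo the augmentation ideal, feed this through the equality of characteristic ideals in Theorem~A to conclude that $\mathbf{z}^\eps$ has non-torsion image in $\mathfrak{Sel}^{\eps,\eps}(K,\Tc)/I^\ac$, identify that image with the Kummer class of $y_K$, and finish with Gross--Zagier on Shimura curves.

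The ``principal obstacle'' you flag in the last paragraph is not actually there, and the paper dispatches it in half a line. The class $\mathbf{z}^\eps$ is \emph{not} literally a Heegner system divided by a half-logarithm power series; it is the inverse limit $\varprojlim_n z_n[1]^\eps$, where $z_n[1]^\eps$ is characterized by $\tilde{\omega}_n^{-\eps}\cdot z_n[1]^\eps=(-1)^{[(n+1)/2]}z_n[1]$ inside the \emph{free} $\Lambda_\ac$-module $H^1(K,\Tc)$ (Lemma~\ref{Lemma 2.2}). At the bottom layer $n=0$ one has $\tilde{\omega}_0^{\mp}=1$ (an empty product), so $z_0[1]^\eps=z_0[1]$ on the nose. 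Hence the image of $\mathbf{z}^\eps$ in $\mathfrak{Sel}^{\eps,\eps}(K,\Tc)/I^\ac\hookrightarrow H^1(K,T)\otimes\bQ_p$ is exactly the Kummer image of $y_K$, with no half-logarithm factor to worry about. This is what the paper means by ``by construction $\mathbf{z}^+_1=z_0[1]$ is the Kummer image of $y_K$.''
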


In the course of proving Theorem~A, we also obtain new cases of B.-D.~Kim's doubly-signed main conjectures for elliptic curves at  supersingular primes \cite{kim-CJM}. For the statement, fix a root $\alpha$ of $x^2-a_px+p=x^2+p$, let $\beta=-\alpha$ be the other root, and denote by $\Gamma_K:={\rm Gal}(K_\infty/K)$ the Galois group of the unique $\bZ_p^2$-extension of $K$. 
Building upon Haran's construction \cite{haran} of Mazur--Tate elements for automorphic forms on ${\rm GL}_{2}$ over number fields, 
Loeffler introduced in \cite{Loeffler}:
\begin{itemize}
\item[-]{} Four unbounded distributions on $\Gamma_K$:
\begin{equation}\label{eq:4-Loeffler}
L_{p,(\alpha,\alpha)}(E/K),\quad
L_{p,(\alpha,\beta)}(E/K),\quad
L_{p,(\beta,\alpha)}(E/K),\quad
L_{p,(\beta,\beta)}(E/K),\quad
\end{equation}
interpolating the Rankin--Selberg $L$-values $L(E/K,\psi,1)$, as $\psi$ runs over
the finite order characters of $\Gamma_K$;
\item[-]{} Four bounded $\bQ_p$-valued measures on $\Gamma_K$:
\begin{equation}\label{eq:4-loeffler-b}
L_p^{+,+}(E/K),\quad
L_p^{-,+}(E/K),\quad
L_p^{+,-}(E/K),\quad
L_p^{-,-}(E/K),\quad
\end{equation}
for which one has the decomposition
\begin{equation}
\begin{aligned}\label{eq:loeffler}
L_{p,(\alpha,\beta)}(E/K)=&\;L_p^{+,+}(E/K)\cdot\log^+_{\pp}\log^+_{\overline\pp}\\
+&L_p^{-,+}(E/K)\cdot\log^+_{\pp}\log^-_{\overline\pp}\cdot\alpha
  +L_p^{+,-}(E/K)\cdot\log^-_{\pp}\log^+_{\overline\pp}\cdot\beta\\
+&L_p^{-,-}(E/K)\cdot\log^-_\pp\log_{\overline{\pp}}^-\cdot\alpha\beta,
\end{aligned}
\end{equation}
and similarly for the other three distributions in (\ref{eq:4-Loeffler}),
for certain elements $\log_\pp^{\pm}, \log^{\pm}_{\overline\pp}\in\bQ_p[[\Gamma_K]]$ generalizing Pollack's \cite{pollack} half-logarithms.
\end{itemize}

On the arithmetic side, B.-D.~Kim \cite{kim-CJM} introduced four doubly-signed Selmer groups ${\rm Sel}_{p^\infty}^{\pm,\pm}(E/K_\infty)$, which he conjectured to be cotorsion over the two-variable Iwasawa algebra $\Lambda_K:=\bZ_p[[\Gamma_K]]$, with characteristic ideal generated by 
$L_p^{\pm,\pm}(E/K)$ (\emph{cf.} \cite[Conj.~3.1]{kim-CJM}):

\begin{conj}[Kim]\label{conj:kim}
For each $\bullet, \circ=\{\pm\}$,
the Pontrjagin dual $X_{p^\infty}^{\bullet,\circ}(E/K_\infty)$ of
${\rm Sel}_{p^\infty}^{\bullet,\circ}(E/K_\infty)$ is $\Lambda_K$-torsion, and
\[
{\rm Char}_{\Lambda_K}(X_{p^\infty}^{\bullet,\circ}(E/K_\infty))=(L_p^{\bullet,\circ}(E/K))
\]
as ideals in $\Lambda_K$.
\end{conj}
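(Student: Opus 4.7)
The plan is to derive Kim's main conjecture from Theorem~A via a supersingular analogue of the Bertolini--Darmon--Prasanna explicit reciprocity law, followed by a two-variable propagation argument.

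For the diagonal cases $\bullet=\circ=\varepsilon$, I first identify the anticyclotomic restriction of the bounded $p$-adic $L$-function $L_p^{\varepsilon,\varepsilon}(E/K)$ with a signed BDP-type $p$-adic $L$-function. Using the decomposition $(\ref{eq:loeffler})$ of $L_{p,(\alpha,\beta)}(E/K)$ into $L_p^{\pm,\pm}(E/K)$ times the half-logarithms $\log_\pp^{\pm}\log_{\overline\pp}^{\pm}$, together with the Rankin--Selberg interpolation property of $L_{p,(\alpha,\beta)}(E/K)$ at anticyclotomic characters, the identification reduces to matching interpolation formulas. Next, by a signed analogue of the BDP formula, the image of the Heegner class $\mathbf{z}^\varepsilon_\infty$ under a suitable Bloch--Kato dual exponential (compatible with the Kobayashi--Kim decomposition of the local cohomology at $\pp$ and $\overline\pp$) should equal this signed BDP $p$-adic $L$-function, up to an explicit constant involving $c_E$ and $u_K$.

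Combining this reciprocity law with Theorem~A (which expresses the characteristic ideal of $\mathfrak{X}_{p^\infty}^{\varepsilon,\varepsilon}(E/K_\infty^\ac)_{\rm tors}$ as a square related to $\mathbf{z}^\varepsilon_\infty$), I would deduce Kim's conjecture restricted to the anticyclotomic line. To extend it to the full $\Lambda_K$, I would invoke a control theorem relating $\mathrm{Sel}_{p^\infty}^{\varepsilon,\varepsilon}(E/K_\infty)$ to its anticyclotomic specialization, combined with the known cyclotomic Iwasawa main conjecture for $E/K$ at supersingular primes (work of Kato and Wan); a Greenberg-style argument then yields the equality in $\Lambda_K$, using that characteristic ideals over the regular local ring $\Lambda_K$ are pinned down by their height-one specializations. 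The mixed-sign cases $\bullet\neq\circ$ would require an auxiliary input, either via twisting by an anticyclotomic character to reduce to the diagonal case, or via a Beilinson--Flach Euler system compatible with the mixed signed local conditions.

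The main obstacle is expected to be the signed explicit reciprocity law: adapting the ordinary BDP formula to the supersingular setting requires a careful analysis of the Perrin--Riou logarithm at the primes $\pp$ and $\overline\pp$ and the precise matching of signed local terms with the decomposition $(\ref{eq:loeffler})$. The two-variable propagation step is delicate as well, hinging on the still non-trivial input from Wan's work in the non-ordinary setting and on the uniformity of control theorems for the signed Selmer groups.
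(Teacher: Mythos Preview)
There is a fundamental gap in your plan. Under the generalized Heegner hypothesis (which is in force throughout), the anticyclotomic restriction of $L_p^{\varepsilon,\varepsilon}(E/K)$ is \emph{identically zero}: the functional equation~(\ref{eq:fun-eq}) has sign $-1$, and as shown in Corollary~\ref{cor:signed-0} this forces $L_{p,\ac}^{\varepsilon,\varepsilon}(f/K)=0$. Consequently your proposed identification of this restriction with a signed BDP $p$-adic $L$-function cannot hold (the latter is nonzero by Theorem~\ref{thm:mu-bdp}), and ``Kim's conjecture restricted to the anticyclotomic line'' becomes the vacuous or false statement that $\mathfrak{X}^{\varepsilon,\varepsilon}(K,\Ac)$ is torsion with zero characteristic ideal---in fact it has $\Lambda_\ac$-rank~$1$.

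The paper's route (for the equal-sign cases only; the mixed-sign cases are not proved) avoids this obstruction by working instead with the $p$-adic $L$-function $L_\pp(f/K)$ of Theorem~\ref{thm:wan} and the ``relaxed/strict'' Selmer group $\mathfrak{X}^{{\rm rel},{\rm str}}$. The anticyclotomic restriction of $L_\pp(f/K)$ is $\mathscr{L}_\pp^{\tt BDP}(f/K)^2$ (Corollary~\ref{cor:wan-bdp}), which is nonzero, and the explicit reciprocity law for the Heegner classes (Theorem~\ref{3.1}) relates ${\rm loc}_\pp(\mathbf{z}^\varepsilon)$ to this quantity. Theorem~A together with the two-variable divisibility of \cite{wan-combined} then yields the full Iwasawa--Greenberg main conjecture for $L_\pp(f/K)$ over $\Lambda$ (Theorem~\ref{thm:two-varIMC}). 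Via the Poitou--Tate exact sequences in Theorem~\ref{thm:2-varIMC} and the reciprocity law~(\ref{eq:ERL-BF1}) for the Beilinson--Flach classes, this converts into one divisibility of Kim's conjecture in $\Lambda[1/P]$. The opposite divisibility is obtained by specializing to the \emph{cyclotomic} line---not the anticyclotomic one---where Kobayashi's Euler system bound applies; equality in $\Lambda$ then follows from \cite[Lem.~3.2]{SU}. So your instinct about a two-variable propagation using cyclotomic input was right, but the anticyclotomic step has to go through $L_\pp(f/K)$ and $\mathfrak{X}^{{\rm rel},{\rm str}}$ rather than $L_p^{\varepsilon,\varepsilon}$ and $\mathfrak{X}^{\varepsilon,\varepsilon}$ directly.
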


Note that Conjecture~\ref{conj:kim} is the combination of four different main conjectures, which no direct connection \emph{a priori}  between them. 
In \cite{wan-combined}, the second named author has obtained (under mild hypotheses) the proof one of the divisibilities predicted by the two equal-sign cases of Conjecture~\ref{conj:kim} 
when the global root number of $E/K$ is $+1$ (so that $N^-$ is the product of an \emph{odd} number of primes). 
In this paper, we extend this result 
to the cases when the global root number of $E/K$ is $-1$: 


\begin{ThmC}
Under the hypotheses in Theorem~A, for each $\varepsilon\in\{\pm\}$ the module $X_{p^\infty}^{\varepsilon,\varepsilon}(E/K_\infty)$ is $\Lambda_K$-torsion, and
\[
{\rm Char}_{\Lambda_K}(X_{p^\infty}^{\varepsilon,\varepsilon}(E/K_\infty))=(L_p^{\varepsilon,\varepsilon}(E/K))
\]
as ideals in $\Lambda_K$.
\end{ThmC}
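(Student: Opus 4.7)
The strategy is to deduce Theorem~C from Theorem~A in two stages: a $\Lambda$-adic $p$-adic Gross--Zagier formula first transports the Perrin-Riou equality of Theorem~A to a signed anticyclotomic main conjecture, and then a two-variable lifting argument yields the full equality of Theorem~C.

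First I would establish a $\Lambda$-adic extension of Kobayashi's $p$-adic Gross--Zagier formula, computing the image of the bounded Heegner classes $\mathbf{z}^\varepsilon_\infty$ under the Bloch--Kato dual exponentials at $\pp$ and $\overline{\pp}$. Applying Kobayashi's local $p$-adic Hodge-theoretic calculation to Shimura-curve Heegner classes, and extending it $\Lambda$-adically, should match $\mathbf{z}^\varepsilon_\infty$ (up to an explicit unit) with the anticyclotomic projection of $L_p^{\varepsilon,\varepsilon}(E/K)$, as extractable from Loeffler's decomposition~(\ref{eq:loeffler}) after restriction of $\Gamma_K$-characters to $\Gamma_\ac$. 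Substituting this into Theorem~A and applying Poitou--Tate duality to turn the Heegner-quotient side into a Selmer characteristic ideal then yields an equality in $\Lambda_\ac$ between $\mathrm{Char}_{\Lambda_\ac}\bigl(\mathfrak{X}^{\varepsilon,\varepsilon}_{p^\infty}(E/K^\ac_\infty)\bigr)$ and the anticyclotomic projection of $L_p^{\varepsilon,\varepsilon}(E/K)$.

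Second, a standard signed-Selmer control theorem identifies $X^{\varepsilon,\varepsilon}_{p^\infty}(E/K_\infty)\otimes_{\Lambda_K}\Lambda_\ac$ with $\mathfrak{X}^{\varepsilon,\varepsilon}_{p^\infty}(E/K^\ac_\infty)$ up to pseudo-isomorphism, so the above pins down the anticyclotomic specialization of $\mathrm{Char}_{\Lambda_K}\bigl(X^{\varepsilon,\varepsilon}_{p^\infty}(E/K_\infty)\bigr)$. To lift to the full $\bZ_p^2$-tower I would combine this with a Heegner point Euler system divisibility in two variables (a supersingular-signed analogue of Howard's Heegner main conjecture divisibility in the ordinary case), yielding one containment in $\Lambda_K$. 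Since $L_p^{\varepsilon,\varepsilon}(E/K)$ is not a zero divisor in $\Lambda_K$, equality of the two anticyclotomic projections forces the one-sided containment to be an equality.

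The main obstacle is the first step: carrying out the $p$-adic Hodge-theoretic computation for Shimura-curve Heegner classes in the $\Lambda$-adic signed setting, and correctly aligning the outcome against Loeffler's decomposition~(\ref{eq:loeffler}). In particular, the factor $\delta_{N^+,N^-}/(c_E^2 u_K^2)$ appearing in Theorem~A must be absorbed by a corresponding constant in the Gross--Zagier formula, so as to yield the unadorned equality of Kim's Conjecture~\ref{conj:kim}.
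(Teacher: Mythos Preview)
Your plan has a fundamental obstruction at Step~1. Under the generalized Heegner hypothesis the sign of the functional equation is $-1$, and so the anticyclotomic projection of $L_p^{\varepsilon,\varepsilon}(E/K)$ is identically zero (this is Corollary~\ref{cor:signed-0}). Thus no formula can match $\mathbf{z}^\varepsilon_\infty$ with that projection, and restricting to $\Gamma^\ac$ gives no information about $L_p^{\varepsilon,\varepsilon}$. The explicit reciprocity law that the Heegner classes actually satisfy (Theorem~\ref{3.1}) relates ${\rm loc}_\pp(\mathbf{z}^\varepsilon)$ to the BDP $p$-adic $L$-function $\mathscr{L}^{\tt BDP}_\pp(f/K)$, which lives in a different range of interpolation. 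The $\Lambda$-adic Gross--Zagier formula you allude to does exist in the paper (Theorem~\ref{thm:lambda-GZ}), but it equates the \emph{height pairing} of $\mathbf{z}^\varepsilon$ with the \emph{linear} coefficient $L^{\varepsilon,\varepsilon}_{p,1}(f/K)$ in the expansion along $\gamma_{\rm cyc}-1$; it is a \emph{consequence} of Theorems~A and~C, not an input.

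Step~2 also breaks: Heegner points are intrinsically anticyclotomic, so there is no ``two-variable Heegner Euler system divisibility'' to invoke. In the paper, the two-variable input comes from a different Euler system: the Beilinson--Flach classes $\mathcal{BF}^\varepsilon$ and their explicit reciprocity laws (Theorem~\ref{thm:BF-ERL}), which connect the equal-sign Selmer group to the Greenberg $({\rm rel},{\rm str})$ Selmer group and to both $L_p^{\varepsilon,\varepsilon}(f/K)$ and $\mathcal{L}_\pp(f/K)$ via Poitou--Tate (Theorem~\ref{thm:2-varIMC}). Theorem~A is used to upgrade Wan's one-sided divisibility for the $({\rm rel},{\rm str})$ conjecture to an equality (Theorem~\ref{thm:two-varIMC}), which in turn yields the divisibility ${\rm Char}_\Lambda(\mathfrak{X}^{\varepsilon,\varepsilon}(K,\mathbf{A}))\subseteq (L_p^{\varepsilon,\varepsilon}(f/K))$ in $\Lambda_K$. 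The opposite divisibility is obtained not on the anticyclotomic line but on the \emph{cyclotomic} line, by Kobayashi's Euler system bound for $E$ and its twist $E^{(K)}$ together with a control theorem; equality mod $I^{\ac}$ combined with the global divisibility and \cite[Lem.~3.2]{SU} then forces equality in $\Lambda_K$ (Corollary~\ref{cor:kimIMC}). None of the Beilinson--Flach or cyclotomic-Kobayashi ingredients appear in your outline, and they are essential.
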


Finally, we note that our methods 
also yield a proof under mild hypotheses 
of the Iwasawa--Greenberg main conjecture for the $p$-adic $L$-functions of Bertolini--Darmon--Prasanna 
\cite{bdp1} (see Theorem~\ref{thm:howard}), as well as a $\Lambda_{\ac}$-adic extension of Kobayashi's $p$-adic Gross--Zagier formula \cite{kobayashi-191} (see Theorem~\ref{thm:lambda-GZ}) 
for elliptic curves at supersingular primes $p>3$.

\subsection{Outline of the proofs}

The proofs of our main results are via Iwasawa theory, exploiting the connections between Heegner points, Beilinson--Flach classes, and their explicit reciprocity laws. Recall that $\alpha$ and $\beta$ denote the roots of $x^2+p$, 
and let $f_\alpha$ and $f_\beta$ be the $p$-stabilizations of $f$
with $U_p$-eigenvalues $\alpha$ and $\beta$, respectively. In \cite{LZ-Coleman}, Loeffler and Zerbes have defined
three-variable systems of cohomology classes $\BF_{\mathbf{f},\mathbf{g}}$ interpolating the
Beilinson--Flach classes of \cite{LLZ} attached to the different specializations of two Coleman families
$\mathbf{f}$ and $\mathbf{g}$ and their cyclotomic twists. Letting $\mathbf{f}$ be the Coleman family passing through $f_\alpha$ and $f_\beta$, respectively,
and $\mathbf{g}$ be a certain Hida family of CM forms, 
we deduce from their work the construction of two-variable classes
\begin{equation}\label{eq:BF}
\BF_{\alpha},\;\BF_{\beta}\in\bQ_p[[\Gamma_K]]\otimes_{\Lambda_K}
H^1_{\rm Iw}(K_\infty,T_pE).
\end{equation}

In analogy with $(\ref{eq:4-Loeffler})$, the classes $(\ref{eq:BF})$ have unbounded growth over the tower $K_\infty/K$, but building on the explicit reciprocity laws of \cite{LZ-Coleman} we deduce from them the construction of two \emph{bounded} elements $\BF_{}^\pm\in H^1_{\rm Iw}(K_\infty,T_pE)$. Moreover, we construct four 
$\Lambda_K$-linear maps\footnote{The maps ${\rm Log}^\pm$ are in fact valued in a large scalar extension of $\Lambda_K$ that we suppress here for the ease of exposition.}
\begin{align*}
{\rm Col}^{\pm}:\frac{H_{\rm Iw}^1(K_{\infty,\overline{\pp}},T_pE)}{H_{\pm,{\rm Iw}}^1(K_{\infty,\overline{\pp}},T_pE)}
\longrightarrow\Lambda_K,
\quad\quad
{\rm Log}^{\pm}:H_{\pm,{\rm Iw}}^1(K_{\infty,\pp},T_pE)
\longrightarrow\Lambda_K,
\end{align*}
such that
\begin{equation}\label{eq:ERL-BF1}
{\rm Col}^{\circ}({\rm loc}_{\overline\pp}(\BF^\bullet))=L_p^{\bullet,\circ}(E/K)
\end{equation}
for all $\bullet,\circ\in\{\pm\}$, 
and
\begin{equation}\label{eq:ERL-BF2}
{\rm Log}^{\varepsilon}({\rm loc}_{\pp}(\BF^\varepsilon))=L_\pp(E/K)
\end{equation}
for all $\varepsilon\in\{\pm\}$, where $H^1_{\pm,{\rm Iw}}(K_{\infty,\overline{\pp}},T_pE)$ 
is the local condition defining ${\rm Sel}^{\pm,\pm}(K_\infty,T_pE)$ at the places above $\overline{\pp}$,
and similarly $H^1_{\pm,{\rm Iw}}(K_{\infty,\pp},T_pE)$ 
for the places above $\pp$, 
and $L_\pp(E/K)$ is a Rankin--Selberg $p$-adic 
$L$-function constructed in \cite{wan-combined}. 
\sk

The Iwasawa--Greenberg main conjecture \cite{Greenberg55} predicts that $L_\pp(E/K)$ generates the characteristic ideal of a certain torsion Selmer group: 
\begin{equation}\label{eq:Gr-conj}
{\rm Char}_{\Lambda_K}(\mathfrak{X}_{p^\infty}^{{\rm rel},{\rm str}}(E/K_\infty))\overset{?}=(L_\pp(E/K)),
\end{equation}
where $\mathfrak{X}_{p^\infty}^{{\rm rel},{\rm str}}(E/K_\infty)$ is the 
Pontrjagin dual of a Selmer group 
$\mathfrak{Sel}_{p^\infty}^{{\rm rel},{\rm str}}(f/K_\infty)$ 
defined by imposing local triviality (resp. no condition) at the places above $\overline{\pp}$ (resp. $\pp$). In \cite{wan-combined}, the second named author has obtained one of the divisibilities in conjecture $(\ref{eq:Gr-conj})$. 
By descending to the anticyclotomic line, we show that this leads to the divisibility
\begin{equation}\label{eq:div-wan}
{\rm Char}_{\Lambda_{\ac}}(\mathfrak{X}_{p^\infty}^{{\rm rel},{\rm str}}(E/K^{\ac}_\infty))\subseteq(\mathscr{L}_\pp^{\tt BDP}(E/K)^2)
\end{equation}
in the Iwasawa--Greenberg main conjecture for (the square of) the $p$-adic $L$-function of \cite{bdp1}. On the other hand, by an extension of Howard's techniques \cite{howard-PhD-I} to the Heegner classes $\mathbf{z}^\pm$, in Section~\ref{subsec:KS} we prove a divisibility in the opposite direction in Conjecture~\ref{conj:PR-ss}:
\begin{equation}\label{eq:div-how}
{\rm Char}_{\Lambda_\ac}(\mathfrak{X}_{p^\infty}^{\varepsilon,\varepsilon}(E/K_\infty^\ac)_{\rm tors})\supseteq{\rm Char}_{\Lambda_\ac}\biggl(\frac{\mathfrak{Sel}^{\varepsilon,\varepsilon}(K_\infty^\ac,T_pE)}{\Lambda_\ac\cdot\mathbf{z}^\varepsilon_\infty}\biggr)^2,
\end{equation}
By the explicit reciprocity law, analogous to $(\ref{eq:ERL-BF2})$, that we obtain in Theorem~\ref{3.1}:
\[
{\rm Log}_{\rm ac}^\varepsilon({\rm res}_\pp(\mathbf{z}^\varepsilon))=\mathscr{L}_\pp^{\tt BDP}(E/K)
\]
we show that the divisibilities $(\ref{eq:div-wan})$ and $(\ref{eq:div-how})$ complement each other, leading to the equalities in both. In particular, Theorem~A follows, and using the reciprocity laws $(\ref{eq:ERL-BF1})$ and $(\ref{eq:ERL-BF2})$ we deduce from this the proof of Theorem~C. 
\sk

We end this Introduction with a few remarks on related results in the literature, especially in the works of Longo--Vigni \cite{LV-ss} and  B{\"u}y{\"u}kboduk--Lei \cite{BuyLei-SS}. More precisely, one of the main results of \cite{LV-ss} amounts to the ``rank part'' of our Conjecture~\ref{conj:PR-ss}. The results of \cite{LV-ss} are based on an extension of Bertolini's techniques  \cite{bertolini-PhD} to supersingular primes, whereas here we deduce this portion of Theorem~A from an analogous extension of Howard's \cite{howard-PhD-I} (giving us access to the  $\Lambda_\ac$-torsion submodule of $X_{p^\infty}^{\varepsilon,\varepsilon}(E/K_\infty^\ac)$ as well). On the other hand, \emph{twisted} versions of Theorem~C and the rank part of  Theorem~A are also contained in \cite{BuyLei-SS}. Their methods share with ours that use of Beilinson--Flach classes and their explicit reciprocity laws, but they differ in a critical aspect:  
we only need to apply the method Euler/Kolyvagin systems to the plus/minus Heegner points $\mathbf{z}^\pm$ constructed in this paper,
whereas in \cite{BuyLei-SS} this method is applied to a variant of the classes $\mathcal{BF}^\pm$. 
As a consequence, when specialized to the setting considered here, the main results in \cite{BuyLei-SS} are for the twists of $E$ by  ``$p$-distinguished'' characters, whereas such twists can be avoided here. 
\sk


\noindent\emph{Acknowledgements.} A substantial part of this paper was written while the first named author visited the Morningside Center of Mathematics 
during March 2016, and he would like to thank Professor Ye Tian and the Chinese Academy of Sciences for their  hospitality and support. The second named author is partially supported by the Chinese Academy of Science grant Y729025EE1, NSFC grant 11688101, 11621061 and an NSFC grant associated to the Recruitment Program of Global Experts. Finally, we would also like to thank the anonymous referees for a very careful reading of a previous version of the paper, which greatly helped us to improve  the exposition of our results.

\section{$p$-adic $L$-functions}\label{sec:L}
Throughout this section, we let $f=\sum_{n=1}^\infty a_n(f)q^n\in S_2(\Gamma_0(N_f))$
be a newform of level $N_f$, and $K$ be an imaginary quadratic field of discriminant $-D_K<0$ prime to $N_f$.
Fix a prime $p\nmid 6N_fD_K$ and a choice of complex and $p$-adic embeddings
$\bC\overset{\imath_\infty}\hookleftarrow\overline{\bQ}\overset{\imath_p}\hookrightarrow\bC_p$; since it will suffice for our applications in this paper, we also assume that the image under $\iota_p$ of 
the number field generated by the Fourier coefficients $a_n(f)$ is contained in $\bQ_p$. 


\subsection{$p$-adic Rankin--Selberg $L$-functions}\label{sec:2varL}

Let $\Xi_K$ denote the set of algebraic Hecke characters
$\psi:K^\times\backslash\mathbb{A}_K^\times\rightarrow\bC^\times$.
We say that $\psi\in\Xi_K$ has infinity type $(\ell_1,\ell_2)\in\bZ^2$ if
\[
\psi_\infty(z)=z^{\ell_1}\overline{z}^{\ell_2},
\]
where for each place $v$ of $K$, we let $\psi_v:K_v^\times\rightarrow\bC^\times$ be
the $v$-component of $\psi$. The conductor of $\psi$ is the largest ideal $\frakc_\psi\subset\cO_K$ such that
$\psi_\frakq(u)=1$ for all $u\in(1+\frakc_\psi\cO_{K,\frakq})^\times\subset K_\frakq^\times$.
If $\psi$ has conductor $\frakc_\psi$ and
$\fraka$ is any fractional ideal of $K$ prime to $\frakc_\psi$,
we write $\psi(\fraka)$ for $\psi(a)$, where $a$ is an idele satisfying $a\hat\cO_K\cap K=\fraka$ and
such that $a_\frakq=1$ for all $\frakq\mid\frakc_\psi$.
As a function on fractional ideals, then $\psi$ satisfies
\[
\psi((\alpha))=\alpha^{-\ell_1}\overline{\alpha}^{-\ell_2}
\]
for all $\alpha\in K^\times$ with $\alpha\equiv 1\pmod{\frakc_\psi}$.

We say that a Hecke character $\psi$ of infinity type $(\ell_1,\ell_2)$ is
\emph{critical for $f$} if $s=1$ is a critical value in the sense of Deligne
for
\[
L(f/K,\psi,s)=L\left(\pi_f\times\pi_{\psi},s+\frac{\ell_1+\ell_2-1}{2}\right),
\]
where $L(\pi_f\times\pi_{\psi},s)$ is the $L$-function for the Rankin--Selberg
convolution of the cuspidal automorphic representations of ${\rm GL}_2(\mathbb{A})$
associated with $f$ and the theta series of $\psi$, respectively. 
The set of infinity types of critical characters can be written as the disjoint union
\[
\Sigma=\Sigma^{-}\sqcup\Sigma^{+}\sqcup\Sigma^{+'},
\]
with $\Sigma^{-}=\{(0,0)\}$,
$\Sigma^{+}=\{(\ell_1,\ell_2)\colon \ell_1\leqslant -1, \ell_2\geqslant 1\}$,
$\Sigma^{+'}=\{(\ell_1,\ell_2)\colon \ell_2\leqslant -1, \ell_1\geqslant 1\}$.

The involution $\psi\mapsto\psi^\rho$ on $\Xi_K$, where $\psi^\rho$ is obtained by composing
$\psi$ with the complex conjugation on $\mathbb{A}_K^\times$, has the effect on infinity types
of interchanging the regions $\Sigma^{+}$ and $\Sigma^{+'}$ (while leaving $\Sigma^{-}$ stable).
Since the values $L(f/K,\psi,1)$ and $L(f/K,\psi^\rho,1)$ are the same, for the purposes of
$p$-adic interpolation we may restrict our attention
to the first two subsets in the above decomposition of $\Sigma$. 

\begin{defn}
Let $\psi=\psi^{\infty}\psi_\infty\in\Xi_K$ be an algebraic Hecke character
of infinity type $(\ell_1,\ell_2)$. The \emph{$p$-adic avatar}
$\hat{\psi}:K^\times\backslash\hat{K}^\times\rightarrow\bC_p^\times$
of $\psi$ is defined by
\[
\hat\psi(z)=\imath_p\imath_\infty^{-1}(\psi^{\infty}(z))z_\pp^{\ell_1}z_{\overline{\pp}}^{\ell_2}.
\]
\end{defn}

For each ideal $\frakm\subset\cO_K$ let $H_\frakm={\rm Gal}(K(\frakm)/K)$ be 
the ray class group of $K$ modulo $\frakm$, and set $H_{\frakf p^\infty}=\varprojlim_rH_{\frakf p^r}$.
Via the 
Artin reciprocity map, the correspondence $\psi\mapsto\hat\psi$ establishes a
bijection between the set of algebraic Hecke characters of $K$ of conductor dividing $\frakf p^\infty$ and
the set of locally algebraic $\overline{\bQ}_p$-valued characters of $H_{\frakf p^\infty}$.

Following the terminology in \cite[\S{2.3}]{Loeffler}, for any $r, s\in\mathbf{R}_{\geqslant 0}$ we let $D^{(r,s)}(H_{\frakf p^\infty})$ be the space of $\bQ_p$-valued distributions on $H_{\frakf p^\infty}$
of order $(r,s)$ with respect to the quasi-factorization of $H_{\frakf p^\infty}$ induced by the ray class groups
$H_{\frakf\pp^\infty}$ and $H_{\frakf\overline{\pp}^\infty}$ (see [\emph{loc.cit.}, Prop.~4]).


On the other hand, let
\begin{equation}\label{eq:Hida-period}
\Omega_f^{\tt Hida}:=\frac{8\pi^2\langle f,f\rangle}{c_f}\in\overline{\bQ}_p^\times
\end{equation}
be Hida's canonical period, where 
\[
\langle f,f\rangle=\int_{\Gamma_0(N)\backslash\mathfrak{H}}\vert f(z)\vert^2dxdy 
\] 
is the Petersson norm of $f$, and $c_f$ is the congruence number 
of $f$ (\emph{cf.} \cite[\S{9.3}]{skinner-zhang}, where $c_f$ is denoted by $\eta_f(N)$). 


\begin{thm}\label{thm:hida}
Assume that $p=\frakp\overline{\pp}$ splits in $K$, let $\alpha$ and $\beta$
be the roots of $x^2-a_p(f)x+p$, and set $r:=v_p(\alpha)$ and $s:=v_p(\beta)$.
\begin{itemize}
\item[(i)]{}
There exists an element $L_p(f/K,\Sigma^+)\in{\rm Frac}(\ro[[H_{\frakf p^\infty}]]\otimes_{\bZ_p}\bQ_p)$
such that for every $\psi\in\Xi_K$ of trivial conductor 
and infinity type $(\ell_1,\ell_2)\in\Sigma^{+}$, we have
\begin{align*}
L_p(f/K,\Sigma^{+})(\hat\psi)&=
\frac{\Gamma(\ell_2)\Gamma(\ell_2+1)\cdot\mathcal{E}(f,\psi)}
{(1-\psi^{1-\rho}(\pp))(1-p^{-1}\psi^{1-\rho}(\pp))}
\cdot\frac{L(f/K,\psi,1)}{(2\pi)^{2\ell_2+1}\cdot\langle\theta_{\psi_{\ell_2}},\theta_{\psi_{\ell_2}}\rangle_{}},
\end{align*}
where $\theta_{\psi_{\ell_2}}$ is the theta series of weight $\ell_2-\ell_1+1\geqslant 3$ associated to the
Hecke character $\psi_{\ell_2}:=\psi{\rm\mathbf{N}}_K^{\ell_2}$ of infinity type $(\ell_1-\ell_2,0)$, and
\begin{equation}\label{eq:Euler-Hida}
\mathcal{E}(f,\psi)=(1-p^{-1}\psi(\pp)\alpha)(1-p^{-1}\psi(\pp)\beta)
(1-\psi^{-1}(\overline\pp)\alpha^{-1})(1-\psi^{-1}(\overline\pp)\beta^{-1}).\nonumber
\end{equation}
\item[(ii)]{}
If $r<1$ and $s<1$, then for each $\underline\alpha:=(\alpha_\pp,\alpha_{\overline{\pp}})\in
\{(\alpha,\alpha), (\alpha,\beta), (\beta,\alpha), (\beta,\beta)\}$
there exists an element $L_{p,\underline{\alpha}}(f/K,\Sigma^{-})\in D^{(r,s)}(H_{\frakf p^\infty})$
such that for every finite order character $\psi\in\Xi_K$ of conductor 
$\mathfrak{c}_\psi\mid p^\infty$, we have
\begin{align*}
L_{p,\underline{\alpha}}(f/K,\Sigma^{-})(\hat\psi)&=
\Biggl(\prod_{\frakq\mid p}\alpha_\frakq^{-v_\frakq(\mathfrak{c}_\psi)}\Biggr)
\cdot\frac{\mathcal{E}(\psi,f)}{\mathfrak{g}(\psi)\cdot\vert\mathfrak{c}_\psi\vert}
\cdot\frac{L_{}(f/K,\psi,1)}{\Omega_f^{\tt Hida}},
\end{align*}
where
\[
\mathcal{E}(\psi,f)=\prod_{\frakq\mid p,\;\frakq\nmid\mathfrak{c}_\psi}(1-\alpha_\frakq^{-1}\psi(\frakq))
(1-\alpha_{\frakq}^{-1}\psi^{-1}(\frakq)).
\]
\end{itemize}
\end{thm}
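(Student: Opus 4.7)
The plan is to construct both $p$-adic $L$-functions via Rankin--Selberg integral representations of $L(f/K,\psi,s)$, splitting into two cases depending on the infinity type of $\psi$. In both cases one first realizes the critical values as toric-type periods, and then $p$-adically interpolates: using Katz's theory of $p$-adic modular forms on the Igusa tower of $K$ in part (i), and the theory of modular symbols together with bounded-order distributions in part (ii).

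For part (i), the characters $\psi\in\Sigma^+$ satisfy $\ell_2-\ell_1+1\geq 3$, so the theta series $\theta_{\psi_{\ell_2}}$ is a classical cuspidal modular form of weight at least three. I would express $L(f/K,\psi,1)$ as a Petersson product of $f$ against $\theta_{\psi_{\ell_2}}$ times an Eisenstein series via Shimura's integral representation, and then $p$-adically interpolate by letting $\theta_{\psi_{\ell_2}}$ vary through a Hida (or Coleman) family of CM forms on the Igusa tower. Applying a slope-finite projection and pairing with the $p$-depletion of $f$ produces a distribution whose specialization at classical $\psi$ recovers the stated formula. The normalization $\langle\theta_{\psi_{\ell_2}},\theta_{\psi_{\ell_2}}\rangle$ is the natural period that collapses the varying family of theta series into a single $p$-adic measure; the factor $\mathcal{E}(f,\psi)$ arises from the $p$-depletion of $f$ at both primes above $p$; and the factor $(1-\psi^{1-\rho}(\pp))(1-p^{-1}\psi^{1-\rho}(\pp))$ records the local zeta integral at $\pp$ for the chosen archimedean and $p$-adic test data.

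For part (ii), the character $\psi$ has infinity type $(0,0)$, so the Rankin--Selberg integral is no longer nearly-holomorphic and one instead constructs the $p$-adic $L$-function from modular symbols. I would follow Loeffler's adaptation of Haran's Mazur--Tate-type formulation: each $\psi$ of $p$-power conductor pairs with an explicit Shintani-style finite sum of modular symbols of $f$ over coset representatives for a quotient of $H_{\frakf p^\infty}$, evaluated against a chosen $p$-stabilization. Fixing the pair $(\alpha_\pp,\alpha_{\overline{\pp}})$ of $U$-eigenvalues selects a compatible tower of such evaluations, whose projective limit defines an element of $D^{(r,s)}(H_{\frakf p^\infty})$ with $r=v_p(\alpha_\pp)$ and $s=v_p(\alpha_{\overline{\pp}})$. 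The constraints $r,s<1$ are precisely the Amice--V\'elu--Vishik uniqueness threshold applied separately in the $\pp$- and $\overline{\pp}$-directions, which guarantees that the distribution is uniquely determined by the stated interpolation formula.

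The main obstacle will be the explicit unfolding of the global Rankin--Selberg integrals into local zeta integrals and the verification of the precise Euler factors with the chosen test vectors. In part (i), the local analysis at $\pp$, where the form used is not the new vector but a level-raised Hecke-compatible replacement, requires tracking the $p$-depletion carefully through the integral. In part (ii), matching the constant $\mathfrak{g}(\psi)\vert\mathfrak{c}_\psi\vert$ at ramified $\psi$ comes down to a delicate Gauss-sum computation consistent with Haran's cocycle formulation. These local computations, rather than the global interpolation scheme itself, constitute the technical heart of the construction.
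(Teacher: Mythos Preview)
Your sketch is correct and aligns with the constructions the paper invokes, but you should be aware that the paper does not actually carry out these constructions: its proof consists entirely of citations. Part (i) is attributed to \cite[Thm.~6.1.3(i)]{LLZ-K}, whose construction is indeed the Rankin--Selberg/Hida interpolation you describe (pairing $f$ against theta series varying in a CM Hida family), and part (ii) is attributed to \cite[Thm.~9, Prop.~7]{Loeffler}, which is precisely the Haran-style modular-symbol construction you outline, with the Amice--V\'elu--Vishik bound giving the order-$(r,s)$ control. So your proposal is not so much an alternative proof as a summary of what happens inside the cited references; for the purposes of this paper no further argument is needed beyond quoting those results.
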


\begin{proof}
The first part is a reformulation of \cite[Thm.~6.1.3(i)]{LLZ-K}, and
the second follows from \cite[Thm.~9]{Loeffler} and [\emph{loc.~cit.}, Prop.~7]. (See also Remark~\ref{rem:periods} below.)
\end{proof}

\subsection{The two-variable plus/minus $p$-adic $L$-functions}\label{subsec:Loeffler}

Let $\Phi_n(X)=\sum_{i=0}^{p-1}X^{p^{n-1}i}$ be the $p^n$-th cyclotomic polynomial.
Fix a topological generator $\gamma_v\in H_{v^\infty}$ for each prime $v\mid p$, and define
the `half-logarithms'
\[
\log_{v}^+:=\frac{1}{p}\prod_{m=1}^\infty\frac{\Phi_{2m}(\gamma_v)}{p},
\quad\quad
\log_{v}^-:=\frac{1}{p}\prod_{m=1}^\infty\frac{\Phi_{2m-1}(\gamma_v)}{p}.
\]
These are elements in $D^{1/2}(H_{v^\infty})$ which will be seen
in $D^{(1/2,1/2)}(H_{p^\infty})$ via pullback.

\begin{thm}\label{prop:loeffler+-}
Assume that $a_p(f)=0$. Then there exist four bounded $\bQ_p$-valued distributions on $H_{p^\infty}$:
\[
L_p^{+,+}(f/K),\;L_p^{-,+}(f/K),\;L_p^{+,-}(f/K),\;L_p^{-,-}(f/K)
\]
such that for every $\underline{\alpha}=(\alpha_\pp,\alpha_{\overline{\pp}})$ as in Theorem~\ref{thm:hida}  
we have
\begin{equation}\nonumber
\begin{split}\label{dec:L}
L_{p,\underline{\alpha}}(f/K,\Sigma^{-})=&\;L_p^{+,+}(f/K)\cdot\log^+_{\pp}\log^+_{\overline\pp}\\
+&L_p^{-,+}(f/K)\cdot\log^-_{\pp}\log^+_{\overline\pp}\cdot\alpha_{\pp}
+L_p^{+,-}(f/K)\cdot\log^+_{\pp}\log^-_{\overline\pp}\cdot\alpha_{\overline{\pp}}\\
+&L_p^{-,-}(f/K)\cdot\log^-_\pp\log_{\overline{\pp}}^-\cdot\alpha_\pp\alpha_{\overline{\pp}}.
\end{split}
\end{equation}
Moreover, if 
$\phi$ is a finite order character of $H_{p^\infty}$ of conductor $\pp^{n_\pp}\overline{\pp}^{n_{\overline{\pp}}}$
with $n_\pp, n_{\overline{\pp}}>0$, then $L_p^{\bullet,\circ}(f/K)$ vanishes at $\phi$
unless $\bullet=(-1)^{n_\pp}$ and $\circ=(-1)^{n_{\overline{\pp}}}$.
\end{thm}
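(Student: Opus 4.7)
The strategy is to adapt Pollack's one-variable construction in \cite{pollack} of the plus/minus $p$-adic $L$-functions to the two-variable setting, following the scheme introduced by Loeffler \cite{Loeffler}. Since $a_p(f)=0$ forces $\beta=-\alpha$ and $\alpha\beta=p$, so that $v_p(\alpha)=v_p(\beta)=1/2$, Theorem~\ref{thm:hida}(ii) provides four unbounded distributions in $D^{(1/2,1/2)}(H_{p^\infty})$. I plan to produce $L_p^{\bullet,\circ}(f/K)$ as an explicit $\bQ_p(\alpha)$-linear combination of the four $L_{p,\underline{\alpha}}(f/K,\Sigma^{-})$ divided by the appropriate half-logarithms, and then to verify that the resulting distributions are bounded and satisfy the stated vanishing property.

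The first task is to set up and invert the relevant linear system. Evaluating Theorem~\ref{thm:hida}(ii) at a finite order character $\hat\phi=\hat\phi_\pp\hat\phi_{\overline\pp}$ of $H_{p^\infty}$ of conductor $\pp^{n_\pp}\overline\pp^{n_{\overline\pp}}$ shows that the four values $L_{p,\underline\alpha}(f/K,\Sigma^{-})(\hat\phi)$ all equal a common quantity $F(\phi)$ (independent of $\underline\alpha$) up to the scaling factor $\alpha_\pp^{-n_\pp}\alpha_{\overline\pp}^{-n_{\overline\pp}}$. Correspondingly, the proposed decomposition evaluates at $\hat\phi$ to a $4\times 4$ linear system in the unknowns $L_p^{\bullet,\circ}(f/K)(\hat\phi)$, whose coefficient matrix is essentially the tensor square of a Vandermonde matrix in $\{\alpha,\beta\}$; since $\alpha\neq\beta$, this system is invertible, and its solution yields an explicit formula expressing each $L_p^{\bullet,\circ}(f/K)$ as a $\bQ_p(\alpha)$-linear combination of the $L_{p,\underline\alpha}(f/K,\Sigma^{-})$ divided by $\log_\pp^\bullet\log_{\overline\pp}^\circ$. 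The final vanishing at $\hat\phi$ with $n_\pp,n_{\overline\pp}\geq 1$ then follows directly from this formula combined with Pollack's interpolation identities for $\log_v^\pm$: namely, $\log_v^+$ annihilates $\hat\phi_v$ when $n_v$ is odd, while $\log_v^-$ annihilates $\hat\phi_v$ when $n_v$ is positive and even, leaving exactly one non-vanishing summand in the decomposition at each such $\hat\phi$.

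The main obstacle I anticipate is the boundedness assertion: a priori, the distributions defined by the formal quotients above lie only in $D^{(1/2,1/2)}(H_{p^\infty})$. The heuristic is that the denominator $\log_\pp^\bullet\log_{\overline\pp}^\circ$ has total order exactly $(1/2,1/2)$, matching the growth of the numerator, so the quotient ought to be bounded. To convert this heuristic into a proof I would work one variable at a time, following Pollack's Mahler-coefficient analysis: for a fixed second coordinate, the relation $a_p(f)=0$ permits one to factor the difference $L_{p,(\alpha,\alpha_{\overline\pp})}(f/K,\Sigma^{-}) - L_{p,(\beta,\alpha_{\overline\pp})}(f/K,\Sigma^{-})$ through $(\alpha-\beta)\log_\pp^-$ with bounded quotient, and an analogous combination yields divisibility by $\log_\pp^+$. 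Iterating this procedure in the $\overline\pp$-variable and then invoking Amice's criterion on the resulting Mahler expansions completes the verification. This two-variable adaptation has been carried out in \cite[Thm.~9]{Loeffler}, which I would invoke (or reproduce, incorporating the explicit form of $F(\phi)$ coming from Theorem~\ref{thm:hida}(ii)) at this step.
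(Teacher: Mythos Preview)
Your proposal is correct and essentially follows the paper's approach, which defers the construction and boundedness to \cite[\S5]{Loeffler} and then records the explicit formulas obtained by inverting the $4\times 4$ Vandermonde-type system, exactly as you describe.

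One imprecision worth flagging: the argument you give for the final vanishing statement (``leaving exactly one non-vanishing summand in the decomposition'') only shows that three of the four summands $L_p^{\bullet,\circ}(f/K)\cdot\log_\pp^\bullet\log_{\overline\pp}^\circ$ vanish at $\phi$; this says nothing about the values $L_p^{\bullet,\circ}(f/K)(\phi)$ themselves when the corresponding $\log$-factor is already zero. To get the vanishing cleanly, evaluate your explicit quotient formula for $L_p^{\bullet,\circ}(f/K)$ at $\phi$ in the case where the denominator $\log_\pp^\bullet\log_{\overline\pp}^\circ$ does not vanish (there the numerator vanishes by the parity relation $\beta=-\alpha$), and for the remaining cases iterate Pollack's one-variable vanishing result successively in each variable, exactly as you correctly propose for boundedness. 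Also note that with the paper's conventions $\log_v^+=\frac{1}{p}\prod_{m\geqslant 1}\Phi_{2m}(\gamma_v)/p$ vanishes at characters of \emph{even} positive conductor, not odd, so your parity assignments for $\log_v^\pm$ are swapped.
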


\begin{proof}
This is shown in \cite[\S{5}]{Loeffler}.
For our later use, we record the construction of the four
$L_{p}^{*,\circ}(f/K)$ as an explicit linear combination of the four
$L_{p,\underline{\alpha}}:=L_{p,\underline{\alpha}}(f/K,\Sigma^{-})$.
Fix a root $\alpha$ of the Hecke polynomial $x^2-a_p(f)x+p=x^2+p$, and let $\beta$ be the other root. Then:
\begin{equation}\nonumber
\begin{split}\label{def:Lp-pm}
L_p^{+,+}(f/K)&=\frac{L_{p,(\alpha,\alpha)}+L_{p,(\beta,\alpha)}
+L_{p,(\alpha,\beta)}+L_{p,(\beta,\beta)}}{4\log_{\pp}^+\log_{\overline{\pp}}^+},\\
L_p^{-,+}(f/K)&=\frac{L_{p,(\alpha,\alpha)}-L_{p,(\beta,\alpha)}
+L_{p,(\alpha,\beta)}-L_{p,(\beta,\beta)}}{4\log_{\pp}^-\log_{\overline{\pp}}^+\cdot\alpha},\\
L_p^{+,-}(f/K)&=\frac{L_{p,(\alpha,\alpha)}+L_{p,(\beta,\alpha)}
-L_{p,(\alpha,\beta)}-L_{p,(\beta,\beta)}}{4\log_{\pp}^+\log_{\overline{\pp}}^-\cdot\alpha},\\
L_p^{-,-}(f/K)&=\frac{L_{p,(\alpha,\alpha)}-L_{p,(\beta,\alpha)}
-L_{p,(\alpha,\beta)}+L_{p,(\beta,\beta)}}{4\log_{\pp}^-\log_{\overline{\pp}}^-\cdot\alpha^2}.
\end{split}
\end{equation}
Using the relation $\beta=-\alpha$, it is immediate to check
that the four identities $(\ref{dec:L})$ hold.
\end{proof}

\begin{rem}\label{rem:periods}
	In their original construction in \cite{Loeffler}, the $p$-adic $L$-functions $L_{p,\underline{\alpha}}(f/K,\Sigma^{-})$ 
	are normalized with a period $\Omega_{\Pi}$ attached to the base change to $K$ of the cuspidal automorphic representation of ${\rm GL}_2(\mathbb{A})$ associated with $f$. However, it is easy to see that Loeffler's $\Omega_{\Pi}$ agrees with our $\Omega_f^{\tt Hida}$ up to a nonzero factor in $\bQ^\times$, and so the conclusion of Theorem~\ref{prop:loeffler+-} also holds for our periods. Moreover,  
	under mild hypotheses 
	one can show that with our normalizations the elements $L_p^{\bullet,\circ}(f/K)$ are in fact integral (see Corollary~\ref{cor:kimIMC}).
\end{rem}

\subsection{Anticyclotomic $p$-adic $L$-functions}\label{sec:anti-L}

Write
\[
N_f=N^+N^-
\]
with $N^+$ (resp. $N^-$) only divisible by primes which split (resp. remain inert) in $K$. Similarly as in the Introduction, 
we say that the pair $(f,K)$ satisfies the \emph{generalized Heegner hypothesis} if
\begin{equation}\label{def:heeg}
\textrm{$N^-$ is the square-free product of an even number of primes.}\tag{{Heeg}}
\end{equation}

Let $K_\infty/K$ be the $\bZ_p^2$-extension of $K$, and set $\Gamma_K={\rm Gal}(K_\infty/K)$.
We may decompose
\[
H_{\frakf p^\infty}\simeq\Delta\times\Gamma_K
\]
with $\Delta$ a finite group.
%
The Galois group ${\rm Gal}(K/\bQ)$ acts on $\Gamma_K$ by conjugation.
Let $\Gamma^{\rm cyc}\subseteq\Gamma_K$ be the fixed part by this action, and set $\Gamma^\ac:=\Gamma_K/\Gamma^{\rm cyc}$.
Then $\Gamma^\ac\simeq{\rm Gal}(K^\ac_\infty/K)$ is the Galois group of the \emph{anticyclotomic} $\bZ_p$-extension of $K$,
on which we have $\tau\sigma\tau^{-1}=\sigma^{-1}$ for the non-trivial element $\tau\in{\rm Gal}(K/\bQ)$.
Similarly, we say that a character $\psi$ of $G_K$ is \emph{anticyclotomic} if
$\psi(\tau\sigma\tau^{-1})=\psi^{-1}(\sigma)$ for all $\sigma$, i.e., $\psi^\rho=\psi^{-1}$.

Let $L_{p,\underline{\alpha}}^\ac(f/K)$ be the image of the $p$-adic $L$-function
$L_{p,\underline{\alpha}}(f/K,\Sigma^{-})$
of Theorem~\ref{thm:hida} under the natural projection
$D^{(r,s)}(H_{\frakf p^\infty})\rightarrow D^{(r,s)}(\Gamma^\ac)$.

\begin{prop}\label{thm:hida-1}
If 
${\rm (Heeg)}$ holds, then $L_{p,(\alpha,\alpha)}^\ac(f/K)$ and $L_{p,(\beta,\beta)}^\ac(f/K)$ 
are identically zero.
\end{prop}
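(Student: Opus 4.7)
The plan is to verify the identical vanishing of $L_{p,(\alpha,\alpha)}^{\ac}(f/K)$ by checking it pointwise at a dense set of characters, and then invoking the boundedness of the distribution. Since $L_{p,(\alpha,\alpha)}(f/K,\Sigma^{-})$ belongs to $D^{(1/2,1/2)}(H_{p^\infty})$ (because $r=s=v_p(\alpha)=1/2<1$), its anticyclotomic projection lies in $D^{(1/2,1/2)}(\Gamma^{\ac})$ and is thereby determined by its values at the finite-order characters of $\Gamma^{\ac}$. Under the Artin map, such characters correspond to anticyclotomic finite-order characters $\chi$ of $G_K$ of $p$-power conductor $\pp^{n}\overline{\pp}^{n}$, and by Theorem~\ref{thm:hida}(ii) the value $L_{p,(\alpha,\alpha)}(f/K,\Sigma^{-})(\hat\chi)$ equals a nonzero explicit multiple of $L(f/K,\chi,1)/\Omega_f^{\tt Hida}$. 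The problem therefore reduces to showing that $L(f/K,\chi,1)=0$ for every such $\chi$.

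For this I would invoke the Rankin--Selberg functional equation. Since $\chi^{\rho}=\chi^{-1}$ for $\chi$ anticyclotomic and $\pi_f$ is self-dual, the functional equation degenerates into the self-duality
\[
L(f/K,\chi,s)=\varepsilon(f/K,\chi)\cdot L(f/K,\chi,2-s),
\]
so it is enough to show that the global root number $\varepsilon(f/K,\chi)$ equals $-1$. Decomposing it as a product of local $\varepsilon$-factors, the archimedean place contributes $-1$ (corresponding to the pairing of a weight-$2$ form with the weight-$1$ theta series of $\chi$); each split prime $\ell\mid N^{+}$ contributes trivially, as $\chi$ is unramified there; at each inert prime $\ell\mid N^{-}$, a Tunnell--Saito-type local root-number calculation for the Steinberg component of $\pi_{f,\ell}$ yields $-1$, so the product over all $\ell\mid N^{-}$ equals $(-1)^{\#\{\ell\mid N^{-}\}}=+1$ by the even-parity condition in~\eqref{def:heeg}; and finally, at $p=\pp\overline{\pp}$, the anticyclotomic identity $\chi_{\pp}=\chi_{\overline{\pp}}^{-1}$ combined with the self-duality of the unramified $\pi_{f,p}$ forces $\varepsilon_{\pp}\cdot\varepsilon_{\overline{\pp}}$ to be the product of two complex conjugate factors, hence of sign $+1$. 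Multiplying these contributions gives $\varepsilon(f/K,\chi)=-1$, and $L(f/K,\chi,1)=0$ follows.

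The argument for $L_{p,(\beta,\beta)}^{\ac}(f/K)$ runs identically, since the interpolation formula of Theorem~\ref{thm:hida}(ii) depends on $\underline{\alpha}$ only through the nonzero prefactor $\prod_{\mathfrak{q}\mid p}\alpha_{\mathfrak{q}}^{-v_{\mathfrak{q}}(\mathfrak{c}_{\chi})}$, so replacing $\alpha$ by $\beta=-\alpha$ preserves the zero set. The main technical point is the root-number computation at the inert primes $\ell\mid N^{-}$: the local sign there depends on the precise automorphic type of $\pi_{f,\lambda}$ over $K_{\lambda}$, and the even-parity hypothesis in~\eqref{def:heeg} is exactly what ensures that the individual contributions multiply to produce the required global sign of $-1$.
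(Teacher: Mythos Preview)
Your reduction to the pointwise vanishing $L(f/K,\chi,1)=0$ via the root-number computation is essentially correct and captures the same sign phenomenon the paper exploits. The gap is in the sentence ``its anticyclotomic projection lies in $D^{(1/2,1/2)}(\Gamma^{\ac})$ and is thereby determined by its values at the finite-order characters.'' An anticyclotomic character of conductor $p^n$ has $\pp$- and $\overline\pp$-conductors both growing like $p^n$, so the $(1/2,1/2)$-growth on $H_{p^\infty}$ becomes order-$1$ growth on the one-variable group $\Gamma^{\ac}$. Distributions of order exactly $1$ are \emph{not} determined by their values at finite-order characters: the Amice transform $\log(1+T)$ is the classical counterexample, and more concretely in this setting, $(\gamma_{\ac}-1)\cdot\log_\pp^+\log_{\overline\pp}^-$ projects to a nonzero element of order $\le 1$ on $\Gamma^{\ac}$ that vanishes at every finite-order character. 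So knowing $L(f/K,\chi,1)=0$ for all anticyclotomic $\chi$ of finite order does not by itself force $L_{p,(\alpha,\alpha)}^{\ac}(f/K)=0$ as a distribution.

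The paper circumvents this by working with the functional equation $(\ref{eq:fun-eq})$ as an identity of two-variable power series (i.e., of distributions), established via the Rankin--Selberg construction following \cite{PR89} and \cite{disegni}. The involution $(X,Y)\mapsto(\tfrac{1}{1+Y}-1,\tfrac{1}{1+X}-1)$ corresponds to $\phi\mapsto\phi^{-\rho}$, which is the identity on anticyclotomic characters; under $(\mathrm{Heeg})$ the sign is $\epsilon=-1$, so the identity $\mu=-\mu$ holds on the anticyclotomic line as an equality of distributions, not merely pointwise. Your argument would be repaired if you could upgrade the root-number statement to a functional equation at the level of the $p$-adic distribution itself, but that is precisely the input the paper imports from the cited references.
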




\begin{proof}
Via Rankin--Selberg convolution techniques, B.-D. Kim has constructed in
\cite{kim-JNT} $p$-adic $L$-functions $\mathcal{L}_{p,(\alpha,\alpha)}(f/K)$
and $\mathcal{L}_{p,(\beta,\beta)}(f/K)$ which are easily seen to be nonzero constant multiples
of Loeffler's $L_{p,(\alpha,\alpha)}(f/K,\Sigma^{-})$ and $L_{p,(\beta,\beta)}(f/K,\Sigma^{-})$, respectively
(see the remarks in \cite[p.~378]{Loeffler}). Via the usual identifications $\bQ_p[[H_{\pp^\infty}]]\simeq \bQ_p[[X]]$
and $\bQ_p[[H_{\overline{\pp}^\infty}]]\simeq\bQ_p[[Y]]$ sending $\gamma_\pp\mapsto 1+X$ and $\gamma_{\overline{\pp}}\mapsto 1+Y$,
we may view these $p$-adic $L$-functions as two-variable power series in the variables $X$ and $Y$.
Let $\varepsilon_K$ denote the quadratic character associated with $K$ by class field theory.
The same argument as in \cite[Thm.~1.1]{PR89} and \cite[\S{4.2}]{disegni}, then shows
that $\mathcal{L}_{p,(\alpha,\alpha)}(f/K)$ (and hence also $L_{p,(\alpha,\alpha)}(f/K,\Sigma^{-}))$
satisfies the functional equation
\begin{equation}\label{eq:fun-eq}
\mathcal{L}_{p,(\alpha,\alpha)}(f/K)\left(\frac{1}{1+Y}-1,\frac{1}{1+X}-1\right)=\epsilon\mathcal{L}_{p,(\alpha,\alpha)}(f/K)(X,Y),
\end{equation}
where $\epsilon=-\varepsilon_K(N)$, and similarly for $\mathcal{L}_{p,(\beta,\beta)}(f/K)$.
Since the change of variables $(X,Y)\mapsto(\frac{1}{1+Y}-1,\frac{1}{1+X}-1)$ corresponds
to the transformation $\phi\mapsto\phi^{-\rho}$ on characters of $H_{p^\infty}$,
this shows that under the generalized Heegner hypothesis 
(which implies $\epsilon=-1$) both $L_{p,(\alpha,\alpha)}(f/K,\Sigma^{-})$ and $L_{p,(\beta,\beta)}(f/K,\Sigma^{-})$ vanish at all
anticyclotomic characters of $H_{p^\infty}$, whence the result.
\end{proof}

Throughout the following, 
we shall identify the space of bounded
$\bQ_p$-valued measures on a compact $p$-adic Lie group $G$ with the Iwasawa algebra
$\bQ_p\otimes_{\bZ_p}\bZ_p[[G]]$. Viewing the measures $L_p^{\bullet,\circ}(f/K)$ of Theorem~\ref{prop:loeffler+-}
as elements in $\bQ_p\otimes_{\ro}\ro[[H_{\frakf p^\infty}]]$, we thus denote by $L_{p,{\rm ac}}^{\bullet,\circ}(f/K)$
their images under the natural projection
$\bQ_p\otimes_{\ro}\ro[[H_{\frakf p^\infty}]]\rightarrow\bQ_p\otimes_{\bZ_p}\ro[[\Gamma^\ac]]$.

\begin{cor}\label{cor:signed-0}
Assume that 
$a_p(f)=0$.
If 
${\rm (Heeg)}$ holds, then $L_{p,{\rm ac}}^{\eps,\eps}(f/K)$ is identically zero for all $\eps\in\{+,-\}$.
\end{cor}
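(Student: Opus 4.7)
The plan is to exploit the explicit linear combination giving $L_p^{\varepsilon,\varepsilon}(f/K)$ in terms of the four $L_{p,\underline{\alpha}}(f/K,\Sigma^{-})$ recorded in the proof of Theorem~\ref{prop:loeffler+-}, together with Proposition~\ref{thm:hida-1} and an auxiliary functional equation cancelling the surviving mixed terms on the anticyclotomic line.

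First, I would take anticyclotomic projections in the formulas
\[
L_p^{+,+}(f/K)=\frac{L_{p,(\alpha,\alpha)}+L_{p,(\beta,\alpha)}+L_{p,(\alpha,\beta)}+L_{p,(\beta,\beta)}}{4\log_{\pp}^+\log_{\overline{\pp}}^+},
\]
\[
L_p^{-,-}(f/K)=\frac{L_{p,(\alpha,\alpha)}-L_{p,(\beta,\alpha)}-L_{p,(\alpha,\beta)}+L_{p,(\beta,\beta)}}{4\log_{\pp}^-\log_{\overline{\pp}}^-\cdot\alpha^2}.
\]
By Proposition~\ref{thm:hida-1}, under (Heeg) the two ``pure'' terms $L_{p,(\alpha,\alpha)}^{\rm ac}(f/K)$ and $L_{p,(\beta,\beta)}^{\rm ac}(f/K)$ vanish identically. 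Thus both corollary assertions are reduced to showing the single identity
\[
L_{p,(\alpha,\beta)}^{\rm ac}(f/K)+L_{p,(\beta,\alpha)}^{\rm ac}(f/K)=0.
\]

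To prove this, I would run the functional equation argument of Proposition~\ref{thm:hida-1} in the mixed case, noting that under the involution $\psi\mapsto\psi^{-\rho}$ the two primes $\pp$ and $\overline{\pp}$ are interchanged, so the roles of the two chosen roots $\alpha_{\pp}$ and $\alpha_{\overline{\pp}}$ are also swapped. Concretely, the interpolation formula of Theorem~\ref{thm:hida}(ii), combined with the functional equation on the complex side relating $L(f/K,\psi,1)$ and $L(f/K,\psi^\rho,1)$ (as in \cite[Thm.~1.1]{PR89} or \cite[\S{4.2}]{disegni}), should yield
\[
\mathcal{L}_{p,(\alpha,\beta)}(f/K)\biggl(\frac{1}{1+Y}-1,\frac{1}{1+X}-1\biggr)=\epsilon\cdot\mathcal{L}_{p,(\beta,\alpha)}(f/K)(X,Y),
\]
with the same sign $\epsilon=-\varepsilon_K(N)$ as in (\ref{eq:fun-eq}). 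Since (Heeg) forces $\epsilon=-1$ and the change of variables $(X,Y)\mapsto(\tfrac{1}{1+Y}-1,\tfrac{1}{1+X}-1)$ acts as the identity on the anticyclotomic quotient (anticyclotomic characters being precisely those with $\phi^\rho=\phi^{-1}$), the image in $D^{(r,s)}(\Gamma^{\ac})$ satisfies $L_{p,(\alpha,\beta)}^{\rm ac}(f/K)=-L_{p,(\beta,\alpha)}^{\rm ac}(f/K)$, which gives the desired cancellation.

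The main obstacle is establishing the mixed functional equation rigorously: Proposition~\ref{thm:hida-1} cites \cite{kim-JNT} in the equal-root case, and I would need to verify that the Rankin--Selberg construction adapts to the mixed $p$-stabilizations, or alternatively argue directly on the dense set of interpolation points, where both $L_{p,(\alpha,\beta)}(\hat\psi)$ and $L_{p,(\beta,\alpha)}(\hat\psi^\rho)$ are given by the same complex $L$-value $L(f/K,\psi,1)=L(f/K,\psi^\rho,1)$ multiplied by products of Euler factors and periods that transform correctly under the swap $\pp\leftrightarrow\overline{\pp}$. Once that identity is in place on a Zariski-dense set of characters with conductor divisible by both $\pp$ and $\overline{\pp}$, density in $D^{(r,s)}(H_{\frakf p^\infty})$ yields the full functional equation and the corollary follows.
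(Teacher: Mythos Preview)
Your approach is correct but takes a genuinely different route from the paper's. You invert the decomposition of Theorem~\ref{prop:loeffler+-}, kill the pure terms $L_{p,(\alpha,\alpha)}^{\rm ac}$ and $L_{p,(\beta,\beta)}^{\rm ac}$ via Proposition~\ref{thm:hida-1}, and are left needing the mixed functional equation relating $L_{p,(\alpha,\beta)}$ to $L_{p,(\beta,\alpha)}$ under $(X,Y)\mapsto(\tfrac{1}{1+Y}-1,\tfrac{1}{1+X}-1)$. That identity is true and your sketch for proving it is sound, but it is new analytic input not established anywhere in the paper. By contrast, the paper's proof avoids any mixed functional equation entirely: it substitutes the decomposition of Theorem~\ref{prop:loeffler+-} into the \emph{already proved} functional equation~(\ref{eq:fun-eq}) for $L_{p,(\alpha,\alpha)}$, and then separates the resulting identity using a $p$-adic valuation argument---the terms involving $L_p^{+,+}$ and $L_p^{-,-}$ carry coefficients with valuations in $\bZ$ (since $\alpha^2=-p$), while those involving $L_p^{\pm,\mp}$ carry a factor of $\alpha$ and hence valuations in $\tfrac{1}{2}\bZ\smallsetminus\bZ$. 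This forces both sides to vanish, yielding a functional equation for $L_p^{\varepsilon,\varepsilon}$ itself with sign $\epsilon=-\varepsilon_K(N)$. So the paper trades your extra Rankin--Selberg computation for an elementary valuation trick; your route is more conceptually transparent about where the cancellation comes from, but demands more work.
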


\begin{proof}
As in \cite[Thm.~5.13]{pollack}, the idea is to use
the decomposition in Proposition~\ref{prop:loeffler+-} to deduce
from the functional equation for $L_{p,\underline{\alpha}}(f/K,\Sigma^{-})$
a similar one for $L_p^{\varepsilon,\varepsilon}(f/K)$ forcing the vanishing of
$L_{p,{\rm ac}}^{\varepsilon,\varepsilon}(f/K)$ under our generalized Heegner hypothesis.

Indeed, writing the functional equation $(\ref{eq:fun-eq})$ for $L_{p,(\alpha,\alpha)}(f/K,\Sigma^{-})$
in terms of the signed $p$-adic $L$-functions $L_p^{\bullet,\circ}:=L_p^{\bullet,\circ}(f/K)$ we obtain
\begin{equation}
\begin{split}\label{eq:fun-eq-pm}
&\log_{\pp}^+\log_{\overline{\pp}}^+\cdot\left(L_p^{+,+}(X,Y)-\epsilon L_p^{+,+}\left(\frac{1}{1+Y}-1,\frac{1}{1+X}-1\right)\right)\\
&+\log_{\pp}^-\log_{\overline{\pp}}^-\left(L_p^{-,-}(X,Y)-\epsilon L_p^{-,-}\left(\frac{1}{1+Y}-1,\frac{1}{1+X}-1\right)\right)\cdot\alpha^2\\
=&\log_{\pp}^+\log_{\overline{\pp}}^-\left(-L_p^{+,-}(X,Y)+\epsilon L_p^{+,-}\left(\frac{1}{1+Y}-1,\frac{1}{1+X}-1\right)\right)\cdot\alpha\\
&+\log_{\pp}^-\log_{\overline{\pp}}^+\left(-L_p^{-,+}(X,Y)+\epsilon L_p^{-,+}\left(\frac{1}{1+Y}-1,\frac{1}{1+X}-1\right)\right)\cdot\alpha,
\end{split}
\end{equation}
where $\epsilon=-\varepsilon_K(N)$. Since ${\rm ord}_p(\alpha)=1/2$, the nonzero coefficients in the
left-hand side of this equality have coefficients with $p$-adic valuations in $\bZ$, whereas
the nonzero coefficients in the right-hand side have $p$-adic valuations in $\frac{1}{2}\bZ\smallsetminus\bZ$.
This forces both sides to be identically zero, and so we obtain
\[
L_p^{+,+}(f/K)\left(\frac{1}{1+Y}-1,\frac{1}{1+X}-1\right)=\epsilon L_p^{+,+}(f/K)(X,Y),
\]
and similarly for $L_p^{-,-}(f/K)$. Since $({\rm Heeg})$ implies that $\epsilon=-1$, this shows that $L_{p,\ac}^{+,+}(f/K)$ and $L_{p,\ac}^{-,-}(f/K)$ are then identically zero, as was to be shown. 
\end{proof}

The following anticyclotomic $p$-adic $L$-function will play a key 
role in this paper. 
Let $\unr$ denote the completion of the ring of integers of the maximal unramified extension of $\bQ_p$.

\begin{thm}\label{thm:bdp}
Assume hypothesis $({\rm Heeg})$ and that $p=\pp\overline{\pp}$ splits in $K$. Then there exists a $p$-adic $L$-function
$\mathscr{L}^{\tt BDP}_{\pp}(f/K)\in\unr[[\Gamma^\ac]]$ such that if
$\hat{\psi}:\Gamma^\ac\rightarrow\bC_p^\times$ has trivial conductor and
infinity type $(-\ell,\ell)$ with $\ell\geqslant 1$, then
\begin{align*}
\biggl(\frac{\mathscr{L}^{\tt BDP}_{\pp}(f/K)(\hat\psi)}{\Omega_p^{2\ell}}\biggr)^2&=
\Gamma(\ell)\Gamma(\ell+1)
\cdot(1-p^{-1}\psi(\pp)\alpha)^2(1-p^{-1}\psi(\pp)\beta)^2
\cdot\frac{L(f/K,\psi,1)}{\pi^{2\ell+1}\cdot\Omega_K^{4\ell}},
\end{align*}
where 
$\Omega_p\in\unr^\times$ and $\Omega_K\in\bC^\times$ are CM periods attached to $K$.
\end{thm}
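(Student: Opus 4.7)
The plan is to construct $\mathscr{L}_\pp^{\tt BDP}(f/K)$ as a quaternionic generalization of the Bertolini--Darmon--Prasanna construction \cite{bdp1}, using CM points on the Shimura curve $X_{N^+,N^-}$ attached via ${\rm (Heeg)}$ to the indefinite quaternion algebra of discriminant $N^-$. The first step is to fix a CM elliptic curve $A$ defined over the Hilbert class field of $K$ with $\End(A)\simeq\cO_K$ and a N\'eron differential $\omega_A$. Since $p=\pp\overline{\pp}$ splits in $K$, the reduction of $A$ at a place above $\pp$ is ordinary, so $\omega_A$ together with the Serre--Tate coordinate on the formal group $\hat A$ defines a $p$-adic period $\Omega_p\in\unr^\times$, while the complex uniformization of $A(\bC)\simeq\bC/\cO_K$ defines a complex period $\Omega_K\in\bC^\times$.

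Next, by the Jacquet--Langlands correspondence $f$ transfers to a weight-two newform $f^{JL}$ on $X_{N^+,N^-}$. Using the moduli description of $X_{N^+,N^-}$ as parametrizing false elliptic curves with $\Gamma_0(N^+)$-level structure, I would build a $\Gamma^\ac$-compatible system of CM points $\{P_\sigma\}_{\sigma\in\Gamma^\ac}$ whose endomorphism rings are orders in $\cO_K$ of $p$-power conductor. The desired anticyclotomic measure $\mathscr{L}_\pp^{\tt BDP}(f/K)\in\unr[[\Gamma^\ac]]$ is then defined by evaluating the $p$-adic avatar of $f^{JL}$, appropriately twisted by powers of the Maass--Shimura raising operator $\delta$, at these points; $p$-adic continuity in $\sigma$ follows from the rigidity of the Serre--Tate $q$-expansion on the ordinary locus together with the $p$-integrality of $f^{JL}$.

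For the interpolation formula, for $\psi$ of trivial conductor and infinity type $(-\ell,\ell)$, I would identify $\mathscr{L}_\pp^{\tt BDP}(f/K)(\hat\psi)/\Omega_p^{2\ell}$ with an algebraic pairing $J(f,\psi)/\Omega_K^{2\ell}$, where
\[
J(f,\psi) = (1-p^{-1}\psi(\pp)\alpha)(1-p^{-1}\psi(\pp)\beta)\cdot\bigl\langle \delta^{\ell-1}f^{JL},\,\theta_{\psi}\bigr\rangle.
\]
Squaring this identity and invoking the Waldspurger/Gross--Zagier--Zhang formula of Yuan--Zhang--Zhang, specialized to the Shimura curve $X_{N^+,N^-}$, identifies $J(f,\psi)^2$ with $L(f/K,\psi,1)$ times the product of Gamma factors $\Gamma(\ell)\Gamma(\ell+1)$, the factor $\pi^{-(2\ell+1)}$, and the Euler factor $(1-p^{-1}\psi(\pp)\alpha)^2(1-p^{-1}\psi(\pp)\beta)^2$, yielding the stated formula.

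The main obstacle I anticipate is the precise matching of the Waldspurger/Gross--Zagier--Zhang integral in the quaternionic setting with the BDP normalizations of $\Omega_p$ and $\Omega_K$; this requires reconciling the toric period in Yuan--Zhang--Zhang with the CM periods attached to $A$, as carried out in Brooks's and Hsieh's treatments of generalized Heegner cycles. A secondary point is that the Euler factor appears symmetrically in both roots $\alpha$ and $\beta$, rather than only in one as in the ordinary case; this reflects the absence of a distinguished $p$-stabilization when $a_p=0$ and arises from the fact that the quantity being square-rooted is itself symmetric in the two roots of $x^2+p$.
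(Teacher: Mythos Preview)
Your sketch is essentially the construction carried out in Castella--Hsieh \cite{cas-hsieh1}, which is exactly what the paper cites; the paper's own proof consists of the sentence ``This follows from the results in \cite[\S{3.3}]{cas-hsieh1}'', with the precise normalization recorded later in the proof of Theorem~\ref{3.1} as $\mathscr{L}^{\tt BDP}_{\pp}(f/K)={\rm Tw}_{\psi^{-1}}(\mathscr{L}_{\pp,\psi}(f))$ for an auxiliary anticyclotomic character $\psi$ of infinity type $(1,-1)$. So you are on the right track and in agreement with the paper.

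Two small corrections to your outline. First, the interpolation step does not use the Gross--Zagier formula of Yuan--Zhang--Zhang: that formula concerns the \emph{derivative} $L'(f/K,\psi,1)$ in the sign~$-1$ range, whereas here $\psi$ has infinity type $(-\ell,\ell)$ with $\ell\geqslant 1$, placing us in the sign~$+1$ range $\Sigma^{+}$ where the central value is generically nonzero. The relevant input is an explicit Waldspurger formula relating the square of a toric period to $L(f/K,\psi,1)$, as in Hsieh's work (which you also mention). Second, the quantity $J(f,\psi)$ is not really an inner product $\langle\delta^{\ell-1}f^{JL},\theta_\psi\rangle$ but rather a finite sum of values of $\delta^{\ell-1}f^{JL}$ at CM points weighted by $\psi$; it is this toric period whose square Waldspurger relates to the $L$-value.
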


\begin{proof}
This follows from the results in \cite[\S{3.3}]{cas-hsieh1}. (See the proof of Theorem~\ref{3.1} below
for the precise relation between the construction in \emph{loc.~cit.} and the above $\mathscr{L}_\pp^{\tt BDP}(f/K)$.)
\end{proof}

We next recall some of the nontriviality properties one knows about $\mathscr{L}_{\pp}^{\tt BDP}(f/K)$, which we will also need. 
Let $\rho_f:{\rm Gal}(\overline{\bQ}/\bQ)\rightarrow{\rm Aut}_{\bQ_p}(V_f)\simeq{\rm GL}_2(\bQ_p)$
be the $p$-adic Galois representation attached to $f$, and let $\overline{\rho}_f$ denote
its associated semi-simple mod $p$ representation.

\begin{thm}\label{thm:mu-bdp}
In addition to the hypotheses in Theorem~\ref{thm:bdp}, assume that:
\begin{itemize}
\item{} $\overline{\rho}_f\vert_{{\rm Gal}(\overline{\bQ}/K)}$ is absolutely irreducible,
\item{} $\overline{\rho}_f$ is ramified at every prime $\ell\mid N^-$.
\end{itemize}
Then $\mathscr{L}^{\tt BDP}_{\pp}(f/K)$ is not identically zero, and it has trivial $\mu$-invariant.
\end{thm}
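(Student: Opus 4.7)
The plan is to reduce both conclusions to a single non-vanishing mod $p$ statement for the power series $\mathscr{L}_\pp^{\tt BDP}(f/K)\in\unr[[\Gamma^\ac]]$. Since $\unr[[\Gamma^\ac]]\simeq\unr[[T]]$ is a complete local ring with residue field $\overline{\bF}_p$, showing that the image of $\mathscr{L}_\pp^{\tt BDP}(f/K)$ in $\overline{\bF}_p[[\Gamma^\ac]]$ is nonzero simultaneously gives non-triviality and vanishing of the $\mu$-invariant. Equivalently, it suffices to exhibit a single $\hat\psi\in\mathrm{Hom}_{\mathrm{cts}}(\Gamma^\ac,\overline{\bQ}_p^\times)$ of finite order for which $\mathscr{L}_\pp^{\tt BDP}(f/K)(\hat\psi)$ is a $p$-adic unit (after normalizing out the period $\Omega_p^{2\ell}$).

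First I would unwind the construction recalled in the proof of Theorem~\ref{thm:bdp} and given in detail in \cite[\S{3.3}]{cas-hsieh1}: the $p$-adic $L$-function $\mathscr{L}_\pp^{\tt BDP}(f/K)$ is realized as the evaluation of the $p$-depleted form attached to $f$ against a $\Gamma^\ac$-twisted compatible system of CM points on the Shimura curve $X_{N^+,N^-}$, read off through a Serre--Tate-type expansion at a mod-$p$ CM point. The anticyclotomic variable corresponds to translating this CM point by the Galois action of $\Gamma^\ac$, so the mod-$p$ reduction of $\mathscr{L}_\pp^{\tt BDP}(f/K)$ is governed by the reductions of (twists of) CM points under the anticyclotomic tower.

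The core step is then to appeal to Hsieh's non-vanishing mod $p$ theorem for anticyclotomic Rankin--Selberg $p$-adic $L$-functions, whose hypotheses are exactly (i) $\overline{\rho}_f|_{G_K}$ absolutely irreducible, and (ii) $\overline{\rho}_f$ ramified at every prime $\ell\mid N^-$. This theorem, proved in Hsieh's work via Chai's theory of Hecke-stable subvarieties and Cornut--Vatsal-style equidistribution of CM points on the relevant Igusa/Shimura tower, asserts the existence of a Zariski-dense set of finite order characters $\hat\psi$ of $\Gamma^\ac$ at which the square of $\mathscr{L}_\pp^{\tt BDP}(f/K)(\hat\psi)/\Omega_p^{2\ell}$ (namely the algebraic part of $L(f/K,\psi,1)$) is a $p$-adic unit. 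Together with the previous reduction, this yields both conclusions.

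The main obstacle is the translation between the two constructions: the $p$-adic $L$-function here is built on the \emph{indefinite} Shimura curve $X_{N^+,N^-}$, whereas Hsieh's non-vanishing statement is naturally phrased in a Waldspurger-type framework on the \emph{definite} quaternion algebra of discriminant $N^-\infty$. The technical heart of the argument is therefore the verification that the two interpolating measures agree up to a factor that is a $p$-adic unit under the hypotheses (i) and (ii); this comparison (essentially a form of the $p$-adic Waldspurger formula) is available in the literature, and in particular it is carried out in \cite{cas-hsieh1} in the generality needed here.
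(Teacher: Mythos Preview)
Your overall strategy matches the paper's: both conclusions are reduced to Hsieh's mod-$p$ non-vanishing results for anticyclotomic Rankin--Selberg $p$-adic $L$-functions. The paper simply cites \cite[Thm.~3.7]{cas-hsieh1} (which appeals to \cite[Thm.~C]{hsieh}) for the non-vanishing and \cite[Thm.~B]{hsieh} (with \cite{burungale-II} as an alternative) for $\mu=0$; you are essentially sketching what lies behind those citations.

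However, your ``main obstacle'' paragraph contains a genuine error. Under (Heeg), $N^-$ is a product of an \emph{even} number of primes, so a quaternion algebra over $\bQ$ ramified exactly at the primes dividing $N^-$ together with $\infty$ would be ramified at an odd set of places and hence does not exist. There is no definite quaternion algebra in this picture, and no indefinite/definite comparison is needed. Hsieh's results in \cite{hsieh} are proved directly on the \emph{indefinite} side, i.e.\ on the Shimura curve $X_{N^+,N^-}$ itself, by analyzing the Serre--Tate expansion of the relevant $p$-adic modular form at CM points and invoking Chai/Ratner-type equidistribution. The role of hypothesis (ii) is to ensure that the Jacquet--Langlands transfer $f_B$ of $f$ to $X_{N^+,N^-}$ can be $p$-integrally normalized so that its reduction mod $p$ is nonzero (equivalently, that Prasanna's local factor $\alpha(f,f_B)$ is a $p$-adic unit), which is exactly what the paper invokes via \cite{prasanna} in the Burungale alternative.

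There is also a smaller slip: the interpolation in Theorem~\ref{thm:bdp} is at characters of infinity type $(-\ell,\ell)$ with $\ell\geqslant 1$, \emph{not} at finite order characters, so the phrase ``finite order $\hat\psi$ \ldots\ normalizing out $\Omega_p^{2\ell}$'' conflates two different ranges. What Hsieh actually proves is that the image of $\mathscr{L}_\pp^{\tt BDP}(f/K)$ in $\overline{\bF}_p[[\Gamma^\ac]]$ is nonzero; one detects this either by evaluating at characters in the $\ell\geqslant 1$ interpolation range (where the link to algebraic $L$-values is available) or by direct analysis of the $t$-expansion. Your first-paragraph reduction is correct, but the bridge to $L$-values passes through the $\ell\geqslant 1$ region, not through finite order characters.
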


\begin{proof}
The nonvanishing of $\mathscr{L}_\pp^{\tt BDP}(f/K)$ follows from \cite[Thm.~3.7]{cas-hsieh1},
where it is deduced from \cite[Thm.~C]{hsieh}. The vanishing of $\mu(\mathscr{L}^{\tt BDP}_{\pp}(f/K))$
similarly follows from \cite[Thm.~B]{hsieh} (or alternatively, from \cite[Thm.~B]{burungale-II}
in the cases where the number of prime factors in $N^-$ is positive, noting that by
the discussion in \cite[p.~912]{prasanna} our last assumption guarantees that the term $\alpha(f,f_B)$
in \cite[Thm.~5.6]{burungale-II} is a $p$-adic unit).
\end{proof}

Letting $L_{p,\ac}(f/K)$ be the image of the $p$-adic $L$-function
$L_p(f/K,\Sigma^{+})$ of Theorem~\ref{thm:hida} under the natural
map induced by the natural projection $H_{\frakf p^\infty}\rightarrow\Gamma^{\rm ac}$, 
we see that $L_{p,\ac}(f/K)$ and \emph{the square} of the $p$-adic $L$-function $\mathscr{L}^{\tt BDP}_{\pp}(f/K)$ 
are defined by the interpolation of the same $L$-values.
However, the archimedean periods used in their construction are different, and so
these $p$-adic $L$-functions need not be equal (even up to units in the Iwasawa algebra).
In fact, as shown in Theorem~\ref{thm:factorization} below,
the ratio between these different periods is interpolated by an anticyclotomic projection of a
Katz $p$-adic $L$-function.\footnote{This phenomenon appears to have been first observed by Hida--Tilouine
\cite[\S{8}]{HT-ENS} in a slightly different context; see also \cite[\S{3.2}]{DLR}.}

Before we state the defining property of the Katz $p$-adic $L$-function, recall that
the Hecke $L$-function of $\psi\in\Xi_K$
is defined by (the analytic continuation of) the Euler product
\[
L_{}(\psi,s)=\prod_{\frakl}\biggl(1-\frac{\psi(\frakl)}{N(\frakl)^s}\biggr)^{-1},
\]
where 
$\frakl$ runs over all prime ideals of $K$, with the convention
that $\psi(\frakl)=0$ for $\frakl\mid\mathfrak{c}_\psi$. The set of infinity types
of $\psi\in\Xi_K$ for which $s=0$ is a critical value of $L(\psi,s)$ 
can be written as the disjoint union $\Sigma_K\sqcup\Sigma_K'$, where $\Sigma_K=\{(\ell_1,\ell_2):\ell_2\leqslant 0<\ell_1\}$ and 
$\Sigma_K'=\{(\ell_1,\ell_2):\ell_1\leqslant 0<\ell_2\}$.

\begin{thm}\label{thm:katz}
Assume that $p=\pp\overline{\pp}$ splits in $K$. Then
there is a $p$-adic $L$-function
$\mathscr{L}^{\tt Katz}_{\pp}(K)\in\unr[[H_{\frakf p^\infty}]]$
such that if $\psi\in\Xi_K$ has trivial conductor 
and infinity type $(\ell_1,\ell_2)\in\Sigma_K$, then
\[
\frac{\mathscr{L}^{\tt Katz}_{\pp}(K)(\hat\psi)}{\Omega_p^{\ell_1-\ell_2}}=\biggl(\frac{\sqrt{D_K}}{2\pi}\biggr)^{\ell_2}
\cdot\Gamma(\ell_1)\cdot(1-\psi(\overline{\pp}))(1-p^{-1}\psi^{-1}(\pp))
\cdot\frac{L_{}(\psi,0)}{\Omega_K^{\ell_1-\ell_2}},
\]
where $\Omega_p$ and $\Omega_K$ are as in Theorem~\ref{thm:bdp}.
\end{thm}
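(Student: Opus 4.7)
The plan is to invoke Katz's classical construction of a $p$-adic $L$-function attached to an imaginary quadratic field in which $p$ splits. First, I would express the special values $L(\psi,0)$ as algebraic combinations of values of Eisenstein series at CM points: for $\psi$ of trivial conductor and infinity type $(\ell_1,\ell_2)\in\Sigma_K$, a suitable Eisenstein series of weight $\ell_1-\ell_2$ on the modular curve of level dividing $\frakf p^\infty$, when evaluated at the CM point associated to a fixed $\cO_K$-CM elliptic curve with chosen level structure, computes $L(\psi,0)$ up to the expected archimedean factors $(2\pi)^{-\ell_2}\Gamma(\ell_1)$ and a power of $\sqrt{D_K}$ coming from the complex uniformisation. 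The CM period $\Omega_K$ enters as the complex period of this fixed auxiliary curve, trivialising its N\'eron differential and algebraising the Eisenstein value.

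Next, I would replace the Eisenstein series by its $p$-depletion, i.e.\ the form whose $q$-expansion is supported on integers prime to $p$. This step produces exactly the Euler factor $(1-\psi(\overline{\pp}))(1-p^{-1}\psi^{-1}(\pp))$ appearing in the interpolation formula, and at the same time makes the resulting value $p$-integral after division by $\Omega_p^{\ell_1-\ell_2}$; here $\Omega_p\in\unr^\times$ is the $p$-adic period of the same CM elliptic curve viewed on the Igusa tower, trivialising its differential on the ordinary locus.

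Finally, I would use Katz's $q$-expansion principle to assemble the measure. As $\psi$ varies over locally algebraic characters of $H_{\frakf p^\infty}$, the corresponding $p$-depleted Eisenstein series form a compatible family of mod $p^n$ modular forms on the Igusa tower, whose inverse limit is a single Katz-style $p$-adic modular form; evaluating this form on the fixed CM trivialisation yields the desired element $\mathscr{L}^{\tt Katz}_\pp(K)\in\unr[[H_{\frakf p^\infty}]]$, and the interpolation identity follows by matching $q$-expansion coefficients against the Fourier coefficients of the $p$-depleted Eisenstein series from the first step. The main obstacle in executing this construction from scratch is verifying that the various normalisations (the CM periods $\Omega_p$ and $\Omega_K$, the precise choice of Eisenstein series on the relevant Igusa tower, and the prescribed Gamma and $2\pi$ factors) align to produce \emph{exactly} the formula as stated; since this is a classical result recorded in the literature, for the purposes of this paper one simply cites Katz's original construction (or its refinements by de Shalit and Hida--Tilouine), after checking that the chosen period pair $(\Omega_p,\Omega_K)$ is the same as the one fixed in Theorem~\ref{thm:bdp}.
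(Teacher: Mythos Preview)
Your proposal is correct and aligns with the paper's treatment: the paper's proof is a bare citation to Katz \cite[\S{5.3.0}]{Katz49} and de~Shalit \cite[Thm.~II.4.14]{de_shalit}, which is exactly the conclusion you reach at the end of your sketch. Your outline of the construction (Eisenstein series at CM points, $p$-depletion producing the Euler factors, assembly via the $q$-expansion principle) is an accurate summary of what happens in those references, but the paper itself supplies none of this detail and simply appeals to the literature.
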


\begin{proof}
See \cite[\S{5.3.0}]{Katz49}, or \cite[Thm.~II.4.14]{de_shalit}.
\end{proof}

Denote by $\mathscr{L}_{\pp,\ac}^{\tt Katz}(K)$
the image of $\mathscr{L}^{\tt Katz}_\pp(K)$ under the projection
$\unr[[H_{\frakf p^\infty}]]\rightarrow\unr[[\Gamma^\ac]]$.

\begin{thm}\label{thm:factorization}
Assume hypothesis $({\rm Heeg})$ and that $p=\pp\overline{\pp}$ splits in $K$.  
Then
\[
L_{p,\ac}(f/K)(\hat\psi)
=\frac{w_K}{h_K}\cdot\frac{\mathscr{L}^{\tt BDP}_{\pp}(f/K)^2(\hat\psi)}{\mathscr{L}_{\pp,\ac}^{\tt Katz}(K)(\hat\psi^{\rho-1})}
\]
up to a unit in $\ro[[\Gamma^\ac]]^\times$, where $w_K=\vert\cO_K^\times\vert$ and
$h_K$ is the class number of $K$.
\end{thm}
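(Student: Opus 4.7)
The strategy is to verify the stated identity by evaluation at a Zariski-dense collection of characters of $\Gamma^\ac$. Let $\psi$ range over algebraic Hecke characters of $K$ with trivial conductor and infinity type $(-\ell,\ell)$, $\ell \geq 1$; since $\psi^\rho = \psi^{-1}$, the $p$-adic avatar $\hat\psi$ is anticyclotomic, and the collection of such $\hat\psi$ is Zariski-dense in $\Spec\,\unr[[\Gamma^\ac]]$, so an equality in $\mathrm{Frac}\,\unr[[\Gamma^\ac]]$ up to a unit in $\unr[[\Gamma^\ac]]^\times$ is detected by comparing values at such characters.

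For such $\psi$, one has $\hat\psi^{\rho-1}=\widehat{\psi^{-2}}$ with $\psi^{-2}$ of infinity type $(2\ell,-2\ell)\in\Sigma_K$, so Theorem~\ref{thm:katz} applies and yields $\mathscr{L}_{\pp,\ac}^{\tt Katz}(K)(\hat\psi^{\rho-1})$ in terms of $L(\psi^{-2},0)$, the CM periods $\Omega_p^{4\ell}/\Omega_K^{4\ell}$, and explicit Euler and $\Gamma$-factors. Direct substitution using Theorems~\ref{thm:hida}(i) and~\ref{thm:bdp} then produces three immediate cancellations: the $L$-value $L(f/K,\psi,1)$ cancels between the LHS and the BDP-numerator on the RHS; using $\alpha\beta=p$ and the anticyclotomic identity $\psi^{-1}(\overline\pp)=\psi(\pp)$, the Euler factor $\mathcal{E}(f,\psi)$ from Theorem~\ref{thm:hida}(i) equals $(1-p^{-1}\psi(\pp)\alpha)^2(1-p^{-1}\psi(\pp)\beta)^2$ from Theorem~\ref{thm:bdp}; and the Hida denominator $(1-\psi^{1-\rho}(\pp))(1-p^{-1}\psi^{1-\rho}(\pp))$ matches the Katz Euler factor via $\psi^{1-\rho}(\pp)=\psi^{2}(\pp)=\psi^{-2}(\overline\pp)$.

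After these cancellations, the desired equality at $\hat\psi$ reduces to a single archimedean identity of the form
\[
\langle\theta_{\psi_\ell},\theta_{\psi_\ell}\rangle \;\sim\; \frac{h_K\,D_K^{\ell}\,\Gamma(2\ell)\,L(\psi^{-2},0)}{w_K\cdot(2\pi)^{2\ell+1}\cdot 2^{2\ell}},
\]
where $\psi_\ell=\psi\mathbf{N}_K^\ell$ has infinity type $(-2\ell,0)$ and $\sim$ denotes equality up to a unit independent of $\psi$. This is exactly the classical Rankin--Selberg/Shimura formula for the Petersson norm of the weight-$(2\ell+1)$ CM theta series $\theta_{\psi_\ell}$: $\langle\theta_{\psi_\ell},\theta_{\psi_\ell}\rangle$ is proportional to the adjoint Hecke $L$-value $L(\psi_\ell/\psi_\ell^\rho,0)=L(\psi^{1-\rho},0)=L(\psi^{2},0)$, which is in turn linked to $L(\psi^{-2},0)$ via the functional equation for Hecke $L$-functions.

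The main obstacle is the careful archimedean bookkeeping in Shimura's formula: tracking the level of $\theta_{\psi_\ell}$, the precise exponents of $\pi$, $D_K$, and $\Omega_K$, the $\Gamma$-factors from the functional equation linking $L(\psi^2,s)$ and $L(\psi^{-2},1-s)$, and verifying that the resulting rational constant is exactly $w_K/h_K$ (this factor is natural once one identifies $\Omega_K$ with the complex uniformization period of the CM elliptic curve implicit in the construction of $\mathscr{L}^{\tt Katz}_\pp(K)$). Once pointwise equality is verified at infinitely many $\hat\psi$, Weierstrass preparation in the one-variable Iwasawa algebra $\unr[[\Gamma^\ac]]$ promotes it to an equality of $p$-adic $L$-functions up to a unit in $\unr[[\Gamma^\ac]]^\times$, establishing Theorem~\ref{thm:factorization}.
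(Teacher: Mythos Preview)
Your approach is correct and is precisely the one taken in the reference the paper cites (\cite[Thm.~1.7]{cas-BF}): compare the three interpolation formulas at the Zariski-dense set of anticyclotomic characters of infinity type $(-\ell,\ell)$, cancel the common $L$-value and Euler factors as you do, and reduce the remaining archimedean identity to Shimura's formula expressing $\langle\theta_{\psi_\ell},\theta_{\psi_\ell}\rangle$ in terms of $L(\psi^{1-\rho},0)$.

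One small point to tighten: as written, your density argument yields the identity up to a unit in $\unr[[\Gamma^\ac]]^\times$, whereas the theorem asserts a unit in $\bZ_p[[\Gamma^\ac]]^\times$. Since the left-hand side $L_{p,\ac}(f/K)$ is defined over $\bQ_p$, this amounts to checking that the $\unr$-transcendental contributions from $\Omega_p$ on the right-hand side cancel between $\mathscr{L}^{\tt BDP}_\pp(f/K)^2$ and $\mathscr{L}^{\tt Katz}_{\pp,\ac}(K)$ (both carry the factor $\Omega_p^{4\ell}$ at $\hat\psi$), so that the resulting ratio is ${\rm Gal}(\overline{\bQ}_p^{\rm ur}/\bQ_p)$-invariant. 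This is straightforward from the interpolation formulas you invoke, but it should be stated explicitly to reach the sharper conclusion.
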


\begin{proof}
This is \cite[Thm.~1.7]{cas-BF}, whose proof does not make use of the underlying 
ordinarity hypothesis on $f$ made in \emph{loc.cit.}. See also \cite[\S{5.3}]{JSW} for a similar calculation.
\end{proof}

\subsection{Another $p$-adic Rankin--Selberg $L$-function}\label{subsec:Wan}

Recall the decomposition $H_{\frakf p^\infty}\simeq\Delta\times\Gamma_K$, set
$\Lambda:=\ro[[\Gamma_K]]$,  $\Lambda_{\unr}:=\unr[[\Gamma_K]]$, and $\Lambda_{\rm ac}:=\ro[[\Gamma^{\rm ac}]]$,  and 
continue to denote by
\[
L_{p}(f/K,\Sigma^{+})\in{\rm Frac}(\Lambda\otimes_{\bZ_p}\bQ_p)\quad\quad\textrm{and}\quad\quad
\mathscr{L}_{\pp}^{\tt Katz}(K)\in\Lambda_{\unr}
\]
the natural projections of the $p$-adic $L$-functions
$L_p(f/K,\Sigma^{+})$ and $\mathscr{L}^{\tt Katz}_\pp(K)$ of
Theorem~\ref{thm:hida} and Theorem~\ref{thm:katz}, respectively.

\begin{thm}\label{thm:wan}
Assume that $p=\pp\overline{\pp}$ splits in $K$. There exists a $p$-adic $L$-function
\[
L_{\pp}(f/K)\in\Lambda_{\unr}
\]
such that if $\hat{\psi}:\Gamma\rightarrow\bC_p^\times$ has trivial conductor and
infinity type $(\ell_1,\ell_2)\in\Sigma^{+}$, then
\begin{align*}
L_{\pp}(f/K)(\hat\psi)=
&\frac{\Gamma(\ell_2)\Gamma(\ell_2+1)}{\pi^{2\ell_2+1}}
\cdot\mathcal{E}(f,\psi)
\cdot\frac{\Omega_p^{2(\ell_2-\ell_1)}}{\Omega_K^{2(\ell_2-\ell_1)}}\cdot L(f/K,\psi,1),
\end{align*}
where $\mathcal{E}(f,\psi)=(1-p^{-1}\psi(\pp)\alpha)(1-p^{-1}\psi(\pp)\beta)
(1-\psi^{-1}(\overline\pp)\alpha^{-1})(1-\psi^{-1}(\overline\pp)\beta^{-1})$,
and $\Omega_K$ and $\Omega_p$ are as in Theorem~\ref{thm:bdp}. Moreover, 
$L_\pp(f/K)$ differs from the product
\begin{equation}\label{eq:product}
\widetilde{L}_{\pp}(f/K)(\hat\psi):=
L_{p}(f/K,\Sigma^{+})(\hat\psi)
\cdot\frac{h_K}{w_K}\cdot\mathscr{L}_{\pp,\ac}^{\tt Katz}(K)(\hat\psi^{\rho-1})
\end{equation}
by a unit in $\Lambda_{}^\times$, and it is not identically zero.
\end{thm}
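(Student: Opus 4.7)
The construction of $L_\pp(f/K)\in\Lambda_\unr$ satisfying the stated interpolation formula is established by the second named author in \cite{wan-combined}, where it is built via a Rankin--Selberg-type integral pairing $f$ (viewed on $\GL_2/K$ by base change) with a $\Lambda_\unr$-adic family of CM Eisenstein series over $\Gamma_K$. The explicit zeta-integral and local computations produce the interpolation formula with the CM periods $\Omega_p,\Omega_K$ appearing naturally, and integrality in $\Lambda_\unr$ follows from the construction; I would simply cite this for the existence portion.

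For the product relation, I would evaluate both $L_\pp(f/K)$ and $\widetilde{L}_\pp(f/K)$ at the Zariski-dense set of characters $\hat\psi$ coming from Hecke characters of trivial conductor and infinity type $(\ell_1,\ell_2)\in\Sigma^+$. Substituting the formula of Theorem~\ref{thm:hida}(i) for $L_p(f/K,\Sigma^+)(\hat\psi)$ and that of Theorem~\ref{thm:katz} for $\mathscr{L}_{\pp,\ac}^{\tt Katz}(K)(\hat\psi^{\rho-1})$---which applies since $\hat\psi^{\rho-1}$ has infinity type $(\ell_2-\ell_1,\ell_1-\ell_2)\in\Sigma_K$---the Euler factors combine to reproduce $\mathcal{E}(f,\psi)$, while the ratio of archimedean normalizations reduces to the classical Shimura--Hida identity expressing $(2\pi)^{2\ell_2+1}\langle\theta_{\psi_{\ell_2}},\theta_{\psi_{\ell_2}}\rangle$ in terms of $\pi^{2\ell_2+1}\Omega_K^{2(\ell_2-\ell_1)}$ and the Hecke $L$-value $L(\psi_{\ell_2}^{\rho-1},0)$, modulo the rational factor $h_K/w_K$ and local contributions at the conductor. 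Agreement on this dense set then forces $L_\pp(f/K)/\widetilde{L}_\pp(f/K)\in\Lambda^\times$.

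For the nonvanishing, I would descend to $\Gamma^\ac$: Theorem~\ref{thm:factorization} identifies the image of $\widetilde{L}_\pp(f/K)$ in $\unr[[\Gamma^\ac]]$ with $\mathscr{L}^{\tt BDP}_\pp(f/K)^2$ up to a unit, and \cite[Thm.~3.7]{cas-hsieh1} guarantees $\mathscr{L}^{\tt BDP}_\pp(f/K)\not\equiv 0$ under the generalized Heegner hypothesis, whence $L_\pp(f/K)\not\equiv 0$.

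The main obstacle I anticipate is the second step: translating the Petersson-norm normalization of the Hida--Loeffler $p$-adic $L$-function into the pure CM-period normalization used in \cite{wan-combined}, which is precisely the role played by the anticyclotomic Katz $p$-adic $L$-function at $\hat\psi^{\rho-1}$ inside $\widetilde{L}_\pp(f/K)$; ensuring that all the local and global rational factors cancel into a genuine unit of $\Lambda^\times$---not merely of $\Lambda\otimes\bQ$---is where the delicate bookkeeping lies.
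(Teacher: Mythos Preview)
Your approach to the construction and to the product relation is essentially that of the paper: the construction is cited from \cite{wan-combined}, and the comparison with $\widetilde{L}_\pp(f/K)$ proceeds by evaluating both sides at a Zariski-dense set of characters and reducing to the classical Petersson-norm/CM-period identity (the paper simply cites the calculations in \cite[Thm.~1.7]{cas-BF} and \cite[\S{5.3}]{JSW} for this step).

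There is, however, a genuine gap in your nonvanishing argument. Theorem~\ref{thm:wan} assumes only that $p$ splits in $K$; it does \emph{not} assume the generalized Heegner hypothesis $({\rm Heeg})$. Your route---descending to $\Gamma^\ac$ via Theorem~\ref{thm:factorization} and invoking the nonvanishing of $\mathscr{L}_\pp^{\tt BDP}(f/K)$---requires $({\rm Heeg})$ already to make sense of $\mathscr{L}_\pp^{\tt BDP}(f/K)$ (see Theorem~\ref{thm:bdp}), and the nonvanishing itself (Theorem~\ref{thm:mu-bdp}) imposes further irreducibility and ramification hypotheses. So your argument does not prove the theorem as stated. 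The paper's argument is both simpler and hypothesis-free: for characters $\psi$ of infinity type $(\ell_1,\ell_2)\in\Sigma^+$ with $\ell_2-\ell_1$ sufficiently large, the point $s=1$ lies in the region of absolute convergence of the Euler product defining $L(f/K,\psi,s)$, so the interpolated $L$-value is automatically nonzero, and hence $L_\pp(f/K)$ is not identically zero.
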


\begin{proof}
The construction of $L_\pp(f/K)$ is given in \cite[\S{4.6}]{wan-combined}.
On the other hand, the fact that the product $(\ref{eq:product})$ has the claimed interpolation
property follows by a straightforward adaptation of the calculations in \cite[Thm.~1.7]{cas-BF} (or \cite[\S{5.3}]{JSW}).
Finally, the fact that $L_\pp(f/K)$ is not the zero function follows from the fact that for some
of the characters $\psi$ in the range of $p$-adic interpolation, the Euler product defining
$L(f/K,\psi,s)$ converges at $s=1$.
\end{proof}

\begin{cor}\label{cor:wan-bdp}
Assume hypothesis $({\rm Heeg})$ and that $p=\pp\overline{\pp}$ splits in $K$, and
let $L_{\pp,\ac}(f/K)$ be the image of the $p$-adic
$L$-function $L_\pp(f/K)$ of Theorem~\ref{thm:wan} under the 
projection
$\Lambda_{\unr}\rightarrow\Lambda_{\unr}^\ac$. Then
\[
L_{\pp,\ac}(f/K)=\mathscr{L}^{\tt BDP}_\pp(f/K)^2
\]
up to a unit in $\Lambda_{\ac}^\times$, where $\mathscr{L}_\pp^{\tt BDP}(f/K)$ is as in Theorem~\ref{thm:bdp}.
\end{cor}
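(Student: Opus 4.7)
The approach is to combine Theorem~\ref{thm:wan} with Theorem~\ref{thm:factorization}, exploiting the observation that both formulas feature the same anticyclotomic projection of the Katz $p$-adic $L$-function $\mathscr{L}^{\tt Katz}_\pp(K)$ (twisted by $\hat\psi \mapsto \hat\psi^{\rho-1}$) and both carry a factor of $h_K/w_K$ in opposite positions. After projecting the identity from Theorem~\ref{thm:wan} to the anticyclotomic line and substituting the factorization from Theorem~\ref{thm:factorization}, both of these terms cancel, leaving precisely $\mathscr{L}^{\tt BDP}_\pp(f/K)^2$ up to a unit in $\Lambda_\ac^\times$.

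Concretely, I first apply the natural projection $\Lambda_\unr \twoheadrightarrow \Lambda_\unr^\ac$ to the identity $L_\pp(f/K) = \widetilde{L}_\pp(f/K) \cdot u$ from Theorem~\ref{thm:wan} with $u\in\Lambda^\times$; since the projection is a surjective local ring homomorphism, units map to units. This yields
\[
L_{\pp,\ac}(f/K) = L_{p,\ac}(f/K) \cdot \frac{h_K}{w_K} \cdot \iota\bigl(\mathscr{L}_{\pp,\ac}^{\tt Katz}(K)\bigr)
\]
up to a unit in $\Lambda_\ac^\times$, where $\iota$ is the automorphism of $\Lambda_\ac$ induced by $\sigma \mapsto \sigma^{\rho-1} = \sigma^{-2}$ on $\Gamma^\ac$ (consistent with the notation in both theorems, since anticyclotomic projection sends $\hat\psi^{\rho-1}$ to $\hat\psi^{-2}$). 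Next, under the generalized Heegner hypothesis, Theorem~\ref{thm:factorization} supplies
\[
L_{p,\ac}(f/K) = \frac{w_K}{h_K} \cdot \frac{\mathscr{L}^{\tt BDP}_\pp(f/K)^2}{\iota\bigl(\mathscr{L}_{\pp,\ac}^{\tt Katz}(K)\bigr)}
\]
up to a unit in $\Lambda_\ac^\times$, with the division understood in ${\rm Frac}(\Lambda_\unr^\ac)$; this is legitimate since $\mathscr{L}^{\tt Katz}_\pp(K)$ is not identically zero (as follows from its interpolation property in Theorem~\ref{thm:katz}). Substituting into the previous display, the factors $\iota(\mathscr{L}_{\pp,\ac}^{\tt Katz}(K))$ and $h_K/w_K$ cancel, and the corollary follows.

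Since the argument reduces to direct substitution, there is no genuine obstacle beyond bookkeeping: one must verify that the twist $\hat\psi \mapsto \hat\psi^{\rho-1}$ is applied consistently in both source theorems (which it is, as both employ the anticyclotomic projection of the same Katz $p$-adic $L$-function), and that the passage through ${\rm Frac}(\Lambda_\unr^\ac)$ in the intermediate step is permitted. Both checks are immediate, so the entire argument occupies only a few lines.
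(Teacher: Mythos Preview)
Your argument is correct, but it takes a slightly different route from the paper. The paper's proof is a one-liner: directly compare the interpolation formula for $L_\pp(f/K)$ in Theorem~\ref{thm:wan} with the one for $\mathscr{L}^{\tt BDP}_\pp(f/K)^2$ in Theorem~\ref{thm:bdp}. Restricting to anticyclotomic characters $\psi$ of infinity type $(-\ell,\ell)$, one has $\ell_2-\ell_1=2\ell$ and, using $\psi^{-1}(\overline\pp)=\psi(\pp)$ together with $\alpha\beta=p$, the Euler factor $\mathcal{E}(f,\psi)$ collapses to $(1-p^{-1}\psi(\pp)\alpha)^2(1-p^{-1}\psi(\pp)\beta)^2$; the two interpolation formulas then match on a Zariski-dense set, giving the result.

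Your approach instead composes two already-established identities: the factorization $L_\pp(f/K)=\widetilde{L}_\pp(f/K)\cdot u$ from Theorem~\ref{thm:wan} and the relation between $L_{p,\ac}(f/K)$ and $\mathscr{L}^{\tt BDP}_\pp(f/K)^2$ from Theorem~\ref{thm:factorization}, so that the Katz factor and the constant $h_K/w_K$ cancel. This is perfectly valid and has the virtue of reusing work already done, though it is more indirect since both of the theorems you invoke are themselves proved by comparison of interpolation properties. One small caveat: your symbol $\iota$ for the automorphism induced by $\sigma\mapsto\sigma^{-2}$ on $\Gamma^\ac$ clashes with the paper's later use of $\iota$ for $\gamma\mapsto\gamma^{-1}$; it would be cleaner to avoid naming this map at all and simply keep the notation $\hat\psi\mapsto\hat\psi^{\rho-1}$ throughout, as both source theorems do.
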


\begin{proof}
This follows from a direct comparison of their interpolation properties.
\end{proof} 

\section{Selmer groups}\label{sec:Sel}
\def\K{K}
\def\ac{{\rm ac}}
\def\ur{{\rm ur}}
\def\T{T}
\def\V{V}
\def\A{A}

\subsection{Local conditions at $p$}\label{subsec:local-p}

In this section, we develop some local results for studying the anticyclotomic Iwasawa theory for elliptic curves at
supersingular primes. Throughout, we let $E/\bQ$ be an elliptic curve of conductor $N$, $p>3$ be a prime of good supersingular reduction for $E$, and $K/\bQ$ be an imaginary quadratic field of discriminant
prime to $N$ and such that
\[
p=\pp\overline{\pp}\quad\textrm{splits in $\K$}.
\]
We keep the notations introduced in Section~\ref{sec:L}; in particular, we have $a_p(f)=0$, and $K_\infty^\ac/K$ denotes the anticyclotomic $\bZ_p$-extension of $K$.

It is easy to see that every prime $v\mid p$ is finitely decomposed in $\K_\infty^{\ac}/\K$,
say as the product $v_1 v_2\cdots v_{p^t}$; then $v$ is also decomposed into $p^t$ distinct primes in $\K_\infty/\K$.
Let $\Gamma_1$ (resp. $\Gamma_1^{\ac}$) be the decomposition group of $v_1$ in
$\Gamma_{}$ (resp. $\Gamma^{\ac}$). Let $H_\K$ be the Hilbert class field of $\K$, set
$\K_0^{\ac}:=\K_\infty^{\ac}\cap H_\K$, and let $\K_m^{\ac}$ be the unique subfield of $\K^{\ac}_\infty$ with
$[\K_m^\ac:\K^\ac_0]=p^m$. Let $a$ be the inertial degree of $\K^\ac_0/\K$ at any $v\mid p$.

We denote by $\bQ_p^\ur$ and $\bQ_{p,\infty}$ the unramified and the cyclotomic $\bZ_p$-extensions of $\bQ_p$, respectively,
and let $\bQ_{p,\infty}^\ur$ denote their composition. For $v\mid p$, we identify
$\K_v\simeq\bQ_p$. Let $u_v$ and $\gamma_v$ be topological generators of
$U_v:=\mathrm{Gal}(\bQ_{p,\infty}^\ur/\bQ_{p,\infty})$ and $\Gamma_v:=\mathrm{Gal}(\bQ_{p,\infty}^{\ur}/\bQ_p^{\ur})$;
these are chosen so that $u_v$ is the arithmetic Frobenius and (using additive notation) 
$-p^au_v+\gamma_v$ is a topological generator of
${\rm Gal}(\K_{\infty,v}/\K_{\infty,v}^\ac)$. Let $X_v=\gamma_v-1$ and $Y_v=u_v-1$.
Finally, let $\gamma_\ac\in\Gamma^{\ac}$ be a topological generator, so that
$\bZ_p[[\Gamma^\ac]]\simeq\bZ_p[[T]]$ via $\gamma_\ac\mapsto T+1$.

Following \cite{kobayashi-152} (see also \cite[\S{2.1}]{kim-CJM}),
for any unramified extension $k$ of $\bQ_p$ we define the subgroups $E^\pm(k(\mu_{p^{n+1}}))$
of $E(k(\mu_{p^{n+1}}))$ by
\begin{align*}
E^+({k(\mu_{p^{n+1}})})&:=\biggl\{ P\in E({k(\mu_{p^{n+1}})})\;\bigl\vert\;
\mathrm{tr}^{k(\mu_{p^{n+1}})}_{k(\mu_{p^{\ell+2}})}(P)\in E({k(\mu_{p^{\ell+1}})})\;
\textrm{for}\;0\leqslant \ell<n,\;\textrm{even $\ell$}\biggr\},\\
E^-({k(\mu_{p^{n+1}})})&:=\biggl\{ P\in E({k(\mu_{p^{n+1}})})\;\bigl\vert\;
\mathrm{tr}^{k(\mu_{p^{n+1}})}_{k(\mu_{p^{\ell+2}})}(P)\in E({k(\mu_{p^{\ell+1}})})\;
\textrm{for}\;-1\leqslant \ell<n,\;\textrm{odd $\ell$}\biggr\}.
\end{align*}

Letting $\wh{E}$ denote the formal group associated to the minimal model of $E$ over $\bZ_p$,
we may similarly define the subgroups $\wh{E}^\pm(\mathfrak{m}_{k(\mu_{p^{n+1}})})$
of $\wh{E}(\mathfrak{m}_{k(\mu_{p^{n+1}})})$, and 
using that $a_p:=a_p(f)=0$ one easily checks that 
\[
E^\pm({k(\mu_{p^{n+1}})})\otimes\bQ_p/\bZ_p=\wh{E}^\pm(\mathfrak{m}_{k(\mu_{p^{n+1}})})\otimes
\bQ_p/\bZ_p.
\]

Fix a compatible system $\{\zeta_{p^n}\}_{n\geqslant 0}$ of primitive $p^n$-th roots of unity
$\zeta_{p^n}$ (i.e., $\zeta_{p^{n+1}}^p=\zeta_{p^{n}}$ for $n$ and $\zeta_p\neq 1$).
Let $\varphi$ be the Frobenius on $k/\bQ_p$, and for any polynomial $f\in k[X]$ set
\[
\log_{f}(X)=\sum_{n=0}^\infty(-1)^n\frac{f^{(2n)}(X)}{p^n},
\]
where $f^{(2n)}(X)=f^{\varphi^{2n-1}}\circ\cdots\circ f^{\varphi}\circ f(X)$.
As in \cite[\S{3.2}]{kim-parity}, for any unit $z\in\mathcal{O}_k^\times$ one can construct a point
$\tilde{c}_{n,z}\in\wh{E}(\mathfrak{m}_{k(\mu_{p^n})})$ such that
\begin{equation}\label{eq:def-c}
\log_{\wh{E}}(\tilde{c}_{n,z})=\left[\sum_{i=1}^\infty(-1)^{i-1}z^{\varphi^{-(n+2i)}}\cdot p^i\right]
+\log_{f_z^{\varphi^{-n}}}(z^{\varphi^{-n}}\cdot(\zeta_{p^n}-1)),
\end{equation}
with $f_z(X):=(X+z)^p-z^p$. 

\begin{rem}
Since $\wh{E}(\mathfrak{m}_{k(\mu_{p^n})})$ is torsion-free
(see \cite[Prop.~8.7]{kobayashi-152} and \cite[Prop.~3.1]{kim-parity}),
the formal group logarithm $\log_{\wh{E}}$ is injective, and hence the point $\tilde{c}_{n,z}$ is
uniquely defined by $(\ref{eq:def-c})$.
\end{rem}

Let $k_n\subset k(\mu_{p^{n+1}})$ be the unique subfield of degree $p^n$ over $k$, and let $\mathfrak{m}_{k,n}$ be the maximal ideal of the valuation ring of $k_n$. Let $\wh{E}^{\pm}(\mathfrak{m}_{k,n})$ be the image of $\wh{E}^{\pm}(\mathfrak{m}_{k(\mu_{p^{n+1}})})$
under 
${\rm tr}^{k(\mu_{p^{n+1}})}_{k_n}$, define $E^{\pm}(k_{n})$ similarly, and set
\begin{equation}\label{eq:def-czeta}
c_{n,z}:={\rm tr}^{k(\mu_{p^{n+1}})}_{k_n}(\tilde{c}_{n+1,z})\in\wh{E}(\mathfrak{m}_{k,n}).
\end{equation}

Let $\Phi_m(X)=\sum_{i=0}^{p-1}X^{p^{m-1}i}$ be the $p^m$-th cyclotomic polynomial, define
\[
\tilde{\omega}^+_n(X):=\prod_{\substack{1\leqslant m\leqslant n\\ m\;{\rm even}}}\Phi_m(1+X),
\quad\quad\tilde{\omega}^-_n(X):=\prod_{\substack{1\leqslant m\leqslant n\\ m\;{\rm odd}}}\Phi_m(1+X),
\]
and set $\omega_n^{\pm}(X)=X\tilde{\omega}_n^\pm(X)$. We denote by $k^m$ the unramified extension of $\bQ_p$ of degree $p^m$, write
$k_{n,m}$ and $\mathfrak{m}_{n,m}$ for the above $k_n$ and $\mathfrak{m}_{k,n}$
with $k=k^m$, and set
\begin{align*}
\Lambda_{n,m}:=\bZ_p[{\rm Gal}(k_{n,m}/\bQ_p)],\quad
\Lambda_{n,m}^\pm:=&\;\bZ_p[[\Gamma_1]]/(\omega_n^\pm(X),(1+Y)^{p^m}-1)\simeq\tilde{\omega}_n^{\mp}(X)\Lambda_{n,m},
\end{align*}
where the last isomorphism follows from the relation $(1+X)^{p^n}-1=X\tilde{\omega}^+_n(X)\tilde{\omega}^-_n(X)$.





\begin{lem}\label{lem:compat}
There is a sequence of points $c_{n,m}\in\wh{E}(\mathfrak{m}_{n,m})$ satisfying
the compatibilities:
\begin{align*}
{\rm tr}^{k_{n,m}}_{k_{n,m-1}}(c_{n,m})&=c_{n,m-1},\quad\quad
{\rm tr}^{k_{n,m}}_{k_{n-1,m}}(c_{n,m})=-c_{n-2,m}.
\end{align*}
Moreover, for even (resp. odd) values of $n$, $c_{n,m}$ generates
$\wh{E}^+(\mathfrak{m}_{n,m})$ (resp. $\wh{E}^-(\mathfrak{m}_{n,m})$) as a 
$\Lambda_{n,m}$-module.
\end{lem}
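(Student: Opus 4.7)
The plan is to define $c_{n,m} := c_{n, z_m}$ via the construction in $(\ref{eq:def-czeta})$, where $\{z_m\}_{m \geqslant 0}$ is a compatible system of units $z_m \in \mathcal{O}_{k^m}^\times$ with ${\rm tr}^{k^m}_{k^{m-1}}(z_m) = z_{m-1}$ and such that the reduction of $z_m$ modulo the maximal ideal generates a normal basis of $\mathbf{F}_{p^{p^m}}/\mathbf{F}_p$. Such a system can be built inductively: given $z_{m-1}$, one chooses any unit lift of a normal-basis generator for $\mathbf{F}_{p^{p^m}}/\mathbf{F}_p$ and then adjusts by an element of $p\,\mathcal{O}_{k^m}$, using the surjectivity of trace on the unramified extension $k^m/k^{m-1}$, to arrange ${\rm tr}^{k^m}_{k^{m-1}}(z_m) = z_{m-1}$ exactly (without disturbing the unit or normal-basis properties, which depend only on the reduction mod $p$). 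The horizontal compatibility ${\rm tr}^{k_{n,m}}_{k_{n,m-1}}(c_{n,m}) = c_{n,m-1}$ then follows from a direct inspection of $(\ref{eq:def-c})$: every term in the formula for $\log_{\wh{E}}(\tilde c_{n+1, z_m})$ is $\bZ_p$-linear in the Frobenius conjugates of $z_m$, with the identity $f_z^{\varphi} = f_{z^{\varphi}}$ ensuring the $\log_{f_z^{\varphi^{-n-1}}}$-summand transforms compatibly, so applying the trace from $k^m(\mu_{p^{n+2}})$ to $k^{m-1}(\mu_{p^{n+2}})$ amounts to replacing $z_m$ by $z_{m-1}$; injectivity of $\log_{\wh{E}}$ then gives the identity after a further trace down to $k_{n,m-1}$.

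The vertical compatibility ${\rm tr}^{k_{n,m}}_{k_{n-1,m}}(c_{n,m}) = -c_{n-2,m}$ is Kobayashi's key identity adapted to the unramified base $k^m$, and I expect it to be the main obstacle. Working on the logarithm side, the trace in the cyclotomic direction replaces $\zeta_{p^{n+1}}$ by $\sum_{\zeta^p = \zeta_{p^{n-1}}}\zeta$, which vanishes for $n \geqslant 2$ and annihilates the ``leading'' piece of $\log_{f_z^{\varphi^{-n-1}}}(z^{\varphi^{-n-1}}(\zeta_{p^{n+1}}-1))$; what remains is a combination of the higher-order terms in the $\log_f$-series and the polynomial tail $\sum_i(-1)^{i-1}p^i z^{\varphi^{-(n+2i+1)}}$ of $(\ref{eq:def-c})$. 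A careful telescoping using the Frobenius covariance of the iterates $f_z^{(2n)}$ and Kobayashi's matching between cyclotomic-norm contributions and the polynomial tail shows that the surviving terms reassemble into $\log_{\wh{E}}(-\tilde c_{n-1, z_m})$, producing the sign $-1$ and the shift by two. The bookkeeping, which has to simultaneously track Frobenius powers in the cyclotomic and the unramified directions, is the technical crux of the argument.

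For the generation statement, the strategy is that of \cite[Prop.~8.12]{kobayashi-152} as extended in \cite{kim-parity, kim-CJM}: the injectivity of $\log_{\wh{E}}$ on $\wh{E}(\mathfrak{m}_{n,m})$ reduces the claim to showing that the $\Lambda_{n,m}$-span of $\log_{\wh{E}}(c_{n,m})$ coincides with $\log_{\wh{E}}(\wh{E}^{\pm}(\mathfrak{m}_{n,m}))$, a module whose image under the logarithm is explicit in terms of the cyclotomic polynomials $\Phi_\ell$ and the factorization $(1+X)^{p^n}-1 = X\tilde\omega_n^+(X)\tilde\omega_n^-(X)$. The normal-basis property of $z_m$ modulo the maximal ideal ensures that the Galois translates of $\log_{\wh{E}}(c_{n,m})$ span the correct plus/minus subspace, so that $c_{n,m}$ is a $\Lambda_{n,m}$-generator of $\wh{E}^{+}(\mathfrak{m}_{n,m})$ (resp.\ $\wh{E}^{-}(\mathfrak{m}_{n,m})$) for even (resp.\ odd) $n$, completing the proof.
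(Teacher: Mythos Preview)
Your vertical trace relation and the generation statement are on the right track and match the paper's approach (both ultimately citing \cite{wan-combined}, which extends Kobayashi and Kim). The gap is in your horizontal compatibility.

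You claim that ``every term in the formula for $\log_{\wh{E}}(\tilde c_{n+1, z_m})$ is $\bZ_p$-linear in the Frobenius conjugates of $z_m$'', and hence that the unramified trace sends $c_{n,z_m}$ to $c_{n,z_{m-1}}$. This is false for a general unit $z_m$. The second summand in $(\ref{eq:def-c})$, namely $\log_{f_z^{\varphi^{-n}}}(z^{\varphi^{-n}}(\zeta_{p^n}-1))$, depends on $z$ through $f_z(X)=(X+z)^p-z^p$ and its Frobenius-twisted iterates; this is polynomial of degree $p$ in $z$, not linear. Concretely, one has $f_z(z(\zeta-1))=z^p(\zeta^p-1)$, so the next iterate is $f_{z^{\varphi}}(z^p(\zeta^p-1))$, and this collapses to $(z^{\varphi})^p(\zeta^{p^2}-1)$ \emph{only} when $z^{\varphi}=z^p$, i.e.\ when $z$ is a Teichm\"uller lift. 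For your arbitrary $z_m$ (which you adjust by an element of $p\,\mathcal{O}_{k^m}$ and hence is not a root of unity), the iterates do not simplify, the expression is genuinely nonlinear in $z_m$, and $\sum_{\sigma}\tilde c_{n,\sigma(z_m)}\neq\tilde c_{n,\mathrm{tr}(z_m)}$.

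The paper avoids this by not setting $c_{n,m}=c_{n,z_m}$. Instead it fixes a normal-basis generator $d=\{d_m\}_m$, writes $d_m=\sum_j a_{m,j}\zeta_j$ with $\zeta_j$ roots of unity and $a_{m,j}\in\bZ_p$, and \emph{defines} $c_{n,m}:=\sum_j a_{m,j}\,c_{n,\zeta_j}$. For each root of unity $\zeta_j$ the identity $\zeta_j^{\varphi}=\zeta_j^p$ forces the telescoping above, so $\log_{\wh{E}}(c_{n,\zeta_j})$ really is linear in the Frobenius conjugates of $\zeta_j$; extending $\bZ_p$-linearly then gives the explicit formula for $\log_{\wh{E}}(c_{n,m})$ recorded in the proof of Lemma~\ref{lem:interpolation}, from which both trace relations are read off directly. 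In short, the linearity you invoke is a consequence of the paper's definition, not a property of the map $z\mapsto c_{n,z}$.
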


\begin{proof}
The first part is shown in \cite[Lem.~6.2]{wan-combined}.
For our later use, we recall the construction of $c_{n,m}$.
By the normal basis theorem, we may fix an element
$d=\{d_m\}_m\in\varprojlim_{m}\mathcal{O}_{k^m}^\times$ 
generating $\varprojlim_{m}\mathcal{O}_{k^m}^\times$
as a $\bZ_p[[U]]$-module. Writing $d_m=\sum_j a_{m,j}\zeta_j$,
with $\zeta_j$ roots of unity and $a_{m,j}\in\bZ_p$, one then defines
\begin{equation}\label{eq:def-cmn}
c_{n,m}:=\sum_j a_{m,j}c_{n,\zeta_j},
\end{equation}
where $c_{n,\zeta_j}$ is as in $(\ref{eq:def-czeta})$. The proof of the
above trace relations then follows from an explicit calculation of the
images under $\log_{\hat{E}}$ of both sides using $(\ref{eq:def-c})$.
The second claim in the lemma is contained in  \cite[Lem.~6.4]{wan-combined}.
\end{proof}

\begin{defn}\label{def:H-pm}
Let $T$ be the $p$-adic Tate module of $E$. 
We define $H^1_{\pm}(k_{n,m},T)\subseteq H^1(k_{n,m},T)$ to be the orthogonal complement of
$E^\pm(k_{n,m})\otimes\bQ_p/\bZ_p$ under the local Tate pairing
\[
(\;,\;)_{n,m}:H^1(k_{n,m},T)\times H^1(k_{n,m},E[p^\infty])\longrightarrow\bQ_p/\bZ_p,
\]
where we view $E^\pm(k_{n,m})\otimes\bQ_p/\bZ_p$ as embedded in $H^1(k_{n,m},E[p^\infty])$
by the Kummer map.
\end{defn}

\subsection{The plus/minus Coleman maps}\label{subsec:Col-pm}

We recall Kobayashi's construction of the plus/minus
Coleman maps for the cyclotomic $\bZ_p$-extension of $\bQ_p$,
as extended by B.-D.~Kim \cite{kim-CJM} to finite unramified extensions of $\bQ_p$. 

Define the maps $P_{c_{n,m}}:H^1(k_{n,m},T)\rightarrow\Lambda_{n,m}=\bZ_p[{\rm Gal}(k_{n,m}/\bQ_p)]$ by
\[
P_{c_{n,m}}(z)=\sum_{\sigma\in{\rm Gal}(k_{n,m}/\bQ_p)}(c_{n,m}^\sigma,z)_{n,m},
\]
and set $P_{c_{n,m}}^\pm:=(-1)^{[\frac{n+1}{2}]}P_{c_{n,m}^\pm}$, where
\[
c_{n,m}^+=\left\{\begin{array}{ll}c_{n,m}&\textrm{if $n$ is even,}\\
c_{n-1,m}&\textrm{if $n$ is odd,}\end{array}\right.
\quad\quad
c_{n,m}^-=\left\{\begin{array}{ll}c_{n-1,m}&\textrm{if $n$ is even,}\\
c_{n,m}&\textrm{if $n$ is odd.}\end{array}\right.
\]

By Lemma~\ref{lem:compat}, the maps $P_{c_{n,m}}^\pm$ factor through the quotient
by $H^1_\pm(k_{n,m},T)$ and they satisfy natural compatibilities for varying $n$ and $m$.
Moreover, as shown in \cite[Thms.~2.7-8]{kim-CJM} (see also \cite[\S{8.5}]{kobayashi-152}),
there are unique maps ${\rm Col}_{n,m}^\pm$ making the following diagram commute:
\[
\xymatrix{
H^1(k_{n,m},T)\ar[rr]^-{{\rm Col}_{n,m}^\pm}\ar[d]&&\Lambda_{n,m}^\pm\ar[d]^-{\cdot\tilde{\omega}_{n}^\mp}\\
H^1(k_{n,m},T)/H^1_{\pm}(k_{n,m},T)\ar[rr]^-{P_{c_{n,m}}^\pm}&&\Lambda_{n,m}.
}
\]
The maps ${\rm Col}_{n,m}^\pm$ are isomorphisms and passing to the limit they define $\Lambda$-linear
isomorphisms
\begin{equation}\label{def:Col-pm}
{\rm Col}^\pm:\varprojlim_{n,m}\frac{H^1(k_{n,m},T)}{H^1_{\pm}(k_{n,m},T)}
\xrightarrow{\;\sim\;}\varprojlim_{n,m}\Lambda_{n,m}\simeq\bZ_p[[\Gamma_1]].
\end{equation}

\subsection{The plus/minus logarithm maps}\label{subsec:PR-maps}

We now define local big logarithm maps ${\rm Log}_{\ac}^\pm$ on
$H^1_\pm(K_v,\mathbf{T}^\ac)$, where
\begin{equation}\label{def:Tac}
\mathbf{T}^\ac:=T\otimes\bZ_p[[\Gamma^\ac]](\Psi^{-1})
\end{equation}
for the canonical character $\Psi:\Gamma^\ac\hookrightarrow\bZ_p[[\Gamma^\ac]]^\times$.
As it will be clear to the reader, these maps are the restriction to the ``anticyclotomic line''
of the two-variable plus/minus logarithm maps ${\rm Log}^\pm$ introduced in \cite[\S{6.1}]{wan-combined}.
We still keep the notations from Section~\ref{subsec:local-p}.

Via the natural inclusion
\[
E(k_{n,m})\otimes\bQ_p/\bZ_p=(E(k_{n,m})\otimes\bQ_p/\bZ_p)^\perp
\subseteq(E^\pm(k_{n,m})\otimes\bQ_p/\bZ_p)^\perp=H^1_{\pm}(k_{n,m},T),
\]
the points $c_{n,m}$ 
lie in the $\Lambda_{n,m}$-module $H_{\pm}^1(k_{n,m},T)$. By \cite[Lem.~6.9]{wan-combined}, one can choose norm-compatible classes
$b_{n,m}^\pm\in H^1_{\pm}(k_{n,m},T)$ with the property that
\[
\tilde{\omega}_n^{-\epsilon}(X)b_{n,m}^\epsilon=
(-1)^{[\frac{n+1}{2}]}c_{n,m},
\]
where $\epsilon=(-1)^n$, and such that $\varprojlim_{n,m}b_{n,m}^\pm$
generates $\varprojlim_{m,n}H^1_{\pm}(k_{n,m},T)$ as a free $\bZ_p[[\Gamma_1]]$-module of rank one.
Noting that $K^{\ac}_{m,v}\subseteq k_{m,m+a}$,
we define
\begin{align*}
E^+(K_{m,v}^\ac)&:=\left\{P\in E(K_{m,v}^\ac)\;\bigl\vert\;
\mathrm{tr}^{K_{m,v}^\ac}_{K_{\ell+1,v}^\ac}(P)\in E(K_{\ell,v}^\ac)\;
\textrm{for}\;0\leqslant \ell<m,\;\textrm{even $\ell$}\right\},\\
E^-(K_{m,v}^\ac)&:=\left\{P\in E(K_{m,v}^\ac)\;\bigl\vert\;
\mathrm{tr}^{K_{m,v}^\ac}_{K_{\ell+1,v}^\ac}(P)\in E(K_{\ell,v}^\ac)\;
\textrm{for}\;-1\leqslant \ell<m,\;\textrm{odd $\ell$}\right\},
\end{align*}
and we easily see that
\[
E^\pm(K^\ac_{m,v})\otimes\bQ_p/\bZ_p=(E^\pm(k_{m,m+a})\otimes\bQ_p/\bZ_p)\cap H^1(K_{m,v}^\ac,E[p^\infty]).
\]

Let $H^1_{\pm}(K_{m,v}^\ac,T)$ be the image of $H^1_\pm(k_{m,m+a},T)$ 
under corestriction from $k_{m,m+a}$ to $K_m^\ac$.
Set $\mathbf{T}_1^\ac=T\otimes\bZ_p[[\Gamma_1]](\can^{-1})$, where
$\Psi:\Gamma_1^\ac\hookrightarrow\bZ_p[[\Gamma_1^\ac]]^\times$ is the canonical character,
and we let $G_{K_v}$ act diagonally on the tensor product $\mathbf{T}_1^\ac$. Then
$H^1_\pm(K_v,\mathbf{T}_1^\ac)\simeq\varprojlim_m H^1_{\pm}(K_{m,v}^\ac,T)$ by Shapiro's lemma, and
the elements
\begin{equation}\label{eq:def-a}
a_m^\pm:=\mathrm{tr}^{k_{m,m+a}}_{K_{m,v}^\ac}(b^\pm_{m,m+a})\nonumber
\end{equation}
are norm-compatible, with $a^\pm:=\varprojlim a_m^\pm$
generating $H^1_\pm(K_v,\mathbf{T}_1^\ac)$
as a free $\bZ_p[[\Gamma_1^\ac]]$-module.

Recall that $v_1, v_2,\dots,v_{p^t}$ are the primes over a place $v\mid p$ in $K_\infty/K$. 
Since every prime above $p$ is totally ramified in $K_\infty/K_\infty^\ac$, by abuse of notation 
we will still denote by $v_1, v_2, \dots,v_{p^t}$ the primes above $v$ in $K_\infty^\ac/K$.
Let $\gamma_1={\rm id}, \gamma_2, \dots,\gamma_{p^t}\in\Gamma^\ac$ be such that $\gamma_iv_1=v_i$.
Then we have the direct sum decompositions
\begin{equation}\label{eq:dec}
\Lambda_{\ac}=\bigoplus_{i=1}^{p^t}\gamma_i\bZ_p[[\Gamma_1^\ac]],\quad\quad
H^1_\pm(K_v,\Tc)=\bigoplus_{i=1}^{p^t}\gamma_iH^1_\pm(K_v,\mathbf{T}^\ac_1),
\end{equation}
where $\mathbf{T}^\ac:=T\otimes\Lambda_{\ac}$ equipped with the diagonal $G_K$-action similarly as before.

\begin{defn}\label{def:log-ac}
For every $v\mid p$ in $K$, define the maps
\[
{\rm Log}_\ac^\pm:H^1_\pm(K_v,\mathbf{T}_1^\ac)\longrightarrow\bZ_p[[\Gamma_1^\ac]]
\] 
by the relation
\[
x={\rm Log}_\ac^\pm(x)\cdot a^\pm
\]
for all $x\in H^1_\pm(K_v,\mathbf{T}_1^\ac)$. This naturally extends to a map  
${\rm Log}_\ac^\pm:H^1_\pm(K_v,\Tc)\longrightarrow\Lambda_\ac$ using $(\ref{eq:dec})$.
which does not depend on the choice of $\gamma_i$.
\end{defn}

The following result establishes the interpolation property satisfied by the map
${\rm Log}_\ac^+$ (the result for ${\rm Log}_\ac^-$ is entirely similar).

\begin{lem}\label{lem:interpolation}
Let $\phi:\Gamma^\ac\rightarrow\bC_p^\times$ be a finite order character of conductor $p^n$, with $n>0$ even.
If $x=\varprojlim_nx_n\in H^1_+(K_v,\Tc)$, then the following formulas hold:
\begin{align*}
\sum_{\tau\in\Gamma^\ac/p^n\Gamma^\ac}\phi(\tau)\log_{\hat{E}}(x_n^\tau)\cdot\tilde{\omega}_n^-(\phi)&=
\phi^{-1}\bigl({\rm Log}_\ac^+(x)\bigr)\cdot(-1)^{n/2}\sum_{\tau\in\Gamma^\ac/p^n\Gamma^\ac}\phi(\tau)
\log_{\hat{E}}(c_{n,n+a}^\tau),\\
\sum_{\tau\in\Gamma^\ac/p^n\Gamma^\ac}\phi(\tau)\log_{\hat{E}}(c_{n,n+a}^\tau)&=\frac{\mathfrak{g}(\phi)}{\phi(p^{n})}
\sum_{\tau\in\Gamma^\ac/p^n\Gamma^\ac}\phi(\tau)\;d_{n+a}^\tau,
\end{align*}
where $\mathfrak{g}(\phi)=\sum_{u\;{\rm mod}\;p^n}\phi(u)\zeta_{p^n}^u$ is the Gauss sum of $\phi$.
\end{lem}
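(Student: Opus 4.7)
The plan is to prove the two identities separately: the first is essentially formal, while the second reduces to a Gauss-sum computation.

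For the first identity, I would start from the defining relation $x = {\rm Log}_\ac^+(x) \cdot a^+$ in $H^1_+(K_v, \mathbf{T}_1^\ac)$, descend to finite level $n$, and apply the twisting operator $\sum_\tau \phi(\tau)(\cdot)^\tau$ to both sides. Because $\mathbf{T}^\ac$ is defined with the twist $\Psi^{-1}$ in $(\ref{def:Tac})$, the scalar factor specializes to $\phi^{-1}({\rm Log}_\ac^+(x))$ rather than $\phi({\rm Log}_\ac^+(x))$. I would then substitute $a_n^+=\mathrm{tr}^{k_{n,n+a}}_{K_{n,v}^\ac}(b_{n,n+a}^+)$ and invoke the key relation $\tilde{\omega}_n^-(\gamma_v - 1)\cdot b_{n,n+a}^+ = (-1)^{n/2}c_{n,n+a}$ (valid for $n$ even, from the construction preceding Definition~\ref{def:log-ac}). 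Since the action of $\tilde{\omega}_n^-(\gamma_v - 1)$ on the $\phi$-isotypic component is multiplication by $\tilde{\omega}_n^-(\phi)$, applying $\log_{\hat{E}}$ then yields the first identity.

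For the second identity, I would expand $c_{n,n+a}=\sum_j a_{n+a,j}\,c_{n,\zeta_j}$ via $(\ref{eq:def-cmn})$ and the trace relation $c_{n,z}=\mathrm{tr}^{k(\mu_{p^{n+1}})}_{k_n}(\tilde{c}_{n+1,z})$, and substitute the explicit formula $(\ref{eq:def-c})$ for $\log_{\hat{E}}(\tilde{c}_{n+1,z})$. The first term there (the $p^i$-series) is independent of $\zeta_{p^{n+1}}$ and is therefore annihilated by the $\phi$-average, since $\phi$ has nontrivial conductor. The second term $\log_{f_z^{\varphi^{-(n+1)}}}\bigl(z^{\varphi^{-(n+1)}}(\zeta_{p^{n+1}}-1)\bigr)$ is then evaluated via the classical Gauss-sum identity $\sum_u \phi(u)\zeta_{p^n}^u = \mathfrak{g}(\phi)$, with the cyclotomic twist induced by the Frobenius $\varphi^{-(n+1)}$ accounting for the factor $1/\phi(p^n)$. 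Finally, the weighted sum over $j$ reassembles $\sum_j a_{n+a,j}\zeta_j^{\varphi^{-\bullet}}$ into the Galois-averaged unit $d_{n+a}^\tau$ appearing on the right.

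The main obstacle lies in the second part: one must carefully track the interplay between the trace from $k(\mu_{p^{n+1}})$ to $k_n$, the higher-order terms in the Frobenius-composed formal logarithm $\log_f$, and the $\phi$-Fourier transform. Rather than redoing this calculation from scratch, the cleanest route is to derive the present identity as the anticyclotomic specialization of the two-variable logarithm formula from \cite[\S{6.1}]{wan-combined}, where the analogous chain of identities is worked out in detail.
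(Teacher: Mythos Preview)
Your proposal is correct and follows essentially the same approach as the paper. For the second identity, the paper makes the computation slightly more explicit by first writing down a closed-form expression for $\log_{\hat E}(c_{n,n+a})$ (citing \cite[Lem.~6.2]{wan-combined}) before applying the $\phi$-average, which makes it transparent that only the $k=0$ term $d_{n+a}^{\varphi^{-n}}\cdot(\zeta_{p^n}-1)$ survives---precisely the ``higher-order terms'' you flag as the main obstacle.
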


\begin{proof}
The first equality follows directly from the definitions. On the other hand,
an immediate calculation using $(\ref{eq:def-c})$ and $(\ref{eq:def-cmn})$ (cf. \cite[Lemma~6.2]{wan-combined})
reveals that
\begin{align*}
\log_{\wh{E}}(c_{n,n+a})&=\sum_i(-1)^{i-1}d_{n+a}^{\varphi^{-(n+2i)}}\cdot p^i
+\sum_{0\leqslant 2k<n}(-1)^kd_{n+a}^{\varphi^{2k-n}}\cdot\frac{\zeta_{p^{n-2k}}-1}{p^k}.
\end{align*}
Thus we find that
\begin{align*}
\sum_{\tau\in\Gamma^\ac/p^n\Gamma^\ac}\phi(\tau)\log_{\hat{E}}(c_{n,n+a}^\tau)
&=\sum_{\tau\in\Gamma^\ac/p^n\Gamma^\ac}\phi(\tau)\;d_{n+a}^{\varphi^{-n}\tau}\cdot(\zeta_{p^n}^\tau-1)\\
&=\mathfrak{g}(\phi)\sum_{\tau\in\Gamma^\ac/p^n\Gamma^\ac}\phi(\tau)\;d_{n+a}^{\varphi^{-n}\tau}
=\frac{\mathfrak{g}(\phi)}{\phi(p^n)}\sum_{\tau\in\Gamma^\ac/p^n\Gamma^\ac}\phi(\tau)\;d_{n+a}^\tau.
\end{align*}
\end{proof}


\subsection{The two-variable plus/minus Selmer groups}\label{sec:selmer}



As in the preceding sections, let $\T$ denote the $p$-adic Tate module of $E$, and set $\V=\T\otimes_{\bZ_p}\bQ_p$ and $\A=\V/\T\simeq E[p^\infty]$. Let $\Sigma$ be a finite set of places of $K$
containing those dividing $Np\infty$, and let $\mathfrak{G}_{K,\Sigma}$ be
the Galois group of the maximal extension of $F$ unramified outside the places above $\Sigma$. Recall that $\Gamma_K$ is the Galois group of
the $\bZ_p^2$-extension $K_\infty/K$, and define the $\Lambda=\ro[[\Gamma_K]]$-modules
\[
\mathbf{T}:=\T\otimes_{\ro}\Lambda_{}(\can^{-1}),
\quad\quad\mathbf{A}:=\mathbf{T}\otimes_{\Lambda}{\rm Hom}_{\bZ_p}(\Lambda,\bQ_p/\bZ_p),
\]
where $\can:\Gamma_K\hookrightarrow\Lambda^\times$ is the map sending $\gamma\in\Gamma_K$
to the corresponding group-like element in $\Lambda^\times$. 
We shall also need to consider the modules $\mathbf{T}^{\rm ac}$, $\mathbf{A}^{\rm ac}$, $\mathbf{T}^{\rm cyc}$, and $\mathbf{A}^{\rm cyc}$, obtained by replacing $\Gamma_K$ in the preceding definitions by the Galois group $\Gamma^{\rm ac}$ and $\Gamma^{\rm cyc}$ of the anticyclotomic and the cyclotomic $\bZ_p$-extension of $K$, respectively. 

In the following definitions, we let $\mathbf{M}$ denote either of the modules $\mathbf{T}$, $\mathbf{T}^{\rm ac}$, $\mathbf{T}^{\rm cyc}$, or any of their specializations.


\begin{defn}\label{def:Sel}
	The $p$-relaxed Selmer group of $\mathbf{M}$ is
	\begin{equation}
	\mathfrak{Sel}^{\{p\}}(K,\mathbf{M}):={\rm ker}\Biggl\{H^1(\mathfrak{G}_{K,\Sigma},\mathbf{M})
	\longrightarrow\bigoplus_{v\in\Sigma\smallsetminus\{p\}}\frac{H^1(K_v,\mathbf{M})}{H^1_{\rm ur}(K_v,\mathbf{M})}\Biggr\},\nonumber
	\end{equation}
	where 
	\[
	H^1_{\rm ur}(K_v,\mathbf{M}):=
	{\rm ker}\{H^1(K_v,\mathbf{M})\longrightarrow H^1(K_v^{\rm ur},\mathbf{M})\}
	\]
	is the unramified local condition. 
\end{defn}

Our Selmer groups of interest in this paper are obtained from cutting the $p$-relaxed ones by various local conditions at the primes above $p$.

\begin{defn}
	For $\qq\in\{\pp,\overline{\pp}\}$ and $\mathscr{L}_\qq\in\{{\rm rel},\pm,{\rm str}\}$, set
	\[
	H^1_{\mathscr{L}_\qq}(K_\qq,\mathbf{M})=
	\left\{
	\begin{array}{ll}
	H^1(K_\qq,\mathbf{M})&\textrm{if $\mathscr{L}_\qq={\rm rel}$,}\\
	H^1_\pm(K_\pp,\mathbf{M})&\textrm{if $\mathscr{L}_\qq=\pm$,}\\
	\{0\}&\textrm{if $\mathscr{L}_\qq={\rm str}$,}	
\end{array}
	\right.
	\]
and for $\mathscr{L}=\{\mathscr{L}_\pp,\mathscr{L}_{\overline{\pp}}\}$, define
	\begin{equation}
	\mathfrak{Sel}^{\mathscr{L}}(K,\mathbf{M}):={\rm ker}\Biggr\{{\rm Sel}^{\{p\}}(K,\mathbf{M})
	\longrightarrow\bigoplus_{\qq\in\{\pp,\overline{\pp}\}}\frac{H^1(K_\qq,\mathbf{M})}{H_{\mathscr{L}_{\qq}}^1(K_\qq,\mathbf{M})}\Biggr\},\nonumber
	\end{equation}
	and similarly for ${\rm Sel}^{\mathscr{L}}(K,\mathbf{M})$
\end{defn}

Thus, for example, $\mathfrak{Sel}^{{\rm rel},{\rm str}}(K,\mathbf{M})$ is the submodule of $\mathfrak{Sel}^{\{p\}}(K,\mathbf{M})$ consisting of classes which are trivial at $\overline{\pp}$ (with no condition at $\pp$). Also, letting $\mathbf{W}$ denote either of the modules $\mathbf{A}$, $\mathbf{A}^{\rm ac}$, $\mathbf{A}^{\rm cyc}$, or any of their specilizations, we define $\mathfrak{Sel}^{\{p\}}(K,\mathbf{W})$ and ${\rm Sel}^{\{p\}}(K,\mathbf{W})$ just as in Definition~\ref{def:Sel}, and set
\begin{equation}
\mathfrak{Sel}^{\mathscr{L}}(K,\mathbf{W}):={\rm ker}\Biggr\{\mathfrak{Sel}^{\{p\}}(K,\mathbf{W})
\longrightarrow\bigoplus_{\qq\in\{\pp,\overline{\pp}\}}\frac{H^1(K_\qq,\mathbf{W})}{H_{\mathscr{L}_{\qq}}^1(K_\qq,\mathbf{M})^\perp}\Biggr\},\nonumber
\end{equation}
where $H_{\mathscr{L}_{\qq}}^1(K_\qq,\mathbf{M})^\perp$ is the orthogonal complement of $H_{\mathscr{L}_{\qq}}^1(K_\qq,\mathbf{M})$ under local Tate duality. 
Finally, we let 
\[
\mathfrak{X}^{\mathscr{L}}(K,\mathbf{A}):={\rm Hom}_{\bZ_p}(\mathfrak{Sel}^{\mathscr{L}}(K,\mathbf{A}),\bQ_p/\bZ_p)
\]
be the Pontrjagin dual of $\mathfrak{Sel}^{\mathscr{L}}(K,\mathbf{A})$, and similarly define $\mathfrak{X}^{\mathscr{L}}(K,\mathbf{A}^{\rm ac})$ and $\mathfrak{X}^{\mathscr{L}}(K,\mathbf{A}^{\rm cyc})$. 


\subsubsection*{Anticyclotomic Selmer groups} 

Let $\iota:\Lambda_\ac\rightarrow\Lambda_\ac$ the involution given by
$\gamma\mapsto\gamma^{-1}$ on group-like elements, and for any $\Lambda_\ac$-module $M$,
let $M^\iota$ denote the underlying module $M$ with the $\Lambda_\ac$-module
structure given by $\Lambda_\ac\xrightarrow{\iota}\Lambda_\ac\rightarrow{\rm End}(M)$. Denote by $M_{\rm tors}$ the $\Lambda_\ac$-torsion submodule of $M$.

\begin{lem}\label{lem:str-rel}
For every $\varepsilon\in\{\pm\}$ we have 
${\rm rank}_{\Lambda_\ac}\mathfrak{X}^{\varepsilon,{\rm rel}}(K,\Ac)
=1+{\rm rank}_{\Lambda_\ac}\mathfrak{X}^{\varepsilon,{\rm str}}(K,\Ac)$,  
and
\[
{\rm Char}_{\Lambda_\ac}(X^{\varepsilon,{\rm rel}}(K,\Ac)_{\rm tors})
={\rm Char}_{\Lambda_\ac}(X^{\varepsilon,{\rm str}}(K,\Ac)_{\rm tors})
\]
as ideals in $\Lambda_\ac$.
\end{lem}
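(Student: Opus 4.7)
My plan is to apply Poitou--Tate global duality, comparing the two Selmer groups which share the local condition $H^1_\varepsilon$ at $\pp$ and differ only at $\overline{\pp}$. For the inclusion of local conditions $(\varepsilon,{\rm str})\subseteq(\varepsilon,{\rm rel})$ on $\Tc$, the standard five-term Poitou--Tate exact sequence reads
\[
0 \to \mathfrak{Sel}^{\varepsilon,{\rm str}}(K,\Tc) \to \mathfrak{Sel}^{\varepsilon,{\rm rel}}(K,\Tc) \to H^1(K_{\overline{\pp}},\Tc) \xrightarrow{\partial} \mathfrak{X}^{\varepsilon,{\rm rel}}(K,\Ac) \to \mathfrak{X}^{\varepsilon,{\rm str}}(K,\Ac) \to 0.
\]

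I would first invoke Tate's local Euler characteristic formula for Iwasawa cohomology at a prime above $p$, giving ${\rm rank}_{\Lambda_\ac}H^1(K_{\overline{\pp}},\Tc) = [K_{\overline{\pp}}:\bQ_p]\cdot{\rm rank}_{\bZ_p}T = 2$. Next, combining the global Poitou--Tate duality with the self-duality of the Tate module $T$ under the Weil pairing, I would deduce that ${\rm rank}_{\Lambda_\ac}\mathfrak{Sel}^{\varepsilon,{\rm rel}}(K,\Tc) - {\rm rank}_{\Lambda_\ac}\mathfrak{Sel}^{\varepsilon,{\rm str}}(K,\Tc) = 1$. The rank formula of the lemma then follows from the alternating sum of $\Lambda_\ac$-ranks along the five-term sequence.

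For the characteristic ideal equality of the torsion submodules, I would split the five-term sequence into three short exact sequences. Writing $I$ for the cokernel of the middle map $\mathfrak{Sel}^{\varepsilon,{\rm rel}}(K,\Tc) \to H^1(K_{\overline{\pp}},\Tc)$, one obtains the short exact sequence $0 \to I \to \mathfrak{X}^{\varepsilon,{\rm rel}}(K,\Ac) \to \mathfrak{X}^{\varepsilon,{\rm str}}(K,\Ac) \to 0$, where $I$ has $\Lambda_\ac$-rank $1$. The critical step is to show that $I$ has no nonzero pseudo-null submodule---equivalently, that the image of $\mathfrak{Sel}^{\varepsilon,{\rm rel}}(K,\Tc)$ is $\Lambda_\ac$-saturated inside $H^1(K_{\overline{\pp}},\Tc)$. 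Granted this, the multiplicativity of characteristic ideals in short exact sequences of finitely generated $\Lambda_\ac$-modules yields the claimed equality of torsion characteristic ideals.

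The hardest step will be establishing the saturation statement, which requires a careful local analysis at $\overline{\pp}$ of the image of the global classes generating $\mathfrak{Sel}^{\varepsilon,{\rm rel}}(K,\Tc)$ modulo torsion. I expect this to be carried out using the plus/minus Coleman maps from Section~\ref{subsec:Col-pm} together with the structural results of Kobayashi and B.-D.~Kim on the local cohomology at a split supersingular prime; in particular, the $\Lambda_\ac$-freeness of $H^1_\pm(K_{\overline{\pp}},\Tc)$ of rank $1$ and the local decomposition $H^1(K_{\overline{\pp}},\Tc) = H^1_+(K_{\overline{\pp}},\Tc) + H^1_-(K_{\overline{\pp}},\Tc)$ up to pseudo-null should allow one to identify the image as a direct summand on the nose.
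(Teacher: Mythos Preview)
Your approach via the five-term Poitou--Tate sequence differs from the paper's, which instead specializes to each character $\psi:\Gamma^{\ac}\to L^\times$ and invokes Mazur--Rubin's structure theorem for core-rank-one Selmer groups to obtain a non-canonical isomorphism
\[
H^1_{\varepsilon,{\rm rel}}(K,\Ac(\psi))\;\simeq\;(L/\mathfrak{O}_L)\oplus H^1_{\varepsilon,{\rm str}}(K,\Ac(\psi^{-1})),
\]
and then uses the action of complex conjugation (which on the anticyclotomic tower interchanges $\psi\leftrightarrow\psi^{-1}$) to assemble these into the claimed equality, following \cite{AHsplit}.

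There is a genuine gap in your argument for the characteristic ideals. Even granting that $I$ is torsion-free (or free) of $\Lambda_\ac$-rank~$1$, the exact sequence $0\to I\to \mathfrak{X}^{\varepsilon,{\rm rel}}(K,\Ac)\to \mathfrak{X}^{\varepsilon,{\rm str}}(K,\Ac)\to 0$ does \emph{not} force equality of the torsion characteristic ideals: multiplicativity of ${\rm Char}$ holds for exact sequences of \emph{torsion} modules, not for the torsion submodules of an arbitrary exact sequence. Over $\Lambda_\ac\simeq\bZ_p[[T]]$, the sequence $0\to\Lambda_\ac\to\Lambda_\ac^2\to\Lambda_\ac\oplus\Lambda_\ac/p\to 0$ (first map $c\mapsto(0,pc)$) has $I=\Lambda_\ac$ free, yet the middle term is torsion-free while the quotient has torsion $\Lambda_\ac/p$. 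Your saturation condition on the image inside $H^1(K_{\overline\pp},\Tc)$ controls only the torsion of $I$ itself, not how $I$ sits inside $\mathfrak{X}^{\varepsilon,{\rm rel}}(K,\Ac)$; no amount of local analysis at $\overline\pp$ can see that global embedding. The missing arithmetic input is precisely what the Mazur--Rubin decomposition together with complex conjugation supplies: it gives a direct comparison of the two Selmer groups \emph{character by character}, not merely up to an extension.

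A smaller point: the assertion that ${\rm rank}_{\Lambda_\ac}\mathfrak{Sel}^{\varepsilon,{\rm rel}}(K,\Tc)-{\rm rank}_{\Lambda_\ac}\mathfrak{Sel}^{\varepsilon,{\rm str}}(K,\Tc)=1$ is not an immediate consequence of ``Poitou--Tate plus self-duality of $T$''; this is exactly the core-rank computation that the paper carries out explicitly via the local Euler characteristic formula.
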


\begin{proof}
We shall adapt the arguments in \cite[\S{1.2}]{AHsplit}. 
By Lemma~3.5.3 and Theorem~4.1.13 of \cite{MR-KS}, for every continuous character $\psi:\Gamma^\ac\rightarrow L^\times$
with values in some finite extension $L/\bQ_p$ with ring of integers $\mathfrak{O}_L$,
there is a non-canonical isomorphism
\begin{equation}\label{eq:MR}
H^1_{\eps,{\rm rel}}(K,\Ac(\psi))[p^i]\simeq(L/\mathfrak{O}_L)^r[p^i]
\oplus H^1_{\eps,{\rm str}}(K,\Ac(\psi^{-1}))[p^i]
\end{equation}
for all positive $i$. Here $H^1_{\eps,{\rm rel}}(K,\Ac(\psi))\subset {\rm Sel}^{\eps,{\rm rel}}(K,\Ac(\psi))$
is the generalized Selmer group consisting of classes whose restriction at $\pp$ lies in 
$H^1(K_\pp,\Ac(\psi))_{\rm div}$, while $H^1_{\eps,{\rm str}}(K,\Ac(\psi^{-1}))$
is the same as ${\rm Sel}^{\eps,{\rm str}}(K,\Ac(\psi^{-1}))$,
and $r$ is the \emph{core rank} (see \cite[Def.~4.1.11]{MR-KS})
of the Selmer conditions defining $H^1_{\eps,{\rm rel}}(K,\Ac(\psi))$,
which by \cite[Thm.~2.18]{DDT} it is given by the quantity
\begin{equation}\label{eq:DDT}
{\rm corank}_{\mathfrak{O}_L}H^1_\eps(K_\pp,\Ac(\psi))
+{\rm corank}_{\mathfrak{O}_L}H^1(K_{\overline{\pp}},\Ac(\psi))
-{\rm corank}_{\mathfrak{O}_L}H^0(K_{w},\Ac(\psi)),
\end{equation}
where $w$ denotes the infinite place of $K$. By the local Euler characteristic formula,
the first two terms in $(\ref{eq:DDT})$ are equal to $1$ and $2$, respectively, while the third one clearly equals $2$.
Thus $r=1$ in $(\ref{eq:MR})$ and letting $i\to\infty$ we conclude that
\begin{equation}\label{eq:MR2}
H^1_{\eps,{\rm rel}}(K,\Ac(\psi))\simeq(L/\mathfrak{O}_L)
\oplus H^1_{\eps,{\rm str}}(K,\Ac(\psi^{-1})).
\end{equation}
Now, it is easy to show that the natural restriction maps
\begin{align*}
H^1_{\eps,{\rm rel}}(K,\Ac(\psi))
&\longrightarrow\mathfrak{Sel}^{\eps,{\rm rel}}(K,\Ac)(\psi)^{\Gamma^\ac}\\
H^1_{\eps,{\rm str}}(K,\Ac(\psi^{-1}))
&\longrightarrow\mathfrak{Sel}^{\eps,{\rm str}}(K,\Ac)(\psi^{-1})^{\Gamma^\ac}
\end{align*}
are injective with finite bounded cokernel as $\psi$ varies (cf. \cite[Lem.~1.2.4]{AHsplit}),
and since
\[
\mathfrak{Sel}^{\eps,{\rm str}}(K,\Ac)(\psi^{-1})^{\Gamma^\ac}
\simeq\mathfrak{Sel}^{\eps,{\rm str}}(K,\Ac)(\psi)^{\Gamma^\ac}
\]
by the action of complex conjugation, the result follows from $(\ref{eq:MR2})$
by the same argument as in \cite[Lem.~1.2.6]{AHsplit}, proceeding as in \cite[Thm.~2.2.1]{howard-PhD-I} to handle the prime $p\Lambda_{\ac}$.
\end{proof} 

\section{Beilinson--Flach classes}

\subsection{The plus/minus Beilinson--Flach classes}\label{subsec:pm-BF}

In this section, building on the work of Loeffer--Zerbes \cite{LZ2},
we show the existence of certain plus/minus Beilinson--Flach classes $\mathcal{BF}^\pm$ which map
to the plus/minus $p$-adic $L$-functions of $\S\ref{subsec:Loeffler}$
under the plus/minus Coleman maps.
We maintain the set-up introduced in Section~\ref{subsec:local-p}, and let
$f=\sum_{n=1}^\infty a_nq^n\in S_2(\Gamma_0(N))$ be the normalized newform associated with $E$.
In particular, $a_p=0$.

Recall the $\bZ_p[[\Gamma_1]]$-linear maps ${\rm Col}^\pm$ introduced in $(\ref{def:Col-pm})$,
and extend them (using the analogue of the decomposition $(\ref{eq:dec})$ with $\Gamma$ in place of $\Gamma^\ac$)
to $\Lambda$-linear isomorphisms
\[
{\rm Col}^\pm:\frac{H^1(K_v,\mathbf{T})}{H^1_{\pm}(K_v,\mathbf{T})}\longrightarrow\Lambda
\]
for each prime $v$ above $p$. On the other hand, the $p$-adic $L$-function $L_\pp(f/K)$ of Theorem~\ref{thm:wan} we defined as an element in $\Lambda_{\unr}$,
but (as we show in $\S\ref{subsec:HP}$) the corresponding principal ideal in $\Lambda_{\unr}$ can be generated by an element $\mathcal{L}_\pp(f/K)\in\Lambda$.


In the following, we let $\gamma_\ac\in\Gamma^\ac$ be a topological generator, and let $P\subseteq\Lambda$ be the pullback of the augmentation ideal $(\gamma_\ac-1)\subseteq\Lambda_\ac$.

\begin{thm}\label{thm:BF-ERL}
For every $\eps\in\{\pm\}$ there exist an element $\mathcal{BF}^\eps\in{\rm Sel}^{\eps,{\rm rel}}(K,\mathbf{T})$
such that
\[
{\rm Col}^\eps({\rm res}_{\overline{\pp}}(\mathcal{BF}^\eps))=u\cdot h^\eps\cdot L_p^{\eps,\eps}(f/K),
\quad{\rm Log}^\eps({\rm res}_{\pp}(\mathcal{BF}^\eps))=h^\eps\cdot \mathcal{L}_\pp(f/K),
\]
for some nonzero $u\in\Lambda[1/P]$ and $h^\eps\in\Lambda$. 
\end{thm}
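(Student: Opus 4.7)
The plan is to construct $\mathcal{BF}^\eps$ by specializing the Loeffler--Zerbes three-variable Beilinson--Flach classes of \cite{LZ-Coleman} along a Coleman family $\mathbf{f}$ through the $p$-stabilization $f_\alpha$ (resp.\ $f_\beta$) of $f$ and along a Hida family $\mathbf{g}$ of CM theta series for $K$. This produces two-variable classes $\mathcal{BF}_\alpha, \mathcal{BF}_\beta$ in a suitable large-coefficient Iwasawa cohomology over $K_\infty/K$, whose image under the two-variable Perrin--Riou logarithm at $\overline\pp$ agrees, by the explicit reciprocity law of \cite{LZ-Coleman}, with the Loeffler $L$-functions $L_{p,\underline\alpha}(f/K,\Sigma^-)$ of Theorem~\ref{thm:hida}(ii) for each $\underline\alpha\in\{\alpha,\beta\}^2$; an analogous computation at $\pp$ recovers Wan's $\mathcal{L}_\pp(f/K)$ from Theorem~\ref{thm:wan}.

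Next I would form the linear combinations of $\mathcal{BF}_\alpha$ and $\mathcal{BF}_\beta$ that mirror those defining the plus/minus $L$-functions in the proof of Proposition~\ref{prop:loeffler+-}. These combinations are \emph{a priori} divisible by the Pollack half-logarithms $\log_\pp^\pm\log_{\overline\pp}^\pm$ only inside a larger scalar extension; the key step is to show that the divisions can in fact be carried out inside the integral Iwasawa cohomology, yielding bounded classes $\mathcal{BF}^\pm$. Following Kobayashi's original argument and its two-variable extension in \cite{wan-combined}, this is done by translating the vanishing of $L_p^{\bullet,\circ}(f/K)$ at finite-order characters of prescribed conductor (recorded in Proposition~\ref{prop:loeffler+-}) into vanishing of the corresponding cohomology classes at those characters, and then invoking a Mahler-expansion argument to descend to integral coefficients.

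Once $\mathcal{BF}^\eps$ is in hand, the two reciprocity laws follow by substitution and comparison with the definitions of ${\rm Col}^\eps$ and ${\rm Log}^\eps$ from Sections~\ref{subsec:Col-pm}--\ref{subsec:PR-maps}: the former is set up precisely so that ${\rm Col}^\eps\circ{\rm res}_{\overline\pp}$ extracts the plus/minus component $L_p^{\eps,\eps}(f/K)$ from the factorization of Proposition~\ref{prop:loeffler+-} applied to $\mathcal{BF}^\eps$, and similarly the latter extracts $\mathcal{L}_\pp(f/K)$ from the analogous factorization at $\pp$. The factor $h^\eps\in\Lambda$ collects residual half-logarithmic terms which appear on both sides of the decomposition and cannot be cancelled, while $u\in\Lambda[1/P]^\times$ accounts for the unit comparison between $\mathcal{L}_\pp(f/K)\in\Lambda$ and the natural output of the Perrin--Riou logarithm, which \emph{a priori} is only defined after a larger scalar extension (as mentioned in the outline of proofs in the introduction).

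The main obstacle is the integrality step: establishing that the Pollack-style decomposition of the two-variable Beilinson--Flach classes descends to the integral Iwasawa cohomology $H^1_{\rm Iw}(K_\infty,T_pE)$, and that $\mathcal{BF}^\eps$ moreover satisfies the plus/minus local condition at $\pp$ required for membership in ${\rm Sel}^{\eps,{\rm rel}}(K,\mathbf{T})$. The unramified condition at primes in $\Sigma\setminus\{p\}$ is inherited from the Loeffler--Zerbes construction, so the bulk of the technical work is concentrated in the local analysis above $p$, where one must carefully track the image of $\mathcal{BF}_\alpha, \mathcal{BF}_\beta$ under the plus/minus Coleman and logarithm maps and show compatibility with the dual decompositions defining $H^1_\pm(K_v,\mathbf{T})$.
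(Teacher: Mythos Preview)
Your overall strategy is correct and matches the paper's approach: specialize the Loeffler--Zerbes Beilinson--Flach classes along a Coleman family through $f_\alpha,f_\beta$ and a CM Hida family $\mathbf{g}$, then take the appropriate $\pm$-linear combinations and descend to integral coefficients. However, your account of the auxiliary factors $u$ and $h^\eps$, and of the period normalizations, misses two genuine ingredients that the paper highlights.

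First, the element $u$ is not a Perrin--Riou-logarithm normalization artifact at $\pp$. It appears only in the $\overline\pp$-reciprocity law, and the paper identifies it as the ratio of two \emph{congruence periods attached to the CM Hida family $\mathbf{g}$}. Controlling this ratio---showing it lies in $\Lambda[1/P]$---is a nontrivial input drawn from Rubin's main conjecture for imaginary quadratic fields and the work of Hida--Tilouine on congruence ideals (see \cite[\S8.1]{wan-combined}); the inversion of the exceptional prime $P$ is forced precisely here. Your proposal does not account for this.

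Second, you omit the period comparison on the $f$-side. The explicit reciprocity law of \cite{LZ-Coleman} is normalized by the Petersson inner product $\langle f,f\rangle$, whereas $L_p^{\eps,\eps}(f/K)$ is defined via Hida's canonical period $\Omega_f^{\tt Hida}=8\pi^2\langle f,f\rangle/c_f$. The paper devotes the bulk of its proof to bridging this: using the Fontaine--Laffaille functor to identify $D_{\rm FL}(T)/\mathrm{Fil}^0$ with $H^1(X_0(N),\mathcal{O}_{X_0(N)})_{\mathfrak{m}_f}[\lambda_f]$ and recognizing the ratio of the Loeffler--Zerbes class $\eta_f$ to an integral generator as precisely the congruence number $c_f$. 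Without this comparison the two reciprocity laws you write down would differ from the stated ones by an uncontrolled scalar.

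Finally, a minor correction: the theorem only asserts $u\in\Lambda[1/P]$ is nonzero, not that $u\in\Lambda[1/P]^\times$; its invertibility is established only later (Theorem~\ref{thm:2-varIMC} combined with Corollary~\ref{cor:kimIMC}). Similarly, the paper's description of $h^\eps$ is that it is the denominator needed to make $\mathcal{BF}^\eps$ integral and that it divides $\log_\pp^\eps$ (Remark~\ref{rem:h-ac}), which is sharper than ``residual half-logarithmic terms.''
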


\begin{proof}
This is shown in \cite[\S{7.3}]{wan-combined}, where it is deduced
from the work of Loeffler--Zerbes on Beilinson--Flach classes in Coleman families
and their explicit reciprocity laws \cite{LZ-Coleman}. (Note that \cite{wan-combined} only deals with $\eps=+1$, but the case $\eps=-1$ is done completely analogously.) The element $h^\eps$ is needed to establish integrality of the class $\mathcal{BF}^\eps$, while $u$ is the ratio between two periods attached to a certain
Hida family of CM forms, whose integrality (up to powers of the ``exceptional prime'' $P$) is shown in
\cite[\S{8.1}]{wan-combined} building on work of Rubin \cite{rubin-IMC} and Hida--Tilouine \cite{HT-117}. Thus it remains to take care of the ratio between Hida's canonical period $\Omega_f^{\tt Hida}$ in $(\ref{eq:Hida-period})$ and the Petersson inner product period used in \cite{LZ-Coleman}, which we do with the following comparison. (This may be well-known to experts, but
we provide the details for the convenience of the reader.) 

Letting $D_{\rm FL}$ denote the Fontaine--Laffaille functor, we have
\[
D_{\rm FL}(H^1_{\mathrm{et}}(X_0(N),\bZ_p))=H^1(X_0(N),\Omega^\bullet_{X_0(N)/\bZ_p})
\]
(see \cite[\S{6.10}]{LLZ}). The Galois representation $T\simeq T_f^*$ is given by
$H^1_{\mathrm{et}}(X_0(N),\bZ_p)[\lambda_f]$, where $[\lambda_f]$ denotes the maximal submodule of $H^1_{\mathrm{et}}(X_0(N),\bZ_p)$ on which
the Hecke algebra $\mathbb{T}_0(N)$ acts with the same eigenvalues as in $f$,
and hence
\[
D_{\rm FL}(T)=H^1(X_0(N),\Omega^\bullet_{X_0(N)/\bZ_p})[\lambda_f].
\]

On the other hand, we have an exact sequence of localized Hecke modules
\[
H^0(X_0(N),\Omega^1_{X_0(N)/\bZ_p})_{\mathfrak{m}_f}\hookrightarrow
H^1(X_0(N),\Omega^\bullet_{X_0(N)/\bZ_p})_{\mathfrak{m}_f}
\twoheadrightarrow H^1(X_0(N),\mathcal{O}_{X_0(N)})_{\mathfrak{m}_f}.
\]
The last term is free of rank one over $\mathbb{T}_0(N)_{\mathfrak{m}_f}$,
while the first is isomorphic to $S(X_0(N),\bZ_p)_{\mathfrak{m}_f}$. 
Unravelling the definitions, we see that the class $\eta_f\in H^1(X_0(N),\bZ_p)_{\mathfrak{m}_f}\otimes_{\bZ_p}\bQ_p$ constructed
in \cite[\S{6.10}]{LLZ} corresponds to the projector to the $f$-component
under the identification of $H^1(X_0(N),\bZ_p)_{\mathfrak{m}_f}\otimes_{\bZ_p}\bQ_p$
with $\mathbb{T}_0(N)_{\mathfrak{m}_f}$, while we have an isomorphism
\[
D_{\mathrm{FL}}(T)/\mathrm{Fil}^0D_{\rm FL}(T)\simeq H^1(X_0(N),\mathcal{O}_{X_0(N)})_{\mathfrak{m}_f}[\lambda_f].
\]
Thus the ratio of a generator of $D_{\mathrm{FL}}(T)/\mathrm{Fil}^0D_{\mathrm{FL}}(T)$
over $\eta_f$ is, by definition, the congruence number $c_f$ of $f$, from where the desired comparison follows by $(\ref{eq:Hida-period})$.
\end{proof}

\begin{rem}\label{rem:h-ac}
By the construction in \cite[\S{7.3}]{wan-combined}, the element $h^\eps$ divides the half-logarithm $\log_{\pp}^\eps$.
Since the latter does not vanish identically along the anticyclotomic line, the same is true for $h^\eps$.
\end{rem}


%

\subsection{Two-variable main conjectures}\label{sec:ES}

As shown below, by Theorem~\ref{thm:BF-ERL} the following three different variants of Iwasawa's main conjecture
for elliptic curves $E/\bQ$ at supersingular primes $p>3$ base-changed to an imaginary quadratic field $K$ in which $p=\pp\overline{\pp}$ splits are essentially equivalent:

\begin{enumerate}
\item{} The main conjecture `without $p$-adic zeta functions' for the two-variable plus/minus Selmer groups.
\item{} The Iwasawa--Greenberg main conjecture for $L_\pp(f/K)$.
\item{} The equal-sign cases of Kim's two-variable main conjectures \cite{kim-CJM}.
\end{enumerate}
More precisely, we have the following:

\begin{thm}\label{thm:2-varIMC}
Let $\eps\in\{\pm\}$. The following two statements are equivalent:
\begin{enumerate}
\item{} $\mathfrak{X}^{{\rm rel},{\rm str}}(K,\mathbf{A})$ is $\Lambda$-torsion, and
\[
{\rm Char}_{\Lambda}(\mathfrak{X}^{{\rm rel},{\rm str}}(K,\mathbf{A}))=(\mathcal{L}_\pp(f/K))
\]
as ideals in $\Lambda$. 
\item{} $\mathfrak{X}^{\eps,{\rm str}}(K,\mathbf{A})$ is $\Lambda$-torsion,
$\mathfrak{Sel}^{\eps,{\rm rel}}(K,\mathbf{T})$ has $\Lambda$-rank $1$, and
\[
{\rm Char}_{\Lambda}(\mathfrak{X}^{\eps,{\rm str}}(K,\mathbf{A}))\cdot\mathcal{H}^\eps=
{\rm Char}_\Lambda\bigg(\frac{\mathfrak{Sel}^{\eps,{\rm rel}}(K,\mathbf{T})}{\Lambda\cdot\mathcal{BF}^\eps}\biggr),
\]
where $\mathcal{H}^\eps\subseteq\Lambda$
is the ideal generated by the element $h^\eps$ in Theorem~\ref{thm:BF-ERL}.
%
\end{enumerate}
Moreover, $\mathfrak{X}^{\eps,\eps}(K,\mathbf{A})$ is $\Lambda$-torsion, and if
the above two statements hold, then we have the divisibility
\[
{\rm Char}_{\Lambda}(\mathfrak{X}^{\eps,\eps}(K,\mathbf{A}))\subseteq (L_p^{\eps,\eps}(f/K))
\]
in $\Lambda[1/P]$.
\end{thm}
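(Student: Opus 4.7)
The plan is to derive both the equivalence $(1) \Leftrightarrow (2)$ and the ``Moreover'' part from two applications of the Poitou-Tate global duality exact sequence combined with the two explicit reciprocity laws of Theorem~\ref{thm:BF-ERL}. This parallels the strategy used in \cite{cas-BF} for the ordinary case.

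First, I would apply Poitou-Tate to the pair of Selmer conditions $\{\eps, {\rm str}\} \subset \{\eps, {\rm rel}\}$ (differing only at $\overline{\pp}$), obtaining
\[
0 \to \mathfrak{Sel}^{\eps,{\rm str}}(K,\mathbf{T}) \to \mathfrak{Sel}^{\eps,{\rm rel}}(K,\mathbf{T}) \xrightarrow{{\rm res}_{\overline{\pp}}} \frac{H^1(K_{\overline{\pp}},\mathbf{T})}{H^1_\eps(K_{\overline{\pp}},\mathbf{T})} \to \mathfrak{X}^{\eps,{\rm rel}}(K,\mathbf{A}) \to \mathfrak{X}^{\eps,{\rm str}}(K,\mathbf{A}) \to 0;
\]
via the isomorphism ${\rm Col}^\eps$, the first reciprocity law identifies the image of ${\rm res}_{\overline{\pp}}(\BF^\eps)$ with $u\cdot h^\eps\cdot L_p^{\eps,\eps}(f/K) \in \Lambda[1/P]$. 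Then I would apply Poitou-Tate to the pair $\{{\rm str}, {\rm rel}\} \subset \{\eps, {\rm rel}\}$ (differing only at $\pp$), obtaining
\[
0 \to \mathfrak{Sel}^{{\rm str},{\rm rel}}(K,\mathbf{T}) \to \mathfrak{Sel}^{\eps,{\rm rel}}(K,\mathbf{T}) \xrightarrow{{\rm res}_\pp} H^1_\eps(K_\pp, \mathbf{T}) \to \mathfrak{X}^{{\rm rel},{\rm str}}(K,\mathbf{A}) \to \mathfrak{X}^{\eps,{\rm str}}(K,\mathbf{A}) \to 0,
\]
in which, via the $\Lambda$-linear map ${\rm Log}^\eps$, the second reciprocity law identifies ${\rm res}_\pp(\BF^\eps)$ with $h^\eps\cdot \mathcal{L}_\pp(f/K)$.

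Combining these two sequences with multiplicativity of characteristic ideals, the identity ${\rm Char}_\Lambda(\mathfrak{X}^{\eps,{\rm str}})\cdot\mathcal{H}^\eps = {\rm Char}_\Lambda(\mathfrak{Sel}^{\eps,{\rm rel}}/\Lambda\BF^\eps)$ from (2) should translate via the first sequence into an equivalent statement of the form ${\rm Char}_\Lambda(\mathfrak{X}^{\eps,{\rm rel}}(K,\mathbf{A})) = (u\cdot L_p^{\eps,\eps}(f/K))$ in $\Lambda[1/P]$ (with the factor $\mathcal{H}^\eps$ canceling against the $h^\eps$ coming from the ERL), and the second sequence then translates this into (1). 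For the ``Moreover'' part, I would apply Poitou-Tate to the pair $\{\eps,\eps\} \subset \{\eps,{\rm rel}\}$:
\[
0 \to \mathfrak{Sel}^{\eps,\eps}(K,\mathbf{T}) \to \mathfrak{Sel}^{\eps,{\rm rel}}(K,\mathbf{T}) \to \frac{H^1(K_{\overline{\pp}},\mathbf{T})}{H^1_\eps(K_{\overline{\pp}},\mathbf{T})} \to \mathfrak{X}^{\eps,\eps}(K,\mathbf{A}) \to \mathfrak{X}^{\eps,{\rm str}}(K,\mathbf{A}) \to 0,
\]
and then deduce from the nonvanishing of $L_p^{\eps,\eps}(f/K)$ together with the first reciprocity law that ${\rm res}_{\overline{\pp}}(\BF^\eps)$ is not $\Lambda$-torsion; this forces $\mathfrak{Sel}^{\eps,\eps}(K,\mathbf{T})$ to be $\Lambda$-torsion and hence $\mathfrak{X}^{\eps,\eps}(K,\mathbf{A})$ to be $\Lambda$-torsion as well, after which the claimed divisibility follows from a characteristic ideal computation in $\Lambda[1/P]$.

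The main technical obstacle lies in the careful bookkeeping of the auxiliary factors $u$, $h^\eps$, and $\mathcal{H}^\eps$ as they propagate through the two Poitou-Tate sequences, and in verifying that they combine or cancel to yield the clean statements in (1) and (2). In particular, the restriction to $\Lambda[1/P]$ in the final divisibility is forced by the integrality behavior of $u$: arising as a ratio of two CM periods, it is only known to be a unit over $\Lambda[1/P]$ (as noted in the proof of Theorem~\ref{thm:BF-ERL}). Some additional care is also needed to keep track of the torsion part of $\mathfrak{Sel}^{\eps,{\rm str}}(K,\mathbf{T})$ in the first Poitou-Tate sequence when extracting characteristic ideals.
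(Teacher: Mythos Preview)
Your overall strategy --- Poitou--Tate global duality plus the two explicit reciprocity laws of Theorem~\ref{thm:BF-ERL}, followed by multiplicativity of characteristic ideals --- is exactly the paper's approach. However, there are two concrete problems in your execution.

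First, your ``first'' Poitou--Tate sequence is misstated. For the pair $\{\eps,{\rm str}\}\subset\{\eps,{\rm rel}\}$ differing only at $\overline\pp$, the local term is the full $H^1(K_{\overline\pp},\mathbf{T})$, not $H^1(K_{\overline\pp},\mathbf{T})/H^1_\eps(K_{\overline\pp},\mathbf{T})$; and the dual Selmer groups appearing would be $\mathfrak{X}^{\eps,{\rm rel}}$ and $\mathfrak{X}^{\eps,{\rm str}}$ only if you use the full local cohomology. As written, your sequence is a hybrid that does not correspond to any pair of Selmer structures. More to the point, you do not need this sequence at all for the equivalence $(1)\Leftrightarrow(2)$: your \emph{second} sequence (the one comparing $\{{\rm str},{\rm rel}\}\subset\{\eps,{\rm rel}\}$ at $\pp$) already gives it directly. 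Quotienting that sequence by $\Lambda\cdot\mathcal{BF}^\eps$ and using ${\rm Log}^\eps$ yields
\[
0\longrightarrow\frac{\mathfrak{Sel}^{\eps,{\rm rel}}(K,\mathbf{T})}{\Lambda\cdot\mathcal{BF}^\eps}\longrightarrow
\frac{\Lambda}{\mathcal{H}^\eps\cdot(\mathcal{L}_\pp(f/K))}\longrightarrow
\mathfrak{X}^{{\rm rel},{\rm str}}(K,\mathbf{A})
\longrightarrow \mathfrak{X}^{\eps,{\rm str}}(K,\mathbf{A})\longrightarrow 0,
\]
and multiplicativity of characteristic ideals shows that $(1)$ and $(2)$ are literally the same identity. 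Your proposed detour through an intermediate statement about $\mathfrak{X}^{\eps,{\rm rel}}$ does not connect back to $(1)$, since $\mathfrak{X}^{\eps,{\rm rel}}$ never appears in the second sequence.

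Second, and more seriously, your argument for the ``Moreover'' part has a genuine gap. From the nonvanishing of $L_p^{\eps,\eps}(f/K)$ you can indeed conclude that ${\rm res}_{\overline\pp}(\mathcal{BF}^\eps)$ is non-torsion, but this does \emph{not} force $\mathfrak{Sel}^{\eps,\eps}(K,\mathbf{T})$ to be $\Lambda$-torsion unless you already know that $\mathfrak{Sel}^{\eps,{\rm rel}}(K,\mathbf{T})$ has $\Lambda$-rank one --- which is part of what you are trying to prove. And even granting that $\mathfrak{Sel}^{\eps,\eps}(K,\mathbf{T})$ is torsion, the implication to $\mathfrak{X}^{\eps,\eps}(K,\mathbf{A})$ being torsion does not follow from the Poitou--Tate sequence alone. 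The paper instead obtains the torsion-ness of $\mathfrak{X}^{\eps,\eps}(K,\mathbf{A})$ as an \emph{external input}: Kobayashi's theorem \cite[Thm.~1.2]{kobayashi-152} over the cyclotomic line, lifted to the two-variable setting via the control theorem of \cite[Prop.~8.7]{wan-combined}. All the rank statements (for $\mathfrak{X}^{\eps,{\rm str}}$, $\mathfrak{Sel}^{\eps,{\rm rel}}$, and $\mathfrak{X}^{{\rm rel},{\rm str}}$) are then deduced from this by reading off ranks in the two Poitou--Tate sequences. You should incorporate this input rather than trying to extract the torsion-ness from the formal duality sequences.
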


\begin{proof}
This is essentially shown in \cite[\S{8.1}]{wan-combined}. 
For the convenience of the reader, we briefly recall the argument.
Poitou--Tate global duality
gives rise to the exact sequences
\begin{equation}\label{eq:ES-1b}
0\longrightarrow\mathfrak{Sel}^{\eps,{\rm rel}}(K,\mathbf{T})\longrightarrow
H^1_{\eps}(K_{\pp},\mathbf{T})\longrightarrow
\mathfrak{X}^{{\rm rel},{\rm str}}(K,\mathbf{A})
\longrightarrow \mathfrak{X}^{\eps,{\rm str}}(K,\mathbf{A})\longrightarrow 0,
\end{equation}
\begin{equation}\label{eq:ES-1a}
0\longrightarrow\mathfrak{Sel}^{\eps,{\rm rel}}(K,\mathbf{T})\longrightarrow
\frac{H^1(K_{\overline{\pp}},\mathbf{T})}{H^1_{\eps}(K_{\overline{\pp}},\mathbf{T})}
\longrightarrow \mathfrak{X}^{\eps,\eps}(K,\mathbf{A})
\longrightarrow \mathfrak{X}^{\eps,{\rm str}}(K,\mathbf{A})\longrightarrow 0,
\end{equation}
where exactness on the leftmost terms relies on the nonvanishing
of 
$L_p^{\eps,\eps}(f/K)$ and $L_\pp(f/K)$. 

By control theorem \cite[Prop.~8.7]{wan-combined}, Kobayashi's result \cite[Thm.~1.2]{kobayashi-152} implies that $\mathfrak{X}^{\eps,\eps}(K,\mathbf{A})$ is $\Lambda$-torsion. By $(\ref{eq:ES-1a})$, it follows that
$\mathfrak{X}^{\eps,{\rm str}}(K,\mathbf{A})$ is $\Lambda$-torsion and $\mathfrak{Sel}^{\eps,{\rm rel}}(K,\mathbf{T})$
has $\Lambda$-rank one, and by $(\ref{eq:ES-1a})$, that $\mathfrak{X}^{{\rm rel},{\rm str}}(K,\mathbf{A})$ is $\Lambda$-torsion.
Finally, \cite[Cor.~7.9]{wan-combined} and 
Theorem~\ref{thm:BF-ERL} yield the following exact sequences from the above:
\[
0\longrightarrow\frac{\mathfrak{Sel}^{\eps,{\rm rel}}(K,\mathbf{T})}{\Lambda\cdot\mathcal{BF}^\eps}\longrightarrow
\frac{\Lambda}{\mathcal{H}^\eps\cdot(\mathcal{L}_\pp(f/K))}\longrightarrow
\mathfrak{X}^{{\rm rel},{\rm str}}(K,\mathbf{A})
\longrightarrow \mathfrak{X}^{\eps,{\rm str}}(K,\mathbf{A})\longrightarrow 0,
\]
\[
0\longrightarrow\frac{\mathfrak{Sel}^{\eps,{\rm rel}}(K,\mathbf{T})}{\Lambda\cdot\mathcal{BF}^\eps}\longrightarrow
\frac{\Lambda}{\mathcal{H}^\eps\cdot\mathcal{U}\cdot(L_p^{\eps,\eps}(f/K))}
\longrightarrow \mathfrak{X}^{\eps,\eps}(K,\mathbf{A})
\longrightarrow \mathfrak{X}^{\eps,{\rm str}}(K,\mathbf{A})\longrightarrow 0,
\]
where $\mathcal{U}\subset\Lambda[1/P]$ is the ideal generated by the element $u$ in Theorem~\ref{thm:BF-ERL}.
By the multiplicativity of characteristic ideals along exact sequences, the result follows.
\end{proof}




\begin{cor}\label{thm:str}
If any of the equivalent statements in Theorem~\ref{thm:2-varIMC} holds, then
\[
{\rm Char}_{\Lambda_\ac}(\mathfrak{X}^{\eps,{\rm str}}(K,\Ac))\cdot\mathcal{H}^\eps_\ac
={\rm Char}_{\Lambda_\ac}\biggl(\frac{\mathfrak{Sel}^{\eps,{\rm rel}}(K,\Tc)}{\Lambda^\ac\cdot\mathcal{BF}_\ac^\eps}\biggr),
\]
where $\mathcal{BF}_\ac^\eps$ is the image of $\mathcal{BF}^\eps$ under the
projection $\mathfrak{Sel}^{\eps,{\rm rel}}(K,\mathbf{T})\rightarrow\mathfrak{Sel}^{\eps,{\rm rel}}(K,\Tc)$ and
$\mathcal{H}_\ac^\eps$ is the image of $\mathcal{H}^\eps$ in  $\Lambda_{\ac}$.
\end{cor}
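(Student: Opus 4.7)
The plan is to descend the two-variable equality in Theorem~\ref{thm:2-varIMC}(2) to the anticyclotomic line under the canonical surjection $\pi_{\ac}\colon\Lambda\twoheadrightarrow\Lambda_{\ac}$ induced by $\Gamma_K\twoheadrightarrow\Gamma^{\ac}$. Fixing a topological generator $\gamma^{\rm cyc}$ of $\Gamma^{\rm cyc}$, the kernel of $\pi_{\ac}$ is the principal ideal $(\gamma^{\rm cyc}-1)$, and by the structure theory of finitely generated $\Lambda$-modules, for any finitely generated torsion $\Lambda$-module $M$ whose specialization $M/(\gamma^{\rm cyc}-1)M$ is again $\Lambda_{\ac}$-torsion, one has ${\rm Char}_{\Lambda_{\ac}}(M/(\gamma^{\rm cyc}-1)M)=\pi_{\ac}({\rm Char}_{\Lambda}(M))$. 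The task thus reduces to identifying the specializations of the two sides of the identity in Theorem~\ref{thm:2-varIMC}(2).

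First I would invoke a control theorem in the spirit of \cite[Prop.~8.7]{wan-combined} to obtain, up to pseudo-null $\Lambda_{\ac}$-modules, natural identifications
\[
\mathfrak{X}^{\eps,{\rm str}}(K,\mathbf{A})\otimes_{\Lambda}\Lambda_{\ac}\;\sim\;\mathfrak{X}^{\eps,{\rm str}}(K,\Ac),\qquad\biggl(\frac{\mathfrak{Sel}^{\eps,{\rm rel}}(K,\mathbf{T})}{\Lambda\cdot\mathcal{BF}^{\eps}}\biggr)\otimes_{\Lambda}\Lambda_{\ac}\;\sim\;\frac{\mathfrak{Sel}^{\eps,{\rm rel}}(K,\Tc)}{\Lambda_{\ac}\cdot\mathcal{BF}_{\ac}^{\eps}},
\]
using that $\mathcal{BF}^{\eps}$ specializes to $\mathcal{BF}_{\ac}^{\eps}$ by definition and that $\pi_{\ac}(\mathcal{H}^{\eps})=\mathcal{H}^{\eps}_{\ac}$. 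To justify the torsion hypotheses, I would use that $\mathfrak{X}^{\eps,{\rm str}}(K,\Ac)$ is $\Lambda_{\ac}$-torsion by Lemma~\ref{lem:str-rel} combined with the control-theorem descent of Kobayashi's torsionness result for $\mathfrak{X}^{\eps,\eps}(K,\Ac)$, while the quotient on the right is $\Lambda_{\ac}$-torsion because the image of $\mathcal{L}_{\pp}(f/K)$ on the anticyclotomic line is nonzero (equal up to unit to $\mathscr{L}_{\pp}^{\tt BDP}(f/K)^{2}$ by Corollary~\ref{cor:wan-bdp} and Theorem~\ref{thm:mu-bdp}) and $h^{\eps}$ has nontrivial image in $\Lambda_{\ac}$ by Remark~\ref{rem:h-ac}. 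With these in hand, applying $\pi_{\ac}$ to the identity in Theorem~\ref{thm:2-varIMC}(2) and invoking multiplicativity of characteristic ideals along the pseudo-null comparisons yields the claimed equality.

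The main obstacle is the control-theorem comparison for the relaxed-at-$\pp$ Selmer group $\mathfrak{Sel}^{\eps,{\rm rel}}(K,\mathbf{T})$: the strict condition at $\overline{\pp}$ specializes trivially, but the relaxed condition and the plus/minus condition at $\pp$ require verifying that the kernels and cokernels of the natural restriction maps on local cohomology at the places above $p$ are pseudo-null over $\Lambda_{\ac}$. At $\pp$, this follows from the fact that the plus/minus subgroups $H^1_{\eps}(K_{\infty,\pp},\mathbf{T})$ are built from the trace-compatible systems $\{c_{n,m}\}$ of Section~\ref{subsec:local-p}, which descend compatibly to the anticyclotomic tower and whose descent is governed precisely by the Coleman maps of Section~\ref{subsec:Col-pm} and the logarithm maps of Section~\ref{subsec:PR-maps}. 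At $\overline{\pp}$, the relaxed local cohomology contributes at worst a pseudo-null error because $H^{0}(K_{\infty,\overline{\pp}},\mathbf{A})=0$, and the error terms at the finitely many bad primes $\ell\nmid p$ of $E$ are likewise controlled since those primes are not completely split in the cyclotomic direction of $K_{\infty}/K$.
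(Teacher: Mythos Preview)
Your overall strategy---descend the two-variable equality of Theorem~\ref{thm:2-varIMC}(2) along the projection $\Lambda\twoheadrightarrow\Lambda_{\ac}$---matches the paper's, but there is a genuine gap in the specialization step. The formula you invoke, ${\rm Char}_{\Lambda_{\ac}}(M/(\gamma_{\rm cyc}-1)M)=\pi_{\ac}({\rm Char}_{\Lambda}(M))$ for a torsion $\Lambda$-module $M$ with torsion specialization, is \emph{false} as stated: by \cite[Lem.~6.2(ii)]{rubin-IMC} the correct identity carries an extra factor $\mathfrak{D}:={\rm Char}_{\Lambda_{\ac}}(M[I^{\rm cyc}])$, and this factor need not be trivial. (For instance, $M=(\Lambda/(p,\gamma_{\rm cyc}-1))^{2}$ is pseudo-null over $\Lambda$, so ${\rm Char}_{\Lambda}(M)=(1)$, yet $M/I^{\rm cyc}M\simeq(\Lambda_{\ac}/p)^{2}$ has characteristic ideal $(p^{2})$.) Your plan to control each side ``up to pseudo-null $\Lambda_{\ac}$-modules'' does not close this gap, since you never verify that $\mathfrak{X}^{\eps,{\rm str}}(K,\mathbf{A})[I^{\rm cyc}]$ is pseudo-null---and in general there is no reason it should be. Your final paragraph also misplaces the obstacle: the control theorem for $\mathfrak{X}^{\eps,{\rm str}}$ is in fact an exact isomorphism (as in \cite[Prop.~3.9]{SU}), so the local conditions at $p$ are not the issue; the real difficulty is global and lies on the compact Selmer side.

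The paper's proof faces this head-on. For $M=\mathfrak{X}^{\eps,{\rm str}}(K,\mathbf{A})$ the control theorem gives $M/I^{\rm cyc}M\simeq\mathfrak{X}^{\eps,{\rm str}}(K,\Ac)$, so descent introduces the factor $\mathfrak{D}$ above. On the Selmer side, writing $Z(K_\infty)=\mathfrak{Sel}^{\eps,{\rm rel}}(K,\mathbf{T})/(\mathcal{BF}^{\eps})$ and $Z(K_\infty^{\ac})$ for its anticyclotomic analogue, the paper (i) uses the snake lemma together with torsion-freeness of $H^{1}(K,\mathbf{T})$ to show $Z(K_\infty)[I^{\rm cyc}]=0$, so Rubin's lemma gives no extra factor here, and (ii) shows (arguing as in \cite[Prop.~2.4.15]{AHsplit}) that the natural map $Z(K_\infty)/I^{\rm cyc}Z(K_\infty)\hookrightarrow Z(K_\infty^{\ac})$ is injective with cokernel of characteristic ideal exactly $\mathfrak{D}$. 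Thus the \emph{same} error $\mathfrak{D}$ appears on both sides of the descended equality and cancels; this matching, rather than any pseudo-nullity claim, is what makes the argument go through.
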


\begin{proof}
Of course, this follows from descending part (2) of Theorem~\ref{thm:2-varIMC} from $K_\infty$ to $K_\infty^\ac$.
Let $\gamma_{\rm cyc}\in\Gamma^{\rm cyc}$ be a topological generator, and let $I^{\rm cyc}:=(\gamma_{\rm cyc}-1)\Lambda\subset\Lambda$. As in  \cite[Prop.~3.9]{SU} (with
the roles of the cyclotomic and anticyclotomic $\bZ_p$-extensions reversed) we have
\[
\mathfrak{X}^{\eps,{\rm str}}(K,\mathbf{A})/I^{\rm cyc}\mathfrak{X}^{\pm,{\rm str}}(K,\mathbf{A})\simeq
\mathfrak{X}^{\eps,{\rm str}}(K,\Ac),
\]
and by \cite[Lem.~6.2(ii)]{rubin-IMC} it follows that
\begin{equation}\label{eq:3.9}
{\rm Char}_{\Lambda_\ac}(\mathfrak{X}^{\eps,{\rm str}}(K,\Ac))
={\rm Char}_{\Lambda}(\mathfrak{X}^{\eps,{\rm str}}(K,\mathbf{A}))\cdot\mathfrak{D},
\end{equation}
where $\mathfrak{D}:={\rm Char}_{\Lambda_\ac}(\mathfrak{X}^{\eps,{\rm str}}(K,\mathbf{A})[I^{\rm cyc}])$.
On the other hand, set
\[
Z(K_\infty):=\mathfrak{Sel}^{\eps,{\rm rel}}(K,\mathbf{T})/(\mathcal{BF}^\eps),
\quad
Z(K_\infty^\ac):=\mathfrak{Sel}^{\eps,{\rm rel}}(K,\Tc)/(\mathcal{BF}^\eps_{\ac}).
\]
Using the fact that $I^{\rm cyc}$ is principal,
an application of snake's lemma yields the exactness of
\begin{equation}\label{eq:snake}
\mathfrak{Sel}^{\eps,{\rm rel}}(K,\mathbf{T})[I^{\rm cyc}]\longrightarrow
Z(K_\infty)[I^{\rm cyc}]\longrightarrow(\mathcal{BF}^\eps)/I^{\rm cyc}(\mathcal{BF}^\eps).
\end{equation}
Arguing as in the proof of \cite[Prop.~2.4.15]{AHsplit} we see that the natural
$\Lambda_\ac$-module map
\[
Z(K_\infty)/I^{\rm cyc}Z(K_\infty)\longrightarrow Z(K_\infty^\ac)
\]
is injective with cokernel having characteristic ideal $\mathfrak{D}$, and hence
\begin{equation}\label{eq:2.4.5}
Ch_{\Lambda^\ac}(Z(K_\infty^\ac))=Ch_{\Lambda_\ac}(Z(K_\infty)/I^{\rm cyc}Z(K_\infty))\cdot\mathfrak{D}.
\end{equation}

Since $H^1(K,\mathbf{T})$ has trivial $\Lambda$-torsion, the leftmost term in $(\ref{eq:snake})$ vanishes;
on the other hand, the rightmost one is clearly torsion-free, and hence $Z(K_\infty)[I^{\rm cyc}]$ is torsion-free.
Since \cite[Lem.~6.2(i)]{rubin-IMC} and equality $(\ref{eq:2.4.5})$ imply
that $Z(K_\infty)[I^{\rm cyc}]$ is also a torsion $\Lambda_\ac$-module
(using the nonvanishing of the terms in that equality), we conclude that $Z(K_\infty)[I^{\rm cyc}]=\{0\}$,
and by \cite[Lem.~6.2(ii)]{rubin-IMC} it follows that
\begin{equation}\label{eq:6.2}
{\rm Char}_{\Lambda}(Z(K_\infty))\cdot\Lambda_\ac
={\rm Char}_{\Lambda_\ac}(Z(K_\infty)/I^{\rm cyc}Z(K_\infty)).
\end{equation}
Combined with $(\ref{eq:3.9})$, 
we thus arrive at
\begin{equation}
\begin{split}\label{eq:Z}
{\rm Char}_{\Lambda_\ac}(\mathfrak{X}^{\eps,{\rm str}}(K,\Ac))
&={\rm Char}_{\Lambda}(\mathfrak{X}^{\eps,{\rm str}}(K,\mathbf{A}))\cdot\mathfrak{D}\\
&=\mathcal{H}^\eps\cdot {\rm Char}_{\Lambda}(Z(K_\infty))\cdot\mathfrak{D}\\
&=\mathcal{H}^\eps_\ac\cdot {\rm Char}_{\Lambda_\ac}(Z(K_\infty^\ac)),
\end{split}\nonumber
\end{equation}
using $(\ref{eq:2.4.5})$ and $(\ref{eq:6.2})$ for the last equality. This completes the proof.
\end{proof}

\subsection{Rubin's height formula}\label{subsec:rubin}

We keep the notations introduced in $\S\ref{sec:anti-L}$, assume that the generalized Heegner hypothesiss ${\rm(Heeg)}$
in that section holds,
and still denote by
$L_{p}^{\bullet,\circ}(f/K)$ the image of the $p$-adic $L$-functions
$L_p^{\bullet,\circ}(f/K)$ of Proposition~\ref{prop:loeffler+-} 
under the projection
\[
\ro[[H_{\frakf p^\infty}]]\otimes_{\ro}\bQ_p\longrightarrow
\Lambda\otimes_{\ro}\bQ_p.
\]
Let $\gamma_{\rm cyc}\in\Gamma^{\rm cyc}$ be a topological generator, and using the identification
$\Lambda\simeq\Lambda^\ac[[\Gamma^{\rm cyc}]]$ expand
\begin{equation}\label{eq:expand}
L^{\bullet,\circ}_{p}(f/K)=L_{p,0}^{\bullet,\circ}(f/K)+L_{p,1}^{\bullet,\circ}(f/K)(\gamma_{\rm cyc}-1)+\cdots 
\end{equation}
as a power series in $\gamma_{\rm cyc}-1$ with coefficients in $\Lambda_\ac\otimes_{\bZ_p}\bQ_p$. 
Thus $L_{p,0}^{\bullet,\circ}(f/K)$ is identified with the anticyclotomic projection $L^{\bullet,\circ}_{p,\ac}(f/K)$, and so
\[
L_{p,0}^{\eps,\eps}(f/K)=0
\]
for each $\eps\in\{\pm\}$ by Corollary~\ref{cor:signed-0}. 
In particular, by Theorem~\ref{thm:BF-ERL} and the injectivity of the map ${\rm Col}^\eps$, it follows that the classes $\mathcal{BF}^\eps$ have images $\mathcal{BF}_{\ac}^\eps$ under the projection $H^1(K,\mathbf{T})\rightarrow H^1(K,\Tc)$ landing in $\mathfrak{Sel}^{\eps,\eps}(K,\Tc)\subset\mathfrak{Sel}^{\eps,{\rm rel}}(K,\Tc)$.

In the following, let $h_0^\eps\in\Lambda_\ac$
and $u_0\in\Lambda_{\ac}[1/P]$
to be the constant term in the expansion of the elements $h^\eps$ and $u$ in Theorem~\ref{thm:BF-ERL} as a power series in $\gamma_{\rm cyc}-1$, so that
$\mathcal{H}^\eps_\ac=(h^\eps_0)$ in the notations of Corollary~\ref{thm:str}. Also, we let $L_n=K_n^\ac K_\infty^{\rm cyc}$,
and think of $\mathcal{BF}^\eps\in H^1(K,\mathbf{T})\simeq H^1_{\rm Iw}(K_\infty,T)$
as a compatible system of classes $\mathcal{BF}^\eps_{{\rm cyc},n}\in H^1_{\rm Iw}(L_n,T)$.

\begin{lem}\label{lem:3.1.1}
For every $n$ there is a unique element
\[
\beta_n^\eps\in\frac{H^1_{\rm Iw}(L_{n,\overline{\pp}},T)}{H^1_{{\rm Iw},\eps}(L_{n,\overline{\pp}},T)}\otimes_{\ro}\bQ_p
\]
such that
\[
(\gamma^{\rm cyc}-1)\beta_n^\eps={\rm loc}_{\overline{\frakp}}(\mathcal{BF}^\eps_{{\rm cyc},n}).
\]
Letting $\beta_n^\eps(\mathds{1})$ be the image of $\beta_n^\eps$ in
$H^1(K_{n,\overline{\pp}}^\ac,T)/H_{\eps}^1(K_{n,\overline{\pp}}^\ac,T)[1/p]$,
the elements $\beta_n^\eps(\mathds{1})$ 
define an element $\beta_\infty^\eps(\mathds{1})\in H^1(K_{\overline{\pp}},\Tc)/H^1_\eps(K_{\overline{\pp}},\Tc)[1/p]$, 
and the map ${\rm Col}^{\eps}$ 
yields an identification
\[
\frac{H^1_{}(K_{\overline{\pp}},\Tc)}{H^1_{\eps}(K_{\overline{\pp}},\Tc)}
\otimes_{\ro}\bQ_p\simeq\Lambda^\ac\otimes_{\ro}\bQ_p
\]
sending $\beta_\infty^\eps(\mathds{1})$ to 
the product $u_0\cdot h_0^\eps\cdot L_{p,1}^{\eps,\eps}(f/K)$. 
\end{lem}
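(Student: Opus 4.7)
The plan is to construct $\beta_n^\eps$ by ``dividing'' the local restriction $\mathrm{res}_{\overline{\pp}}(\mathcal{BF}^\eps_{{\rm cyc},n})$ by $\gamma_{\rm cyc}-1$, justified by the vanishing of the anticyclotomic projection of $L_p^{\eps,\eps}(f/K)$. More precisely, Corollary~\ref{cor:signed-0} gives $L_{p,0}^{\eps,\eps}(f/K)=L_{p,\ac}^{\eps,\eps}(f/K)=0$, so by $(\ref{eq:expand})$ one may write
\[
L_p^{\eps,\eps}(f/K) = (\gamma_{\rm cyc}-1)\cdot M^\eps
\]
for some $M^\eps\in\Lambda\otimes_{\bZ_p}\bQ_p$ with $M^\eps\equiv L_{p,1}^{\eps,\eps}(f/K)\pmod{\gamma_{\rm cyc}-1}$. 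Substituting into the explicit reciprocity law of Theorem~\ref{thm:BF-ERL} yields
\[
\mathrm{Col}^\eps(\mathrm{res}_{\overline{\pp}}(\mathcal{BF}^\eps)) = (\gamma_{\rm cyc}-1)\cdot u\cdot h^\eps\cdot M^\eps\quad\textrm{in}\quad\Lambda[1/P]\otimes\bQ_p.
\]

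I would then pass to finite anticyclotomic level $n$ by reducing modulo the ideal $\omega_n^\ac\Lambda$: since $\omega_n^\ac\in\Lambda_\ac$, we have $\Lambda/\omega_n^\ac\Lambda\simeq(\Lambda_\ac/\omega_n^\ac)[[\gamma_{\rm cyc}-1]]$, and in particular $\gamma_{\rm cyc}-1$ is not a zero divisor in this target (even after inverting $p$ and $P$). By $\Lambda$-linearity, $\mathrm{Col}^\eps$ induces a map at level $n$ whose injectivity allows us to divide by $\gamma_{\rm cyc}-1$ on both sides, producing a unique preimage
\[
\beta_n^\eps\in\frac{H^1_{\rm Iw}(L_{n,\overline{\pp}},T)}{H^1_{{\rm Iw},\eps}(L_{n,\overline{\pp}},T)}\otimes_{\bZ_p}\bQ_p
\]
satisfying $(\gamma_{\rm cyc}-1)\beta_n^\eps=\mathrm{res}_{\overline{\pp}}(\mathcal{BF}^\eps_{{\rm cyc},n})$. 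The compatibility of the family $\{\beta_n^\eps(\mathds{1})\}_n$, and hence the existence of $\beta_\infty^\eps(\mathds{1})\in H^1(K_{\overline{\pp}},\Tc)/H^1_\eps(K_{\overline{\pp}},\Tc)[1/p]$, is automatic since all the $\beta_n^\eps$ arise from the single element $u\cdot h^\eps\cdot M^\eps$ at the full Iwasawa level.

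Finally, to identify $\mathrm{Col}^\eps(\beta_\infty^\eps(\mathds{1}))$ I would apply the specialization $\gamma_{\rm cyc}\mapsto 1$: the two-variable Coleman isomorphism descends compatibly to the anticyclotomic Coleman isomorphism $H^1(K_{\overline{\pp}},\Tc)/H^1_\eps(K_{\overline{\pp}},\Tc)\otimes\bQ_p\simeq\Lambda_\ac\otimes\bQ_p$, while the image of $u\cdot h^\eps\cdot M^\eps$ under the reduction modulo $\gamma_{\rm cyc}-1$ is, by definition of the constant coefficients in the expansion, precisely $u_0\cdot h_0^\eps\cdot L_{p,1}^{\eps,\eps}(f/K)$. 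The main obstacle I anticipate is purely the bookkeeping around the period factor $u\in\Lambda[1/P]$: one must verify that its constant term $u_0$ is well-defined in an appropriate localization of $\Lambda_\ac\otimes\bQ_p$ so that the product $u_0\cdot h_0^\eps\cdot L_{p,1}^{\eps,\eps}(f/K)$ makes sense, and check that the Coleman map and its inverse descend cleanly through each of the reductions used.
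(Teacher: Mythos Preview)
Your proposal is correct and follows essentially the same approach as the paper: both rely on the vanishing of the constant term $L_{p,0}^{\eps,\eps}(f/K)$ (Corollary~\ref{cor:signed-0}), the explicit reciprocity law ${\rm Col}^\eps({\rm res}_{\overline\pp}(\mathcal{BF}^\eps))=u\cdot h^\eps\cdot L_p^{\eps,\eps}(f/K)$ (Theorem~\ref{thm:BF-ERL}), and the observation that the linear coefficient of the product $u\cdot h^\eps\cdot L_p^{\eps,\eps}(f/K)$ in $(\gamma_{\rm cyc}-1)$ is $u_0\cdot h_0^\eps\cdot L_{p,1}^{\eps,\eps}(f/K)$. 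The paper's proof is a one-line appeal to these ingredients, whereas you have spelled out the division-by-$(\gamma_{\rm cyc}-1)$ step via the Coleman isomorphism; your bookkeeping concern about $u_0$ is legitimate but harmless, since $P$ projects to a nonzero (hence invertible after $\otimes\,\bQ_p$-localization at the relevant primes) element of $\Lambda_\ac$ distinct from $(\gamma_{\rm cyc}-1)$.
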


\begin{proof}
Since $u_0\cdot h_0^\eps\cdot L_{p,1}^{\eps,\eps}(f/K)$ is clearly the coefficient in the linear term of the expansion of $u\cdot h^\eps\cdot L_p^{\eps,\eps}(f/K)$ as a power series in $\gamma_{\rm cyc}-1$, the result follows
from the definition of $\beta^\eps_\infty(\mathds{1})$ and 
Theorem~\ref{thm:BF-ERL}.
\end{proof}

Let $\cI\subseteq\ro[[\Gamma^{\rm cyc}]]$ be the augmentation ideal,
and set $\mathcal{J}=\cI/\cI^2$.

\begin{thm}\label{thm:rubin-ht}
For every $n$ there is a canonical (up to sign)
$p$-adic height pairing
\[
\langle\;,\;\rangle_{K_n^\ac}^{\rm cyc}:\mathfrak{Sel}^{\eps,\eps}(K_n^\ac,T)\times\mathfrak{Sel}^{\eps,\eps}(K_n^\ac,T)
\longrightarrow p^{-k}\bZ_p\otimes_{\bZ_p}\mathcal{J}
\]
for some $k\in\bZ_{\geqslant 0}$ independent of $n$,
such that for every $b\in\mathfrak{Sel}^{\eps,\eps}(K_n^\ac,T)$, we have
\begin{equation}\label{eq:rubin-ht}
\langle\mathcal{BF}^\eps_{{\rm cyc},n}(\mathds{1}),b\rangle^{\rm cyc}_{K_n^\ac}
=(\beta^\eps_n(\mathds{1}),{\rm loc}_{\overline\pp}(b))_n\otimes(\gamma_{\rm cyc}-1),
\end{equation}
where $(\;,\;)_n$ is the $\bQ_p$-linear extension of the local Tate pairing
\[
\frac{H^1(K_{n,\overline{\pp}}^\ac,T)}{H^1_{\eps}(K_{n,\overline{\pp}}^\ac,T)}
\times H^1_{\eps}(K_{n,\overline\pp}^\ac,T)\longrightarrow\bZ_p.
\]
\end{thm}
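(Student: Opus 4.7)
The plan is to follow the paradigm of Rubin's formula for $p$-adic heights of derived Iwasawa classes: construct the height pairing as a Bockstein in the cyclotomic direction (most naturally in Nekov\'a\v{r}'s Selmer-complex formalism, or equivalently via the direct cup-product construction of Perrin-Riou), and then reduce the computation of $\langle \mathcal{BF}^\epsilon_{\mathrm{cyc},n}(\mathds{1}), b\rangle$ to a single local pairing at $\overline{\pp}$ by exploiting the canonical Iwasawa lift $\mathcal{BF}^\epsilon_{\mathrm{cyc},n}$ already at our disposal.

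The first step is the construction of the pairing. One takes the short exact sequence $0 \to T \otimes \mathcal{I}/\mathcal{I}^2 \to T \otimes \Lambda^{\mathrm{cyc}}/\mathcal{I}^2 \to T \to 0$ and forms its Bockstein on the Selmer complex computing $\mathfrak{Sel}^{\epsilon,\epsilon}(K_n^{\mathrm{ac}}, T)$, equipped with a cyclotomic deformation of the plus/minus local conditions. Combined with Poitou--Tate global duality, this yields a pairing of the stated form. The bounded denominator $p^{-k}$ arises because the local subspaces $H^1_\epsilon$ at primes above $p$ are only Lagrangian up to bounded torsion (coming from the plus/minus Coleman machinery and Kim's explicit description of $H^1_\epsilon$ as the annihilator of $E^\epsilon \otimes \bQ_p/\bZ_p$); uniformity of $k$ in $n$ follows because these local invariants stabilize in the supersingular anticyclotomic tower.

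The second step is to compute the height explicitly for classes admitting an Iwasawa-cohomology lift. A standard cup-product manipulation shows that if $a \in \mathfrak{Sel}^{\epsilon,\epsilon}(K_n^{\mathrm{ac}}, T)$ lifts to a class $\widetilde{A} \in H^1_{\mathrm{Iw}}(L_n, T)$ whose image at $\pp$ already lies in $H^1_{\epsilon,\mathrm{Iw}}(L_{n,\pp}, T)$, then
\[
\langle a, b\rangle_{K_n^{\mathrm{ac}}}^{\mathrm{cyc}} = \sum_{v \mid p} (\partial_v \widetilde{A}, \mathrm{loc}_v(b))_n \otimes (\gamma_{\mathrm{cyc}} - 1),
\]
where $\partial_v \widetilde{A}$ is the ``derivative'' at $v$, defined as the unique element in the relevant local quotient satisfying $(\gamma_{\mathrm{cyc}} - 1) \partial_v \widetilde{A} \equiv \mathrm{loc}_v(\widetilde{A})$. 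Primes $v \nmid p$ do not contribute by unramifiedness. At $v = \pp$ no derivative is needed, since $\mathrm{loc}_\pp(\widetilde{A}) \in H^1_{\epsilon,\mathrm{Iw}}$ already, and by the Lagrangian property this class pairs trivially with $\mathrm{loc}_\pp(b) \in H^1_\epsilon$; so only $v = \overline{\pp}$ contributes.

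To conclude, apply this with $a = \mathcal{BF}^\epsilon_{\mathrm{cyc},n}(\mathds{1})$ and $\widetilde{A} = \mathcal{BF}^\epsilon_{\mathrm{cyc},n}$. That $a$ indeed lands in the $(\epsilon,\epsilon)$-Selmer group follows from the vanishing $L_{p,0}^{\epsilon,\epsilon}(f/K) = 0$ of Corollary~\ref{cor:signed-0}, combined with the reciprocity law of Theorem~\ref{thm:BF-ERL} and the injectivity of ${\rm Col}^\epsilon$. By the very definition of $\beta_n^\epsilon$ in Lemma~\ref{lem:3.1.1}, the derivative $\partial_{\overline{\pp}}\widetilde{A}$ equals $\beta_n^\epsilon(\mathds{1})$, and (\ref{eq:rubin-ht}) follows. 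The hardest step is completing Step~1 rigorously with the uniform bound on $k$: this requires analyzing how the plus/minus local conditions deform cyclotomically, and will rely on the explicit shape of the norm-compatible points $c_{n,m}$ of Lemma~\ref{lem:compat} together with the formal-group logarithm formula $(\ref{eq:def-c})$.
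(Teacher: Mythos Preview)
Your approach is correct and is precisely the content underlying the paper's proof, which simply cites Howard's derived $p$-adic heights machinery: the construction of the pairing is \cite[Thm.~1.11]{howard-derived} and the height formula (\ref{eq:rubin-ht}) is \cite[Thm.~2.5(c)]{howard-derived}, once one knows the plus/minus local conditions at $p$ are self-dual. You have essentially unpacked what those citations contain.

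One point to correct: you attribute the bounded denominator $p^{-k}$ to the local conditions $H^1_\eps$ being ``only Lagrangian up to bounded torsion,'' and flag this as the hardest step requiring the explicit formal-group analysis of the points $c_{n,m}$. In fact the paper invokes \cite[Prop.~4.11]{kim-parity} to assert that the plus/minus local conditions are \emph{exactly} their own orthogonal complements under local Tate duality, so no such defect needs to be controlled. The denominator $p^{-k}$ is intrinsic to Howard's general construction (it appears already in the statement of \cite[Thm.~1.11]{howard-derived}) and does not stem from any failure of the Lagrangian property here. Consequently your ``hardest step'' dissolves once you quote Kim's self-duality result, and the remainder of your argument (vanishing of contributions away from $\overline{\pp}$, identification of the derivative at $\overline{\pp}$ with $\beta_n^\eps(\mathds{1})$) goes through exactly as you wrote.
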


\begin{proof}
Since by \cite[Prop.~4.11]{kim-parity} the 
local conditions defining $\mathfrak{Sel}^{\eps,\eps}(K_n^{\ac},T)$ at the primes $v\mid p$ are their own orthogonal complement
under the local Tate pairing, the construction of the cyclotomic $p$-adic height pairings $\langle\;,\rangle_{K_n^\ac}^{\rm cyc}$
can be deduced from \cite[Thm.~1.11]{howard-derived}.
The $p$-adic height formula $(\ref{eq:rubin-ht})$
then follows from \cite[Thm.~2.5(c)]{howard-derived}.
\end{proof}

\section{Heegner points}\label{sec:HP}
Let $E/\bQ$ be an elliptic curve, let
$f=\sum_{n=1}^\infty a_nq^n\in S_2(\Gamma_0(N))$ be the associated newform, and assume that
$p>3$ is a prime of good supersingular reduction for $E$ (so $a_p=0$). 
Let $K/\bQ$ be an imaginary quadratic field in which $p=\pp\overline{\pp}$ splits.
Throughout this section, we assume that the pair $(f,K)$ satisfies the
generalized Heegner hypothesis (Heeg) introduced in $\S\ref{sec:anti-L}$.

\subsection{The plus/minus Heegner classes}\label{subsec:HP}

%

Let $X_{N^+,N^-}$ be the Shimura curve (with the cusps added if $N^-=1$)
over $\bQ$ attached to a quaternion algebra $B/\bQ$ of discriminant $N^-$ and an Eichler order $R\subset\cO_B$ of level $N^+$.
We embed $X_{N^+,N^-}$ into its Jacobian $J_{N^+,N^-}$ by choosing
an auxiliary prime $\ell\nmid Np$ and defining
\[
\iota_{\ell}:X_{N^+,N^-}\longrightarrow J_{N^+,N^-}
\]
by $x\mapsto(T_{\ell}-\ell-1)[x]$, where $T_{\ell}$ is the usual Hecke correspondence on $X_{N^+,N^-}$,
and $[x]\in{\rm Div}(X_{N^+,N^-})$ is the divisor class of $x\in X_{N^+,N^-}$. 

Recall that we let $K[m]$ denote the ring class field of $K$ of conductor $m$.

\begin{prop}\label{prop:HP}
For every positive integer $m$ prime to $N$, there are Heegner points $P[m]\in E(K[m])$ such that
\[
{\rm tr}^{K[mp^{k+2}]}_{K[mp^{k+1}]}(P[mp^{k+2}])=
a_pP[mp^{k+1}]-P[mp^k]
\]
for all $k\geqslant 0$.
\end{prop}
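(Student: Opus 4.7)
The plan is to construct $P[m]$ as the image of a compatible system of CM points $x_c \in X_{N^+,N^-}(K[c])$ under the composition $\lambda \circ \pi \circ \iota_\ell$, where $\pi : J_{N^+,N^-} \to E'$ is the parametrization (\ref{eq:Sh-par}) and $\lambda : E' \to E$ is a fixed isogeny; the trace relation will then follow from the classical Euler system norm relation for CM points, together with the fact that $T_p$ acts on the $f$-isotypic quotient of $J_{N^+,N^-}$ as multiplication by $a_p$.

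First I would invoke the theory of complex multiplication for Shimura curves to produce, for each positive integer $c$ prime to $N$, a CM point $x_c \in X_{N^+,N^-}(K[c])$ corresponding to a QM abelian surface $A$ with an optimal embedding $\cO_c \hookrightarrow \mathrm{End}_{\cO_B}(A)$ and a suitable $N^+$-level structure. The main theorem of complex multiplication identifies the Galois action of $\Gal(K[cp^r]/K[c])$ on the fibre of CM points of conductor $cp^r$ lying above $x_c$ with the natural transitive action of $\Pic(\cO_{cp^r})/\Pic(\cO_c)$. Defining $P[m] := (\lambda \circ \pi \circ \iota_\ell)(x_m)$ then produces a system of points $P[m] \in E(K[m])$.

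The heart of the proof is the norm relation on the Shimura curve:
\[
{\rm tr}^{K[mp^{k+2}]}_{K[mp^{k+1}]}(x_{mp^{k+2}}) = T_p \cdot x_{mp^{k+1}} - x_{mp^k} \qquad (k \geq 0),
\]
which I would prove via the moduli description of $T_p$: the $p+1$ cyclic $p$-isogenies out of the QM surface underlying $x_{mp^{k+1}}$ correspond to the $p+1$ geometric points of the Hecke fibre above $x_{mp^{k+1}}$. Because the conductor $mp^{k+1}$ already contains $p$ (i.e.\ $k+1 \geq 1$), exactly one of these isogenies decreases the CM conductor back to $mp^k$, giving rise to the correction term $-x_{mp^k}$, while the remaining $p$ raise the conductor to $mp^{k+2}$ and are permuted transitively by $\Gal(K[mp^{k+2}]/K[mp^{k+1}])$ to produce the trace on the left-hand side.

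Finally, applying $\lambda \circ \pi \circ \iota_\ell$ to this identity and using that the Hecke correspondence $T_p$ acts on $E$ (through $\pi$) as multiplication by the Fourier coefficient $a_p$ of $f$ yields the desired formula. The main obstacle is the precise verification of the norm relation in the quaternionic setting, but this is by now classical, and we only need the case in which the conductor already contains $p$ (so no split/inert dichotomy at $p$ intervenes); detailed accounts, modelled on Perrin-Riou's original computation for $X_0(N)$, appear in the work of Bertolini--Darmon, Nekov\'a\v{r}, and Howard.
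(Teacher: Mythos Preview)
Your proposal is correct and follows essentially the same approach as the paper: the paper's proof simply declares the construction ``standard,'' defines $P[m]$ as the image of CM points $h[m]\in X_{N^+,N^-}(K[m])$ under $\pi\circ\iota_\ell$, and cites \cite[Prop.~1.2.1]{howard-PhD-II} for the norm relations. Your version supplies the details the paper omits (the moduli-theoretic analysis of the $T_p$-fibre over a CM point of conductor divisible by $p$) and is slightly more careful in routing through the isogeny $\lambda:E'\to E$, but the underlying argument is the same.
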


\begin{proof}
This is standard: after fixing a modular parametrization
\[
\pi:J_{N^+,N^-}\longrightarrow E,
\] 	
a system of points as in the Proposition is obtained by letting $P[m]$ be the image of CM points $h[m]\in X_{N^+,N^-}(K[m])$ (see e.g. \cite[Prop.~1.2.1]{howard-PhD-II}) under the composite map $\pi\circ\iota_{\ell}$. 
\end{proof}

Since the $G_{\bQ}$-representation $E[p]\simeq\bar{\rho}_f$ is irreducible by \cite{Edi}, we may choose the above prime $\ell$ so that $a_{\ell}-\ell-1$ is a unit in $\ro^\times$, and we then let
\[
z[m]\in H^1(K[m],T)
\]
be the image of $P[m]\otimes(a_{\ell}-\ell-1)^{-1}$
under the Kummer map $E(K[m])\otimes\bZ_p\rightarrow H^1(K[m],T)$. (Thus $z[m]$ is independent of the choice of $\ell$.) 

The anticyclotomic $\bZ_p$-extension $K_\infty^\ac/K$ is contained in $\widetilde{K}_\infty=\bigcup_{k\geqslant 0}K[p^k]$,
and ${\rm Gal}(\widetilde{K}_\infty/K)$ is isomorphic to $\Gamma^\ac\times\Delta$, with
$\Delta$ finite. For every positive integer $S$ coprime to $Np$ and every $n\geqslant 0$,
we let $K_n^\ac[S]$ denote the compositum $K_n^\ac K[S]$, and set 
\[
z_n[S]:={\rm cor}^{K[Sp^{k(n)}]}_{K_n^\ac[S]}(z[Sp^{k(n)}])
\]
where $k(n):=\min\{k\;:\;K_n^\ac\subset K[p^k]\}$. Letting ${\rm cor}^{n+1}_{n}$ be corestriction map for the extension
$K_{n+1}^\ac[S]/K_{n}^\ac[S]$, it follows from the norm-compatibility in Proposition~\ref{prop:HP} that
\begin{equation}\label{(10)}
{\rm cor}^{n+1}_{n}(z_{n+1}[S])=-z_{n-1}[S],
\end{equation}
since $a_p=0$.


\begin{lem}\label{lem:UN}
The classes $z_n[S]$ lie in the image of the natural map
\begin{equation}\label{eq:proj}
H^1(K[S],\Tc)\longrightarrow H^1(K_n^\ac[S],T).\nonumber
\end{equation}
\end{lem}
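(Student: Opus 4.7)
The plan is to use Shapiro's lemma to identify $H^1(K[S], \Tc) \simeq \varprojlim_m H^1(K_m^\ac[S], T)$ along corestriction, so that the natural map in the statement becomes the projection to level $n$. Proving the lemma then amounts to constructing a corestriction-compatible system $\{\tilde z_m[S]\}_{m\ge 0}$ with $\tilde z_n[S] = z_n[S]$; such a system automatically defines an element of $H^1(K[S], \Tc)$ specializing to $z_n[S]$ at level $n$.

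The obvious candidate $\tilde z_m[S] := z_m[S]$ fails because of the norm relation displayed just above the lemma: one gets $\mathrm{cor}^{m+1}_m(z_{m+1}[S]) = -\mathrm{res}^m_{m-1}(z_{m-1}[S])$ rather than $z_m[S]$. Moreover, no $\bZ_p$-linear combination of (restrictions of) the various $z_{m-j}[S]$ can be made compatible either: matching the leading coefficient of $z_m$ under corestriction forces a factor of $1/p$, which is not integral.

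The remedy I would pursue is to work upstairs in the full ring class field tower $\{K[Sp^k]\}_k$, where the underlying Heegner classes $z[Sp^{k(m)}]$ live. Since $z_m[S] = \mathrm{cor}^{K[Sp^{k(m)}]}_{K_m^\ac[S]}(z[Sp^{k(m)}])$ and the intermediate Galois group $\mathrm{Gal}(K[Sp^{k(m)}]/K_m^\ac[S])$ is a nontrivial finite abelian quotient of $\Delta$, there is enough room to choose modified lifts $\hat z_m$ of $z_m[S]$---differing from $z[Sp^{k(m)}]$ by classes in the kernel of $\mathrm{cor}^{K[Sp^{k(m)}]}_{K_m^\ac[S]}$---whose descents to $K_m^\ac[S]$ do form a corestriction-compatible sequence. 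A Mackey-type decomposition for $K[Sp^{k(m)}]/K_{m-1}^\ac[S]$, combined with the ring-class-level trace relation $\mathrm{tr}(z[Sp^{k+2}]) = -z[Sp^k]$ (the ring-class version of $a_p=0$ that is the underlying input to the displayed norm relation), then drives an induction on $m$. This is parallel in spirit to Howard's big Heegner class construction in the ordinary case (\cite{howard-PhD-II}), adapted to the supersingular setting as in \cite{LV-ss}.

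The main obstacle is the sign discrepancy in the norm relation; the key point is that the extra Galois freedom afforded by the ring class field tower is exactly what is needed to absorb this sign and yield an honest Iwasawa-theoretic lift of $z_n[S]$.
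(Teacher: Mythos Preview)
Your approach has a genuine gap. You propose to modify the lifts $z[Sp^{k(m)}]$ by classes in the kernel of $\mathrm{cor}^{K[Sp^{k(m)}]}_{K_m^\ac[S]}$, but by definition such a modification leaves the descent to $K_m^\ac[S]$ unchanged: the descent of your $\hat z_m$ is still exactly $z_m[S]$. So the ``compatible sequence of descents'' you hope to produce is the original sequence $\{z_m[S]\}$, which you already observed is not corestriction-compatible. The extra Galois freedom in the ring class tower is governed by the fixed finite group $\Delta$ and cannot absorb the obstruction. Note also that the obstacle is not the sign: even $\mathrm{cor}(z_{m+1})=+\mathrm{res}(z_{m-1})$ would still skip a level and fail to give $\mathrm{cor}(z_{m+1})=z_m$. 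The analogy with Howard's ordinary construction is misleading, since there one renormalizes by inverse powers of the unit $U_p$-eigenvalue to force compatibility; no such renormalization exists when $a_p=0$.

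The paper's argument avoids any explicit lift. Via Shapiro's lemma and the long exact sequence attached to $0\to\Tc\xrightarrow{\omega_n}\Tc\to\Tc/\omega_n\Tc\to 0$, the cokernel of the map in the lemma is identified with $H^2(K[S],\Tc)[\omega_n]$. Since $H^2(K[S],\Tc)$ is finitely generated over $\Lambda_\ac$, these $\omega_n$-torsion submodules stabilize for $n\gg 0$. The norm relation $(\ref{(10)})$ gives
\[
\frac{\omega_{n'}^\epsilon}{\omega_n^\epsilon}\,z_{n'}[S]=\pm z_n[S]
\]
for all $n'>n$ with $n'\equiv n\pmod 2$, so the boundary image of $z_n[S]$ in $H^2[\omega_n]$ is $\frac{\omega_{n'}^\epsilon}{\omega_n^\epsilon}$ times a class in the stabilized (hence fixed finite) module. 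Letting $n'\to\infty$ forces this image to be zero, and the lemma follows. The norm relation thus enters not as an obstruction to be absorbed, but as the source of an infinite-divisibility property that kills the $H^2$-obstruction.
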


\begin{proof}
The obvious long exact sequence shows that the cokernel of the map in the statement is controlled by
\begin{equation}\label{eq:n}
H^2(K[S],\Tc)[\omega_n].
\end{equation}
Since $H^2(K[S],\Tc)$ is finitely generated over $\Lambda_\ac$, the module $(\ref{eq:n})$ stabilizes for $n\gg 0$. On the other hand, from the norm-relation $(\ref{(10)})$ we immediately see that
\[
\frac{\omega_{n'}^\epsilon}{\omega_n^\epsilon}z_{n'}[S]=\pm z_n[S]
\]
for all $n'>n$ with $n'\equiv n\pmod{2}$, where $\epsilon=(-1)^n$.
Letting $n'\to\infty$, this shows that $z_n[S]$ must have zero image
in $(\ref{eq:n})$, hence the result.
\end{proof}



The next result will be a key ingredient in our construction of  plus/minus Heegner classes.

\begin{lem}\label{Lemma 2.2}
Assume that $E[p]\vert_{G_K}$ is irreducible.
Then $H^1(K[S],\Tc)$ is free over $\Lambda_\ac$.
\end{lem}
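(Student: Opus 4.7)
The plan is to prove that $M := H^1(K[S], \Tc)$ has depth $\geq 2$ over the regular local ring $\Lambda_\ac$ of Krull dimension $2$, whereupon the Auslander--Buchsbaum formula will give $\mathrm{pd}_{\Lambda_\ac}(M) = 0$ and hence freeness. For the underlying finite generation statement: via a standard control argument, $M/(\gamma-1)M$ maps with finite kernel and cokernel to the finitely generated $\bZ_p$-module $H^1(K[S], T)$, so Nakayama's lemma implies $M$ is finitely generated over $\Lambda_\ac$, and in particular has finite projective dimension there.

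The key cohomological input will be the vanishing $E[p]^{G_{K_\infty^\ac[S]}} = 0$. Indeed, since $G_{K_\infty^\ac[S]}$ is normal in $G_{K[S]}$ with pro-$p$ quotient $\Gal(K_\infty^\ac[S]/K[S])$, the subspace $V := E[p]^{G_{K_\infty^\ac[S]}}$ is stable under $G_{K[S]}$; by the irreducibility of $E[p]|_{G_K}$ (which in the paper's setting restricts to irreducibility over $G_{K[S]}$), either $V = 0$ or $V = E[p]$. The second option would force the image of $G_{K[S]}$ in $\Aut(E[p]) \simeq \GL_2(\mathbf{F}_p)$ to be a pro-$p$ group, hence conjugate into the upper-triangular unipotent subgroup and thus reducible---contradicting the assumed irreducibility of the restriction. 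Since $G_{K_n^\ac[S]} \supseteq G_{K_\infty^\ac[S]}$ for every $n$, each $E[p]^{G_{K_n^\ac[S]}}$ vanishes as well, and by $p$-adic separatedness of $T$ the same holds with $E[p]$ replaced by $T$; Shapiro's lemma then yields $H^0(K[S], \Tc) = H^0(K[S], \Tc/p\Tc) = 0$.

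Next I will exhibit a regular sequence $(p, \gamma - 1)$ on $M$, where $\gamma$ is a topological generator of $\Gamma^\ac$. The Bockstein sequence attached to $0 \to \Tc \xrightarrow{p} \Tc \to \Tc/p \to 0$ together with $H^0(K[S], \Tc/p) = 0$ shows that $p$ acts as a non-zero-divisor on $M$ and yields an injection $M/pM \hookrightarrow H^1(K[S], \Tc/p)$. Since $\Psi$ becomes trivial modulo $(\gamma-1)$, one has $\Tc/(\gamma-1)\Tc \simeq T$ as $G_{K[S]}$-modules, and the analogous Bockstein argument applied to $0 \to \Tc/p \xrightarrow{\gamma-1} \Tc/p \to E[p] \to 0$, combined with $H^0(K[S], E[p]) = 0$, shows that $\gamma - 1$ acts injectively on $H^1(K[S], \Tc/p)$, hence on the submodule $M/pM$.

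With the regular sequence $(p, \gamma - 1)$ on $M$ in hand, one concludes $\mathrm{depth}_{\Lambda_\ac}(M) \geq 2 = \mathrm{depth}(\Lambda_\ac)$, and Auslander--Buchsbaum gives $\mathrm{pd}_{\Lambda_\ac}(M) = 0$, so $M$ is $\Lambda_\ac$-free. The main obstacle is arguably the first step: while the stability argument for $V$ is clean, the deduction $V = 0$ relies on the irreducibility of $\overline\rho_f|_{G_{K[S]}}$, which is not entirely automatic from the irreducibility of $\overline\rho_f|_{G_K}$ once $[K[S]:K]$ is divisible by $p$; the paper's running hypotheses on the image of $\overline\rho_f$ (together with the fact that the ring class field $K[S]$ does not destroy this image in the relevant cases) are what guarantee this descent in practice.
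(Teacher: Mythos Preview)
Your proof is correct and follows essentially the same strategy as the paper's: both show that a length-two regular sequence acts on $M$ and deduce freeness over the $2$-dimensional regular local ring $\Lambda_\ac$. The paper takes the sequence in the order $(\gamma_\ac-1,p)$---showing $M\xrightarrow{X}M$ is injective via the vanishing of $T^{G_{K_\infty^\ac[S]}}$ (citing Perrin-Riou), then embedding $M/XM\hookrightarrow H^1(K[S],T)$ to get injectivity of $p$---and finishes by the structure theorem plus a $\mathrm{Tor}_1$ computation. You take the sequence in the order $(p,\gamma-1)$ and finish with Auslander--Buchsbaum, which is a cleaner homological packaging of the same idea. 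Your explicit discussion of why $E[p]^{G_{K_\infty^\ac[S]}}=0$ (and the caveat about irreducibility over $G_{K[S]}$) is more careful than the paper, which simply asserts the analogous vanishing and appeals to the irreducibility hypothesis over $G_K$ without further comment.
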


\begin{proof}
Let $M_S:=H^1(K[S],\Tc)$, and identify $\Lambda_\ac\simeq\bZ_p[[X]]$ via $\gamma_\ac\mapsto 1+X$.
We first claim that the two maps
\[
\alpha:M_S\overset{X}\longrightarrow M_S,\quad
\beta:M_S/XM_S\overset{p}\longrightarrow M_S/XM_S
\]
are injective. Indeed, the irreducibility assumption on $E[p]\vert_{G_K}$ implies that $T^{G_{K_\infty^\ac}}=\{0\}$,
and hence the injectivity of $\alpha$ follows from \cite[\S{1.3.3}]{PR:Lp}.
We thus get an injection $M_S/XM_S\hookrightarrow H^1(K[S],T)$, and so
to establish the injectivity of $\beta$ is suffices to show the injectivity of the map
\[
H^1(K[S],T)\overset{p}\longrightarrow H^1(K[S],T),
\]
but this follows again from the $G_K$-irreducibility of $E[p]$.
By the structure theorem for finitely generated modules over $\Lambda_\ac$,
the above shows that $M_S$ injects into a free module of finite rank with finite cokernel $N$. If $N\neq\{0\}$, then
${\rm Tor}^{\Lambda_\ac}_1(N,\Lambda_\ac/X\Lambda_\ac)$ is a nonzero $\bZ_p$-torsion module injecting into $M_S/XM_S$, contradicting the injectivity of $\beta$. Hence $N=\{0\}$ and $M_S$ is free over $\Lambda_\ac$.
\end{proof}

Set $\omega_n^\pm:=\omega_n^\pm((1+Y)^{p^a}-1)$ to lighten the
notation. A straightforward induction argument using 
$(\ref{(10)})$ shows that
\[
\omega_n^\epsilon z_n[S]=0,
\]
where $\eps$ is the sign $(-1)^n$ (see \cite[Lem.~4.2]{darmon-iovita}). By Lemma~\ref{lem:UN} 
Lemma~\ref{Lemma 2.2},
this implies that there is a unique class
\[
z_n[S]^\eps\in H^1(K_n^\ac[S],T)/\omega_n^\eps H^1(K^\ac_n[S],T)
\]
such that
\[
\tilde{\omega}_n^{-\eps}z_n[S]^\eps=
(-1)^{[\frac{n+1}{2}]}z_n[S].
\]

\begin{lem}\label{lem:DI-2.9}
For each $\eps\in\{\pm\}$
the sequences $\{z_n[S]^\eps\}_{n\equiv\eps\pmod{2}}$ are
compatible under the natural projections
\[
H^1(K_n^\ac[S],T)/\omega_n^\eps H^1(K_n^\ac[S],T)
\longrightarrow H^1(K_{n-2}^\ac[S],T)/\omega_{n-2}^\epsilon H^1(K_{n-2}^\ac[S],T)
\]
induced by corestriction.
\end{lem}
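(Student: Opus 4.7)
My plan is to apply the $\Lambda_{\ac}$-linear corestriction map $\mathrm{cor}^n_{n-2}$ to the defining equation $\tilde{\omega}_n^{-\eps}z_n[S]^\eps=(-1)^{[(n+1)/2]}z_n[S]$, then compare the result against the analogous equation at level $n-2$. For concreteness I treat the case $n$ even and $\eps=+$; the opposite parity case is entirely symmetric. First, iterating the norm relation~$(\ref{(10)})$ once gives $\mathrm{cor}^n_{n-1}(z_n[S])=-z_{n-2}[S]$, and then applying $\mathrm{cor}^{n-1}_{n-2}$ together with $\mathrm{cor}\circ\mathrm{res}=[K_{n-1}^\ac[S]:K_{n-2}^\ac[S]]=p$ yields $\mathrm{cor}^n_{n-2}(z_n[S])=-p\cdot z_{n-2}[S]$. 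Combining with the defining equations at levels $n$ and $n-2$, the lemma reduces to the identity
\[
\tilde{\omega}_n^-\cdot\mathrm{cor}^n_{n-2}(z_n[S]^+)=-(-1)^{n/2}\,p\cdot z_{n-2}[S]=p\,\tilde{\omega}_{n-2}^-\cdot z_{n-2}[S]^+
\]
in $H^1(K_{n-2}^\ac[S],T)$, after which I need to re-express this equality in the quotient by $\omega_{n-2}^+$.

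The key algebraic input is the congruence $\tilde{\omega}_n^-\equiv p\,\tilde{\omega}_{n-2}^-\pmod{\omega_{n-2}}$, which follows from the factorization $\tilde{\omega}_n^-=\tilde{\omega}_{n-2}^-\cdot\Phi_{n-1}$ (the only factor added in passing from $n-2$ to $n$ comes from the odd index $m=n-1$) together with the standard congruence $\Phi_{n-1}\equiv p\pmod{\omega_{n-2}}$. Since $\omega_{n-2}\in(\omega_{n-2}^+)$, the same congruence holds modulo $\omega_{n-2}^+$; an analogous factorization shows $\omega_n^+\in(\omega_{n-2}^+)$, which ensures that $\mathrm{cor}^n_{n-2}$ descends to the relevant quotient. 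Substituting these relations into the preceding identity, the problem becomes the implication
\[
p\,\tilde{\omega}_{n-2}^-\bigl(\mathrm{cor}^n_{n-2}(z_n[S]^+)-z_{n-2}[S]^+\bigr)=0\quad\Longrightarrow\quad\mathrm{cor}^n_{n-2}(z_n[S]^+)=z_{n-2}[S]^+
\]
in $H^1(K_{n-2}^\ac[S],T)/\omega_{n-2}^+H^1(K_{n-2}^\ac[S],T)$.

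The hard part will be this final cancellation. My plan is to invoke Lemma~\ref{lem:UN} to lift both sides to the module $M:=H^1(K[S],\Tc)$, which is free over $\Lambda_{\ac}$ by Lemma~\ref{Lemma 2.2}. In the free $\Lambda_{\ac}/\omega_{n-2}^+$-module $M/\omega_{n-2}^+M$, multiplication by $p\,\tilde{\omega}_{n-2}^-$ is injective, because the irreducible factors of $\omega_{n-2}^+$ are pairwise distinct from, and hence coprime to, the irreducible factors of $p\,\tilde{\omega}_{n-2}^-$ in the UFD $\Lambda_{\ac}$. The cancellation on the free lift then produces the desired identity $\mathrm{cor}^n_{n-2}(z_n[S]^+)=z_{n-2}[S]^+$ after projecting back to the target quotient. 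The principal subtlety to watch is that this module-theoretic cancellation would not be valid directly in $H^1(K_{n-2}^\ac[S],T)$, which may well have $p$-torsion or $\tilde{\omega}_{n-2}^-$-torsion; the use of Lemma~\ref{lem:UN} and the freeness of Lemma~\ref{Lemma 2.2} is exactly what makes the argument go through.
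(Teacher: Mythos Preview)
Your proposal is correct and is essentially the argument of \cite[Lem.~2.9]{darmon-iovita} that the paper invokes: compute $\mathrm{cor}^n_{n-2}(z_n[S])$ from the norm relation~(\ref{(10)}), use the factorization $\tilde{\omega}_n^{-\eps}=\tilde{\omega}_{n-2}^{-\eps}\cdot\Phi_{n-1}$ together with $\Phi_{n-1}\equiv p\pmod{\omega_{n-2}}$, and then cancel $p\,\tilde{\omega}_{n-2}^{-\eps}$ using the freeness of $M=H^1(K[S],\Tc)$ from Lemma~\ref{Lemma 2.2}. Your identification of the crucial subtlety---that the cancellation must take place in the free quotient $M/\omega_{n-2}^\eps M$ rather than in $H^1(K_{n-2}^\ac[S],T)$ itself---is exactly the point; this is why the paper singles out Lemma~\ref{Lemma 2.2} as the ingredient allowing the Darmon--Iovita argument to go through verbatim. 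One small sharpening: since the injection $M/\omega_m M\hookrightarrow H^1(K_m^\ac[S],T)$ has kernel exactly $\omega_m M$, the classes $z_n[S]^\eps$ are really constructed (and unique) already in $M/\omega_n^\eps M$, so the ``lifting'' you describe via Lemma~\ref{lem:UN} is built into their very definition rather than being an extra step.
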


\begin{proof}
In light of the freeness result of Lemma~\ref{Lemma 2.2}, the argument in
\cite[Lem.~2.9]{darmon-iovita} applies verbatim.
\end{proof}

For every $\eps\in\{\pm\}$ and $S>0$ prime to $Np$ we may thus define classes $\mathbf{z}[S]^\eps\in H^1(K[S],\Tc)$ by
\begin{equation}\label{def:pm-HP}
\mathbf{z}[S]^\eps:=\varprojlim_n z_n[S]^\eps,
\end{equation}
where the limit is over $n$ with
the fixed parity determined by $\eps$. Since $\{\omega_n^\eps\}_n$
forms a basis for the topology of $\Lambda_\ac$, the class $\mathbf{z}[S]^\eps$ is well-defined.




\subsection{Explicit reciprocity law}\label{subsec:HP-ERL}

As we show in this section, similarly as in \cite{cas-hsieh1} for ordinary primes, the classes
$\mathbf{z}^\pm:=\mathbf{z}[1]^\pm$ satisfy an explicit reciprocity
law relating them to some of the anticyclotomic $p$-adic $L$-functions in $\S\ref{sec:anti-L}$.

Recall from $\S\ref{subsec:local-p}$ the element $d=\{d_m\}_m\in\varprojlim_m\cO_{k^m}^\times$
generating this $\bZ_p[[U]]$-module, and let
\[
F_{d,2}:=\varprojlim_m\sum_{\sigma\in U/p^mU}d_m^\sigma\cdot\sigma^2,
\]
viewed as an element in $\Lambda_{\unr}$. By the discussion in \cite[\S{6.4}]{LZ2}
on the Katz $p$-adic $L$-function (see also [\emph{loc.cit.}, \S{3.2}]), the quotient
$\mathscr{L}_{\pp}^{\tt Katz}(K)/F_{d,2}$ gives rise to a nonzero element in
$\Lambda_\ac$, rather than just $\Lambda_{\unr}^\ac$. 

\begin{lem}
We have the equality
\[
F_{d,2}=\biggl(\varprojlim_m\sum_{\sigma\in U/p^mU}d_m^\sigma\cdot\sigma\biggr)^2
\]
up to a unit in $\bZ_p[[U]]^\times$.
\end{lem}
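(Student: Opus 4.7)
The plan is to exploit a common Galois equivariance of both sides. Write $\mathcal{D}_1:=\varprojlim_m\sum_{\sigma\in U/p^mU}d_m^\sigma\,\sigma\in\unr[\![U]\!]\subset\Lambda_\unr$, so that the claim becomes $F_{d,2}/\mathcal{D}_1^2\in\bZ_p[\![U]\!]^\times$. Let $U$ act on $\unr[\![U]\!]$ via its Galois action on the coefficient ring $\unr$ (trivially on the group-like elements); this is a ring action whose continuous invariants form $\bZ_p[\![U]\!]$, by Galois descent and $\unr^U=\bZ_p$. The reindexing $\sigma\mapsto\tau^{-1}\sigma$ then yields, for every $\tau\in U$,
\[
\tau\cdot\mathcal{D}_1=\tau^{-1}\mathcal{D}_1\qquad\text{and}\qquad \tau\cdot F_{d,2}=\tau^{-2}F_{d,2}.
\]
Squaring the first identity gives $\tau\cdot\mathcal{D}_1^2=\tau^{-2}\mathcal{D}_1^2$, so $F_{d,2}$ and $\mathcal{D}_1^2$ transform identically under $U$; in particular the ratio (once known to be defined) is $U$-invariant and therefore lies in $\bZ_p[\![U]\!]$.

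To verify the ratio is well-defined and a unit, I would evaluate at the augmentation $U\to\{1\}$. The trace-compatibility of $\{d_m\}$ (built into the normal-basis presentation of $\varprojlim_m\mathcal{O}_{k^m}$ as a free $\bZ_p[\![U]\!]$-module) gives $\sum_\sigma d_m^\sigma=\mathrm{Tr}_{k^m/\bQ_p}(d_m)=d_0$, and the hypothesis $d_m\in\mathcal{O}_{k^m}^\times$ yields $d_0\in\bZ_p^\times$. Hence both $\mathcal{D}_1$ and $F_{d,2}$ reduce to the unit $d_0$ in $\unr[\![U]\!]$ modulo the augmentation, so both are units in this local ring; the ratio is then a well-defined element of $\unr[\![U]\!]^\times$ with augmentation $d_0^{-1}\in\bZ_p^\times$, and combined with its $\bZ_p[\![U]\!]$-integrality from the previous paragraph, this forces it into $\bZ_p[\![U]\!]^\times$, as desired.

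The main (minor) obstacle is organizational: verifying that the continuous $U$-invariants of $\unr[\![U]\!]$ really are $\bZ_p[\![U]\!]$ (a standard Galois-descent fact for the Iwasawa-theoretic completed tensor product), and carefully recording the trace-compatibility that makes the augmentation compute to $d_0$. Conceptually, the proof reduces to the single observation that both sides transform as $\tau^{-2}$ under $U$ and take the same unit value $d_0$ at the augmentation.
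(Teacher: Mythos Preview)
Your argument is correct. The paper's own proof consists entirely of the sentence ``This follows from a straightforward calculation,'' so there is nothing to compare against; you have supplied the omitted details via a clean equivariance argument.

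Two small remarks. First, your trace-compatibility assumption on $\{d_m\}$ is indeed the right one: despite the paper's multiplicative notation $\varprojlim_m\mathcal{O}_{k^m}^\times$, the very well-definedness of $F_d$ and $F_{d,2}$ as inverse limits forces $\mathrm{Tr}_{k^{m+1}/k^m}(d_{m+1})=d_m$, and the normal basis theorem reference confirms this additive reading. Second, the unit check at the augmentation can also be extracted directly from the free-generator hypothesis without invoking $d_0\in\bZ_p^\times$: writing $1=\lambda\cdot d$ with $\lambda\in\bZ_p[[U]]$, Galois-invariance of $1$ forces $\lambda$ to be a constant $a\in\bZ_p$, whence $\mathrm{Tr}_{k^m/\bQ_p}(d_m)=a^{-1}\in\bZ_p^\times$. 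Either route gives that $\mathcal{D}_1$ and $F_{d,2}$ are units in $\unr[[U]]$, after which your $U$-invariance computation (both transform as $\tau^{-2}$) places the ratio in $(\unr[[U]])^U=\bZ_p[[U]]$, and hence in $\bZ_p[[U]]^\times$.
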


\begin{proof}
This follows from a straightforward calculation.
\end{proof}

Thus setting $F_d:=\varprojlim_m\sum_{\sigma\in U/p^mU}d_m^\sigma\cdot\sigma$, the $p$-adic $L$-function $L_\pp(f/K)$ 
of Theorem~\ref{thm:wan} may be written as the product
\begin{equation}\label{eq:factor-2}
L_\pp(f/K)=\mathcal{L}_\pp(f/K)\cdot F_d^2\cdot U,
\end{equation}
for some $\mathcal{L}_\pp(f/K)\in\Lambda$ and $U\in\Lambda^\times$, and letting $\mathcal{L}_{\pp,\ac}(f/K)\in\Lambda_{\ac}$ be the image of $\mathcal{L}_\pp(f/K)$ under the projection $\Lambda\rightarrow\Lambda_\ac$, we see from Corollary~\ref{cor:wan-bdp}
that
\begin{equation}\label{eq:factor}
\mathscr{L}_{\pp}^{\tt BDP}(f/K)^2=\mathcal{L}_{\pp,\ac}(f/K)\cdot F_d^2\cdot U'
\end{equation}
for some $U'\in\Lambda_{\rm ac}^\times$. 

Note that for every $\eps\in\{\pm\}$ and $v\mid p$ the
classes $\mathbf{z}^\eps$ 
satisfy ${\rm loc}_v(\mathbf{z}^\eps)\in H^1_\eps(K,\Tc)$,
and so we may consider the image of ${\rm loc}_v(\mathbf{z}^\eps)$ under
the signed logarithm map ${\rm Log}_{\ac}^\eps$ constructed in $\S\ref{subsec:PR-maps}$.

\begin{thm}[Explicit reciprocity law]\label{3.1}
For every $\eps\in\{\pm\}$, we have the equality
\begin{equation}\label{eq:ERL-HP}
{\rm Log}_{\ac}^\eps({\rm loc}_\pp(\mathbf{z}^\eps))=\mathscr{L}_{\pp}^{\tt BDP}(f/K)\cdot F_d\cdot\sigma_{-1,\pp},
\end{equation}
where $\sigma_{-1,\pp}:={\rm rec}_\pp(-1)\vert_{K_{\infty}^\ac}\in\Gamma^\ac$.
In particular, the 
class ${\rm loc}_\pp(\mathbf{z}^\eps)$ is non-torsion.
\end{thm}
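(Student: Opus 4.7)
The plan is to verify the identity $(\ref{eq:ERL-HP})$ by comparing both sides at a Zariski-dense set of characters of $\Gamma^\ac$ and then invoking standard density. Since both ${\rm Log}_\ac^\eps({\rm loc}_\pp(\mathbf{z}^\eps))$ and $\mathscr{L}_\pp^{\tt BDP}(f/K)\cdot F_d\cdot\sigma_{-1,\pp}$ are bounded elements of $\Lambda_\ac^\unr$, it is enough to compare their values on finite order characters $\phi:\Gamma^\ac\to\bC_p^\times$ of conductor $p^n$, with $n$ of parity dictated by $\eps$ (so that $\tilde{\omega}_n^{-\eps}(\phi)\neq 0$).

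First I would evaluate the left-hand side at such a $\phi$. Applying Lemma~\ref{lem:interpolation} to $x={\rm loc}_\pp(\mathbf{z}^\eps)\in H^1_\eps(K_\pp,\Tc)$, and substituting the defining relation $\tilde{\omega}_n^{-\eps}\,\mathbf{z}_n^\eps=(-1)^{[(n+1)/2]}\,z_n$ from $(\ref{def:pm-HP})$, one obtains an explicit expression for $\phi^{-1}({\rm Log}_\ac^\eps({\rm loc}_\pp(\mathbf{z}^\eps)))$ as a ratio whose numerator is the Heegner twisted sum $\sum_{\tau}\phi(\tau)\log_{\hat E}({\rm loc}_\pp(z_n)^\tau)$ and whose denominator involves $\mathfrak{g}(\phi)/\phi(p^n)$, the sign $(-1)^{n/2}$, the value $\tilde{\omega}_n^{-\eps}(\phi)$, and the twisted unramified sum $\sum_\tau\phi(\tau)d_{n+a}^\tau$ attached to the element $d=\{d_m\}$ underlying $F_d$.

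Next I would evaluate the right-hand side at $\phi$. Although such $\phi$ lies outside the interpolation range of Theorem~\ref{thm:bdp}, the construction of $\mathscr{L}_\pp^{\tt BDP}(f/K)$ recalled from \cite[\S{3.3}]{cas-hsieh1} directly yields a formula for $\mathscr{L}_\pp^{\tt BDP}(f/K)(\phi)$ in terms of the same Heegner twisted sum $\sum_{\tau}\phi(\tau)\log_{\hat E}({\rm loc}_\pp(z_n)^\tau)$: this is the $p$-adic Waldspurger/Gross--Zagier formula of Bertolini--Darmon--Prasanna, which identifies the image of Heegner classes under the formal logarithm with values of the BDP $p$-adic $L$-function. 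The factor $F_d$ on the right contributes precisely $\sum_\tau\phi(\tau)d_{n+a}^\tau$ (cf. $(\ref{eq:factor})$ and the discussion preceding it), while $\sigma_{-1,\pp}$ contributes a sign accounting for the $\varphi^{-n}$-twists appearing in $(\ref{eq:def-c})$ and $(\ref{eq:def-cmn})$. Matching the two expressions character-by-character and using Zariski density yields $(\ref{eq:ERL-HP})$ in $\Lambda_\ac^\unr$; the non-torsion assertion then follows at once from the non-vanishing of $\mathscr{L}_\pp^{\tt BDP}(f/K)$ guaranteed by Theorem~\ref{thm:mu-bdp}.

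The main obstacle is the careful bookkeeping of Gauss sums, the CM and unramified periods $\Omega_K,\Omega_p$, and the Euler-like factors so that the product $F_d\cdot\sigma_{-1,\pp}$ matches exactly (and not merely up to units) the ratio between the formal logarithm expression coming from Lemma~\ref{lem:interpolation} and the Bertolini--Darmon--Prasanna formula for $\mathscr{L}_\pp^{\tt BDP}(f/K)(\phi)$. Conceptually nothing beyond the BDP $p$-adic Waldspurger formula and the definition of the unramified distribution $F_d$ is needed, but in practice this matching requires tracking the interplay between the factorization $(\ref{eq:factor})$, the Katz factor appearing in Theorem~\ref{thm:factorization}, and the explicit normalizations $b_{n,m}^\pm$ used to define ${\rm Log}_\ac^\eps$ in $\S\ref{subsec:PR-maps}$.
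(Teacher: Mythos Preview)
Your proposal is correct and follows essentially the same approach as the paper: evaluate both sides at finite order characters $\phi$ of conductor $p^n$ with $n$ of the appropriate parity, using Lemma~\ref{lem:interpolation} on the left and the formula from \cite[Thm.~4.8]{cas-hsieh1} for $\mathscr{L}_\pp^{\tt BDP}(f/K)(\phi^{-1})$ on the right, then match the auxiliary factors to identify $F_d$ and $\sigma_{-1,\pp}$. One small note: for the non-torsion claim the paper invokes Theorem~\ref{thm:bdp} rather than Theorem~\ref{thm:mu-bdp}, since the nonvanishing of $\mathscr{L}_\pp^{\tt BDP}(f/K)$ already follows from its interpolation property (some interpolated $L$-values lie in the region of absolute convergence), so the extra hypotheses of Theorem~\ref{thm:mu-bdp} are not needed here.
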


\begin{proof}
We give the proof for $\eps=+$, the proof for the other sign being virtually the same.
Let $\psi$ be an anticyclotomic Hecke character of infinity type $(1,-1)$ and conductor prime to $p$,
and let $\mathscr{L}_{\mathfrak{p},\psi}(f)\in R_0[[\Gamma^\ac]]$ be as in
\cite[Def.~3.5]{cas-hsieh1}. The $p$-adic $L$-function $\mathscr{L}^{\tt BDP}_\pp(f/K)$ of Theorem~\ref{thm:bdp} is then
given by
\[
\mathscr{L}^{\tt BDP}_{\mathfrak{p}}(f/K)={\rm Tw}_{\psi^{-1}}(\mathscr{L}_{\mathfrak{p},\psi}(f)),
\]
where ${\rm Tw}_{\psi^{-1}}:\unr[[\Gamma^\ac]]\rightarrow\unr[[\Gamma^\ac]]$ is the $\unr$-linear
isomorphism given by $\gamma\mapsto\psi^{-1}(\gamma)\gamma$ for $\gamma\in\Gamma^{\ac}$.
Let $\phi:\Gamma^\ac\rightarrow\mu_{p^\infty}$ be a nontrivial finite order character,
let $n>0$ be the smallest positive integer such that $\phi$ factors through $\Gamma^\ac/p^n\Gamma^\ac$ (using additive notation), and
assume that $n$ is even. Following the calculations in \cite[Thm.~4.8]{cas-hsieh1}, we then find that
\[
\begin{split}
\mathscr{L}^{\tt BDP}_{\mathfrak{p}}(f/K)(\phi^{-1})&=
\mathfrak{g}(\phi^{-1})\phi(p^n)p^{-n}
\sum_{\sigma\in\Gamma^\ac/p^n\Gamma^\ac}\phi(\sigma){\rm log}_{\hat{E}}(\sigma P[p^n])\\
&=\phi(-1)\cdot\frac{\phi(p^n)}{\mathfrak{g}(\phi)}\cdot(-1)^{n/2}\tilde{\omega}_n^-(\phi)
\sum_{\sigma\in\Gamma^\ac/p^n\Gamma^\ac}\phi(\sigma){\rm log}_{\hat{E}}(\sigma P[p^n]^+),
\end{split}
\]
where we used the definition of $P[p^n]^+$ for the second equality.
Combined with the interpolation properties of the map ${\rm Log}^+$ (see Lemma~\ref{lem:interpolation}),
this shows that
\begin{equation}\label{eq:end-calc}
\begin{split}
\mathscr{L}^{\tt BDP}_{\mathfrak{p}}(f/K)(\phi^{-1})&=
\phi(-1)\cdot\frac{\phi(p^n)}{\mathfrak{g}(\phi)}
\sum_{\sigma\in\Gamma^\ac/p^n\Gamma^\ac}\phi(\sigma)\log_{\hat{E}}(c_n^\sigma)
\cdot{\rm Log}_{\ac}^+({\rm loc}_\pp(\mathbf{z}_{}^+))(\phi^{-1})\\
&=\phi(-1)\sum_{\sigma\in\Gamma^\ac/p^n\Gamma^\ac}\phi(\sigma)d_{n+a}^\sigma
\cdot{\rm Log}_{\ac}^+({\rm loc}_\pp(\mathbf{z}_{}^+))(\phi^{-1}).
\end{split}
\end{equation}
Letting $\phi$ vary, equality (\ref{eq:ERL-HP}) follows immediately from $(\ref{eq:end-calc})$.

With the explicit reciprocity law $(\ref{eq:ERL-HP})$ in hand, the nontrivality of ${\rm loc}_\pp(\mathbf{z}^\pm)$ follows from the nonvanishing of $\mathscr{L}^{\tt BDP}_\pp(f/K)$ in Theorem~\ref{thm:bdp}.
\end{proof}

\begin{rem}
That the classes $\mathbf{z}^\pm$ are non-torsion over $\Lambda_\ac$ can also be deduced from the proof by
Cornut--Vatsal of Mazur's conjecture on higher Heegner points (see \cite[Thm.~1.5]{CV-dur} and the discussion right after it). 
However, the local refinement provided by Theorem~\ref{3.1}
will be a vital ingredient for our main results in this paper.
\end{rem}

\subsection{Anticyclotomic main conjectures}



By $(\ref{eq:factor-2})$ and $(\ref{eq:factor})$, there is an element
$\mathcal{L}_\pp^{\tt BDP}(f/K)$ in $\Lambda_\ac$ such that
\begin{equation}\label{eq:equivalences}
(\mathscr{L}^{\tt BDP}_\pp(f/K)^2)=(\mathcal{L}_\pp^{\tt BDP}(f/K)^2)=(\mathcal{L}_{\pp,\ac}(f/K))
\end{equation}
as principal ideals of $\unr[[\Gamma^\ac]]$. The Iwasawa--Greenberg main conjecture \cite{Greenberg55} for the 
$p$-adic $L$-function $\mathscr{L}^{\tt BDP}_\pp(f/K)$
of Theorem~\ref{thm:bdp} may thus be formulated as follows: 

\begin{conj}[Iwasawa--Greenberg main conjecture]\label{conj:IG}
The module $\mathfrak{X}^{{\rm rel},{\rm str}}(K,\Ac)$ is $\Lambda_\ac$-torsion, and
\[
{\rm Char}_{\Lambda_\ac}(\mathfrak{X}^{{\rm rel},{\rm str}}(K,\Ac))=(\mathcal{L}_\pp^{\tt BDP}(f/K)^2)
\]
as ideals in $\Lambda_{\ac}\otimes_{\bZ_p}\bQ_p$.
\end{conj}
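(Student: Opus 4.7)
The plan is to establish Conjecture~\ref{conj:IG} via two complementary divisibilities linked through the explicit reciprocity law of Theorem~\ref{3.1}. By the identifications \eqref{eq:equivalences}, it suffices to prove
\[
{\rm Char}_{\Lambda_\ac}\bigl(\mathfrak{X}^{{\rm rel},{\rm str}}(K,\Ac)\bigr)=\bigl(\mathcal{L}_{\pp,\ac}(f/K)\bigr)
\]
as ideals in $\Lambda_\ac\otimes_{\bZ_p}\bQ_p$. For one direction, I would invoke the second named author's work in \cite{wan-combined}, which supplies the divisibility in the two-variable Iwasawa--Greenberg main conjecture for $L_\pp(f/K)$. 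Theorem~\ref{thm:2-varIMC} recasts that as a divisibility in the two-variable plus/minus main conjecture, and descending to the anticyclotomic line via Corollary~\ref{thm:str} (combined with the factorization \eqref{eq:factor-2}) yields the divisibility $\subseteq$ in the displayed equality.

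For the opposite divisibility, I would apply Howard's Iwasawa-theoretic Kolyvagin system machine \cite{howard-PhD-I} to the plus/minus Heegner classes $\mathbf{z}^\pm$ constructed in \eqref{def:pm-HP}. Since the signed local conditions $H^1_\eps(K_\qq,\cdot)$ are self-orthogonal under local Tate duality (\emph{cf.} \cite[Prop.~4.11]{kim-parity}), the Kolyvagin system formalism adapts in the anticyclotomic supersingular setting to deliver, for each $\eps\in\{\pm\}$,
\[
{\rm Char}_{\Lambda_\ac}\bigl(\mathfrak{X}^{\eps,\eps}(K,\Ac)_{\rm tors}\bigr)\supseteq{\rm Char}_{\Lambda_\ac}\biggl(\frac{\mathfrak{Sel}^{\eps,\eps}(K,\Tc)}{\Lambda_\ac\cdot\mathbf{z}^\eps}\biggr)^{2}.
\]
The explicit reciprocity law of Theorem~\ref{3.1} identifies ${\rm Log}_\ac^\eps({\rm loc}_\pp(\mathbf{z}^\eps))$ with $\mathscr{L}_{\pp}^{\tt BDP}(f/K)$ up to the unit factor $F_d\cdot\sigma_{-1,\pp}$; feeding this into the anticyclotomic analogues of the Poitou--Tate global duality sequences \eqref{eq:ES-1a}--\eqref{eq:ES-1b} (with $\mathcal{BF}^\eps$ replaced by $\mathbf{z}^\eps$) then converts the Heegner divisibility into the opposite divisibility $\supseteq$ above, thereby forcing equality.

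The hard part will be the Kolyvagin step. Transporting Howard's argument to the supersingular plus/minus setting requires constructing a genuine Euler/Kolyvagin system out of the Shimura-curve classes $\mathbf{z}[S]^\pm$ for $S$ ranging over suitable Kolyvagin primes, checking the norm relations and derivative identities in the plus/minus framework, and exploiting the residual $G_K$-irreducibility of $\bar\rho_f$ to secure the freeness result of Lemma~\ref{Lemma 2.2} (on which the very construction of $\mathbf{z}^\pm$ rests). A further subtlety is that Euler system bounds typically control characteristic ideals only up to powers of $p$, so promoting them to integral divisibilities requires the vanishing of the $\mu$-invariant of $\mathscr{L}_{\pp}^{\tt BDP}(f/K)$ supplied by Theorem~\ref{thm:mu-bdp}. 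Once these ingredients are in place, the passage from the Heegner divisibility to Conjecture~\ref{conj:IG} via Theorem~\ref{3.1} is formal.
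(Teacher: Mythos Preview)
Your overall strategy---one divisibility from \cite{wan-combined}, the opposite from a Kolyvagin system built out of the $\mathbf{z}[S]^\eps$, glued via Theorem~\ref{3.1}---is exactly the paper's, and your treatment of the Heegner side ($\supseteq$) matches Theorem~\ref{thm:KS-argument} together with Lemmas~\ref{lem:cas} and~\ref{lem:tors}.

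The $\subseteq$ direction, however, is where you diverge. You propose to route the \cite{wan-combined} divisibility through Theorem~\ref{thm:2-varIMC} and then Corollary~\ref{thm:str}. This has two problems. First, Corollary~\ref{thm:str} is stated under the hypothesis that the \emph{full} two-variable main conjecture holds, not just one divisibility; you would need to reopen its proof and track a single inclusion through the descent, which is doable but not what you wrote. Second, and more seriously, Corollary~\ref{thm:str} outputs a statement about $\mathfrak{X}^{\eps,{\rm str}}(K,\Ac)$ and the quotient by $\mathcal{BF}^\eps_\ac$, carrying the extraneous factor $\mathcal{H}^\eps_\ac$---not a direct statement about $\mathfrak{X}^{{\rm rel},{\rm str}}(K,\Ac)$ and $\mathcal{L}^{\tt BDP}_\pp(f/K)$. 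To get back to the latter you would have to run the anticyclotomic Poitou--Tate sequences in reverse and cancel $\mathcal{H}^\eps_\ac$, which is circuitous.

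The paper avoids all of this: it descends the \cite{wan-combined} divisibility for $\mathfrak{X}^{{\rm rel},{\rm str}}$ \emph{directly} via the control isomorphism
\[
\mathfrak{X}^{{\rm rel},{\rm str}}(K,\mathbf{A})/I^{\rm cyc}\mathfrak{X}^{{\rm rel},{\rm str}}(K,\mathbf{A})\simeq\mathfrak{X}^{{\rm rel},{\rm str}}(K,\Ac)
\]
(argued as in \cite[Prop.~3.9]{SU}), together with the factorization \eqref{eq:factor}. The signed Selmer groups and Beilinson--Flach classes play no role on this side of the argument. Your remark about needing $\mu=0$ from Theorem~\ref{thm:mu-bdp} to upgrade from $\Lambda_\ac\otimes\bQ_p$ to $\Lambda_\ac$ is correct and is exactly how the paper closes the proof of Theorem~\ref{thm:howard}(2).
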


As we show in Section~\ref{subsec:main-1}, Conjecture~\ref{conj:IG} is intimatelly related to the analogue of Perrin-Riou's Heegner point main conjecture \cite{PR-HP} formulated in the Introduction of this paper (see Conjecture~\ref{conj:PR-ss}). The following three lemmas will be used to relate the two, where we let $\eps\in\{\pm\}$ be a fixed sign.

\begin{lem}\label{thm:ES}
Assume that $\mathfrak{Sel}^{\eps,\eps}(K,\Tc)$ has $\Lambda_\ac$-rank $1$. Then
\begin{equation}\label{eq:pm-rel}
\mathfrak{Sel}^{\eps,\eps}(K,\Tc)=\mathfrak{Sel}^{\eps,{\rm rel}}(K,\Tc)
\end{equation}
and $\mathfrak{X}^{\eps,{\rm str}}(K,\Ac)$ is a torsion $\Lambda_\ac$-module.
\end{lem}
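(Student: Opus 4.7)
The plan is to combine Poitou--Tate global duality with the explicit reciprocity law (Theorem~\ref{3.1}), Lemma~\ref{lem:str-rel}, and the Mazur--Rubin core rank formalism applied to the self-dual Selmer structure $(\eps,\eps)$. Using the self-duality of $H^1_\eps$ under local Tate duality (built into Definition~\ref{def:H-pm}), global duality yields
\begin{equation}\label{eq:PT-A-plan}
0 \to \mathfrak{Sel}^{\eps,\eps}(K,\Tc) \to \mathfrak{Sel}^{\eps,{\rm rel}}(K,\Tc) \to \frac{H^1(K_{\overline{\pp}},\Tc)}{H^1_\eps(K_{\overline{\pp}},\Tc)} \to \mathfrak{X}^{\eps,\eps}(K,\Ac) \to \mathfrak{X}^{\eps,{\rm str}}(K,\Ac) \to 0
\end{equation}
together with an entirely analogous five-term sequence with the roles of $\pp$ and $\overline{\pp}$ exchanged.

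Next I would exploit Theorem~\ref{3.1}: since $\mathbf{z}^\eps \in \mathfrak{Sel}^{\eps,\eps}(K,\Tc)$ (by its construction in Section~\ref{subsec:HP}) and $H^1_\eps(K_\pp,\Tc)$ is free of rank~$1$ over $\Lambda_\ac$, the non-triviality of ${\rm loc}_\pp(\mathbf{z}^\eps)$ forces the localization $\mathfrak{Sel}^{\eps,\eps}(K,\Tc) \to H^1_\eps(K_\pp,\Tc)$ to have rank-$1$ image. Combined with the rank-$1$ hypothesis on $\mathfrak{Sel}^{\eps,\eps}(K,\Tc)$, its kernel $\mathfrak{Sel}^{{\rm str},\eps}(K,\Tc)$ has $\Lambda_\ac$-rank $0$; by the action of complex conjugation on $K_\infty^\ac/K$ (which exchanges $\pp$ and $\overline{\pp}$ and induces the involution on $\Lambda_\ac$), the same is then true for $\mathfrak{Sel}^{\eps,{\rm str}}(K,\Tc)$. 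Substituting into the rank equation for the analogue of~(\ref{eq:PT-A-plan}) at $\overline{\pp}$ and invoking Lemma~\ref{lem:str-rel} yields $r_{\Lambda_\ac}(\mathfrak{X}^{\eps,\eps}) = r_{\Lambda_\ac}(\mathfrak{X}^{\eps,{\rm str}}) + 1$, which plugged into~(\ref{eq:PT-A-plan}) gives $r_{\Lambda_\ac}(\mathfrak{Sel}^{\eps,{\rm rel}}(K,\Tc))=1$. Since the quotient $\mathfrak{Sel}^{\eps,{\rm rel}}/\mathfrak{Sel}^{\eps,\eps}$ is thereby $\Lambda_\ac$-torsion yet embeds into $H^1(K_{\overline{\pp}},\Tc)/H^1_\eps(K_{\overline{\pp}},\Tc) \cong \Lambda_\ac$ (torsion-free, via the Coleman map ${\rm Col}^\eps$ of Section~\ref{subsec:Col-pm}), it must vanish, establishing~(\ref{eq:pm-rel}).

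The hard part will be the torsion statement for $\mathfrak{X}^{\eps,{\rm str}}(K,\Ac)$, for which an extra rank input is needed. Here I would apply the core rank formalism of Mazur--Rubin to the \emph{self-dual} structure $(\eps,\eps)$, in close analogy with the proof of Lemma~\ref{lem:str-rel}: the local Euler characteristic computation yields core rank $1+1-2=0$, and the Iwasawa-theoretic form of Mazur--Rubin's core rank theorem then gives the rank equality $r_{\Lambda_\ac}(\mathfrak{Sel}^{\eps,\eps}(K,\Tc)) = r_{\Lambda_\ac}(\mathfrak{X}^{\eps,\eps}(K,\Ac))$, whence $r_{\Lambda_\ac}(\mathfrak{X}^{\eps,\eps}(K,\Ac))=1$. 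Combined with the reduced form of~(\ref{eq:PT-A-plan}) obtained from~(\ref{eq:pm-rel}), namely $0 \to H^1(K_{\overline{\pp}},\Tc)/H^1_\eps(K_{\overline{\pp}},\Tc) \to \mathfrak{X}^{\eps,\eps}(K,\Ac) \to \mathfrak{X}^{\eps,{\rm str}}(K,\Ac) \to 0$, this forces $r_{\Lambda_\ac}(\mathfrak{X}^{\eps,{\rm str}})=0$, completing the proof.
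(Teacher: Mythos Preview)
Your proof is correct in substance and uses the same toolkit as the paper (Theorem~\ref{3.1}, Lemma~\ref{lem:str-rel}, Poitou--Tate duality, and the core-rank equality), but the order of operations differs and one step is imprecisely described.

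The paper invokes Proposition~\ref{prop:str-rel} (the core-rank equality $r_{\Lambda_\ac}(\mathfrak{Sel}^{\eps,\eps}) = r_{\Lambda_\ac}(\mathfrak{X}^{\eps,\eps})$) at the outset to get $r_{\Lambda_\ac}(\mathfrak{X}^{\eps,\eps}) = 1$ directly; the sequence at $\pp$ together with Theorem~\ref{3.1} then gives $r_{\Lambda_\ac}(\mathfrak{X}^{{\rm rel},\eps}) = 1$, complex conjugation transfers this to $\mathfrak{X}^{\eps,{\rm rel}}$, and Lemma~\ref{lem:str-rel} yields the torsion of $\mathfrak{X}^{\eps,{\rm str}}$; the equality $(\ref{eq:pm-rel})$ comes last from rank counting and torsion-freeness. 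You instead delay the core-rank input to the very end, first reaching $(\ref{eq:pm-rel})$ via the vanishing of $r_{\Lambda_\ac}(\mathfrak{Sel}^{\eps,{\rm str}})$. This works, but note that the sequence you need for your intermediate claim $r(\mathfrak{X}^{\eps,\eps}) = r(\mathfrak{X}^{\eps,{\rm str}}) + 1$ is not literally the $\pp$--$\overline\pp$ swap of $(\ref{eq:PT-A-plan})$ (that swap involves $\mathfrak{Sel}^{{\rm rel},\eps}$ and $\mathfrak{X}^{{\rm str},\eps}$, where your input $r(\mathfrak{Sel}^{\eps,{\rm str}})=0$ does not appear); rather, you need the Poitou--Tate sequence comparing $(\eps,{\rm str})$ with $(\eps,\eps)$,
\[
0 \to \mathfrak{Sel}^{\eps,{\rm str}}(K,\Tc) \to \mathfrak{Sel}^{\eps,\eps}(K,\Tc) \to H^1_\eps(K_{\overline\pp},\Tc) \to \mathfrak{X}^{\eps,{\rm rel}}(K,\Ac) \to \mathfrak{X}^{\eps,\eps}(K,\Ac) \to 0,
\]
which gives $r(\mathfrak{X}^{\eps,{\rm rel}}) = r(\mathfrak{X}^{\eps,\eps})$, after which Lemma~\ref{lem:str-rel} finishes. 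With that correction your argument is complete; the paper's ordering is marginally more direct since it uses the core-rank equality once up front rather than deriving a relative rank identity and then invoking it again at the end.
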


\begin{proof}
Consider the exact sequence
\begin{equation}\label{eq:es0}
\mathfrak{Sel}^{\eps,\eps}(K,\Tc)\overset{{\rm loc}_\pp}\longrightarrow H^1_{\eps}(K_\pp,\Tc)
\longrightarrow\mathfrak{X}^{{\rm rel},\eps}(K,\Ac)\longrightarrow \mathfrak{X}^{\eps,\eps}(K,\Ac)\longrightarrow 0.
\end{equation}
Since $\mathbf{z}^\eps$ lands in $\mathfrak{Sel}^{\eps,\eps}(K,\Tc)$, by Theorem~\ref{3.1} the image of the map ${\rm loc}_\pp$ is not $\Lambda_\ac$-torsion, and since $H^1_{\eps}(K_\pp,\Tc)$ has $\Lambda_{\rm ac}$-rank $1$, it follows
that ${\rm coker}({\rm loc}_\pp)$ is $\Lambda_{\rm ac}$-torsion. On the other hand, by Proposition~\ref{prop:str-rel} below the assumption in the lemma implies
that $\mathfrak{X}^{\eps,\eps}(K,\Ac)$ has $\Lambda_{\rm ac}$-rank $1$,
and so from (\ref{eq:es0})  we conclude that
\begin{equation}\label{eq:rank=1}
{\rm rank}_{\Lambda_{\rm ac}}(\mathfrak{X}^{{\rm rel},\eps}(K,\Ac))=1.
\end{equation}
Since $\mathfrak{X}^{{\rm rel},\eps}(K,\Ac)\simeq\mathfrak{X}^{\eps,{\rm rel}}(K,\Ac)$ by the action
of complex conjugation, we deduce from (\ref{eq:rank=1}) and Lemma~\ref{lem:str-rel}
that $\mathfrak{X}^{\eps,{\rm str}}(K,\Ac)$ is $\Lambda_{\rm ac}$-torsion.
Finally, since 
$H^1(K_{\overline{\pp}},\Tc)/H^1_{\eps}(K_{\overline{\pp}},\Tc)$ has $\Lambda_{\rm ac}$-rank $1$,
counting ranks in the exact sequence
\begin{equation}\label{eq:es1}
\begin{split}
0\longrightarrow\mathfrak{Sel}^{\eps,\eps}(K,\Tc)\longrightarrow
\mathfrak{Sel}^{\eps,{\rm rel}}(K,\Tc)
&\overset{{\rm loc}_{\overline\pp}}\longrightarrow
\frac{H^1(K_{\overline{\pp}},\Tc)}{H^1_{\eps}(K_{\overline{\pp}},\Tc)}\\
&\longrightarrow\mathfrak{X}^{\eps,\eps}(K,\Ac)\longrightarrow \mathfrak{X}^{\eps,{\rm str}}(K,\Ac)\longrightarrow 0,\nonumber
\end{split}
\end{equation}
we see that $\mathfrak{Sel}^{\eps,\eps}(K,\Tc)$ and $\mathfrak{Sel}^{\eps,{\rm rel}}(K,\Tc)$ have both
$\Lambda_{\rm ac}$-rank $1$,
and since the quotient  $H^1(K_{\overline\pp},\Tc)/H^1_{\eps}(K_{\overline\pp},\Tc)$ is also
$\Lambda_{\rm ac}$-torsion-free,  
equality (\ref{eq:pm-rel}) follows.
\end{proof}


\begin{lem}\label{lem:cas}
Assume that $\mathfrak{Sel}^{\eps,\eps}(K,\Tc)$ has $\Lambda_\ac$-rank $1$. Then
for any height one prime $\mathfrak{P}$ of $\Lambda_{\ac}$ we have
\[
{\rm ord}_{\mathfrak{P}}(\mathcal{L}^{\tt BDP}_{\pp}(f/K))={\rm length}_{\mathfrak{P}}({\rm coker}({\rm loc}_{{\pp}}))
+{\rm length}_{\mathfrak{P}}\biggl(\frac{\mathfrak{Sel}^{\eps,\eps}(K,\Tc)}{\Lambda^\ac\cdot\mathbf{z}_{}^\eps}\biggr),
\]
where ${\rm loc}_{\pp}:\mathfrak{Sel}^{\eps,\eps}(K,\Tc)
\rightarrow H^1_\eps(K_{\pp},\Tc)$ is the natural restriction map.
\end{lem}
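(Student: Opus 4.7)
The plan is to combine the explicit reciprocity law of Theorem~\ref{3.1} with a short-exact-sequence length count comparing the submodules $\Lambda_\ac\cdot\mathbf{z}^\eps\subset\mathfrak{Sel}^{\eps,\eps}(K,\Tc)$ and $\Lambda_\ac\cdot{\rm loc}_\pp(\mathbf{z}^\eps)\subset H^1_\eps(K_\pp,\Tc)$. Throughout, write $M:=\mathfrak{Sel}^{\eps,\eps}(K,\Tc)$, $L:=H^1_\eps(K_\pp,\Tc)$, and $\phi:={\rm loc}_\pp\colon M\to L$.

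First I would show that $\phi$ is injective. Its kernel is the strict Selmer group $\mathfrak{Sel}^{{\rm str},\eps}(K,\Tc)$, which sits inside $H^1(K,\Tc)$. Under the irreducibility of $E[p]\vert_{G_K}$ (part of our running hypotheses), a standard argument parallel to the one in Lemma~\ref{Lemma 2.2} shows that $H^1(K,\Tc)$ is $\Lambda_\ac$-torsion-free. On the other hand, Theorem~\ref{3.1} implies ${\rm loc}_\pp(\mathbf{z}^\eps)$ is non-torsion, and so the rank-one hypothesis on $M$ forces $\ker\phi$ to have $\Lambda_\ac$-rank zero; being simultaneously torsion-free, it vanishes.

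Next, with $\phi$ injective and $\phi(\Lambda_\ac\mathbf{z}^\eps)=\Lambda_\ac\cdot{\rm loc}_\pp(\mathbf{z}^\eps)$, the nested inclusions $\Lambda_\ac\mathbf{z}^\eps\subset M\hookrightarrow L$ give rise to the short exact sequence
\[
0\longrightarrow M/\Lambda_\ac\mathbf{z}^\eps\longrightarrow L/\Lambda_\ac\cdot{\rm loc}_\pp(\mathbf{z}^\eps)\longrightarrow{\rm coker}(\phi)\longrightarrow 0.
\]
All three terms are $\Lambda_\ac$-torsion (the outer two by the rank-one hypothesis and the non-torsion-ness of ${\rm loc}_\pp(\mathbf{z}^\eps)$, the middle term by sandwiching), so for any height-one prime $\mathfrak{P}$ of $\Lambda_\ac$ additivity of lengths yields
\[
{\rm length}_\mathfrak{P}\bigl(L/\Lambda_\ac\cdot{\rm loc}_\pp(\mathbf{z}^\eps)\bigr)={\rm length}_\mathfrak{P}\bigl(M/\Lambda_\ac\mathbf{z}^\eps\bigr)+{\rm length}_\mathfrak{P}\bigl({\rm coker}(\phi)\bigr).
\]

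To finish, I would identify the left-hand side with ${\rm ord}_\mathfrak{P}(\mathcal{L}_\pp^{\tt BDP}(f/K))$. Under the isomorphism ${\rm Log}_\ac^\eps\colon L\xrightarrow{\sim}\Lambda_\ac$ of Definition~\ref{def:log-ac}, the quotient $L/\Lambda_\ac\cdot{\rm loc}_\pp(\mathbf{z}^\eps)$ becomes $\Lambda_\ac/\bigl({\rm Log}_\ac^\eps({\rm loc}_\pp(\mathbf{z}^\eps))\bigr)$, and by the explicit reciprocity law of Theorem~\ref{3.1} the generator here equals $\mathscr{L}_\pp^{\tt BDP}(f/K)\cdot F_d\cdot\sigma_{-1,\pp}$. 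Since $\sigma_{-1,\pp}$ is a unit in $\Lambda_\ac$, and since $(\ref{eq:equivalences})$ together with the factorization $(\ref{eq:factor})$ and Corollary~\ref{cor:wan-bdp} identifies the principal ideal generated by $\mathscr{L}_\pp^{\tt BDP}\cdot F_d$ with $(\mathcal{L}_\pp^{\tt BDP})$ at every height-one prime of $\Lambda_\ac$, the claim follows. The main technical point to verify carefully is precisely this last identification, where one must keep track of the auxiliary factor $F_d$ and the distinct coefficient rings $\Lambda_\ac\subset\unr[[\Gamma^\ac]]$ involved in the definitions of the two sides; the length count itself is otherwise straightforward.
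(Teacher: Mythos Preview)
Your proof is correct and follows essentially the same route as the paper: both arguments show that $\ker({\rm loc}_\pp)=\mathfrak{Sel}^{{\rm str},\eps}(K,\Tc)$ vanishes (being torsion by rank considerations and torsion-free as a submodule of $H^1(K,\Tc)$), derive the short exact sequence
\[
0\longrightarrow\frac{\mathfrak{Sel}^{\eps,\eps}(K,\Tc)}{\Lambda_\ac\cdot\mathbf{z}^\eps}\longrightarrow\frac{H^1_\eps(K_\pp,\Tc)}{\Lambda_\ac\cdot{\rm loc}_\pp(\mathbf{z}^\eps)}\longrightarrow{\rm coker}({\rm loc}_\pp)\longrightarrow 0,
\]
and then identify the middle term with $\Lambda_\ac/(\mathcal{L}_\pp^{\tt BDP}(f/K))$ via ${\rm Log}_\ac^\eps$ and Theorem~\ref{3.1}. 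Your closing remark about tracking $F_d$ and the passage between $\Lambda_\ac$ and $\unr[[\Gamma^\ac]]$ is well-taken; the paper absorbs this into the assertion that ${\rm Log}_\ac^\eps$ induces the isomorphism $H^1_\eps(K_\pp,\Tc)/\Lambda_\ac\cdot{\rm loc}_\pp(\mathbf{z}^\eps)\simeq\Lambda_\ac/\Lambda_\ac\cdot\mathcal{L}_\pp^{\tt BDP}(f/K)$ without further comment.
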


\begin{proof}
Consider the tautological exact sequence
\begin{equation}\label{eq:tauto-es}
0\longrightarrow\mathfrak{Sel}^{{\rm str},\eps}(K,\Tc)\longrightarrow
\mathfrak{Sel}^{\eps,\eps}(K,\Tc)
\longrightarrow H^1_{\eps}(K_\pp,\Tc)
\longrightarrow{\rm coker}({\rm loc}_\pp)\longrightarrow 0.
\end{equation}
By Theorem~\ref{3.1}, the image of $\mathbf{z}_{}^\eps\in\mathfrak{Sel}^{\eps,\eps}(K,\Tc)$
under the map ${\rm loc}_\pp$ is not $\Lambda_\ac$-torsion.
Since $\mathfrak{Sel}^{\eps,\eps}(K,\Tc)$ has $\Lambda_\ac$-rank $1$ by assumption,  
this shows that $\mathfrak{Sel}^{{\rm str},\eps}(K,\Tc)$
is $\Lambda_\ac$-torsion, and since $H^1(K,\Tc)$ is $\Lambda_\ac$-torsion-free
(see e.g. \cite[Lem.~2.2.9]{howard-PhD-I}), it follows that
\begin{equation}\label{eq:str=0}
\mathfrak{Sel}^{{\rm str},\eps}(K,\Tc)=\{0\}.
\end{equation}

From $(\ref{eq:tauto-es})$ we thus deduce the exact sequence
\begin{equation}\label{eq:1}
0\longrightarrow\frac{\mathfrak{Sel}^{\eps,\eps}(K,\Tc)}
{\Lambda_\ac\cdot\mathbf{z}_{}^\eps}
\longrightarrow\frac{H^1_\eps(K_{\pp},\Tc)}
{\Lambda_\ac\cdot{\rm loc}_{\pp}(\mathbf{z}_{}^\eps)}
\longrightarrow {\rm coker}({\rm loc}_{\pp})\longrightarrow 0,\nonumber
\end{equation}
and since by the explicit reciprocity law of Theorem~\ref{3.1}
the map ${\rm Log}_{\ac}^\eps$ induces a $\Lambda_\ac$-module isomorphism
\[
\frac{H^1_{\eps}(K_\pp,\Tc)}{\Lambda_\ac\cdot{\rm loc}_\pp(\mathbf{z}^\eps)}
\overset{\simeq}\longrightarrow\frac{\Lambda_\ac}{\Lambda_\ac\cdot\mathcal{L}^{\tt BDP}_\pp(f/K)},
\]
the result follows.
\end{proof}

\begin{lem}\label{lem:tors}
Assume that $\mathfrak{Sel}^{\eps,\eps}(K,\Tc)$ has $\Lambda_\ac$ rank $1$.
Then the module $\mathfrak{X}^{{\rm rel},{\rm str}}(K,\Ac)$ is $\Lambda_\ac$-torsion, and
for any height one prime $\mathfrak{P}$ of $\Lambda_\ac$ we have
\[
{\rm length}_{\mathfrak{P}}(\mathfrak{X}^{{\rm rel},{\rm str}}(K,\Ac))=
{\rm length}_{\mathfrak{P}}(\mathfrak{X}^{\eps,\eps}(K,\Ac)_{\rm tors})+
2\;{\rm length}_{\mathfrak{P}}({\rm coker}({\rm loc}_{\pp})),
\]
where ${\rm loc}_{\pp}:\mathfrak{Sel}^{\eps,\eps}(K,\Tc)
\rightarrow H^1_\eps(K_{\pp},\Tc)$ is the natural restriction map.
\end{lem}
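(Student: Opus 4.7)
The plan is to combine Poitou--Tate global duality with multiplicativity of characteristic ideals in short exact sequences of $\Lambda_\ac$-modules. First, by applying complex conjugation to $(\ref{eq:str=0})$ we obtain $\mathfrak{Sel}^{\eps,{\rm str}}(K,\Tc)=0$, and combining this with the equality $\mathfrak{Sel}^{\eps,\eps}(K,\Tc)=\mathfrak{Sel}^{\eps,{\rm rel}}(K,\Tc)$ from Lemma~\ref{thm:ES} (together with its complex conjugate $\mathfrak{Sel}^{{\rm rel},\eps}(K,\Tc)=\mathfrak{Sel}^{\eps,\eps}(K,\Tc)$), we further deduce $\mathfrak{Sel}^{{\rm rel},{\rm str}}(K,\Tc)=0$: any class in $\mathfrak{Sel}^{{\rm rel},{\rm str}}(K,\Tc)$ lies in $\mathfrak{Sel}^{{\rm rel},\eps}(K,\Tc)=\mathfrak{Sel}^{\eps,\eps}(K,\Tc)$ with trivial image at $\overline\pp$, while the restriction of ${\rm loc}_{\overline\pp}$ to $\mathfrak{Sel}^{\eps,\eps}(K,\Tc)$ has kernel equal to the trivial $\mathfrak{Sel}^{\eps,{\rm str}}(K,\Tc)$.

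Next, applying Poitou--Tate global duality to the pair $({\rm str},\eps)\subset({\rm str},{\rm rel})$ (differing only at $\overline\pp$) and using the vanishings just established yields
\[
0 \longrightarrow \frac{H^1(K_{\overline\pp},\Tc)}{H^1_\eps(K_{\overline\pp},\Tc)} \longrightarrow \mathfrak{X}^{{\rm rel},\eps}(K,\Ac) \longrightarrow \mathfrak{X}^{{\rm rel},{\rm str}}(K,\Ac) \longrightarrow 0, \quad (\dagger)
\]
while Poitou--Tate for $({\rm str},\eps)\subset(\eps,\eps)$ gives
\[
0 \to \mathfrak{Sel}^{\eps,\eps}(K,\Tc) \xrightarrow{\;{\rm loc}_\pp\;} H^1_\eps(K_\pp,\Tc) \to \mathfrak{X}^{{\rm rel},\eps}(K,\Ac) \to \mathfrak{X}^{\eps,\eps}(K,\Ac) \to 0. \quad (\ddagger)
\]
By Lemma~\ref{lem:str-rel} combined with the $\Lambda_\ac$-torsionness of $\mathfrak{X}^{\eps,{\rm str}}(K,\Ac)$ (from Lemma~\ref{thm:ES}), the module $\mathfrak{X}^{{\rm rel},\eps}(K,\Ac)$ has $\Lambda_\ac$-rank one, and since $H^1(K_{\overline\pp},\Tc)/H^1_\eps(K_{\overline\pp},\Tc)$ is free of rank one over $\Lambda_\ac$, the sequence $(\dagger)$ immediately forces $\mathfrak{X}^{{\rm rel},{\rm str}}(K,\Ac)$ to be $\Lambda_\ac$-torsion.

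For the length formula, the short exact sequence $0\to{\rm coker}({\rm loc}_\pp)\to\mathfrak{X}^{{\rm rel},\eps}\to\mathfrak{X}^{\eps,\eps}\to 0$ extracted from $(\ddagger)$ yields, by multiplicativity of characteristic ideals on torsion submodules,
\[
{\rm length}_{\mathfrak{P}}(\mathfrak{X}^{{\rm rel},\eps}_{\rm tors}) = {\rm length}_{\mathfrak{P}}({\rm coker}({\rm loc}_\pp)) + {\rm length}_{\mathfrak{P}}(\mathfrak{X}^{\eps,\eps}_{\rm tors}),
\]
and from $(\dagger)$ we similarly obtain ${\rm length}_{\mathfrak{P}}(\mathfrak{X}^{{\rm rel},{\rm str}}) = {\rm length}_{\mathfrak{P}}(\mathfrak{X}^{{\rm rel},\eps}_{\rm tors}) + n$, where $n$ is the length of the cokernel of the injection $H^1(K_{\overline\pp},\Tc)/H^1_\eps \hookrightarrow \mathfrak{X}^{{\rm rel},\eps}/\mathfrak{X}^{{\rm rel},\eps}_{\rm tors}$ between free rank-one $\Lambda_\ac$-modules induced by $(\dagger)$. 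The main obstacle I anticipate is to identify $n$ with ${\rm length}_{\mathfrak{P}}({\rm coker}({\rm loc}_\pp))$; this is expected to follow by combining local Tate duality at $\overline\pp$ with the complex conjugate analog of $(\ddagger)$ (involving ${\rm loc}_{\overline\pp}$ and $\mathfrak{X}^{\eps,{\rm rel}}$) together with the symmetry between $\pp$ and $\overline\pp$ provided by complex conjugation. Once that identification is in place, the claimed formula follows at once, with the factor $2$ arising from the two distinct contributions of ${\rm length}_{\mathfrak{P}}({\rm coker}({\rm loc}_\pp))$: one via the torsion in $\mathfrak{X}^{{\rm rel},\eps}$ from $(\ddagger)$, and one as the free-part index $n$ coming from $(\dagger)$.
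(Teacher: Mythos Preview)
Your sequences $(\dagger)$ and $(\ddagger)$ are correct, and the formula
${\rm length}_{\mathfrak{P}}(\mathfrak{X}^{{\rm rel},{\rm str}}) = {\rm length}_{\mathfrak{P}}(\mathfrak{X}^{{\rm rel},\eps}_{\rm tors}) + n$
that you extract from $(\dagger)$ is valid. However, the gap you yourself flag is real: the identification $n = {\rm length}_{\mathfrak{P}}({\rm coker}({\rm loc}_{\pp}))$ does not follow from the ingredients you list. The ``complex conjugate analog of $(\ddagger)$'' produces ${\rm coker}({\rm loc}_{\overline\pp})$, and complex conjugation relates this to ${\rm coker}({\rm loc}_{\pp})$ only via an $\iota$-semilinear isomorphism, so it converts ${\rm length}_{\mathfrak{P}}$ into ${\rm length}_{\iota(\mathfrak{P})}$. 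For a general height-one prime $\mathfrak{P}\neq\iota(\mathfrak{P})$ this is not what you need, and there is no obvious local-duality trick that repairs it.

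The paper avoids this obstacle entirely by choosing a different second exact sequence. Instead of your $(\dagger)$, it uses Poitou--Tate for the pair $(\eps,{\rm str})\subset({\rm rel},{\rm str})$ (differing at $\pp$), which together with Lemma~\ref{thm:ES} gives
\[
0\longrightarrow{\rm coker}({\rm loc}_{\pp})\longrightarrow
\mathfrak{X}^{{\rm rel},{\rm str}}(K,\Ac)\longrightarrow
\mathfrak{X}^{\eps,{\rm str}}(K,\Ac)\longrightarrow 0.
\]
All three terms here are $\Lambda_\ac$-torsion, so no free-part index appears. Combining this with your $(\ddagger)$ yields
${\rm length}_{\mathfrak{P}}(\mathfrak{X}^{{\rm rel},{\rm str}}) = {\rm length}_{\mathfrak{P}}({\rm coker}({\rm loc}_{\pp})) + {\rm length}_{\mathfrak{P}}(\mathfrak{X}^{\eps,{\rm str}})$,
and the remaining step is to show
${\rm length}_{\mathfrak{P}}(\mathfrak{X}^{\eps,{\rm str}}) = {\rm length}_{\mathfrak{P}}(\mathfrak{X}^{{\rm rel},\eps}_{\rm tors})$.
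This is exactly the content of Lemma~\ref{lem:str-rel} (up to swapping $\pp$ and $\overline\pp$), whose proof is a genuinely nontrivial application of Mazur--Rubin's core-rank machinery. Your sketch implicitly requires this same input to compute $n$; the paper's route just makes the dependence explicit and clean.
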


\begin{proof}
Global duality yields the exact sequence
\begin{equation}\label{PT3}
0\longrightarrow{\rm coker}({\rm loc}_{\pp})\longrightarrow
\mathfrak{X}^{{\rm rel},\eps}(K,\Ac)\longrightarrow
\mathfrak{X}^{\eps,\eps}(K,\Ac)\longrightarrow 0.
\end{equation}
As shown in the proof of
Lemma~\ref{lem:cas}, the first term in the sequence is $\Lambda_\ac$-torsion;
since by Proposition~\ref{prop:str-rel} below the assumption implies that $\mathfrak{X}^{\eps,\eps}(K,\Ac)$
has $\Lambda_\ac$-rank $1$, this shows that the same is true for $\mathfrak{X}^{{\rm rel},\eps}(K,\Ac)$,
and by Lemma~\ref{lem:str-rel} it follows that
$\mathfrak{X}^{{\rm str},\eps}(K,\Ac)$ is $\Lambda_\ac$-torsion.
Thus taking $\Lambda_\ac$-torsion in $(\ref{PT3})$ and using Lemma~\ref{lem:str-rel} again,
it follows that
\begin{equation}\label{pbar}
{\rm length}_{\mathfrak{P}}(\mathfrak{X}^{{\rm str},\eps}(K,\Ac))=
{\rm length}_{\mathfrak{P}}(\mathfrak{X}^{\eps,\eps}(K,\Ac)_{{\rm tors}})+
{\rm length}_{\mathfrak{P}}({\rm coker}({\rm loc}_{\pp}))
\end{equation}
for any height one prime $\mathfrak{P}$ of $\Lambda_\ac$.

Another application of global duality yields the exact sequence
\begin{equation}\label{PT4}
0\longrightarrow{\rm coker}({\rm loc}_{\pp}^{\rm rel})\longrightarrow
\mathfrak{X}^{{\rm rel},{\rm str}}(K,\Ac)\longrightarrow
\mathfrak{X}^{\eps,{\rm str}}(K,\Ac)\longrightarrow 0,
\end{equation}
where ${\rm loc}^{\rm rel}_{\pp}:\mathfrak{Sel}^{\eps,{\rm rel}}(K,\Tc)\rightarrow
H^1_\eps(K_\pp,\Tc)$ is the natural restriction map. By Lemma~\ref{thm:ES},
this is the same as the map ${\rm loc}_\pp$ in the statement, and hence
${\rm coker}({\rm loc}^{\rm rel}_{{\pp}})={\rm coker}({\rm loc}_\pp)$ is $\Lambda_\ac$-torsion.
Since $\mathfrak{X}^{\eps,{\rm str}}(K,\Ac)$ is $\Lambda_\ac$-torsion by Lemma~\ref{thm:ES},
we conclude from $(\ref{PT4})$ that $\mathfrak{X}^{{\rm rel},{\rm str}}(K,\Ac)$ is $\Lambda_\ac$-torsion.
Combining $(\ref{PT4})$ and $(\ref{pbar})$, we thus have
\begin{align*}
{\rm length}_{\mathfrak{P}}(\mathfrak{X}^{{\rm rel},{\rm str}}(K,\Ac))
&= {\rm length}_{\mathfrak{P}}(\mathfrak{X}^{\eps,{\rm str}}(K,\Ac))+
{\rm length}_{\mathfrak{P}}({\rm coker}({\rm loc}_{{\pp}})) \\
&={\rm length}_{\mathfrak{P}}(\mathfrak{X}^{\eps,\eps}(K,\Ac)_{\rm tors})+
2\;{\rm length}_{\mathfrak{P}}({\rm coker}({\rm loc}_{\pp}))
\end{align*}
for any height one prime $\mathfrak{P}$ of $\Lambda_\ac$,
as was to be shown.
\end{proof}

\subsection{Kolyvagin system argument}\label{subsec:KS}

The purpose of this section is to prove the following result, establishing under mild assumptions
one of the divisibility predicted by Conjecture~\ref{conj:PR-ss}.

\begin{thm}\label{thm:KS-argument}
Assume that ${\rm Gal}(\overline{\bQ}/K)\rightarrow{\rm Aut}_{\bZ_p}(T)$ is surjective, and that $E[p]$ is ramified at every prime $\ell\mid N^-$. Let $\eps\in\{\pm\}$. Then $\mathfrak{X}^{\eps,\eps}(K,\Ac)$ and $\mathfrak{Sel}^{\eps,\eps}(K,\Tc)$
both have $\Lambda_\ac$-rank $1$, and we have the divisibility
\[
{\rm Char}_{\Lambda_\ac}(\mathfrak{X}^{\eps,\eps}(K,\Ac)_{\rm tors})
\supseteq {\rm Char}_{\Lambda_\ac}\biggl(\frac{\mathfrak{Sel}^{\eps,\eps}(K,\Tc)}
{\Lambda_\ac\cdot\mathbf{z}^\eps}\biggr)^2.
\]
\end{thm}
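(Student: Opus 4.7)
The plan is to adapt the Kolyvagin system argument of Howard \cite{howard-PhD-I} to the supersingular setting, using the plus/minus Heegner classes $\mathbf{z}[S]^\eps$ constructed in $(\ref{def:pm-HP})$. The surjectivity of the mod $p$ Galois representation together with the ramification of $E[p]$ at primes $\ell\mid N^-$ are the two standard Galois-theoretic hypotheses needed to run Kolyvagin-style arguments in this context.

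From the family $\{\mathbf{z}[n]^\eps\}_n$ indexed by squarefree products $n$ of Kolyvagin primes (inert primes $\ell\nmid Np$ with $p\mid\ell+1$ and $p\mid a_\ell$), I would construct Kolyvagin derivative classes $\kappa_n^\eps\in H^1(K,\Tc/I_n\Tc)$ for a suitable Kolyvagin ideal $I_n\subset\Lambda_\ac$, by applying the standard derivative operator $D_n=\prod_{\ell\mid n}D_\ell$ over the Galois extension $K[n]/K$. The freeness of $H^1(K[n],\Tc)$ established in Lemma~\ref{Lemma 2.2} allows these derivatives to be defined unambiguously, and the usual arguments show they form a Kolyvagin system for the Selmer structure underlying $\mathfrak{Sel}^{\eps,\eps}$: the local conditions at primes $\ell\mid n$ are transverse, at primes away from $np$ they are unramified, and at primes above $p$ they coincide with $H^1_\eps$. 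This last point is the crucial new ingredient in the supersingular case, but it is inherited from the fact that $\mathbf{z}[n]^\eps$ itself satisfies the plus/minus condition at $p$, built into its very definition via the $\omega_n^\eps$-divisibility construction of Section~\ref{subsec:HP}.

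With the Kolyvagin system $\{\kappa_n^\eps\}$ in hand, and the non-triviality of $\kappa_1^\eps=\mathbf{z}^\eps$ guaranteed by the explicit reciprocity law of Theorem~\ref{3.1}, one runs the Kolyvagin system machinery in its Iwasawa-theoretic form, following Mazur--Rubin and Howard. The self-duality of the plus/minus local conditions at $p$ under local Tate pairing (\cite[Prop.~4.11]{kim-parity}) ensures the formalism applies cleanly, yielding simultaneously both rank statements --- namely $\mathrm{rank}_{\Lambda_\ac}\mathfrak{Sel}^{\eps,\eps}(K,\Tc)=\mathrm{rank}_{\Lambda_\ac}\mathfrak{X}^{\eps,\eps}(K,\Ac)=1$ --- and the claimed characteristic ideal divisibility, with the square arising from the standard duality comparison between the Selmer group and its dual in this self-dual setting.

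The main obstacle will be the careful verification of the Kolyvagin system axioms for the derived classes $\kappa_n^\eps$, especially at the primes above $p$, and the bookkeeping needed to descend the derivative construction from the two-variable framework used to produce $\mathbf{z}[n]^\eps$ down to the anticyclotomic line. The required Chebotarev-type density argument for Kolyvagin primes $\ell$ with prescribed Frobenius behavior modulo $p^n$ relies on the ramification hypothesis at primes of $N^-$ in an essential way, to guarantee non-degeneracy of the relevant pairings on $E[p^n]$ and to rule out the existence of unwanted global classes annihilating the Kolyvagin derivatives.
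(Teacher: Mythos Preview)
Your proposal is correct and follows essentially the same approach as the paper: construct a Kolyvagin system from the plus/minus Heegner classes $\mathbf{z}[S]^\eps$ via Howard's derivative construction, verify the Selmer structure axioms (with the self-duality at $p$ coming from \cite{kim-parity}), invoke the nontriviality from Theorem~\ref{3.1}, and then run Howard's Iwasawa-theoretic Kolyvagin system machinery prime-by-prime at height one primes of $\Lambda_\ac$. One small clarification: the classes $\mathbf{z}[S]^\eps$ are already constructed directly over the anticyclotomic tower in $\S\ref{subsec:HP}$, so no descent from a two-variable framework is needed; and the verification that the derived classes $\kappa_S^\eps$ satisfy the $H^1_\eps$ condition at $p$ is not quite automatic from the construction, but rather uses a freeness argument parallel to Lemma~\ref{Lemma 2.2}.
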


For the proof of Theorem~\ref{thm:KS-argument}, we will adapt to our supersingular
setting the Kolyvagin system techniques developed by Howard \cite{howard-PhD-I} in the
ordinary case. More precisely, we will use the classes $P[S]^\eps$ introduced in (\ref{def:pm-HP})
to build a certain Kolyvagin system for $\mathbf{T}^\ac$; the nontriviality of this system
will follow from the nontriviality of $\mathbf{z}^\eps$ established in Theorem~\ref{3.1},
and Theorem~\ref{thm:KS-argument} will then follow from a suitable adaptation of
Howard's arguments.

As in \cite[\S{1.1}]{howard-PhD-I}, by a Selmer structure on $\Tc$ we mean
a choice of a local condition $H^1_{\mathcal{F}}(K_v,\Tc)\subseteq H^1(K_v,\Tc)$ for each place $v\in\Sigma$.
(Here 
$\Sigma$ is any finite set of places of $K$ containing those above $p$, those above $\infty$, and those
where $V$ ramifies.) We define the Selmer structure $\mathcal{F}^\pm$ on $\Tc$ to be the unramified local condition at
the places in $\Sigma$ not dividing $p$, and the plus/minus local condition $H^1_\pm(K_v,\Tc)$ at the primes $v$ above $p$.

For the statement of the next result, we refer the reader to \cite[\S{1.2}]{howard-PhD-I}
for the definition of the module of Kolyvagin systems $\mathbf{KS}(\Tc,\mathcal{F},\mathcal{L})$
attached to a Selmer structure $\mathcal{F}$ on $\Tc$ and a certain set $\mathcal{L}$
of primes inert in $K$.

\begin{thm}\label{thm:pm-HP-KS}
For each $\eps\in\{\pm\}$ there exists a Kolyvagin system $\kappa^{\eps}\in\mathbf{KS}(\Tc,\mathcal{F}^\eps,\mathcal{L})$
with $\kappa_1^\eps=\mathbf{z}^\eps$.
\end{thm}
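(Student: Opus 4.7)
The plan is to adapt the construction of the Heegner point Kolyvagin system in \cite[\S{1.7}]{howard-PhD-I} to our supersingular setting, taking as input the plus/minus Heegner classes $\mathbf{z}[S]^\eps \in H^1(K[S],\Tc)$ from (\ref{def:pm-HP}) in place of the Iwasawa-theoretic Heegner classes of the ordinary case. Throughout, let $n$ range over squarefree products of primes in $\mathcal{L}$, and write $I_n \subset \Lambda_\ac$ for the standard Kolyvagin ideal attached to $n$ and $G_n = \prod_{\ell\mid n}\Gal(K[\ell]/K[1])$.

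The first step is to verify Euler system norm relations for the family $\{\mathbf{z}[S]^\eps\}_{S}$: for every inert prime $\ell \in \mathcal{L}$ coprime to $SNp$, the classical Heegner norm relation $\mathrm{cor}^{K[S\ell]}_{K[S]}(z[S\ell]) = a_\ell\cdot z[S]$ and the Frobenius congruence $\Frob_\ell^*(z[S]) \equiv z[S\ell] \pmod{\ell}$ transfer to $\mathbf{z}[S]^\eps$ via the $\Lambda_\ac$-linearity of the $\eps$-projection and the compatibility of Lemma~\ref{lem:DI-2.9}. To give sense to $\mathbf{z}[S]^\eps$ for $S>1$, one first needs to extend Lemma~\ref{Lemma 2.2} with $K$ replaced by $K[S]$, which follows \emph{verbatim} from the argument there using the $G_{K[S]}$-irreducibility of $E[p]$ guaranteed by the surjectivity hypothesis on $\Gal(\overline{\bQ}/K) \to \Aut_{\bZ_p}(T)$.

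The second step is to apply the Kolyvagin derivative $D_n = \prod_{\ell\mid n} D_\ell$ to $\mathbf{z}[n]^\eps$; a standard telescoping argument based on the norm relations shows that $D_n\mathbf{z}[n]^\eps$ is $\Gal(K[n]/K[1])$-invariant modulo $I_n$, and corestriction from $K[1]$ down to $K$ then produces a class $\kappa_n^\eps \in H^1(K,\Tc/I_n\Tc) \otimes G_n$ with $\kappa_1^\eps = \mathbf{z}^\eps$ by construction. The final step is to check the Kolyvagin system axioms for the Selmer structure $\mathcal{F}^\eps(n)$ (agreeing with $\mathcal{F}^\eps$ away from $n$ and transverse at primes dividing $n$): the finite-singular compatibility at primes $\ell\mid n$ is the classical Kolyvagin congruence and is insensitive to the supersingular refinement; the unramified condition away from $Npn$ is inherited from the global nature of the Heegner construction; and the plus/minus condition at $v \mid p$ follows because $D_n$ acts through elements of $\Gal(K[n]/K[1])$ that fix each decomposition group above $p$ pointwise (the primes in $\mathcal{L}$ being coprime to $p$), so $\mathrm{loc}_v(\kappa_n^\eps)$ is a $\bZ_p$-combination of conjugates of $\mathrm{loc}_v(\mathbf{z}[n]^\eps)$, which already lies in $H^1_\eps(K_v,\Tc)$.

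The main obstacle will be the careful bookkeeping required in the last verification: one must track the $\omega_n^\eps$-divisibility defining $\mathbf{z}[n]^\eps$ (as in Lemma~\ref{lem:DI-2.9}) through both the Kolyvagin derivative and the corestriction from $K[1]$ to $K$, to ensure that the plus/minus local condition at $p$ genuinely survives in $\kappa_n^\eps$ rather than only in a limiting sense. Here the uniqueness (over $K[n]$) of the $\eps$-projection provided by the extended freeness result is essential, since it allows the $\eps$-projection and the Kolyvagin derivative to commute in the relevant quotients.
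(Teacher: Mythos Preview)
Your proposal is correct and follows essentially the same approach as the paper: both apply Howard's derivative construction from \cite[\S{1.7}]{howard-PhD-I} to the classes $\mathbf{z}[S]^\eps$, reduce the Kolyvagin system verification to \cite[Lem.~2.3.4]{howard-PhD-I} (extended to the primes dividing $N^-$ as in \cite[Prop.~3.4.1]{howard-PhD-II}), and isolate the local condition at $v\mid p$ as the only genuinely new check. One minor point: Lemma~\ref{Lemma 2.2} is already stated over $K[S]$, so no extension is needed.

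The one place where your presentation differs in emphasis is the verification at $v\mid p$. Your third paragraph argues via the triviality of the decomposition groups in $K[n]/K[1]$; this is the right geometric input, but by itself it only shows that ${\rm loc}_v(\kappa_n^\eps)$ descends from conjugates of ${\rm loc}_w(\mathbf{z}[n]^\eps)$ over $K[n]$, and one still needs to know that those localizations lie in $H^1_\eps$ and that this condition propagates correctly to $\Tc/I_n\Tc$. The paper addresses exactly this point by invoking ``the same argument as in the proof of Lemma~\ref{Lemma 2.2}'' (freeness of the relevant Iwasawa cohomology), which you correctly identify as essential in your final paragraph. So the two write-ups are complementary rather than genuinely different: your decomposition-group observation handles the horizontal descent $K[n]\to K[1]\to K$, and the freeness handles the vertical compatibility with reduction modulo $I_n$.
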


\begin{proof}
Let $\mathcal{L}_0$ be the set of rational primes $\ell$ not dividing $pN$ and inert in $K$.
For each $\ell\in\mathcal{L}_0$, let $\lambda$ be the prime of $K$ above $\ell$, and denote
by $I_\ell$ the smallest ideal of $\Lambda^\ac$ containing $\ell+1$ for which the Frobenius element
${\rm Fr}_\lambda\in G_{K_\lambda}$ acts trivially on $\Tc/I_\ell\Tc$. Let $\mathcal{L}=\mathcal{L}(\Tc)\subseteq\mathcal{L}_0$
consist of the primes $\ell\in\mathcal{L}_0$ with $I_\ell\subseteq p\bZ_p$, and let $\mathcal{N}$ be the set
of square-free products $S$ of primes in $\mathcal{L}$, with the convention that $1\in\mathcal{N}$.
For each $S\in\mathcal{N}$, define $I_S=\sum_{\ell\vert S}I_\ell$.

Applied to the Heegner classes $\mathbf{z}[S]^\eps$ defined in $(\ref{def:pm-HP})$,
the derivative construction in \cite[\S{1.7}]{howard-PhD-I} produces classes
\[
\kappa_S^\eps\in H^1(K,\mathbf{T}^\ac/I_S\mathbf{T}^\ac),
\]
indexed by the products $S\in\mathcal{N}$, with $\kappa_1^\eps=\mathbf{z}[1]^\eps=\mathbf{z}^\eps$.
The verification that these classes form a Kolyvagin system for the Selmer structure $\mathcal{F}^\eps$ on
$\mathbf{T}^\ac$ follows from the same argument as in \cite[Lem.~2.3.4]{howard-PhD-I} (as extended in \cite[Prol.~3.4.1]{howard-PhD-II} to cover the primes $v\mid N^-$), 
the only
difference being at the primes $v\mid p$, where we are led to show that the localization of
$\kappa_S^\eps$ at $v$ is contained in $H^1_{\mathcal{F}^\eps}(K_v,\Tc/I_S\Tc)$, defined as the
image of the natural map
\[
H^1_{\mathcal{F}^\eps}(K_v,\Tc)\longrightarrow H^1(K_v,\Tc/I_S\Tc).
\]
But this follows from the same argument as in the proof of Lemma~\ref{Lemma 2.2}.
\end{proof}

Let $\mathfrak{P}$ be a height one prime of $\Lambda_\ac$, let $S_\mathfrak{P}$
denote the integral closure of $\Lambda_\ac/\mathfrak{P}$, and let
$\varpi_\mathfrak{P}\in S_{\mathfrak{P}}$ be a uniformizer. Define the Galois representations
\[
T_{\mathfrak{P}}:=\Tc\otimes_{\Lambda_\ac}S_{\mathfrak{P}},\quad\quad
A_{\mathfrak{P}}:=\Ac\otimes_{\Lambda_\ac}S_{\mathfrak{P}},
\]
and let $T_{\mathfrak{P},m}$ and $A_{\mathfrak{P},m}$ be their reduction modulo $\varpi_{\mathfrak{P}}^m$.

For each place $v\mid p$ in $K$,
define $H^1_\pm(K_v,T_{\mathfrak{P}})\subseteq H^1(K_v,T_{\mathfrak{P}})$ to be the
image of the natural map
\[
H^1_{\mathcal{F}^\pm}(K_v,\Tc)\longrightarrow H^1(K_v,\Tc\otimes_{\Lambda^\ac}\Lambda^\ac/\mathfrak{P})
\longrightarrow H^1(K_v,T_\mathfrak{P}),
\]
and define $H^1_\pm(K_v,T_{\mathfrak{P},m})\subseteq H^1(K_v,T_{\mathfrak{P},m})$ in the same manner.
Also, let $H^1_\pm(K_v,A_{\mathfrak{P}})$ (resp. $H^1_\pm(K_v,A_{\mathfrak{P},m})$)
be the orthogonal complement of $H^1_\pm(K_v,T_{\mathfrak{P}})$ (resp. $H^1_\pm(K_v,T_{\mathfrak{P},m})$)
under the local Tate pairing.

\begin{lem}\label{eq:2.2.7}
For every height one prime $\mathfrak{P}\subseteq\Lambda_\ac$ with $\mathfrak{P}\neq p\Lambda_\ac$,
and for every place $v$ of $K$, the natural maps
\begin{align*}
H^1_{\mathcal{F}^\pm}(K_v,\Tc/\mathfrak{P}\Tc)&\longrightarrow
H^1_{\mathcal{F}^\pm_{\mathfrak{P}}}(K_v,T_{\mathfrak{P}}),\quad\\
H^1_{\mathcal{F}^\pm_{\mathfrak{P}}}(K_v,A_{\mathfrak{P}})&\longrightarrow
H^1_{\mathcal{F}^\pm}(K_v,\Ac[\mathfrak{P}])
\end{align*}
have finite kernel and cokernel of order bounded by a constant depending only on
$[S_\mathfrak{P}:\Lambda_\ac/\mathfrak{P}]$.
\end{lem}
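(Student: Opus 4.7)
The argument adapts \cite[Lem.~2.2.7]{howard-PhD-I} to the plus/minus setting. Set $R := \Lambda_\ac/\mathfrak{P}$ and $S := S_\mathfrak{P}$. The hypothesis $\mathfrak{P} \neq p\Lambda_\ac$ forces $R$ to be a discrete valuation ring of characteristic zero, and the inclusion $R \hookrightarrow S$ has finite cokernel $S/R$ whose $R$-length is bounded in terms of $c := [S : R]$. Since $\Tc$ is free over $\Lambda_\ac$, tensoring the sequence $0 \to R \to S \to S/R \to 0$ with $\Tc$ produces a short exact sequence of $G_K$-modules
\begin{equation*}
0 \longrightarrow \Tc/\mathfrak{P}\Tc \longrightarrow T_\mathfrak{P} \longrightarrow Q \longrightarrow 0,
\end{equation*}
where $Q \simeq T_pE \otimes_{\bZ_p}(S/R)$ is a finite module of order bounded in terms of $c$. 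Pontrjagin duality produces the companion exact sequence
\begin{equation*}
0 \longrightarrow Q^\vee \longrightarrow A_\mathfrak{P} \longrightarrow \Ac[\mathfrak{P}] \longrightarrow 0
\end{equation*}
with $Q^\vee := \Hom(Q, \bQ_p/\bZ_p)$ of order $\vert Q\vert$.

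I first treat the map $H^1_{\mathcal{F}^\pm}(K_v, \Tc/\mathfrak{P}\Tc) \to H^1_{\mathcal{F}^\pm_\mathfrak{P}}(K_v, T_\mathfrak{P})$. At places $v \mid p$, the target is by its very definition the image of the source under the change-of-coefficients map $H^1(K_v, \Tc/\mathfrak{P}\Tc) \to H^1(K_v, T_\mathfrak{P})$, so surjectivity is automatic. Its kernel is contained in the kernel of this change-of-coefficients map, which by the long exact sequence attached to the first displayed extension coincides with the image of the connecting map from $H^0(K_v, Q)$, and hence is of order at most $\vert Q\vert$. At places $v \in \Sigma$ not dividing $p$ (the archimedean case is vacuous as $K_v = \bC$), the same reasoning applied to the inflation-restriction diagram for $K_v^\ur/K_v$ bounds the kernel and cokernel of the induced map on unramified subgroups by the orders of $Q^{I_v}$ and its ${\rm Fr}_v$-coinvariants, respectively; all of these are controlled by a constant depending only on $c$.

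The assertion for the second map follows formally by local Tate duality. By construction $H^1_\pm(K_v, A_\mathfrak{P})$ is the exact annihilator of $H^1_\pm(K_v, T_\mathfrak{P})$ under the perfect local pairing, and similarly at places away from $p$ the unramified subgroups are mutually orthogonal complements. Applying the same long exact sequence analysis to $Q^\vee$ shows that the kernel and cokernel of $H^1(K_v, A_\mathfrak{P}) \to H^1(K_v, \Ac[\mathfrak{P}])$ are of order at most $\vert Q^\vee \vert = \vert Q \vert$; combined with orthogonality and the bound from the first part, this yields the desired bound. The main bookkeeping point, and the only mildly non-routine one, is the compatibility of the Tate pairings across the inclusion $R \hookrightarrow S$: this reduces to the flatness of $S$ over $R$ and the functoriality of cup products. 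Beyond this, the proof is a straightforward exercise in tracking long exact sequences in Galois cohomology.
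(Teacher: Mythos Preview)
Your argument is correct in substance and follows the template of \cite[Lem.~2.2.7]{howard-PhD-I}: build the short exact sequence $0\to\Tc/\mathfrak{P}\Tc\to T_{\mathfrak{P}}\to Q\to 0$ with $Q$ finite of size controlled by $[S_{\mathfrak{P}}:\Lambda_\ac/\mathfrak{P}]$, read off the bounds from the long exact sequence, and dualize. One small slip: $R=\Lambda_\ac/\mathfrak{P}$ is a one-dimensional local order in $S_{\mathfrak{P}}$ but need not itself be a DVR; fortunately nothing in your argument uses this, only the finiteness of $S/R$.

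The paper proceeds differently. Rather than the generic exact-sequence bookkeeping, it works at finite level modulo $p^m$ and invokes the results of \cite{kim-parity} that are specific to the plus/minus local conditions: Propositions~3.14 and 4.11 there give that $H^1_\pm(K_v,T_{\mathfrak{P},m})$ is the exact annihilator of $H^1_\pm(K_{\overline v},T_{\mathfrak{P},m})$ under local Tate duality, and Proposition~4.18 then supplies the required bounds for the second map directly; the first map is obtained by dualizing. So the order is reversed (second map first, then dualize), and the input is Kim's self-duality rather than Howard's generic argument. Your approach has the advantage of being more self-contained; the paper's has the advantage of isolating precisely where the plus/minus input enters, namely the self-duality of the local conditions, which is exactly the hypothesis~(H.4) needed later when Howard's machinery is applied in the proof of Theorem~\ref{thm:KS-argument}.
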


\begin{proof}
Let $m$ be any positive integer, and $n\gg 0$ be such that we have the inclusion of
ideals of $\Lambda_\ac$:
\[
(\omega_n(X),p^m)\subseteq (\mathfrak{P}, p^m).
\]
Then it follows from \cite[Prop.~3.14]{kim-parity} (\emph{cf.} \cite[Prop.~4.11]{kim-parity})
that for each $v\mid p$ in $K$, the module $H^1_{\pm}(K_v,T_{\mathfrak{P},m})$ is
the exact annihilator of $H^1_{\pm}(K_{\overline{v}},T_{\mathfrak{P},m})$ under
local Tate duality. In particular, $H^1_\pm(K_v,A_{\mathfrak{P},m})$ can be identified with
$H^1_{\pm}(K_{\overline{v}},T_{\mathfrak{P},m})$. Hence by \cite[Prop.~4.18]{kim-parity}
the second map in the statement has kernel and cokernel with the required bounds,
and taking duals the same properties for the first map follow.
\end{proof}

\begin{prop}\label{prop:str-rel}
For every $\eps\in\{\pm\}$ we have 
\[
{\rm rank}_{\Lambda_\ac}(\mathfrak{Sel}^{\eps,\eps}(K,\Tc))
={\rm rank}_{\Lambda_\ac}(\mathfrak{X}^{\eps,\eps}(K,\Ac)).	
\]
\end{prop}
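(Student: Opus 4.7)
The strategy is to reduce the rank equality to a statement over a discrete valuation ring via specialization, and then to invoke the Greenberg--Wiles global Euler characteristic formula for a self-dual Selmer structure.

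Since the $\Lambda_\ac$-rank of a finitely generated module can be detected at any single height-one prime outside a finite exceptional set, it suffices to prove the equality
\[
\mathrm{rank}_{S_\mathfrak{P}}H^1_{\mathcal{F}^\eps_\mathfrak{P}}(K,T_\mathfrak{P})=\mathrm{corank}_{S_\mathfrak{P}}H^1_{\mathcal{F}^\eps_\mathfrak{P}}(K,A_\mathfrak{P})
\]
for a suitable height-one prime $\mathfrak{P}\neq p\Lambda_\ac$. By Lemma~\ref{eq:2.2.7}, together with its standard analogues at finite $v\nmid p$ (where the unramified local conditions are well-behaved under specialization), the natural maps
\[
\mathfrak{Sel}^{\eps,\eps}(K,\Tc)\otimes_{\Lambda_\ac}S_\mathfrak{P}\longrightarrow H^1_{\mathcal{F}^\eps_\mathfrak{P}}(K,T_\mathfrak{P}),\qquad H^1_{\mathcal{F}^\eps_\mathfrak{P}}(K,A_\mathfrak{P})\longrightarrow\mathfrak{Sel}^{\eps,\eps}(K,\Ac)[\mathfrak{P}]\otimes_{\Lambda_\ac}S_\mathfrak{P}
\]
have finite kernel and cokernel, so the displayed $S_\mathfrak{P}$-rank equality implies the $\Lambda_\ac$-rank equality in the statement.

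The next step is to verify that $\mathcal{F}^\eps_\mathfrak{P}$ is self-dual under local Tate duality: the unramified local condition at $v\in\Sigma$ with $v\nmid p$ is always its own orthogonal complement, while for $v\mid p$ this is precisely the content of \cite[Prop.~4.11]{kim-parity} (combined with the definition of $H^1_\eps(K_v,A_\mathfrak{P})$ as the orthogonal complement of $H^1_\eps(K_v,T_\mathfrak{P})$). Together with the Weil-pairing self-duality $T_\mathfrak{P}\simeq T_\mathfrak{P}^*(1)$, which is compatible with the $\Lambda_\ac$-coefficients because the involution $\iota$ acts trivially on anticyclotomic characters, this exhibits $\mathcal{F}^\eps_\mathfrak{P}$ as a self-dual Selmer structure on a self-dual representation.

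The final step is to apply the Greenberg--Wiles formula \cite[Thm.~2.18]{DDT} to $\mathcal{F}=\mathcal{F}^\eps_\mathfrak{P}$ on $T_\mathfrak{P}/\pi^n$ for varying $n$, and to verify that all local contributions cancel. For generic $\mathfrak{P}$ the global $H^0$ terms are trivial, and the local ratios $\#\mathcal{L}_v/\#H^0(K_v,T_\mathfrak{P}/\pi^n)$ contribute as follows: at each of the two primes $v\mid p$ the Lagrangian property gives $\#\mathcal{L}_v=q^n$ (with $q=|S_\mathfrak{P}/\pi|$) and $\#H^0(K_v,T_\mathfrak{P}/\pi^n)=1$, producing a joint contribution of $q^{2n}$; at the unique complex place $w$ of $K$ one has $\mathcal{L}_w=0$ and $H^0(K_w,T_\mathfrak{P}/\pi^n)=T_\mathfrak{P}/\pi^n$ of order $q^{2n}$, producing $q^{-2n}$; all remaining finite primes in $\Sigma$ are balanced and contribute $1$. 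The product is $1$, so $\#H^1_\mathcal{F}(K,T_\mathfrak{P}/\pi^n)=\#H^1_\mathcal{F}(K,A_\mathfrak{P}[\pi^n])$ for every $n$, and letting $n\to\infty$ gives the desired equality of $S_\mathfrak{P}$-ranks. The main technical point is the careful bookkeeping of the finite error terms in the control-theorem step and the isolation of $\mathfrak{P}=p\Lambda_\ac$ (and any finitely many other ``bad'' primes); this is harmless since a generic $\mathfrak{P}$ in the complement of a finite set suffices to read off the $\Lambda_\ac$-ranks.
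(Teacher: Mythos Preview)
Your approach is the same as the paper's: the paper simply cites Howard \cite[Prop.~2.2.8]{howard-PhD-I} and says the argument goes through using Lemma~\ref{eq:2.2.7}, and you have essentially unpacked what that argument is (specialize at a generic height-one prime, control local conditions, and run Greenberg--Wiles).

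There is, however, a genuine error in your self-duality step. You claim $T_\mathfrak{P}\simeq T_\mathfrak{P}^*(1)$ ``because the involution $\iota$ acts trivially on anticyclotomic characters''. This is false: $\iota$ sends $\gamma\mapsto\gamma^{-1}$, so it sends an anticyclotomic character $\psi$ to $\psi^{-1}$, not to itself. What is true is that the Weil pairing gives $T_\mathfrak{P}^*(1)\simeq T\otimes\psi^{-1}\simeq T_{\iota(\mathfrak{P})}$, i.e.\ the Cartier dual of $T_\mathfrak{P}$ is the specialization at the \emph{conjugate} prime (and complex conjugation on $G_K$ intertwines the two). In particular $T_\mathfrak{P}$ is \emph{not} literally self-dual for generic $\mathfrak{P}$.

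This matters for your computation at $v\mid p$. The orthogonal-complement property you quote is a pairing between $H^1(K_v,T_\mathfrak{P}/\pi^n)$ and $H^1(K_v,A_\mathfrak{P}[\pi^n])$; without identifying these two groups you cannot conclude $\#\mathcal{L}_v=q^n$ from ``Lagrangian'' alone, since $\#\mathcal{L}_v\cdot\#\mathcal{L}_v^\perp=\#H^1(K_v,T_\mathfrak{P}/\pi^n)$ does not pin down $\#\mathcal{L}_v$. The correct justification is already in the paper: by the construction in \S\ref{subsec:PR-maps}, the module $H^1_\eps(K_v,\Tc)$ is free of rank one over $\Lambda_\ac$ (generated by $a^\eps$), so for generic $\mathfrak{P}$ its image $H^1_\eps(K_v,T_\mathfrak{P})$ is free of rank one over $S_\mathfrak{P}$, whence $\#\mathcal{L}_v=q^n$ up to a bounded error. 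With this fix, your local bookkeeping is correct and the argument goes through.
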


\begin{proof}
Since the Selmer structure $\mathcal{F}^\eps$ introduced above is such that
\[
\mathfrak{Sel}^{\eps,\eps}(K,\Tc)=H^1_{\mathcal{F}^\eps}(K,\Tc),
\] 
the result follows from Lemma~\ref{eq:2.2.7} in the same manner that Proposition~2.2.8 in \cite{howard-PhD-I} is deduced from \cite[Lem.~2.2.7]{howard-PhD-I} (see also \cite[Lem.~3.5]{wan}).
\end{proof}

\begin{proof}[Proof of Theorem~\ref{thm:KS-argument}]
We can now adapt the argument in the proof \cite[Thm.~2.2.10]{howard-PhD-I}.
Indeed, for every height one prime $\mathfrak{P}\subseteq\Lambda_\ac$ with $\mathfrak{P}\neq p\Lambda_\ac$,
we have a map
\[
\mathbf{KS}(\mathbf{T}^\ac,\mathcal{F}^\eps,\mathcal{L}(\Tc))\longrightarrow
\mathbf{KS}(T_{\mathfrak{P}},\mathcal{F}^\eps_\mathfrak{P},\mathcal{L}(T_{\mathfrak{P}})),
\]
where $\mathcal{F}^\eps_{\mathfrak{P}}$ is the Selmer structure on $T_{\mathfrak{P}}$
naturally induced from $\mathcal{F}^\eps$. Letting $\kappa^\eps(\mathfrak{P})$ be the image of the
Kolyvagin system $\kappa^\eps$ of Theorem~\ref{thm:pm-HP-KS} under this map, it follows
from Theorem~\ref{3.1} and Lemma~\ref{eq:2.2.7} that $\kappa_1^\eps(\mathfrak{P})$
generates and infinite $S_\mathfrak{P}$-submodule of $H^1_{\mathcal{F}_{\mathfrak{P}}^\eps}(K,T_{\mathfrak{P}})$
for all but finitely many $\mathfrak{P}$. To deduce, as in \cite[Prop.~2.1.3]{howard-PhD-I}, that
for any such $\mathfrak{P}$ the module $H^1_{\mathcal{F}^\eps_{\mathfrak{P}}}(K,T_{\mathfrak{P}})$
is free of rank one over $S_\mathfrak{P}$, it suffices to show that the triple
$(T_{\mathfrak{P}},\mathcal{F}_{\mathfrak{P}}^\eps,\mathcal{L}(T_{\mathfrak{P}}))$
satisfies the hypotheses (H.0)--(H.5) of [\emph{loc.cit.},\S{1.2}].
The only difference here with respect to the verification of these hypotheses in \cite[Prop.~2.1.3]{howard-PhD-I}
is the self-duality condition in hypothesis (H.4), but this follows from \cite[Prop.~3.14]{kim-parity}
as indicated above.

By Lemma~\ref{eq:2.2.7}, this shows that $H^1_{\mathcal{F}^\eps}(K,\Tc)\otimes_{\Lambda_\ac}S_\mathfrak{P}$
is a free $S_{\mathfrak{P}}$-module of rank $1$, from where the first part of Theorem~\ref{thm:KS-argument}
follows immediately, and for the second part the argument in \cite[Thm.~2.2.10]{howard-PhD-I} applies verbatim.
\end{proof} 

\section{Main results}


\subsection{Proof of the main conjectures}\label{subsec:main-1}

For the ease of notation, set
\[
\mathfrak{X}_\pp(K,\mathbf{A}):=\mathfrak{X}^{{\rm rel},{\rm str}}(K,\mathbf{A}),
\]
and similarly for $\mathfrak{X}_\pp(K,\Ac)$.



\begin{thm}\label{thm:howard}
Let $E/\bQ$ be an elliptic curve of conductor $N$ with good supersingular reduction at $p$,
and let $K/\bQ$ be an imaginary quadratic field of discriminant prime to $N$. Assume that
the triple $(E,p,K)$ satisfy the following:
\begin{itemize}
\item{} $p\geqslant 5$,
\item{} $p=\pp\overline{\pp}$ splits in $K$,
\item{} hypothesis {\rm (\ref{eq:gen-HH})} holds,
\item{} $N$ is square-free,
\item{} $N^-\neq 1$,
\item{} $E[p]$ is ramified at every prime $\ell\vert N^-$,
\item{} ${\rm Gal}(\overline{\bQ}/K)\rightarrow{\rm Aut}_{\bZ_p}(T_p(E))$ is surjective.
\end{itemize}
Then:
\begin{enumerate}
\item{} 
For each $\eps\in\{\pm\}$ both $\mathfrak{X}^{\eps,\eps}(K,\Ac)$ and $\mathfrak{Sel}^{\eps,\eps}(K,\Tc)$
have $\Lambda_\ac$-rank $1$, and
\[
{\rm Char}_{\Lambda_\ac}(\mathfrak{X}^{\eps,\eps}(K,\Ac)_{\rm tors})
={\rm Char}_{\Lambda_\ac}\biggl(\frac{\mathfrak{Sel}^{\eps,\eps}(K,\Tc)}
{\Lambda_\ac\cdot\mathbf{z}^\eps}\biggr)^2
\]
as ideals in $\Lambda_\ac$.
\item{} 
$\mathfrak{X}_\pp(K,\Ac)$ is $\Lambda_\ac$-torsion, and
\[
{\rm Char}_{\Lambda_\ac}(\mathfrak{X}_\pp(K,\Ac))=(\mathcal{L}^{\tt BDP}_\pp(f/K)^2)
\]
as ideals in $\Lambda_\ac$.
\end{enumerate}
\end{thm}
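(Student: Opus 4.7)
The strategy is to combine three ingredients, each available by work earlier in the paper: the Kolyvagin system divisibility of Theorem~\ref{thm:KS-argument} on the Heegner side, the one-sided Iwasawa--Greenberg divisibility from \cite{wan-combined} descended to the anticyclotomic line, and the ``Rubin-style'' length identities of Lemmas~\ref{lem:cas} and~\ref{lem:tors}, which express lengths of the relevant Selmer quotients and Pontrjagin duals in terms of the single unknown ${\rm length}_\mathfrak{P}({\rm coker}({\rm loc}_\pp))$. The two divisibilities point in opposite directions once translated through these length identities, so they will collapse to equalities at every height-one prime of $\Lambda_\ac$ simultaneously.

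First, I would invoke Theorem~\ref{thm:KS-argument} to obtain the rank assertion in $(1)$, namely that $\mathfrak{Sel}^{\eps,\eps}(K,\Tc)$ and $\mathfrak{X}^{\eps,\eps}(K,\Ac)$ are of $\Lambda_\ac$-rank one, together with the Heegner divisibility
\[
{\rm Char}_{\Lambda_\ac}\bigl(\mathfrak{X}^{\eps,\eps}(K,\Ac)_{\rm tors}\bigr)\supseteq{\rm Char}_{\Lambda_\ac}\bigl(\mathfrak{Sel}^{\eps,\eps}(K,\Tc)/\Lambda_\ac\cdot\mathbf{z}^\eps\bigr)^2.
\]
With rank one now in hand, Lemmas~\ref{lem:cas} and~\ref{lem:tors} apply to give, at every height-one prime $\mathfrak{P}\subset\Lambda_\ac$, the identities
\[
{\rm ord}_\mathfrak{P}(\mathcal{L}_\pp^{\tt BDP}(f/K))={\rm length}_\mathfrak{P}({\rm coker}({\rm loc}_\pp))+{\rm length}_\mathfrak{P}\bigl(\mathfrak{Sel}^{\eps,\eps}/\Lambda_\ac\cdot\mathbf{z}^\eps\bigr),
\]
\[
{\rm length}_\mathfrak{P}(\mathfrak{X}_\pp(K,\Ac))={\rm length}_\mathfrak{P}\bigl(\mathfrak{X}^{\eps,\eps}(K,\Ac)_{\rm tors}\bigr)+2\,{\rm length}_\mathfrak{P}({\rm coker}({\rm loc}_\pp)).
\]

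The complementary input is the analytic divisibility
\[
{\rm Char}_{\Lambda_\ac}(\mathfrak{X}_\pp(K,\Ac))\subseteq(\mathcal{L}_\pp^{\tt BDP}(f/K)^2),
\]
which is obtained by descending the two-variable Iwasawa--Greenberg divisibility of \cite{wan-combined} from $\Gamma_K$ to $\Gamma^\ac$, by the same snake-lemma and characteristic-ideal bookkeeping used in the proof of Corollary~\ref{thm:str}; the argument there propagates one-sided inclusions just as well as the conditional equalities it is formally stated with. Doubling the first length identity and subtracting the second then yields
\[
2\,{\rm ord}_\mathfrak{P}(\mathcal{L}_\pp^{\tt BDP}(f/K))-{\rm length}_\mathfrak{P}(\mathfrak{X}_\pp(K,\Ac))=2\,{\rm length}_\mathfrak{P}\bigl(\mathfrak{Sel}^{\eps,\eps}/\Lambda_\ac\cdot\mathbf{z}^\eps\bigr)-{\rm length}_\mathfrak{P}\bigl(\mathfrak{X}^{\eps,\eps}(K,\Ac)_{\rm tors}\bigr).
\]
The left-hand side is nonpositive by the IG-MC divisibility, the right-hand side is nonnegative by the Heegner divisibility from Theorem~\ref{thm:KS-argument}, so both vanish at every $\mathfrak{P}$. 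This forces equality in $(1)$ and $(2)$ simultaneously, and along the way gives the $\Lambda_\ac$-torsionness of $\mathfrak{X}_\pp(K,\Ac)$.

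The main obstacle I expect is to carry out the descent step cleanly. Corollary~\ref{thm:str} is formally stated as a conditional equality, so one must verify that the auxiliary inputs used in its proof --- the nonvanishing of $\mathcal{L}_\pp^{\tt BDP}(f/K)$ (Theorem~\ref{thm:mu-bdp}), the $\Lambda$-torsion-freeness of $H^1(K,\mathbf{T})$, and the vanishing of $Z(K_\infty)[I^{\rm cyc}]$ --- continue to apply when the two-variable IMC is known only up to one divisibility. Everything else in the plan is a routine height-one-prime calculation.
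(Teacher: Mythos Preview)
Your overall strategy matches the paper's: sandwich the two opposite divisibilities through the length identities of Lemmas~\ref{lem:cas} and~\ref{lem:tors} to force equality at every height-one prime. Two points warrant comment.

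First, the descent step is simpler than you anticipate. The paper does not adapt the snake-lemma bookkeeping of Corollary~\ref{thm:str} (which concerns the quotient by $\mathcal{BF}^\eps$, a different object); instead it uses a direct control theorem: restriction induces an isomorphism
\[
\mathfrak{X}_\pp(K,\mathbf{A})/I^{\rm cyc}\mathfrak{X}_\pp(K,\mathbf{A})\;\simeq\;\mathfrak{X}_\pp(K,\Ac)
\]
(as in \cite[Prop.~3.9]{SU}), so the two-variable divisibility of \cite[Thm.~6.13]{wan-combined} descends immediately. Your worries about whether the auxiliary inputs of Corollary~\ref{thm:str} survive under a one-sided hypothesis are therefore moot.

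Second, there is one missing step. The divisibility from \cite{wan-combined} descends a priori only to $\Lambda_\ac\otimes_{\bZ_p}\bQ_p$, so at the height-one prime $\mathfrak{P}=p\Lambda_\ac$ your sandwich argument has no input on the analytic side and the inequality you need there does not follow. The paper closes this with the vanishing of $\mu(\mathcal{L}_\pp^{\tt BDP}(f/K))$ from Theorem~\ref{thm:mu-bdp} (which you invoke only for nonvanishing): once $\mu=0$, the divisibility ${\rm Char}_{\Lambda_\ac}(\mathfrak{X}_\pp(K,\Ac))\subseteq(\mathcal{L}_\pp^{\tt BDP}(f/K)^2)$ holds already in $\Lambda_\ac$, and your length argument then goes through at every height-one prime including $p\Lambda_\ac$.
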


\begin{proof}
By Theorem~\ref{thm:KS-argument} we know that $\mathfrak{Sel}^{\eps,\eps}(K,\Tc)$ has $\Lambda_\ac$-rank $1$.
By Proposition~\ref{prop:str-rel} the same is true for $\mathfrak{X}^{\eps,\eps}(K,\Ac)$, and by Lemma~\ref{lem:tors}
the module $\mathfrak{X}_\pp(K,\Ac)$ is $\Lambda_\ac$-torsion.
Let $\mathfrak{P}$ be a height one prime of $\Lambda_\ac$. Then by the divisibility in
Theorem~\ref{thm:KS-argument} we have
\begin{equation}\label{eq:upper}
{\rm length}_{\mathfrak{P}}(\mathfrak{X}^{\eps,\eps}(K,\Ac)_{\rm tors})
\leqslant 2\;{\rm length}_{\mathfrak{P}}
\biggl(\frac{\mathfrak{Sel}^{\eps,\eps}(K,\Ac)}{\Lambda_\ac\cdot\mathbf{z}_{}^\eps}\biggr).
\end{equation}
Combined with Lemma~\ref{lem:tors} and Lemma~\ref{lem:cas}, respectively, this implies that
\begin{align*}
{\rm length}_{\mathfrak{P}}(\mathfrak{X}_\pp(K,\Ac))
&\leqslant 2\;{\rm length}_{\mathfrak{P}}
\biggl(\frac{\mathfrak{Sel}^{\eps,\eps}(K,\Tc)}{\Lambda^\ac\cdot\mathbf{z}_{}^\eps}\biggr)+
2\;{\rm length}_{\mathfrak{P}}({\rm coker}({\rm loc}_{\pp}))\\
&=2\;{\rm ord}_{\mathfrak{P}}(\mathcal{L}^{\tt BDP}_{\pp}(f/K)),
\end{align*}
and hence we have the divisibility
\begin{equation}\label{eq:div}
{\rm Char}_{\Lambda_\ac}(\mathfrak{X}_\pp(K,\Ac))\;\supseteq\;
(\mathcal{L}_{\pp}^{\tt BDP}(f/K)^2).
\end{equation}

It remains to show the two divisibilities
$\subseteq$ in the theorem. Let $I^{\rm cyc}:=(\gamma_{\rm cyc}-1)\subset\Lambda$ be the principal ideal generated by $\gamma_{\rm cyc}-1$.
Similarly as in \cite[Prop.~3.9]{SU},
the natural restriction map $H^1(K_\infty^\ac,E[p^\infty])\rightarrow H^1(K_\infty,E[p^\infty])$
induces a $\Lambda_{\ac}$-module isomorphism
\begin{equation}\label{eq:control}
\mathfrak{X}_\pp(K,\mathbf{A})/I^{\rm cyc}\mathfrak{X}_\pp(K,\mathbf{A})
\simeq\mathfrak{X}_\pp(K,\Ac).
\end{equation}

By $(\ref{eq:factor})$,
the two-variable divisibility in \cite[Thm.~6.13]{wan-combined} thus yields the divisibility
\begin{equation}\label{eq:div-wan}
{\rm Char}_{\Lambda_\ac}(\mathfrak{X}_{\pp}(K,\Ac))\subseteq(\mathcal{L}^{\tt BDP}_\pp(f/K)^2)
\end{equation}
in $\Lambda_\ac\otimes_{\bZ_p}\bQ_p$; however, since $\mu(\mathcal{L}^{\tt BDP}_\pp(f/K))=0$ by Theorem~\ref{thm:mu-bdp}, the divisibility $(\ref{eq:div-wan})$ holds already in $\Lambda_\ac$, and therefore equality holds in  $(\ref{eq:div})$. This shows part (1) of Theorem~\ref{thm:howard}, and part (2) follows from it by Lemma~\ref{lem:cas} and Lemma~\ref{lem:tors}.
\end{proof}

\begin{thm}\label{thm:two-varIMC}
Let $\eps\in\{\pm\}$.
Under the hypotheses of Theorem~\ref{thm:howard}, the following hold:
\begin{enumerate}
\item{} $\mathfrak{X}^{\eps,{\rm str}}(K,\mathbf{A})$ is $\Lambda$-torsion,
$\mathfrak{Sel}^{\eps,{\rm rel}}(K,\mathbf{T})$ has $\Lambda$-rank $1$, and
\[
{\rm Char}_{\Lambda}(\mathfrak{X}^{\eps,{\rm str}}(K,\mathbf{A}))\cdot\mathcal{H}^\eps=
{\rm Char}_\Lambda\bigg(\frac{\mathfrak{Sel}^{\eps,{\rm rel}}(K,\mathbf{T})}{\Lambda\cdot\mathcal{BF}^\eps}\biggr)
\]
as ideals in $\Lambda$.
\item{} $\mathfrak{X}_\pp(K,\mathbf{A})$ is $\Lambda$-torsion, and
\[
{\rm Char}_{\Lambda}(\mathfrak{X}_\pp(K,\mathbf{A}))=(\mathcal{L}_\pp(f/K))
\]
as ideals in $\Lambda$.
\end{enumerate}
\end{thm}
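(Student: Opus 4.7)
The plan is to establish (2) directly and then deduce (1) by invoking the equivalence already proved in Theorem~\ref{thm:2-varIMC}; this reduces the main content to the two-variable Iwasawa--Greenberg main conjecture for $\mathcal{L}_\pp(f/K)$.

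For the divisibility $\subseteq$ in (2), I would invoke \cite[Thm.~6.13]{wan-combined}, which provides the two-variable analogue of the divisibility (\ref{eq:div-wan}), namely
\[
{\rm Char}_\Lambda(\mathfrak{X}_\pp(K,\mathbf{A})) \subseteq (L_\pp(f/K))
\]
in $\Lambda_{\unr}$ up to a power of the exceptional prime $P$. Combining this with the factorization (\ref{eq:factor-2}), which writes $L_\pp(f/K) = \mathcal{L}_\pp(f/K) \cdot F_d^2 \cdot U$ with $U \in \Lambda^\times$, and exploiting the triviality of the anticyclotomic $\mu$-invariant of $\mathscr{L}_\pp^{\tt BDP}(f/K)$ from Theorem~\ref{thm:mu-bdp} to rule out a contribution from $P$, one recovers the claimed divisibility in $\Lambda$ itself. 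In particular, since $\mathcal{L}_\pp(f/K)$ is non-zero by Theorem~\ref{thm:wan}, it follows that $\mathfrak{X}_\pp(K,\mathbf{A})$ is $\Lambda$-torsion.

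For the opposite divisibility $\supseteq$ in (2), the idea is to descend to the anticyclotomic line via the control isomorphism (\ref{eq:control}) and then lift the anticyclotomic equality back to $\Lambda$. Setting $I^{\rm cyc} = (\gamma_{\rm cyc}-1) \subset \Lambda$, so that $\Lambda \simeq \Lambda_\ac[[\gamma_{\rm cyc}-1]]$, Rubin's \cite[Lem.~6.2]{rubin-IMC} applied to (\ref{eq:control}) yields
\[
{\rm Char}_{\Lambda_\ac}(\mathfrak{X}_\pp(K,\Ac)) = {\rm Char}_\Lambda(\mathfrak{X}_\pp(K,\mathbf{A}))\cdot \Lambda_\ac \cdot \mathfrak{D},
\]
where $\mathfrak{D} := {\rm Char}_{\Lambda_\ac}(\mathfrak{X}_\pp(K,\mathbf{A})[I^{\rm cyc}])$. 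Theorem~\ref{thm:howard}(2) together with the equivalences (\ref{eq:equivalences}) identify the left-hand side with $(\mathcal{L}_{\pp,\ac}(f/K))$. Writing the upper bound from the previous step as $\mathcal{L}_\pp(f/K) = g \cdot r$ with $(g) = {\rm Char}_\Lambda(\mathfrak{X}_\pp(K,\mathbf{A}))$ and reducing modulo $I^{\rm cyc}$ forces $(\bar r) = \mathfrak{D}$ in $\Lambda_\ac$. Since the units of $\Lambda \simeq \Lambda_\ac[[\gamma_{\rm cyc}-1]]$ are exactly those elements whose image in $\Lambda_\ac$ is a unit, if $\mathfrak{D} = \Lambda_\ac$ then $r$ is a unit in $\Lambda$, yielding the equality.

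The main obstacle is therefore verifying that $\mathfrak{D} = \Lambda_\ac$, i.e., that $\mathfrak{X}_\pp(K,\mathbf{A})[I^{\rm cyc}]$ is a pseudo-null $\Lambda_\ac$-module. I expect to handle this by an argument analogous to the treatment of $Z(K_\infty)[I^{\rm cyc}]$ in the proof of Corollary~\ref{thm:str}: using the Poitou--Tate global duality sequences such as (\ref{eq:ES-1b}) and (\ref{eq:ES-1a}), together with the snake lemma, one bounds $\mathfrak{X}_\pp(K,\mathbf{A})[I^{\rm cyc}]$ in terms of the $I^{\rm cyc}$-torsion of a $\Lambda$-torsion-free module, thereby forcing its pseudo-nullity. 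Once this point is resolved, (2) is established, and (1) follows immediately from the equivalence in Theorem~\ref{thm:2-varIMC}.
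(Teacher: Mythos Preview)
Your overall plan coincides with the paper's: prove (2) first by combining the two-variable divisibility from \cite[Thm.~6.13]{wan-combined} with the anticyclotomic equality of Theorem~\ref{thm:howard}, and then read off (1) from Theorem~\ref{thm:2-varIMC}. The paper packages the lifting step into a single citation of \cite[Lem.~3.2]{SU}, whereas you unwind it by hand via \cite[Lem.~6.2]{rubin-IMC} and the control isomorphism $(\ref{eq:control})$.

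There is, however, a concrete slip that generates an artificial obstacle. The divisibility ${\rm Char}_\Lambda(\mathfrak{X}_\pp(K,\mathbf{A}))\subseteq(\mathcal{L}_\pp(f/K))$ means that $\mathcal{L}_\pp(f/K)$ divides any generator $g$ of the characteristic ideal, so the correct factorization is $g=\mathcal{L}_\pp(f/K)\cdot r$, not $\mathcal{L}_\pp(f/K)=g\cdot r$ as you wrote. With the correct direction, Rubin's lemma gives
\[
(\mathcal{L}_{\pp,\ac}(f/K))={\rm Char}_{\Lambda_\ac}(\mathfrak{X}_\pp(K,\Ac))=(\bar g)\cdot\mathfrak{D}=(\mathcal{L}_{\pp,\ac}(f/K))\cdot(\bar r)\cdot\mathfrak{D},
\]
and since $\mathcal{L}_{\pp,\ac}(f/K)\neq 0$ this forces $(\bar r)\cdot\mathfrak{D}=\Lambda_\ac$. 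Both factors are integral ideals, so $(\bar r)=\Lambda_\ac$ and $\mathfrak{D}=\Lambda_\ac$ automatically; hence $r\in\Lambda^\times$ and $(g)=(\mathcal{L}_\pp(f/K))$. In other words, the ``main obstacle'' you identify --- showing pseudo-nullity of $\mathfrak{X}_\pp(K,\mathbf{A})[I^{\rm cyc}]$ via Poitou--Tate and snake-lemma arguments --- simply does not arise once the factorization is written correctly. This is precisely the content of \cite[Lem.~3.2]{SU} that the paper invokes, so your detour is unnecessary (and the sketch you give for handling $\mathfrak{D}$ directly is in any case too vague to assess).
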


\begin{proof}
Given the equalities in the anticyclotomic main conjecture established in Theorem~\ref{thm:howard},
we shall first deduce part (2) of the theorem by an anticyclotomic analogue of the argument in \cite[Thm.~3.30]{SU}.
Let $I^{\rm cyc}$ be the ideal of $\Lambda$ generated by $\gamma_{\rm cyc}-1$, let
\[
X:={\rm Char}_{\Lambda}(\mathfrak{X}_\pp(K,\mathbf{A})),\quad Y:=(\mathcal{L}_\pp(f/K)).
\]
Similarly as in
the proof of Theorem~\ref{thm:howard}, the divisibility in \cite[Thm.~6.13]{wan-combined}
yields the divisibility $X\subseteq Y$ as ideals in $\Lambda$. On the other hand,
in light of $(\ref{eq:control})$ and Corollary~\ref{cor:wan-bdp}, Theorem~\ref{thm:howard}
implies that $\mathfrak{X}_\pp(K,\mathbf{A})$ is $\Lambda$-torsion and that
$X=Y\pmod{I^{\rm cyc}}$. The equality $X=Y$ as ideals in $\Lambda$, i.e., the equality in part (2) of the theorem, thus follows from \cite[Lem.~3.2]{SU}; by Theorem~\ref{thm:2-varIMC}, the equality in part (1) also follows. 
\end{proof}

\begin{cor}\label{cor:kimIMC}
Let $\eps\in\{\pm\}$. Under the hypotheses of Theorem~\ref{thm:howard}, $\mathfrak{X}^{\eps,\eps}(K,\mathbf{A})$ is $\Lambda$-torsion, and
\[
{\rm Char}_{\Lambda}(\mathfrak{X}^{\eps,\eps}(K,\mathbf{A}))=(L_p^{\eps,\eps}(f/K))
\]
as ideals in $\Lambda$.
\end{cor}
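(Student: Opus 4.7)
The plan is to extract this result from Theorem~\ref{thm:two-varIMC} (the two-variable Iwasawa main conjecture established in $\Lambda$), together with the Poitou--Tate exact sequence (\ref{eq:ES-1a}) from the proof of Theorem~\ref{thm:2-varIMC}, applied at the prime $\overline{\pp}$, and the explicit reciprocity law of Theorem~\ref{thm:BF-ERL}. The point is that Theorem~\ref{thm:two-varIMC} already handles the $\mathcal{L}_\pp(f/K)$ side as an equality in $\Lambda$; the content of the present corollary is to translate this into a statement about the $L_p^{\eps,\eps}(f/K)$ side via the Beilinson--Flach class $\mathcal{BF}^\eps$.

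Concretely, I would start from the four-term exact sequence
\[
0\longrightarrow \frac{\mathfrak{Sel}^{\eps,{\rm rel}}(K,\mathbf{T})}{\Lambda\cdot\mathcal{BF}^\eps}\longrightarrow \frac{H^1(K_{\overline{\pp}},\mathbf{T})}{H^1_{\eps}(K_{\overline{\pp}},\mathbf{T})}\longrightarrow \mathfrak{X}^{\eps,\eps}(K,\mathbf{A})\longrightarrow \mathfrak{X}^{\eps,{\rm str}}(K,\mathbf{A})\longrightarrow 0,
\]
noting that $\mathfrak{X}^{\eps,\eps}(K,\mathbf{A})$ is $\Lambda$-torsion by the control theorem combined with Kobayashi's result (as in the proof of Theorem~\ref{thm:2-varIMC}). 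The map ${\rm Col}^\eps$ identifies the second quotient with $\Lambda$, under which the image of $\mathcal{BF}^\eps$ becomes $X := u\cdot h^\eps\cdot L_p^{\eps,\eps}(f/K)\in\Lambda$ by Theorem~\ref{thm:BF-ERL}; note that this element is \emph{a priori} in $\Lambda$, even though $u$ is only known to lie in $\Lambda[1/P]$, because it is the image of an integral class under an integral map. Taking characteristic ideals, we obtain
\[
{\rm Char}_{\Lambda}(\mathfrak{X}^{\eps,\eps}(K,\mathbf{A}))\cdot{\rm Char}_\Lambda\bigg(\frac{\mathfrak{Sel}^{\eps,{\rm rel}}(K,\mathbf{T})}{\Lambda\cdot\mathcal{BF}^\eps}\biggr) = (X)\cdot{\rm Char}_\Lambda(\mathfrak{X}^{\eps,{\rm str}}(K,\mathbf{A}))
\]
as ideals in $\Lambda$. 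Substituting the equality in Theorem~\ref{thm:two-varIMC}(1) and cancelling the nonzero factor ${\rm Char}_\Lambda(\mathfrak{X}^{\eps,{\rm str}}(K,\mathbf{A}))\cdot(h^\eps)$, which is legitimate in the UFD $\Lambda$, this yields ${\rm Char}_\Lambda(\mathfrak{X}^{\eps,\eps}(K,\mathbf{A}))=(u\cdot L_p^{\eps,\eps}(f/K))$ in $\Lambda$ (so that in particular $u\cdot L_p^{\eps,\eps}(f/K)\in\Lambda$).

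The main obstacle is then to remove the factor $u$, and this is where the exceptional prime $P$ enters. Since $u\in\Lambda[1/P]$ is nonzero but is not known \emph{a priori} to be a unit in $\Lambda$, what the above directly delivers is an equality of ideals in $\Lambda[1/P]$, namely ${\rm Char}_{\Lambda}(\mathfrak{X}^{\eps,\eps}(K,\mathbf{A}))=(L_p^{\eps,\eps}(f/K))$ there. To upgrade this to $\Lambda$ one must show that the Hida--Tilouine/Rubin period ratio $u$ is in fact a unit in $\Lambda$; equivalently, that the $P$-primary components of the two ideals coincide. I would handle this by comparing with the \emph{other} exact sequence (\ref{eq:ES-1b}) and the ERL at $\pp$, where the analogous period factor does not appear: Theorem~\ref{thm:two-varIMC}(2) pins down ${\rm Char}_\Lambda(\mathfrak{X}_\pp(K,\mathbf{A}))=(\mathcal{L}_\pp(f/K))$ in $\Lambda$, and a global duality computation connecting $\mathfrak{X}_\pp$ to $\mathfrak{X}^{\eps,\eps}$ forces the $P$-behavior of $u\cdot L_p^{\eps,\eps}(f/K)$ to match that of $L_p^{\eps,\eps}(f/K)$. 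Carrying this through also establishes the integrality claim advertised in Remark~\ref{rem:periods}, completing the proof.
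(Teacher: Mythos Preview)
Your argument is fine up to the equality ${\rm Char}_\Lambda(\mathfrak{X}^{\eps,\eps}(K,\mathbf{A}))=(u\cdot L_p^{\eps,\eps}(f/K))$ in $\Lambda$; this is essentially the content already packaged into Theorem~\ref{thm:2-varIMC}, and your cancellation in the UFD $\Lambda$ is legitimate. The genuine gap is the final paragraph, where you try to remove $u$. The only link you have between $\mathfrak{X}_\pp(K,\mathbf{A})$ and $\mathfrak{X}^{\eps,\eps}(K,\mathbf{A})$ runs through the pair of Poitou--Tate sequences and the two explicit reciprocity laws for the \emph{same} class $\mathcal{BF}^\eps$; the period factor $u$ is precisely the ratio between those two sides, so any ``global duality computation connecting $\mathfrak{X}_\pp$ to $\mathfrak{X}^{\eps,\eps}$'' just reproduces the identity you already have and cannot determine ${\rm ord}_P(u)$. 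In other words, the input from Theorem~\ref{thm:two-varIMC}(2) is not independent of the input you used to obtain $(u\cdot L_p^{\eps,\eps})$, and the argument is circular.

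The paper closes this gap with a completely different ingredient that you do not invoke: it descends to the \emph{cyclotomic} line and uses Kobayashi's Kato--Euler-system divisibility \cite[Thm.~4.1]{kobayashi-152} for both $E$ and its quadratic twist $E^{(K)}$, together with the control theorem, to obtain the \emph{opposite} divisibility $(X\bmod I^{\rm ac})\supseteq(L_p^{\eps,\eps}(f/K)\bmod I^{\rm ac})$ in $\Lambda_{\rm cyc}$ (after a period comparison via \cite[Lem.~9.5]{skinner-zhang}). Combined with the divisibility $X\subseteq Y$ in $\Lambda[1/P]$ from Theorem~\ref{thm:2-varIMC}, which extends to $\Lambda$ because the cyclotomic specialization of $L_p^{\eps,\eps}(f/K)$ is nonzero, this gives $X\equiv Y\pmod{I^{\rm ac}}$, and then \cite[Lem.~3.2]{SU} promotes the divisibility to the equality $X=Y$ in $\Lambda$. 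Note that this route never proves that $u$ itself is a unit; it bypasses $u$ entirely by producing an independent upper bound on the Selmer side along one specialization.
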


\begin{proof}
In light of Theorem~\ref{thm:2-varIMC}, the result of Theorem~\ref{thm:two-varIMC}
implies that $\mathfrak{X}^{\eps,\eps}(K,\mathbf{A})$ is $\Lambda$-torsion, and we have the divisibility
\begin{equation}\label{eq:2var-div}
{\rm Char}_{\Lambda}(\mathfrak{X}^{\eps,\eps}(K,\mathbf{A}))\subseteq(L_p^{\eps,\eps}(f/K))
\end{equation}
as ideals in $\Lambda$. We note that \emph{a priori} this divisibility holds just in $\Lambda[1/P]$, 
but by the 
nonvanishing of the cyclotomic specialization of $L_p^{\eps,\eps}(f/K)$ (which follows from the nonvanishing
of the $p$-adic $L$-functions constructed by Kobayashi \cite{kobayashi-152} and the discussion in the paragraphs below),
the divisibility holds as stated.

Similarly as in the proof of Theorem~\ref{thm:two-varIMC}, we will deduce 
that equality holds in $(\ref{eq:2var-div})$ by an appropriate application of \cite[Lem.~3.2]{SU}.
Set
\[
X:={\rm Char}_\Lambda(\mathfrak{X}^{\eps,\eps}(K,\mathbf{A})),\quad\quad Y:=(L_p^{\eps,\eps}(f/K)),
\]
and let $I^\ac$ be the kernel of the canonical projection $\Lambda\twoheadrightarrow\Lambda_{\rm cyc}$. Let $\mathcal{L}_p^\pm(E/\bQ)$ and $\mathcal{L}^\pm_p(E^{(K)}/\bQ)$ be the $p$-adic $L$-functions constructed in \cite[\S{3}]{kobayashi-152} for the elliptic curve $E$ and its quadratic twist $E^{(K)}$, respectively, and set
\[
\mathcal{L}_p^{\eps,\eps}(f/K):=\mathcal{L}_p^\eps(E/\bQ)\cdot\mathcal{L}^\eps_p(E^{(K)}/\bQ).
\]

By the control theorem of \cite[Prop.~8.7]{wan-combined} and (see also \cite[Lem.~3.6]{SU}), the
divisibility in \cite[Thm.~4.1]{kobayashi-152} applied to $E$ and  $E^{(K)}$ yields the divisibility
\begin{equation}\label{eq:kob-div}
(X\;{\rm mod}\;I^{\ac})\supseteq (\mathcal{L}_p^{\eps,\eps}(f/K))
\end{equation}
as ideals in $\Lambda_{\rm cyc}$. Thus it remains
to compare the periods $\Omega_E^+\cdot\Omega_E^-$ used in the construction of $\mathcal{L}_p^{\eps,\eps}(f/K)$
with Hida's canonical period used in the construction of $L_p^{\eps,\eps}(f/K)$ in
Theorem~\ref{prop:loeffler+-}. By \cite[Lem.~9.5]{skinner-zhang}, the ratio of periods is a $p$-adic unit which we can clearly ignore. This shows that $(\ref{eq:2var-div})$ and $(\ref{eq:kob-div})$ yield the equality $X=Y\pmod{I^\ac}$, and so the equality
in Corollary~\ref{cor:kimIMC} follows from the divisibility $(\ref{eq:2var-div})$ by virtue of \cite[Lem.~3.2]{SU}.  
(Meanwhile we used the fact that by the argument in \cite[Lem.~8.6]{wan-combined} the two variable $p$-adic $L$-functions $L_p^{\eps,\eps}(f/K)$ are integral.)
\end{proof}

\subsection{A converse to Gross--Zagier--Kolyvagin}

Let
\[
y_K:=P_0[1]={\rm tr}^{K[1]}_K(P[1])\in E(K)
\]
be the Heegner point introduced in $\S$\ref{subsec:HP}.

Our next result is an analogue for supersingular primes
of Skinner's converse to a theorem of Gross--Zagier and Kolyvagin
(\emph{cf.} \cite[Thm.~B]{skinner}) holding under slightly weaker conditions than in \emph{loc.cit.}
(as explained in the Introduction before the statement of Theorem~B).

\begin{thm}
Let the hypotheses be as in Theorem~\ref{thm:howard}, and assume that
${\rm Sel}_{p^\infty}(f/K)$
has $\bZ_p$-corank $1$. Then $y_K\neq 0\in E(K)\otimes_{\bZ}\bQ$.
In particular, ${\rm ord}_{s=1}L(E/K,s)=1$.
\end{thm}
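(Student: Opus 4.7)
The plan is to argue by contradiction: if $y_K = 0$ in $E(K)\otimes\bQ$, then Theorem~\ref{thm:howard}(1), combined with the corank-one hypothesis and a control theorem, will force the specialization of $\mathbf{z}^{\eps}$ at the augmentation ideal of $\Lambda_{\rm ac}$ to be non-trivial, while at the same time this specialization equals (up to a $p$-adic unit) the class of $y_K$ in $H^1(K,T_pE)$.

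First I would invoke a control theorem (in the spirit of \cite[Prop.~8.7]{wan-combined}, specialized to the anticyclotomic line) to identify $\mathfrak{Sel}^{\eps,\eps}_{p^\infty}(E/K_\infty^{\rm ac})^{\Gamma^{\rm ac}}$ with ${\rm Sel}_{p^\infty}(E/K)$ up to finite kernel and cokernel; here the point is that at the base field $K$ the plus/minus local conditions at the primes above $p$ reduce to the Bloch--Kato local conditions, so $\mathfrak{Sel}^{\eps,\eps}_{p^\infty}(E/K) = {\rm Sel}_{p^\infty}(E/K)$. By hypothesis, both sides have $\bZ_p$-corank $1$, so the specialization $\mathfrak{X}^{\eps,\eps}(K,\Ac)/T\cdot\mathfrak{X}^{\eps,\eps}(K,\Ac)$ has $\bZ_p$-rank $1$, where $T := \gamma_{\rm ac}-1$.

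Next, since Theorem~\ref{thm:howard}(1) asserts that $\mathfrak{X}^{\eps,\eps}(K,\Ac)$ has $\Lambda_{\rm ac}$-rank $1$, the pseudo-decomposition as $\Lambda_{\rm ac}\oplus\mathfrak{X}^{\eps,\eps}(K,\Ac)_{\rm tors}$ implies that the $\bZ_p$-rank at $T=0$ equals $1 + {\rm ord}_{T}\bigl({\rm Char}_{\Lambda_{\rm ac}}(\mathfrak{X}^{\eps,\eps}(K,\Ac)_{\rm tors})\bigr)$. Comparing with the previous step forces $T\nmid{\rm Char}_{\Lambda_{\rm ac}}(\mathfrak{X}^{\eps,\eps}(K,\Ac)_{\rm tors})$, and then the equality of characteristic ideals in Theorem~\ref{thm:howard}(1) gives
\[
T\nmid{\rm Char}_{\Lambda_{\rm ac}}\bigl(\mathfrak{Sel}^{\eps,\eps}(K,\Tc)/\Lambda_{\rm ac}\cdot\mathbf{z}^{\eps}\bigr).
\]
In particular, the class $\mathbf{z}^{\eps}$ remains non-trivial modulo $T$.

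Finally, I would identify this specialization with $y_K$ (up to a $p$-adic unit). From the construction~$(\ref{def:pm-HP})$ and the identity $\tilde\omega_0^{-\eps}(X)=1$, one has $z_0[1]^{\eps} = z_0[1]$, which (after corestriction from $K_0^{\rm ac}$ down to $K$) equals $(a_\ell-\ell-1)^{-1}y_K$ in $H^1(K,T_pE)$; the prefactor is a $p$-adic unit by the choice of $\ell$. Thus the vanishing of $y_K$ would force $\mathbf{z}^{\eps}\equiv 0\pmod T$, which in the torsion-free module $H^1(K,T_pE)$ (thanks to the Galois-irreducibility of $T_pE$) contradicts the previous step. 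The hard part will be the control-theoretic comparisons: verifying that they pass through without rank loss requires ruling out error terms coming from $H^2$ and from the possible non-triviality of $K_0^{\rm ac}/K$, and uses the full force of the hypotheses in Theorem~\ref{thm:howard}, in particular the Galois surjectivity assumption. Once $y_K\neq 0$ is established, the implication ${\rm ord}_{s=1}L(E/K,s)=1$ follows from the Gross--Zagier formula together with the sign of the functional equation, which forces $L(E/K,1)=0$ under~(Heeg).
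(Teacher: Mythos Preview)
Your proof is correct and follows essentially the same strategy as the paper's: use a control theorem to see that $\mathfrak{X}^{\eps,\eps}(K,\Ac)/T$ has $\bZ_p$-rank exactly one, invoke Theorem~\ref{thm:howard}(1) to deduce that $\mathbf{z}^\eps$ is nontrivial modulo $T$, and then identify this specialization with the Kummer class of $y_K$. One small imprecision worth flagging: the $\bZ_p$-rank of $M/TM$ is not literally $1+{\rm ord}_T\bigl({\rm Char}_{\Lambda_{\rm ac}}(M_{\rm tors})\bigr)$ (take $M_{\rm tors}=\Lambda_{\rm ac}/(T^2)$), but the implication you actually need---rank one forces $T\nmid{\rm Char}_{\Lambda_{\rm ac}}(M_{\rm tors})$---is correct, so the argument goes through.
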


\begin{proof}
Let $\gamma_\ac\in\Gamma^{\ac}$ be a topological generator, and set
$I^{\ac}=(\gamma_{\ac}-1)\subseteq\Lambda_{\ac}$. We shall work with the sign $\eps=+1$. By Kobayashi's control theorem
(as extended by B.-D. Kim \cite{kim-CJM} to more general $\bZ_p$-extensions),
there is natural surjective map
\[
\mathfrak{X}^{+,+}(K,\mathbf{A}^\ac)/I^\ac \mathfrak{X}^{+,+}(K,\mathbf{A}^\ac)\longrightarrow 
{\rm Sel}_{p^\infty}(f/K)^\vee
\]
with finite kernel, where 
\[
{\rm Sel}_{p^\infty}(f/K)^\vee={\rm Hom}_{\bZ_p}({\rm Sel}_{p^\infty}(f/K),\bQ_p/\bZ_p)
\]
is the Pontrjagin dual. The assumption that ${\rm Sel}_{p^\infty}(f/K)$ has $\bZ_p$-corank $1$ thus implies that $\mathfrak{X}^{+,+}(K,\mathbf{A}^\ac)/I^\ac\mathfrak{X}^{+,+}(K,\mathbf{A}^\ac)$ is not $\bZ_p$-torsion.
By part (1) of Theorem~\ref{thm:howard}, this forces $\mathbf{z}^+$ to have nontorsion image in $\mathfrak{Sel}^{+,+}(K,\mathbf{T}^\ac)/I^\ac\mathfrak{Sel}^{+,+}(K,\mathbf{T}^\ac)$. By the natural injection
\[
\mathfrak{Sel}^{+,+}(K,\mathbf{T}^\ac\otimes_{\bZ_p}\bQ_p)/I^{\tt ac}\mathfrak{Sel}^{+,+}(K,\mathbf{T}^\ac\otimes_{\bZ_p}\bQ_p)
\hookrightarrow{\rm Sel}(K,T\otimes_{\bZ_p}\bQ_p)
\]
this shows that $\mathbf{z}^+$ has nonzero image $\mathbf{z}^+_1\in{\rm Sel}(K,T\otimes_{\bZ_p}\bQ_p)$; since
by construction $\mathbf{z}_1^+=z_0[1]$ is the Kummer image of $y_K$, the result follows.
The last claim follows from the Gross--Zagier formula on Shimura curves \cite{YZZ}.
\end{proof}

\subsection{$\Lambda$-adic Gross--Zagier formula}\label{subsec:main-2}

In this section we obtain an analogue of Howard's $\Lambda$-adic Gross--Zagier formula \cite{howard-compmath} for supersingular primes.
Recall the $p$-adic height pairings $\langle\;,\;\rangle_{K_n^\ac}^{\rm cyc}$
on the plus/minus Selmer groups ${\rm Sel}^{\eps,\eps}(K_n^\ac,T)$ introduced in Theorem~\ref{thm:rubin-ht},
and define the $\Lambda_\ac$-adic height pairing
\begin{equation}\label{eq:lambda-ht}
\langle\;,\;\rangle_{K^\ac_\infty}^{\rm cyc}:\mathfrak{Sel}^{\eps,\eps}(K,\Tc)
\otimes_{\Lambda_\ac}\mathfrak{Sel}^{\eps,\eps}(K,\Tc)^\iota
\longrightarrow\bQ_p\otimes_{\bZ_p}\Lambda_\ac\otimes_{\bZ_p}\mathcal{J}
\end{equation}
by the formula
\[
\langle a_\infty,b_\infty\rangle^{\rm cyc}_{K_\infty^\ac}=
\varprojlim_n\sum_{\sigma\in{\rm Gal}(K_n^\ac/K)}\langle a_n,b_n^\sigma\rangle^{\rm cyc}_{K_n^\ac}\cdot\sigma.
\]

Recall that $\mathcal{J}=\mathcal{I}/\mathcal{I}^2$ for $\mathcal{I}$ the augmentation ideal of $\bZ_p[[\Gamma^{\rm cyc}]]$. Upon choosing a topological generator $\gamma_{\rm cyc}\in\Gamma^{\rm cyc}\simeq\mathcal{J}$ the height pairing $(\ref{eq:lambda-ht})$ may be seen as taking values in $\Lambda_\ac\otimes_{\bZ_p}\bQ_p$; we thus define the cyclotomic $\Lambda_\ac$-adic cyclotomi regulator $\mathcal{R}^\eps_{\rm cyc}\subseteq\Lambda_\ac\otimes_{\bZ_p}\bQ_p$
to be the characteristic ideal of the cokernel of $(\gamma_{\rm cyc}-1)^{-1}\circ(\ref{eq:lambda-ht})$.

\begin{thm}\label{thm:3.1.5}
Let $\eps\in\{\pm\}$, and denote by $\mathcal{X}^{\eps}_{\rm tors}$ be the characteristic ideal of 
$\mathfrak{X}^{\eps,\eps}(K,\Ac)_{\rm tors}$. Then
\[
\mathcal{R}_{\rm cyc}^{\eps}\cdot\mathcal{X}_{\rm tors}^{\eps}
=(L_{p,1}^{\eps,\eps}(f/K))
\]
in $(\Lambda_\ac\otimes_{\bZ_p}\bQ_p)/\Lambda_\ac^\times$.
\end{thm}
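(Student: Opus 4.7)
The plan is to deduce the formula from a direct evaluation of the cyclotomic $p$-adic height $\langle\mathbf{z}^\eps,\mathbf{z}^\eps\rangle^{\rm cyc}_{K_\infty^\ac}$ via Rubin's formula, combined with the main conjecture established in Theorem~\ref{thm:howard}. This parallels the strategy used by Howard \cite{howard-compmath} in the ordinary case.

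First I would reduce the problem to a single height computation. Since by Lemma~\ref{Lemma 2.2} the module $\mathfrak{Sel}^{\eps,\eps}(K,\Tc)$ is $\Lambda_\ac$-torsion-free and by Theorem~\ref{thm:howard} it has $\Lambda_\ac$-rank one, it is pseudo-isomorphic to $\Lambda_\ac$; fix a pseudo-generator $g$ and write $\mathbf{z}^\eps = t\cdot g$ for some $t\in\Lambda_\ac$. The main conjecture then gives $\mathcal{X}^\eps_{\rm tors} = (t)^2$, while by construction $\mathcal{R}^\eps_{\rm cyc} = (r)$ with $r = \langle g,g\rangle^{\rm cyc}_{K_\infty^\ac}/(\gamma_{\rm cyc}-1)$. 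Invoking the anticyclotomic functional equation satisfied by $\mathbf{z}^\eps$ to identify $(t)$ with $(t^\iota)$ as ideals of $\Lambda_\ac$, bilinearity of the $\Lambda_\ac$-adic height pairing yields
\[
\mathcal{R}^\eps_{\rm cyc}\cdot\mathcal{X}^\eps_{\rm tors} = \bigl(\langle\mathbf{z}^\eps,\mathbf{z}^\eps\rangle^{\rm cyc}_{K_\infty^\ac}/(\gamma_{\rm cyc}-1)\bigr)
\]
modulo units in $\Lambda_\ac$, and the theorem reduces to the identity $\langle\mathbf{z}^\eps,\mathbf{z}^\eps\rangle^{\rm cyc}_{K_\infty^\ac}/(\gamma_{\rm cyc}-1) \equiv L_{p,1}^{\eps,\eps}(f/K)$ modulo units.

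Next I would apply Rubin's formula (Theorem~\ref{thm:rubin-ht}) with $b = \mathbf{z}^\eps$ to obtain
\[
\langle\mathcal{BF}^\eps_\ac,\mathbf{z}^\eps\rangle^{\rm cyc}_{K_\infty^\ac} = (\beta^\eps_\infty(\mathds{1}),{\rm loc}_{\overline\pp}(\mathbf{z}^\eps))_\infty\cdot(\gamma_{\rm cyc}-1),
\]
where $\mathcal{BF}^\eps_\ac$ is the image of $\mathcal{BF}^\eps$ along the anticyclotomic line, which lies in $\mathfrak{Sel}^{\eps,\eps}(K,\Tc)$ by the vanishing of $L_{p,0}^{\eps,\eps}(f/K)$ noted in $\S\ref{subsec:rubin}$. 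Writing $\mathcal{BF}^\eps_\ac = \eta\cdot\mathbf{z}^\eps$ for some $\eta\in\Lambda_\ac\otimes\bQ_p$ (well-defined since $\mathbf{z}^\eps$ is non-torsion by Theorem~\ref{3.1}), the scalar $\eta$ can be computed by applying ${\rm Log}_\ac^\eps\circ{\rm loc}_\pp$ to both sides and invoking the reciprocity laws of Theorems~\ref{thm:BF-ERL} and \ref{3.1}, together with the factorizations $(\ref{eq:factor-2})$ and $(\ref{eq:factor})$. Meanwhile, the right-hand side is evaluated by combining Lemma~\ref{lem:3.1.1}, which identifies $\beta^\eps_\infty(\mathds{1})$ under ${\rm Col}^\eps$ with $u_0\cdot h^\eps_0\cdot L_{p,1}^{\eps,\eps}(f/K)$, with the analogue of Theorem~\ref{3.1} at $\overline{\pp}$ (obtained via complex conjugation) to compute ${\rm Log}_\ac^\eps({\rm loc}_{\overline{\pp}}(\mathbf{z}^\eps))$, using that under the isomorphisms ${\rm Col}^\eps$ and ${\rm Log}_\ac^\eps$ the local Tate pairing reduces to multiplication in $\Lambda_\ac\otimes\bQ_p$ up to a unit, as may be checked via Lemma~\ref{lem:interpolation}. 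Isolating $\langle\mathbf{z}^\eps,\mathbf{z}^\eps\rangle^{\rm cyc}_{K_\infty^\ac}/(\gamma_{\rm cyc}-1)$ and simplifying should yield $L_{p,1}^{\eps,\eps}(f/K)$ modulo units.

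The main obstacle I anticipate is the careful bookkeeping of the junk factors $h^\eps$, $u_0$, $F_d$, and $\sigma_{-1,\pp}$ that appear throughout the reciprocity laws. The crux will be that the $F_d^2$ contributed by the Heegner reciprocity law at $\pp$ and $\overline{\pp}$ cancels precisely against the $F_d^2$ appearing in the factorization $\mathscr{L}^{\tt BDP}_\pp(f/K)^2 = \mathcal{L}_{\pp,\ac}(f/K)\cdot F_d^2\cdot U'$, and that the occurrences of $h^\eps$ on each side of the computation likewise balance against each other. A subsidiary but genuine difficulty is verifying that the local Tate pairing, viewed through $({\rm Col}^\eps, {\rm Log}_\ac^\eps)$, is indeed the standard product pairing up to units; this should follow from the adjointness of the Coleman map with the dual-exponential/logarithm map, but the point needs explicit checking in the plus/minus setting.
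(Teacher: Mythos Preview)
Your route differs from the paper's in a structural way. You invoke the Heegner main conjecture (Theorem~\ref{thm:howard}, part~(1)) at the outset to reduce the statement to the $\Lambda_\ac$-adic Gross--Zagier identity $(\langle\mathbf{z}^\eps,\mathbf{z}^\eps\rangle^{\rm cyc}_{K_\infty^\ac})=(L_{p,1}^{\eps,\eps}(f/K))$, and then try to prove that identity by explicit element-by-element computation. The paper does the reverse: it proves the theorem first, using the descended Beilinson--Flach main conjecture (Corollary~\ref{thm:str}), and only afterwards deduces the Gross--Zagier identity as Theorem~\ref{thm:lambda-GZ}. Concretely, the paper applies Rubin's formula at the level of \emph{ideals}: the map $b\mapsto\langle\mathcal{BF}^\eps_\ac,b\rangle$ has image $\mathcal{R}^\eps_{\rm cyc}\cdot{\rm Char}_{\Lambda_\ac}(\mathfrak{Sel}^{\eps,\eps}(K,\Tc)/\Lambda_\ac\cdot\mathcal{BF}^\eps_\ac)$, while by Lemma~\ref{lem:3.1.1} the map $b\mapsto(\beta^\eps_\infty(\mathds{1}),{\rm loc}_{\overline\pp}(b))$ has image $\mathcal{U}_\ac\cdot\mathcal{H}^\eps_\ac\cdot(L_{p,1}^{\eps,\eps}(f/K))\cdot\eta^\iota$, where $\eta={\rm Char}_{\Lambda_\ac}({\rm coker}({\rm loc}_\pp))$. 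Equating these, and then using Corollary~\ref{thm:str} together with Poitou--Tate duality and Lemma~\ref{lem:str-rel} to identify ${\rm Char}_{\Lambda_\ac}(\mathfrak{Sel}^{\eps,\eps}(K,\Tc)/\Lambda_\ac\cdot\mathcal{BF}^\eps_\ac)$ with $\mathcal{H}^\eps_\ac\cdot\mathcal{X}^\eps_{\rm tors}\cdot\eta^\iota$, gives the result after inverting $\mathcal{U}_\ac$. This ideal-theoretic formulation sidesteps both of your acknowledged difficulties: one never needs the explicit value of ${\rm loc}_{\overline\pp}(\mathbf{z}^\eps)$, nor the precise compatibility of the local Tate pairing with $({\rm Col}^\eps,{\rm Log}^\eps_\ac)$ beyond perfectness.

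There is also a genuine omission in your sketch. The factor $u_0$ does not cancel by symmetry the way you anticipate $h^\eps$ and $F_d$ will; it arises from a period ratio appearing only on the Beilinson--Flach side (see Theorem~\ref{thm:BF-ERL}), and your computation will at best produce $(\langle\mathbf{z}^\eps,\mathbf{z}^\eps\rangle)=(u_0)\cdot(L_{p,1}^{\eps,\eps}(f/K))$. Establishing that $(u_0)=\Lambda_\ac$ is not bookkeeping: the paper extracts it from the combination of Theorem~\ref{thm:2-varIMC} and Corollary~\ref{cor:kimIMC}, i.e.\ from the equal-sign case of Kim's two-variable main conjecture. Without that input, neither your argument nor the paper's closes, so you will need to incorporate it explicitly.
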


\begin{proof}
The height formula of Theorem~\ref{thm:rubin-ht} and Lemma~\ref{lem:3.1.1} immediately
yield the equality
\begin{equation}\label{eq:ht}
\mathcal{R}_{\rm cyc}^\eps\cdot
{\rm Char}_{\Lambda_\ac}\biggl(\frac{\mathfrak{Sel}^{\eps,\eps}(K,\Tc)}{\Lambda_\ac\cdot\mathcal{BF}^\eps_\ac}\biggr)
=\mathcal{U}_\ac\cdot\mathcal{H}_\ac^\eps\cdot(L_{p,1}^{\eps,\eps}(f/K))\cdot\eta^\iota
\end{equation}
in $(\Lambda_\ac\otimes_{\bZ_p}\bQ_p)/\Lambda_\ac^\times$, where $\mathcal{U}_\ac=(u_0)$ and $\mathcal{H}_\ac^\eps=(h_0^\eps)$
in the notations of $\S\ref{subsec:rubin}$, and
\[
\eta:={\rm Char}_{\Lambda_\ac}\biggl(\frac{H^1_{\eps}(K_{\pp},\Tc)}{{\rm loc}_{\pp}(\mathfrak{Sel}^{\eps,\eps}(K,\Tc))}\biggr).
\]

By 
Theorem~\ref{3.1}, we see that $\eta$ (and therefore $\eta^\iota$) is nonzero,
while the nonvanishing of $h_0^\eps$ follows from the construction (see Remark~\ref{rem:h-ac}).
On the other hand, from global Poitou--Tate duality
we have the exact sequence
\begin{equation}\label{eq:PT2}
0\longrightarrow\frac{H^1_\eps(K_{\pp},\Tc)}
{{\rm loc}_{\pp}(\mathfrak{Sel}^{\eps,\eps}(K,\Tc))}
\longrightarrow \mathfrak{X}^{{\rm rel},\eps}(K,\Ac)
\longrightarrow \mathfrak{X}^{\eps,\eps}(K,\Ac)\longrightarrow 0.
\end{equation}
Taking $\Lambda_\ac$-torsion in $(\ref{eq:PT2})$ and applying Lemma~\ref{lem:str-rel} we obtain the equality
\begin{equation}\label{eq:str-rel}
{\rm Char}_{\Lambda_\ac}(\mathfrak{X}^{\eps,{\rm str}}(K,\Ac))=\mathcal{X}_{\rm tors}^\eps\cdot\eta^\iota
\end{equation}
in $(\Lambda_\ac\otimes_{\bZ_p}\bQ_p)/\Lambda_\ac^\times$. Combined
with Corollary~\ref{thm:str} (which applies thanks to Theorem~\ref{thm:howard})
and Lemma~\ref{thm:ES}, equation $(\ref{eq:str-rel})$ implies that
\begin{equation}\label{eq:str-rel2}
{\rm Char}_{\Lambda_\ac}\biggl(\frac{\mathfrak{Sel}^{\eps,\eps}(K,\Tc)}{\Lambda_\ac\cdot\mathcal{BF}^\eps_\ac}\biggr)
=\mathcal{H}^\eps_\ac\cdot\mathcal{X}_{\rm tors}^\eps\cdot\eta^\iota
\end{equation}
in $(\Lambda_\ac\otimes_{\bZ_p}\bQ_p)/\Lambda_\ac^\times$. Since the ideal
$\mathcal{U}_\ac$ is invertible by the combination of Theorem~\ref{thm:2-varIMC} and Corollary~\ref{cor:kimIMC},
substituting $(\ref{eq:str-rel2})$ into $(\ref{eq:ht})$, the result follows.
\end{proof}


Note that the formula of Theorem~\ref{thm:3.1.5} has the shape of a $\Lambda_\ac$-adic
analogue of the Birch and Swinnerton-Dyer conjecture. Moreover, by the Heegner point
main conjecture of Theorem~\ref{thm:howard}, it is essentially equivalent to the
following $\Lambda_\ac$-adic Gross--Zagier formula.

\begin{thm}\label{thm:lambda-GZ}
Let $\eps\in\{\pm\}$. Under the hypotheses in Theorem~\ref{thm:howard}, we have the equality
\[
(L_{p,1}^{\eps,\eps}(f/K))=(\langle\mathbf{z}^\eps,\mathbf{z}^\eps\rangle_{K_\infty^\ac}^{\rm cyc})
\]
in $(\Lambda_{\ac}\otimes_{\bZ_p}\bQ_p)/\Lambda_\ac^\times$.
\end{thm}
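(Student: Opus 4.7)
The plan is to deduce the formula by combining the $\Lambda_\ac$-adic Birch--Swinnerton-Dyer-type identity of Theorem~\ref{thm:3.1.5} with the Heegner point main conjecture of Theorem~\ref{thm:howard}(1), and matching the result with a direct computation of the height pairing at $\mathbf{z}^\eps$. Setting
\[
\mathcal{C}:={\rm Char}_{\Lambda_\ac}\biggl(\frac{\mathfrak{Sel}^{\eps,\eps}(K,\Tc)}{\Lambda_\ac\cdot\mathbf{z}^\eps}\biggr),
\]
Theorem~\ref{thm:howard}(1) yields $\mathcal{X}_{\rm tors}^\eps=\mathcal{C}^2$, so the identity of Theorem~\ref{thm:3.1.5} may be rewritten as
\[
(L_{p,1}^{\eps,\eps}(f/K))=\mathcal{R}_{\rm cyc}^\eps\cdot\mathcal{C}^2
\]
as ideals in $(\Lambda_\ac\otimes_{\bZ_p}\bQ_p)/\Lambda_\ac^\times$.

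Next, I would compute $(\langle\mathbf{z}^\eps,\mathbf{z}^\eps\rangle_{K_\infty^\ac}^{\rm cyc})$ directly, exploiting the rank-one structure of the Selmer group. By Lemma~\ref{Lemma 2.2} (applied with $S=1$) the module $\mathfrak{Sel}^{\eps,\eps}(K,\Tc)\subseteq H^1(K,\Tc)$ is $\Lambda_\ac$-torsion-free, while by Theorem~\ref{thm:KS-argument} it has $\Lambda_\ac$-rank one. Working up to pseudo-isomorphism, write $\mathfrak{Sel}^{\eps,\eps}(K,\Tc)\sim\Lambda_\ac\cdot v$ and $\mathbf{z}^\eps=c\cdot v$ for some $c\in\Lambda_\ac$, so that $\mathcal{C}=(c)$. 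Inspection of the definition~(\ref{eq:lambda-ht}) shows that the pairing $\langle\cdot,\cdot\rangle_{K_\infty^\ac}^{\rm cyc}$ is $\Lambda_\ac$-linear in the first variable and $\iota$-linear in the second, yielding
\[
\langle\mathbf{z}^\eps,\mathbf{z}^\eps\rangle_{K_\infty^\ac}^{\rm cyc}=c\cdot c^\iota\cdot\langle v,v\rangle_{K_\infty^\ac}^{\rm cyc},
\]
and since by its very definition $\mathcal{R}_{\rm cyc}^\eps=(\langle v,v\rangle_{K_\infty^\ac}^{\rm cyc})$ up to units, we conclude
\[
(\langle\mathbf{z}^\eps,\mathbf{z}^\eps\rangle_{K_\infty^\ac}^{\rm cyc})=\mathcal{C}\cdot\mathcal{C}^\iota\cdot\mathcal{R}_{\rm cyc}^\eps.
\]

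It remains to verify the $\iota$-invariance $\mathcal{C}=\mathcal{C}^\iota$, which combined with the two preceding displays gives the theorem:
\[
(L_{p,1}^{\eps,\eps})=\mathcal{R}_{\rm cyc}^\eps\cdot\mathcal{C}^2=\mathcal{R}_{\rm cyc}^\eps\cdot\mathcal{C}\cdot\mathcal{C}^\iota=(\langle\mathbf{z}^\eps,\mathbf{z}^\eps\rangle_{K_\infty^\ac}^{\rm cyc}).
\]
To see $\mathcal{C}=\mathcal{C}^\iota$, the key point is that complex conjugation acts on $\Gamma^\ac$ as inversion and preserves the symmetric $(\eps,\eps)$-Selmer conditions (simply swapping the primes $\pp$ and $\overline{\pp}$), thereby inducing a $\iota$-linear self-isomorphism of $\mathfrak{X}^{\eps,\eps}(K,\Ac)$. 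It follows that $\mathcal{X}_{\rm tors}^\eps$ is an $\iota$-invariant ideal, so Theorem~\ref{thm:howard}(1) gives $\mathcal{C}^2=(\mathcal{C}^\iota)^2$; since $\Lambda_\ac\otimes_{\bZ_p}\bQ_p$ is a UFD, this forces $\mathcal{C}=\mathcal{C}^\iota$. The most delicate step is exactly this $\iota$-invariance of $\mathcal{C}$, where one must carefully track how complex conjugation interacts with the plus/minus local conditions at $\pp$ and $\overline{\pp}$ and with the pseudo-isomorphism used to identify the rank-one part of the Selmer module.
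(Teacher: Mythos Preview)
Your proof is correct and follows essentially the same route as the paper: combine Theorem~\ref{thm:3.1.5} with Theorem~\ref{thm:howard}(1), and then identify $\mathcal{R}^\eps_{\rm cyc}\cdot\mathcal{C}^2$ with $(\langle\mathbf{z}^\eps,\mathbf{z}^\eps\rangle_{K_\infty^\ac}^{\rm cyc})$. The paper's proof compresses this last step into a single line (``follows from the definition of $\mathcal{R}^\eps_{\rm cyc}$''), whereas you have unpacked the sesquilinearity of the height pairing to get $(\langle\mathbf{z}^\eps,\mathbf{z}^\eps\rangle)=\mathcal{C}\cdot\mathcal{C}^\iota\cdot\mathcal{R}^\eps_{\rm cyc}$ and then supplied the $\iota$-invariance argument $\mathcal{C}=\mathcal{C}^\iota$ that the paper leaves implicit.

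One remark: your route to $\mathcal{C}=\mathcal{C}^\iota$ via $\iota$-invariance of $\mathcal{X}^\eps_{\rm tors}$ and the UFD property is fine, but there is a more direct argument you might prefer. Complex conjugation not only gives a $\iota$-linear automorphism of $\mathfrak{Sel}^{\eps,\eps}(K,\Tc)$, it also sends the Heegner class $\mathbf{z}^\eps$ to a $\Lambda_\ac^\times$-multiple of itself (this is the standard eigenproperty of Heegner points under complex conjugation, inherited by the plus/minus classes). Hence the quotient $\mathfrak{Sel}^{\eps,\eps}(K,\Tc)/\Lambda_\ac\cdot\mathbf{z}^\eps$ is itself $\iota$-self-dual, giving $\mathcal{C}=\mathcal{C}^\iota$ immediately without appealing to Theorem~\ref{thm:howard} a second time or to unique factorization.
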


\begin{proof}
Combining Theorem~\ref{thm:howard} and Theorem~\ref{thm:3.1.5}, we obtain
\begin{align*}
(L_{p,1}^{\eps,\eps}(f/K))&=\mathcal{R}^\eps_{\cyc}\cdot {\rm Char}_{\Lambda_\ac}(\mathfrak{X}^{\eps,\eps}(K,\Ac)_{\tors})\\
&=\mathcal{R}^\eps_{\cyc}
\cdot{\rm Char}_{\Lambda_\ac}\biggl(\frac{\mathfrak{Sel}^{\eps,\eps}(K,\Tc)}{\Lambda_\ac\cdot\mathbf{z}^\eps}\biggr)^2\\
&=(\langle\mathbf{z}^\eps_{},\mathbf{z}^\eps_{}\rangle_{K_\infty^\ac}^{\cyc})
\end{align*}
in $(\Lambda_{\ac}\otimes_{\bZ_p}\bQ_p)/\Lambda_\ac^\times$, where the last equality follows from the definition of $\mathcal{R}^\eps_{\cyc}$.
\end{proof}

\begin{rem}\label{rem:lambda-GZ}
The equality in Theorem~\ref{thm:lambda-GZ} can easily be rewritten in the form of a $\Lambda_{\ac}$-adic Gross--Zagier formula. Indeed, the two elements in $\Lambda_\ac\otimes_{\bZ_p}\bQ_p$ appearing in that formula are invariant under the $\bZ_p$-isomorphism $\iota$ sending $\gamma_{\rm ac}\mapsto\gamma_{\ac}^{-1}$; letting  $y'\in\Lambda_\ac^\times$ denote the ratio of those two elements, we can find (by making an appropriate choice of $\Omega_f^{\tt Hida}$, which is determined up to a $p$-adic unit) an element $y\in 1+(\gamma_\ac-1)\Lambda_\ac$ such that $y=\iota(y)$ and $y^2=y'$. Setting $z_\infty^\eps:=y\mathbf{z}^\eps$, we thus get another generator of $\Lambda_\ac\cdot\mathbf{z}^\eps$ satisfying
\begin{equation}\label{eq:Lambda-GZ}
L^{\eps,\eps}_{p,1}(f/K)=y\iota(y)\langle\mathbf{z}^\eps,\mathbf{z}^\eps\rangle_{K_\infty^\ac}^{\rm cyc}=
\langle z_\infty^\eps,z_\infty^\eps\rangle_{K_\infty^\ac}^{\rm cyc}.\nonumber
\end{equation}
In particular, specialized at the trivial character of $\Lambda_\ac$, this yields a new proof of Kobayashi's $p$-adic Gross--Zagier formula \cite{kobayashi-191} (which 
our formula extends to all characters of $\Lambda_\ac$).
\end{rem}

\bibliographystyle{amsalpha}
\bibliography{Heegner}

\end{document}